\documentclass[amscd, leqno]{amsart}
\usepackage{amscd, amsmath, amssymb, amsfonts, mathrsfs}
\usepackage[all]{xy}
\usepackage{url}

\theoremstyle{plain}
\newtheorem{theorem}{Theorem}[section]
\newtheorem{proposition}[theorem]{Proposition}
\newtheorem{lemma}[theorem]{Lemma}

\newtheorem{corollary}[theorem]{Corollary}
\theoremstyle{definition}
\newtheorem{definition}[theorem]{Definition}
\newtheorem{example}[theorem]{Example}
\theoremstyle{remark}
\newtheorem{remark}[theorem]{Remark}
\newtheorem{question}[theorem]{Question}
\newtheorem{conjecture}[theorem]{Conjecture}
\newtheorem{problem}[theorem]{Problem}
\newenvironment{pf}{\begin{proof}}{\end{proof}}
\makeatletter

\@addtoreset{equation}{section}
\makeatother

\begin{document}
\title[analytic torsion under degenerations]
{On the behavior of analytic torsion for twisted canonical bundles under degenerations}
\author{Ken-Ichi Yoshikawa}
\address{
Department of Mathematics,
Faculty of Science,
Kyoto University,
Kyoto 606-8502, JAPAN}
\email{yosikawa@math.kyoto-u.ac.jp}

\begin{abstract}
Consider a degeneration of projective algebraic manifolds equipped with a compact group action over a curve.
Suppose that the total space carries a Nakano semi-positive vector bundle, which is equivariant with respect to this action. 
We consider the relative canonical bundle twisted by this bundle.
Under this setting, we prove that the logarithm of the equivariant analytic torsion of the regular fibers for this coefficient
admits an asymptotic expansion near the discriminant locus.
The leading term is given by a logarithmic singularity, while the subdominant term is given by a $\log\log$-type singularity. 
In the non-equivariant case, we provide a formula for the coefficient of the leading term in terms of an integral of characteristic classes associated with the semi-stable reduction of the family. 
To establish these results, we prove the existence of an asymptotic expansion for both the equivariant Quillen metrics and the $L^{2}$-metrics 
in the above setting. We also calculate the leading term of the fiber integral of the Bott-Chern classes associated with the degeneration. 
\end{abstract}

\maketitle
\tableofcontents

\section*{Introduction}
\label{sect:0}
\par
Let ${\mathcal X}$ be a connected complex projective algebraic manifold of dimension $n+1$ and let $C$ be a compact Riemann surface.
Let $G$ be a compact Lie group acting holomorphically on ${\mathcal X}$ and trivially on $C$.
Let $\pi\colon {\mathcal X} \to C$ be a surjective proper holomorphic $G$-equivariant map with connected fibers. 
Then $G$ preserves the fibers of $\pi$. Let $\Sigma_{\pi}$ be the critical locus of $\pi$. 
Let $S \subset C$ be a simply connected domain and fix an identification between $S$ and the unit disc of ${\mathbf C}$. 
Under this identification, assume that $\pi(\Sigma_{\pi}) \cap S=\{0\}$. 
Set $X = \pi^{-1}(S)$. Then $\pi\colon X \to S$ is a degeneration of projective algebraic manifolds over the unit disc. 
\par
Let $h_{\mathcal X}$ be a $G$-invariant K\"ahler metric on ${\mathcal X}$ and set $h_{X} = h_{\mathcal X}|_{X}$. 
Let $\xi \to {\mathcal X}$ be a $G$-equivariant holomorphic vector bundle on ${\mathcal X}$ endowed with a $G$-invariant 
Hermitian metric $h_{\xi}$. Let $K_{X/S}$ be the relative canonical bundle of $\pi$ and set $K_{X/S}(\xi) = K_{X/S}\otimes\xi$. 
Assume that $(\xi,h_{\xi})|_{X}$ is Nakano semi-positive. 
For $s\in S$, we set $X_{s} = \pi^{-1}(s)$ and $\xi_{s} = \xi|_{X_{s}}$.
Under this assumption, $R^{q}\pi_{*}K_{X/S}(\xi)$ is a $G$-equivariant locally free sheaf on $S$ 
of rank $h^{q}= \dim H^{q}(X_{s}, K_{X_{s}}(\xi_{s}))$ for all $q \geq0$ (\cite{Takegoshi95, MourouganeTakayama08}). 
Let $R^{q}\pi_{*}K_{X/S}(\xi) = \bigoplus_{W\in\widehat{G}} R^{q}\pi_{*}K_{X/S}(\xi)_{W}$
be the isotypic decomposition, where $\widehat{G}$ is the isomorphism classes of the irreducible representations of $G$ and 
$R^{q}\pi_{*}K_{X/S}(\xi)_{W}$ is the subbundle of $R^{q}\pi_{*}K_{X/S}(\xi)$ corresponding to $W$. We denote by $h_{W}^{q}$ the rank of
$R^{q}\pi_{*}K_{X/S}(\xi)_{W}$.
\par
For $g \in G$, let $\tau_{G}(X_{s},K_{X_{s}}(\xi_{s}))(g)$ be the equivariant analytic torsion of $K_{X_{s}}(\xi_{s})$
with respect to the metrics induced from $h_{X}$, $h_{\xi}$ (\cite{Bismut95}). By \cite{BGS88, Ma00}, its logarithm yields a smooth function 
on $S\setminus\{0\}$. The purpose of the present article is to describe the structure of the singularity of the function 
$\log \tau_{G}(X_{s},K_{X_{s}}(\xi_{s}))(g)$ at $s = 0$. (We refer the reader to 
\cite{Wolpert87, BismutBost90, Bismut97, Yoshikawa04, FLY08, EFM21, ErikssonFreixas26} for known results.) 
This subject was previously investigated in the author's preprint \cite{Yoshikawa10a}. However, since that work contained certain inaccuracies 
in the proof (see Remark~\ref{remark:inaccuracy}) and lacked a general local formula for the leading term in the asymptotic expansion,
the present article aims to provide a rigorous proof for the asymptotic expansion of $\log\tau_{G}(X_{s},K_{X_{s}}(\xi_{s}))(g)$, 
as well as a topological formula for the leading term expressed via integrals of certain characteristic classes (when $G$ is trivial).
\par
To describe the leading term of the singularity of the equivariant analytic torsion, we first introduce further notation.
Let ${\mathbf P}(T{\mathcal X})^{\lor}$ be the projective space bundle over ${\mathcal X}$ parametrizing the hyperplanes in $T{\mathcal X}$
and let $\gamma_{\pi} \colon {\mathcal X} \setminus \Sigma_{\pi} \to {\mathbf P}(T{\mathcal X})^{\lor}$ be the Gauss map for $\pi$. 
Let $q \colon \widetilde{\mathcal X} \to {\mathcal X}$ be a $G$-equivariant resolution of the indeterminacy of $\gamma_{\pi}$. 
There exists a $G$-equivariant holomorphic map
$\widetilde{\gamma}_{\pi} \colon \widetilde{\mathcal X} \to {\mathbf P}(T{\mathcal X})^{\lor}$ such that 
$\gamma_{\pi} = \widetilde{\gamma}_{\pi} \circ q^{-1}$ on ${\mathcal X} \setminus \Sigma_{\pi}$. 
Let ${\mathcal X}^{g}$ and $\widetilde{\mathcal X}^{g}$ denote the fixed point sets of the $g$-action on ${\mathcal X}$ and 
$\widetilde{\mathcal X}$, respectively. We can further decompose them into the horizontal and vertical components:
${\mathcal X}^{g} = {\mathcal X}^{g}_{H} \amalg {\mathcal X}^{g}_{V}$ and
$\widetilde{\mathcal X}^{g} = \widetilde{\mathcal X}^{g}_{H} \amalg \widetilde{\mathcal X}^{g}_{V}$.
We set $E = q^{-1}(\Sigma_{\pi} \cap X_{0})$. Then we define a complex number $\alpha_{\pi, K_{X/S}(\xi)}(g)$ as
$$
\begin{aligned}
\alpha_{\pi,K_{X/S}(\xi)}(g)
&=
\int_{E\cap\widetilde{\mathcal X}^{g}_{H}}
\widetilde{\gamma}^{*}
\left\{
\frac{{\rm Td}({\mathcal H}^{\lor})^{-1}-1}{c_{1}({\mathcal H}^{\lor})}
\right\}\,
q^{*}\{{\rm Td}_{g}(TX){\rm ch}_{g}(K_{X}(\xi))\}
\\
&\quad
-\int_{{\mathcal X}^{g}_{V}\cap X_{0}}{\rm Td}_{g}(TX){\rm ch}_{g}(K_{X}(\xi)).
\end{aligned}
$$
Here ${\rm Td}_{g}(\cdot)$ and ${\rm ch}_{g}(\cdot)$ denote the equivariant Todd class and the equivariant Chern character class,
respectively, and ${\mathcal H}$ is the line bundle on ${\mathbf P}(T{\mathcal X})^{\lor}$ defined as the quotient 
$\varPi^{*}(T{\mathcal X})/{\mathcal U}$, where $\varPi \colon {\mathbf P}(T{\mathcal X})^{\lor} \to {\mathcal X}$ is the projection 
and ${\mathcal U}$ is the universal rank $n$ bundle on ${\mathbf P}(T{\mathcal X})^{\lor}$. 
\par
Let $T$ be another unit disc in ${\mathbf C}$. We fix a semi-stable reduction (\cite{Mumford73}): 
$$
\begin{CD}
(Y,Y_{0})@> F >>  (X,X_{0})
\\
@V f VV  @V \pi VV
\\
(T,0) @>\mu>> (S,0)
\end{CD}
$$
where $Y$ is a complex manifold of dimension $n+1$, $f \colon Y \to T$ and $F \colon Y \to X$ are proper surjective holomorphic maps, 
$\mu\colon(T,0)\to(S,0)$ is given by $\mu(t)=t^{d}$, $Y_{t}=f^{-1}(t)$ is isomorphic to $X_{\mu(t)}$ by $F$ for $t \not=0$, 
and $Y_{0}=f^{-1}(0)$ is a {\em reduced} normal crossing divisor on $Y$. 
Then the $G$-action on $X\setminus X_{0}$ lifts to a $G$-action on $Y \setminus Y_{0}$.
Note that the $G$-action on $X$ is not assumed to lift to a $G$-action on $Y$. (We do not know whether this is always possible.)
Since $(F^{*}\xi, F^{*}h_{\xi})$ is Nakano semi-positive, $R^{q}f_{*}K_{Y/T}(F^{*}\xi)$ is a locally free sheaf on $T$ of rank $h^{q}$. 
Even though $G$ may not act on $Y$, it can be proved that $R^{q}f_{*}K_{Y/T}(F^{*}\xi)$ is endowed with the structure of a $G$-equivariant 
vector bundle on $T$ in a natural way. Consequently, we have the isotypic decomposition 
$R^{q}f_{*}K_{Y/T}(F^{*}\xi) = \bigoplus_{W\in\widehat{G}} R^{q}f_{*}K_{Y/T}(F^{*}\xi)_{W}$.
By \cite{MourouganeTakayama09}, there is a natural injective homomorphism of $G$-equivariant locally free sheaves
$R^{q}f_{*}K_{Y/T}(F^{*}\xi) \hookrightarrow \mu^{*}R^{q}\pi_{*}K_{X/S}(\xi)$. For $g \in G$, we define 
$$
\kappa_{g} = \alpha_{\pi, K_{X/S}(\xi)}(g) - \sum_{q \geq 0,\, W \in \widehat{G}} \frac{(-1)^{q} \chi_{W}(g)}{\dim W} \delta_{W}^{q},
$$
where $\chi_{W}$ is the character of $W$ and
$$
\delta_{W}^{q}
=
\frac{1}{\deg \mu} \dim_{\mathbf C} \left( \frac{\mu^{*}R^{q}\pi_{*} K_{X/S}(\xi)_{W}}{R^{q}f_{*} K_{Y/T}(F^{*}\xi)_{W}} \right)_{0}
\in {\mathbf Q}_{\geq0}.
$$
Our first theorem is then stated as follows (Theorem~\ref{thm:main:1}).

\begin{theorem}
\label{Mthm:asym:exp}
Let $g \in G$. Near $s=0$, $\log \tau_{G}(X_{s},K_{X_{s}}(\xi_{s}))(g)$ is expressed as
$$
\begin{aligned}
\,&
\log \tau_{G}(X_{s},K_{X_{s}}(\xi_{s}))(g) 
= 
\kappa_{g} \log |s|^{2} + c_{g} + \sum_{0 \leq m \leq n} \sum_{i\in I} |s|^{2r_{i}} (\log |s|^{-2})^{m} \phi_{i,m,g}(s)
\\
&- \sum_{q \geq 0, W \in \widehat{G}} \frac{(-1)^{q}\chi_{W}(g)}{\dim W}
\log \left( \sum_{0 \leq k \leq nh^{q}_{W}} \{ a_{k,W}^{q} + \sum_{1 \leq j \leq d} |s|^{\frac{2j}{d}} \psi_{j,k,W}^{q}(s) \} (\log |s|^{-2})^{k} \right),
\end{aligned}
$$
where $c_{g} \in {\mathbf C}$, $\{ r_{i} \}_{i \in I} \subset {\mathbf Q}\cap (0,1]$ is a finite set of positive rational numbers,
$(a_{0,W}^{q}, \ldots, a_{nh^{q}_{W},W}^{q}) \not= (0,\ldots,0)$ is a non-zero real vector,
the $\phi_{i,m,g}(s)$ are smooth ${\mathbf C}$-valued functions on $S$, and the $\psi_{j,k,W}^{q}(s)$ are smooth ${\mathbf R}$-valued 
functions on $S$. 
Note that, if $R^{q}\pi_{*}K_{X/S}(\xi)_{W}=0$, then $a_{0,W}^{q}=1$, $a_{k,W}^{q}=0$ $(k>0)$ and $\psi_{j,k,W}^{q}(s)=0$.
Let $\varrho^{q}_{W} = \max\{ 0 \leq k \leq nh^{q}_{W};\, a_{k,W}^{q} \not=0 \} \in {\mathbf Z}_{\geq0}$ and set
$$
\varrho_{g} = \sum_{q\geq0, \,W\in \widehat{G}} (-1)^{q} \chi_{W}(g) \varrho^{q}_{W} / \dim W.
$$
Then there exists a constant $\gamma_{g} \in {\mathbf C}$ such that as $s\to0$,
$$
\log \tau_{G}(X_{s},K_{X_{s}}(\xi_{s}))(g) = \kappa_{g} \log |s|^{2} - \varrho_{g}\,\log\log( |s|^{-2}) + \gamma_{g} + O\left(1/\log (|s|^{-1}) \right).
$$
Here, if $g=1$, the identity element of $G$, 
then $\kappa_{1} \in {\mathbf Q}$, $\varrho_{1} \in {\mathbf Z}$, and $\gamma_{1} \in {\mathbf R}$. 
\end{theorem}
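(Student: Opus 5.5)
We decompose the analytic torsion through the equivariant Quillen metric on the determinant of the direct image. By definition of the Quillen metric,
$$
\log\tau_{G}(X_{s},K_{X_{s}}(\xi_{s}))(g)=\log\|\sigma\|^{2}_{Q,g}(s)-\log\|\sigma\|^{2}_{L^{2},g}(s),
$$
where $\sigma$ is an equivariant holomorphic frame of $\lambda_{G}(\xi)=\bigotimes_{q,W}(\det R^{q}\pi_{*}K_{X/S}(\xi)_{W})^{(-1)^{q}}$ over $S$. The frame exists because the $R^{q}\pi_{*}K_{X/S}(\xi)_{W}$ are locally free by Takegoshi and Mourougane--Takayama. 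The two terms are then treated separately.

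\emph{Quillen term.} Use the equivariant curvature formula of Bismut--Gillet--Soul\'e and Ma for the Quillen metric. We compare $h_{X}$ with the singular fiberwise metric near $\Sigma_{\pi}$ via the Gauss map $\gamma_{\pi}$ and its resolution $q\colon\widetilde{\mathcal X}\to\mathcal X$, and apply the Bismut immersion/Bott--Chern machinery as in the author's earlier work on Quillen metrics under degenerations. This should give
$$
\log\|\sigma\|^{2}_{Q,g}=\alpha_{\pi,K_{X/S}(\xi)}(g)\log|s|^{2}+c+\sum|s|^{2r_{i}}(\log|s|^{-2})^{m}\phi_{i,m,g}(s).
$$
The leading coefficient comes from the fiber integral of the Bott--Chern class of $\widetilde\gamma^{*}\mathcal H^{\lor}$ over $E\cap\widetilde{\mathcal X}^{g}_{H}$, minus the vertical fixed-point contribution. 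The subleading expansion (rational exponents $r_{i}\in(0,1]$ with powers of $\log$ up to $n$) comes from the asymptotic expansion of fiber integrals of smooth forms. To get these, pull back to a semi-stable model and use Barlet-type expansions for fiber integrals over $X_{s}$.

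\emph{$L^{2}$ term.} Pull back by $\mu$ and compare with $R^{q}f_{*}K_{Y/T}(F^{*}\xi)$, which injects into $\mu^{*}R^{q}\pi_{*}K_{X/S}(\xi)$ equivariantly. Choose adapted frames. The determinant of the inclusion vanishes to order $d\,\delta^{q}_{W}$ at $t=0$, which contributes $-\delta^{q}_{W}\log|s|^{2}$ after weighting by $\chi_{W}(g)/\dim W$. For the canonical-bundle-valued sections on the semi-stable family $Y_{0}$, we use the $L^{2}$ metric structure of Mourougane--Takayama together with the nilpotent-orbit/Hodge-type analysis. The $L^{2}$ norms of a frame, $\int_{Y_{t}}$ of $(n,0)$-forms valued in $F^{*}\xi$, should be computed via local normal-crossing coordinates $z_{0}\cdots z_{k}=t$. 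Each local fiber integral then expands as a polynomial in $\log|t|^{-2}$ of degree $\le n$, with coefficients smooth in $|t|^{2/d}$-type powers. Hence the Gram determinant of $R^{q}_{W}$ is a polynomial of degree $\le nh^{q}_{W}$ in $\log|s|^{-2}$, with the stated structure and with leading coefficients $a^{q}_{k,W}$ real and not all zero. The determinant is nonzero by positivity of the Gram matrix. Combining the two terms gives the first formula with $\kappa_{g}$ as defined.

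The $\log\log$ asymptotics follow by factoring out $(\log|s|^{-2})^{\varrho^{q}_{W}}$ inside each logarithm, which leaves $\log a^{q}_{\varrho,W}+O(1/\log|s|^{-1})$. For $g=1$: $\kappa_{1}\in\mathbf Q$ because $\alpha$ is an integral of rational characteristic classes and $\delta\in\mathbf Q$. Also $\varrho_{1}\in\mathbf Z$ since $\chi_{W}(1)=\dim W$, and $\gamma_{1}$ is real since all quantities are then real.

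The main obstacle is the $L^{2}$ step: we must prove the uniform expansion of the Gram determinant with a non-vanishing leading polynomial. This means controlling the interaction of the twisting bundle $\xi$ with the normal-crossing fiber integrals, and carrying it out $G$-equivariantly even though $G$ need not act on $Y$. First we would check that the $G$-structure on $R^{q}f_{*}$ is determined on $T\setminus0$ and extends by Hartogs-type arguments.
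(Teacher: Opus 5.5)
Your architecture matches the paper's. You split $\log\tau_{G}$ into the equivariant Quillen term and the $L^{2}$ term. The paper handles the Quillen term in Section~\ref{sect:3}: Bismut's equivariant embedding formula for $X_{s}\hookrightarrow\mathcal{X}$, the Gauss-map resolution, and then the anomaly formula for $h_{K_{\mathcal{X}/C}}=\|d\pi\|^{-2}h_{K_{\mathcal{X}}}$; you only point to this machinery. The $L^{2}$ term goes through the isometric Mourougane--Takayama inclusion (Proposition~\ref{Proposition:MourouganeTakayama}) and Barlet--Takayama expansions on $Y$. Your bookkeeping gives $\kappa_{g}=\alpha_{\pi,K_{X/S}(\xi)}(g)+\delta_{\pi,K_{X/S}(\xi)}(g)$, which is \eqref{eqn:def:kappa}, the convention the proof actually uses; the display in the introduction has the opposite sign on the $\delta^{q}_{W}$-sum. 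However, the $L^{2}$ step, which carries the real content, has two genuine gaps.

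\textbf{Gap 1: no fiber-integral representation for $q>0$.} You never justify that the Gram matrix $\widetilde{H}(t)$ of a frame of $R^{q}f_{*}K_{Y/T}(F^{*}\xi)$ is a fiber integral of a smooth form on $Y$. Treating the frame as ``$(n,0)$-forms valued in $F^{*}\xi$'' covers only $q=0$. For $q>0$ the fiberwise harmonic representatives have no a priori reason to be restrictions of a single smooth form on the total space, so your local computation in coordinates $z_{0}\cdots z_{k}=t$ cannot start. Nilpotent-orbit arguments are also unavailable once $\xi$ is nontrivial.

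The paper's key input is Proposition~\ref{Proposition:Takegoshi}. Every class equals $[\theta\wedge\kappa_{Y}^{q}]$ on $Y\setminus Y_{0}$, with $\theta\in H^{0}(Y,\Omega^{n+1-q}_{Y}(F^{*}\xi))$, $\theta\wedge f^{*}dt=0$ and $D^{F^{*}\xi,F^{*}h_{\xi}}\theta=0$. Writing $\theta=\widetilde{\theta}\wedge f^{*}dt$ off $Y_{0}$, the flatness makes $\widetilde{\theta}|_{Y_{t}}\wedge\kappa_{Y_{t}}^{q}$ harmonic. Lemma~\ref{lemma:fiber:integral} then expresses $\widetilde{H}_{\alpha\bar{\beta}}(t)$ through $f_{*}$ of the smooth top form $i^{(n-q+1)^{2}}h_{F^{*}\xi}(\theta_{\alpha}\wedge\overline{\theta}_{\beta})\wedge\kappa_{Y}^{q}$, to which Takayama's expansion applies (Proposition~\ref{prop:asym:ex}).

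This is more than a citation of Takegoshi. The metric has to be $\kappa_{Y}=F^{*}\kappa_{X}+f^{*}\kappa_{T}$ for the inclusion to be isometric, and it degenerates along $Y_{0}$. The paper therefore passes to $Z\to Y$, obtained from $X'\times_{S}T$ by blow-ups with smooth centres. It approximates $\kappa_{Z}$ by K\"ahler forms equal to $\kappa_{Z}$ off $\varpi^{-1}(T(1/k))$, and uses uniqueness of Takegoshi's representative to get a $k$-independent $\theta$.

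\textbf{Gap 2: non-degeneracy does not follow from positivity.} The claim ``the determinant is nonzero by positivity of the Gram matrix'' does not prove $(a^{q}_{0,W},\ldots,a^{q}_{nh^{q}_{W},W})\neq0$. Positivity on $T^{o}$ is compatible with, for example, $\det\widetilde{H}(t)\sim|t|^{2}$. In that case $\log\det\widetilde{H}$ contributes an extra multiple of $\log|s|^{2}$, the coefficient of $\log|s|^{2}$ is no longer $\alpha_{\pi,K_{X/S}(\xi)}(g)+\delta_{\pi,K_{X/S}(\xi)}(g)$, and $\varrho^{q}_{W}$ is undefined.

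What is needed is the uniform bound $\widetilde{H}(t)\geq C\,I$ on $T^{o}$ (Proposition~\ref{prop:nonvanishing}). The paper obtains it from \cite[Lemmas 4.7, 4.8]{MourouganeTakayama09}, applied with a \emph{non-degenerate} K\"ahler form $\kappa'_{Y}\geq\kappa_{Y}$ on $Y$; this is where $Y_{0}$ being reduced normal crossing is used. It then transfers the bound to the degenerate $\kappa_{Y}$ by the monotonicity $h_{L^{2},\omega'}\leq h_{L^{2},\omega}$ for $\omega\leq\omega'$ on $H^{q}(M,K_{M}(E))$ (Lemma~\ref{lemma:comparison:L2}). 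You identify this as the main obstacle but do not supply the argument.

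A smaller point: Hartogs cannot extend the $G$-action on $R^{q}f_{*}K_{Y/T}(F^{*}\xi)$ across a point of a curve. The paper instead uses a $G$-equivariant resolution of $X\times_{S}T$ and birational invariance of $R^{q}f_{*}K$ (Lemma~\ref{lemma:G:equiv:ext}).
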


The leading term of the asymptotic expansion is of particular significance, 
as it is closely related to the asymptotic behavior of holomorphic torsion invariants (e.g. \cite{EFM21, FLY08, Imaike24, Yoshikawa04}).
Although Theorem~\ref{Mthm:asym:exp} is effective for calculating $\kappa_{g}$ when $X_{0}$ has only mild singularities
(Theorems~\ref{thm:sing:tor:can} and \ref{thm:sing:tor:log:can}), 
it remains unclear whether $\kappa_{g}$ is a local invariant of the degeneration due to the absence of a local formula for 
$\sum (-1)^{q}\chi_{W}(g)\delta_{W}^{q}/\dim W$. 
To elucidate the local nature of $\kappa_{g}$, we provide an alternative formula for $\kappa_{g}$ via the integral of 
certain characteristic classes associated to the semi-stable reduction, assuming $G$ is trivial. 
\par
From now on, we assume $G$ to be trivial and write $\kappa = \kappa_{\pi, K_{X/S}(\xi)}$ for $\kappa_{1}$.
Let $J_{F} \colon Tf \to F^{*}T\pi$ be the differential of $F$ defined on $Y \setminus F^{-1}(\Sigma_{\pi})$, 
where $Tf$ and $T\pi$ are the relative tangent bundles of $f$ and $\pi$, respectively.
By resolving the indeterminacy of Gauss-type maps associated to $f$, 
there exist a proper modification $p \colon Y^{\natural} \to Y$, 
holomorphic vector bundles $(Tf)^{\natural}$, $(F^{*}T\pi)^{\natural}$ on $Y^{\natural}$, and a homomorphism of holomorphic vector bundles
$J_{F}^{\natural} \colon (Tf)^{\natural} \to (F^{*}T\pi)^{\natural}$ such that, on $Y^{\natural}\setminus (F\circ p)^{-1}(\Sigma_{\pi})$,
$J_{F}^{\natural} \colon (Tf)^{\natural} \to (F^{*}T\pi)^{\natural}$ is identified with the pullback of $J_{F} \colon Tf \to F^{*}T\pi$ by $p$
(Sections~\ref{sect:6} and \ref{sect:7}). 
Let $N \subset Y^{\natural}$ be the degeneracy locus of $J_{F}^{\natural}$.
By \cite{Fulton98}, there is a cohomology class $({\rm Td}^{\lor})^{Y^{\natural}}_{N}( J_{F}^{\natural})$ in 
$H_{c}^{*}(Y^{\natural}, {\mathbf Q})$ with
${\rm Td}^{\lor}( (Tf)^{\natural} ) - {\rm Td}^{\lor}( (F^{*}T\pi)^{\natural} ) = ({\rm Td}^{\lor})^{Y^{\natural}}_{N}( J_{F}^{\natural})$, 
where ${\rm Td}^{\lor}(\cdot)$ is the multiplicative characteristic class associated to ${\rm Td}^{\lor}(x) = x/(e^{x}-1)$ 
(Sections~\ref{sect:6.3} and \ref{sect:7.1}). Set
$$
\beta_{Y/X, \xi} = \int_{N} ({\rm Td}^{\lor})^{Y^{\natural}}_{N}( J_{F}^{\natural} ) \, {\rm ch}((F \circ p)^{*}(\xi)).
$$
Our second theorem provides a local formula for $\kappa$ as follows (Theorem~\ref{thm:leading:term:torsion}).

\begin{theorem}
\label{Mthm:kappa:local}
The rational number $\kappa$ in Theorem~\ref{Mthm:asym:exp} is given by
$$
\kappa = \frac{1}{\deg \mu} ( \alpha_{f,K_{Y/T}(F^{*}\xi)} + \beta_{Y/X, \xi} ).
$$
\end{theorem}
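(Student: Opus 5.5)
The proof compares the two degenerations $\pi\colon X\to S$ and $f\colon Y\to T$. For $t\neq0$ the fibers $Y_t$ and $X_{\mu(t)}$ are identified isometrically by $F$, once we equip $Y$ with a Kähler metric and $F^{*}\xi$ with the metric $F^{*}h_{\xi}$. We use the metric on $Y$ only as an auxiliary choice; the torsion of $Y_t$ is computed with the metric pulled back from $X$. Hence
$$
\log\tau(X_{\mu(t)},K(\xi)) = \log\tau(Y_t, K(F^{*}\xi))_{F^{*}h_X}.
$$
Substituting $s=t^{d}$ into Theorem~\ref{Mthm:asym:exp} makes the left side equal to $d\,\kappa\log|t|^{2}+\cdots$. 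The plan is therefore to compute the leading coefficient of the right side directly on $Y$ and divide by $d=\deg\mu$.

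\emph{Step 1.} Apply Theorem~\ref{Mthm:asym:exp} (with $G$ trivial) to $f$ with a genuine Kähler metric $h_Y$ on $Y$. Since $Y_0$ is reduced normal crossing, the $L^2$-part is mild: $R^qf_*K_{Y/T}(F^*\xi)$ is the comparison sheaf for itself, so $\delta^q=0$. This gives the leading coefficient $\alpha_{f,K_{Y/T}(F^*\xi)}$ for $\log\tau(Y_t)_{h_Y}$.

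\emph{Step 2.} Account for the change of $L^2$-metrics. Under the injection $R^qf_*K_{Y/T}(F^*\xi)\hookrightarrow\mu^*R^q\pi_*K_{X/S}(\xi)$ of \cite{MourouganeTakayama09}, the $L^2$-metrics on $K$-twisted cohomology do not depend on the Kähler metric. Consequently the quotient length $\sum_q(-1)^q\,{\rm length}(\mu^*R^q\pi_*/R^qf_*)_0=d\sum_q(-1)^q\delta^q$ enters the comparison of $\lambda(\pi)$ pulled back with $\lambda(f)$, exactly as it enters $\kappa$.

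\emph{Step 3.} Compare the Quillen metrics for the two Kähler metrics $F^*h_X$ (degenerate along $F^{-1}(\Sigma_\pi)$) and $h_Y$ on the fibers $Y_t$, using the anomaly formula of Bismut--Gillet--Soulé. The difference of log-torsions equals, up to the $L^2$ terms of Step 2, the fiber integral
$$
\int_{Y_t}\widetilde{\rm Td}(Tf; h_Y, F^*h_X)\,{\rm ch}(K(F^*\xi)).
$$
This Bott--Chern secondary class is naturally associated with the map $J_F\colon Tf\to F^*T\pi$. After pulling back to $Y^\natural$, $J_F^\natural$ becomes a morphism of honest bundles, degenerate exactly along $N$. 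The leading term in $\log|t|^2$ of the fiber integral should then be the localized class $\int_N({\rm Td}^\lor)^{Y^\natural}_N(J_F^\natural)\,{\rm ch}((F\circ p)^*\xi)=\beta_{Y/X,\xi}$. Here the ${\rm Td}^\lor$ rather than ${\rm Td}$ appears because of the $K$-twist, since ${\rm Td}(T)\,{\rm ch}(K)={\rm Td}^\lor(T^\lor)$ up to sign conventions. This is the computation of "the leading term of the fiber integral of Bott--Chern classes" announced in the abstract, and I would invoke it as the relevant result of Sections~\ref{sect:6}--\ref{sect:7}.

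\emph{Step 4.} Combining the three steps, $d\kappa=\alpha_f+\beta_{Y/X,\xi}$: the $\delta$-terms cancel between Step 2 and the definition of $\kappa$. Dividing by $d$ gives the claim.

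The main obstacle is Step 3. One must show that the fiber integral of the Bott--Chern form has the asymptotics $\beta\log|t|^2+O(1)$. This requires a model for $J_F^\natural$ near $N$ in which the secondary form has a logarithmic singularity computed by Fulton's localized class. I would first reduce to the case where $N$ is a divisor, using further blow-ups. I would then apply the Bott--Chern transgression along the degeneracy locus (Bismut--Gillet--Soulé's formula for the current $\widetilde{\rm Td}$ of a generically isomorphic map), and check that the residue current integrated against the fibers $Y_t$ picks up exactly the integral over $N$ of the localized class.
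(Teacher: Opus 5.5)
Your architecture coincides with the paper's proof of Theorem~\ref{thm:leading:term:torsion}: $\log\tau(X_{\mu(t)},K_{X_{\mu(t)}}(\xi))$ is the torsion of $Y_t$ for $F^{*}h_{\mathcal X}$. Step~1 is Theorem~\ref{thm:sing:tor:log:can} applied to $f$, Step~3 is Proposition~\ref{prop:comparison:quillen}, and Step~4 is the comparison with \eqref{eqn:asymp:eq:tors}. The gap is in Step~2.

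\textbf{Step 2.} The claim that the $L^{2}$-metric on $H^{q}(Y_t,K_{Y_t}(F^{*}\xi))$ does not depend on the K\"ahler metric is false for $q\geq 1$. It depends on the K\"ahler class: a constant rescaling $\omega\mapsto c\,\omega$ already multiplies it by $c^{-q}$, and Lemma~\ref{lemma:comparison:L2} gives only an inequality.

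In your torsion comparison you must control $\log(\|\sigma\|_{L^{2}}/\|\sigma\|'_{L^{2}})^{2}$ on $\lambda(K_{Y/T}(F^{*}\xi))$, comparing $h_{\mathcal Y}$ with the degenerate $F^{*}h_{\mathcal X}$. These two metrics genuinely differ. What is true, and what you need, is only that each of them is $O(\log\log|t|^{-2})$, i.e.\ has no $\log|t|^{2}$-term.
\begin{itemize}
\item For $h_{\mathcal Y}$ this is Proposition~\ref{prop:L2:log:can}.
\item For $F^{*}h_{\mathcal X}$ it is the substantive content of Theorem~\ref{thm:str:sing:L2}(1)--(2). There $\widetilde H(t)$ has a Barlet--Takayama expansion, which needs the Takegoshi representation for the degenerate form (Proposition~\ref{Proposition:Takegoshi}). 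It also satisfies $\widetilde H(t)\geq C\,I$, obtained from Lemma~\ref{lemma:comparison:L2} together with the Mourougane--Takayama bound for a non-degenerate form.
\end{itemize}
Your quotient-length remark is the mechanism by which Theorem~\ref{thm:str:sing:L2}(3) for $\pi$, combined with the Mourougane--Takayama isometry, gives the same conclusion ($-d\delta+d\delta=0$). However, Step~2 as written asserts something else. Moreover, the ``cancellation of $\delta$'' in Step~4 is inconsistent with your torsion-based Steps~1 and~3: in a torsion chain no $\delta$ may appear on the $Y$-side at all.

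\textbf{A cleaner repair.} If you want to keep your $\delta$-bookkeeping, compare Quillen metrics rather than torsions; this removes all $L^{2}$ and $\log\log$ issues.
\begin{itemize}
\item With the adapted bases of \eqref{eqn:rel:bases}, the Mourougane--Takayama injection gives $\sigma_f=t^{d\,\delta_{\pi,K_{X/S}(\xi)}}\mu^{*}\sigma_\pi$ on $T^{o}$ for the induced frames of $\lambda(K_{Y/T}(F^{*}\xi))$ and $\lambda(K_{X/S}(\xi))$.
\item Since $F\colon Y_t\to X_{\mu(t)}$ is an isometry, the Quillen metric for $F^{*}h_{\mathcal X}$ is $\mu^{*}$ of the one on $\lambda(K_{X/S}(\xi))$.
\item Theorem~\ref{Thm:Sing:Q:adj} for $\pi$ then gives $\log\|\sigma_f\|^{\prime 2}_{Q}\equiv d(\alpha_{\pi,K_{X/S}(\xi)}+\delta_{\pi,K_{X/S}(\xi)})\log|t|^{2}=d\kappa\log|t|^{2}$ modulo continuous functions.
\item Theorem~\ref{Thm:Sing:Q:adj} for $f$ gives $\log\|\sigma_f\|^{2}_{Q}\equiv\alpha_{f,K_{Y/T}(F^{*}\xi)}\log|t|^{2}$.
\item Proposition~\ref{prop:comparison:quillen} then yields $d\kappa=\alpha_{f,K_{Y/T}(F^{*}\xi)}+\beta_{Y/X,\xi}$.
\end{itemize}

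\textbf{Step 3.} Citing Proposition~\ref{prop:comparison:quillen} is legitimate, but your fallback would not work as described. Bismut--Gillet--Soul\'e singular currents are built for complexes resolving $i_{*}\eta$, with $i$ the embedding of a smooth submanifold and $\eta$ a bundle on it. The cokernel of $J_F^{\natural}$ is instead a torsion sheaf on a generally singular, non-reduced degeneracy divisor. The paper uses MacPherson's graph construction (Definition~\ref{def:canonical:ext}) and reads off the $\log|t|^{2}$-coefficient by applying $dd^{c}$ to the fiber integral (Theorem~\ref{thm:pusf:forward:Bott:Chern}).

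Two corrections to your Step~3 formulas:
\begin{itemize}
\item The anomaly integrand is $\widetilde{{\rm Td}^{\lor}}(\overline{Tf},F^{*}\overline{T\pi})\,F^{*}{\rm ch}(\overline{\xi})$, not $\widetilde{\rm Td}(Tf)\,{\rm ch}(K(F^{*}\xi))$, because the metric on $K_{Y/T}$ changes together with the metric on $Tf$.
\item The correct identity is ${\rm Td}(T)e^{-c_1(T)}={\rm Td}^{\lor}(T)$, not ${\rm Td}^{\lor}(T^{\lor})$.
\end{itemize}
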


By this theorem, if $H$ is a positive line bundle on ${\mathcal X}$, there exists a polynomial $p(m) \in {\mathbf Q}[m]$
of degree $\leq \dim {\rm Sing}\,X_{0}$ such that $\kappa_{\pi, K_{X/S}(\xi\otimes H^{m})} = p(m)$ for $m \geq 0$.
Another consequence of Theorem~\ref{Mthm:kappa:local} is a locality of $\kappa$ (Theorem~\ref{thm:locality:sing:tors}). 
Roughly speaking, this implies that $\kappa$ is determined
by $\pi$ viewed as a function on a neighborhood of $\Sigma_{\pi}$, the topological vector bundle underlying $\xi$ near $\Sigma_{\pi}$, 
and the combinatorial structure of the pair $(X_{0}, \Sigma_{\pi})$.
Consequently, when $X_{0}$ has only isolated singularities, $\kappa$ can be expressed in terms of two types of invariants associated with 
the isolated critical points of a holomorphic function. 
For a germ $\pi_{x} \in {\mathcal O}_{X,x}$ at a point $x \in \Sigma_{\pi}$,
let $\mu(\pi_{x})$ be the Milnor number of $\pi_{x}$ and let $\widetilde{p}_{g}(\pi_{x})$ be the spectral genus of $\pi_{x}$, introduced recently 
by Eriksson and Freixas i Montplet \cite{ErikssonFreixas26}. 
Our third theorem is stated as follows (Theorem~\ref{thm:sing:tors:N:pos} (2)).

\begin{theorem}
\label{Mthm:kappa:IHS}
Suppose that $X_{0}$ has only isolated singularities. Then 
$$
\kappa = -{\rm rk}(\xi) \sum_{x\in {\rm Sing}\,X_{0}} \left( \frac{\mu(\pi_{x})}{(n+2)!} - \widetilde{p}_{g}(\pi_{x}) \right).
$$
\end{theorem}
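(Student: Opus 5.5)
We will derive the formula from Theorem~\ref{Mthm:kappa:local}, using the locality of $\kappa$ (Theorem~\ref{thm:locality:sing:tors}) to reduce to a computation at each isolated critical point separately. Since $G$ is trivial, $\kappa=\alpha_{\pi,K_{X/S}(\xi)}-\sum_{q}(-1)^{q}\delta^{q}$.

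\emph{Step 1: the term $\alpha$.} When ${\rm Sing}\,X_{0}$ is finite, $E=q^{-1}(\Sigma_{\pi}\cap X_{0})$ is a union of exceptional sets $E_{x}$ lying over points. Hence the classes $q^{*}\{{\rm Td}(TX){\rm ch}(K_{X}(\xi))\}$ restrict to $E_{x}$ as their degree-zero parts, namely ${\rm rk}(\xi)$. Then
$$
\alpha_{\pi,K_{X/S}(\xi)}={\rm rk}(\xi)\sum_{x}\int_{E_{x}}\widetilde{\gamma}^{*}\left\{\frac{{\rm Td}({\mathcal H}^{\lor})^{-1}-1}{c_{1}({\mathcal H}^{\lor})}\right\}.
$$
Over a point, ${\mathcal H}^{\lor}$ restricts to ${\mathcal O}(-1)$ on ${\mathbf P}^{n}$. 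Only the top-degree coefficient of $({\rm Td}(y)^{-1}-1)/y$ contributes, and it is $(-1)^{n}\cdot$ the coefficient of $y^{n+1}$ in $(1-e^{-y})/y$, i.e.\ $\pm1/(n+2)!$. The number $\int_{E_{x}}\widetilde{\gamma}^{*}c_{1}({\mathcal H}^{\lor})^{n}$ is the degree of the Gauss (logarithmic gradient) map of $\pi_{x}$, up to sign, which by the classical Milnor--Teissier argument is $\mu(\pi_{x})$. This gives the contribution $-{\rm rk}(\xi)\mu(\pi_{x})/(n+2)!$, with the sign fixed by a check on an ordinary double point. This step recovers the isolated-singularity computation already done in Theorems~\ref{thm:sing:tor:can} and \ref{thm:sing:tor:log:can}, so I would simply cite it.

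\emph{Step 2: the $\delta$-terms.} First, $\sum_{q}(-1)^{q}\delta^{q}$ is local. The cokernel of $R^{q}f_{*}K_{Y/T}(F^{*}\xi)\hookrightarrow\mu^{*}R^{q}\pi_{*}K_{X/S}(\xi)$ is supported at $0$, and its length is computed by comparing $F_{*}K_{Y/T}$ with $\mu^{*}K_{X/S}$ near $\Sigma_{\pi}$. Away from ${\rm Sing}\,X_{0}$ the map $F$ is the base change of a smooth family, so the two sheaves agree. Using the Mourougane--Takayama injection and a local Leray/Mayer--Vietoris argument, the Euler characteristic of the cokernel becomes $\sum_{x}\chi$ of a skyscraper-type quotient at $x$. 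The twist by $\xi$ only multiplies this by ${\rm rk}(\xi)$, since $\xi$ is trivial near $x$.

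Next we identify $\frac{1}{\deg\mu}$ times this local length with $-\widetilde{p}_{g}(\pi_{x})$. Here I would invoke the definition of the spectral genus in \cite{ErikssonFreixas26}: it measures precisely the discrepancy between $\mu^{*}\pi_{*}\omega$ and the semi-stable $f_{*}\omega$ at an isolated singularity, via spectral numbers in $(0,1]$ and the Hodge filtration on the vanishing cohomology. Only the top degree $q=n$ (equivalently $q=0$ for $\omega$, by duality) sees the vanishing cycles, which fixes the sign $(-1)^{q}$. An alternative route is to use Theorem~\ref{Mthm:kappa:local} and match $\beta_{Y/X,\xi}$ with the spectral genus, but matching the $\delta$-terms directly seems cleaner.

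\emph{Main obstacle.} The hardest step is this identification of the local cokernel length with $\widetilde{p}_{g}$, including normalization conventions such as $\deg\mu$ and the rationality of the spectrum. I would first verify it on the model $\pi=\sum z_{i}^{a_{i}}$, where both sides are computable via Steenbrink's spectrum. Then I would extend to general isolated singularities by the locality in Theorem~\ref{thm:locality:sing:tors} together with $\mu$-constant deformation invariance.
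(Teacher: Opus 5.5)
Your Step~1 is essentially right. On each $E_x=q^{-1}(x)$ the pulled-back classes reduce to their degree-zero part ${\rm rk}(\xi)$, and $\int_{E_x}\widetilde{\gamma}^{*}c_{1}({\mathcal H})^{n}=\mu(\pi_x)$, so $\alpha_{\pi,K_{X/S}(\xi)}=-{\rm rk}(\xi)\sum_{x}\mu(\pi_x)/(n+2)!$. Theorems~\ref{thm:sing:tor:can} and~\ref{thm:sing:tor:log:can} do not contain this computation (they only give $\kappa=\alpha$ under (log-)canonical hypotheses), but your direct argument suffices.

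Step~2, however, starts from the wrong relation. The proof of Theorem~\ref{thm:main:1} shows that a frame of $\det\mu^{*}R^{q}\pi_{*}K_{X/S}(\xi)$ has $L^{2}$-norm blowing up like $|s|^{-2\delta^{q}}$. Then $\log\tau=\log\|\cdot\|_{Q}^{2}-\log\|\cdot\|_{L^{2}}^{2}$ gives $\kappa=\alpha+\sum_{q}(-1)^{q}\delta^{q}$, as in \eqref{eqn:def:kappa}; the minus sign in the formula for $\kappa_g$ in the introduction is a misprint. You therefore need $\sum_{q}(-1)^{q}\delta^{q}=+{\rm rk}(\xi)\sum_{x}\widetilde{p}_{g}(\pi_x)$. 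Your claim that $\frac{1}{\deg\mu}$ times the local length equals $-\widetilde{p}_{g}(\pi_x)$ is false: a length is nonnegative, and $\widetilde{p}_{g}\geq0$ because the eigenvalues of $(2\pi i)^{-1}\log M_s$ lie in $[0,1)$ (for the cusp $y^{2}-x^{3}$ one has $\widetilde{p}_{g}=1/6$). The vanishing cycles enter only through $q=0$ (that is, $F^{n}_{\infty}H^{n}(X_\infty)$), where $(-1)^{q}=1$, so no sign from $(-1)^{q}$ repairs this.

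Your locality claim itself can be made precise. The space $X\times_{S}T$ has only the isolated singularities $\{\pi_x(z)=t^{d}\}$. Hence the quotient of $\omega_{X\times_S T}$ by the direct image of $K_Y$ under the resolution $Y\to X\times_S T$ is a skyscraper of length $p_{g}(\pi_x(z)-t^{d})$ (the geometric genus) at each of them. Flat base change, Grauert--Riemenschneider vanishing and the injectivity in Proposition~\ref{Proposition:MourouganeTakayama} then give $\delta^{q}=0$ for $q\geq1$ and $\deg\mu\cdot\delta^{0}={\rm rk}(\xi)\sum_{x}p_{g}(\pi_x(z)-t^{d})$.

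The genuine gap is the remaining identity $p_{g}(\pi_x(z)-t^{d})/d=\widetilde{p}_{g}(\pi_x)$. You leave it open, and the plan you sketch for it does not work. Theorem~\ref{thm:locality:sing:tors} requires an analytic isomorphism between neighbourhoods of the singular points, with matching toroidal data. It says nothing about $\mu$-constant deformations, and no $\mu$-constant invariance of $\kappa$ or $\delta$ is established in the paper. Moreover, not every isolated hypersurface singularity is joined to a Brieskorn--Pham polynomial by a $\mu$-constant family. The spectrum is constant in such families, and $x^{3}+xy^{3}$ ($\mu=7$, smallest spectral number $5/9$) cannot be joined to $x^{2}+y^{8}$, the only Brieskorn--Pham polynomial in two variables with $\mu=7$ (smallest spectral number $5/8$). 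Closing the gap needs genuine Hodge-theoretic input. One option is M.~Saito's expression of $p_g$ through the spectrum combined with the Thom--Sebastiani formula for spectra. Another is Proposition~\ref{prop:EFM21} for the trivial bundle combined with Steenbrink's comparison of the $M_s\neq1$ part of $F^{n}_{\infty}H^{n}(X_\infty)$ with that of $\bigoplus_{x}{\rm Gr}^{n}_{F}H^{n}({\rm Mil}_{\pi_x})$.

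The paper sidesteps $\delta$ altogether. It applies Theorem~\ref{thm:locality:sing:tors} with ${\mathcal W}={\mathcal X}$ and $\eta={\mathcal O}_{\mathcal X}^{{\rm rk}(\xi)}$ (every bundle is topologically trivial near finitely many points) to get $\kappa_{\pi,K_{X/S}(\xi)}={\rm rk}(\xi)\,\kappa_{\pi,K_{X/S}}$. It then reads off $\kappa_{\pi,K_{X/S}}$ from the Eriksson--Freixas i Montplet asymptotic, Proposition~\ref{prop:EF24}. Your Step~1 together with the corrected Step~2 already yields $\kappa_{\pi,K_{X/S}(\xi)}={\rm rk}(\xi)\,\kappa_{\pi,K_{X/S}}$, without Theorem~\ref{Mthm:kappa:local} or the Bott--Chern currents of Sections~\ref{sect:6} and~\ref{sect:7}. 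Citing Proposition~\ref{prop:EF24} at that point would give a complete and more elementary proof.
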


When $\xi$ is a trivial Hermitian line bundle, this formula is due to Eriksson and Freixas i Montplet \cite{ErikssonFreixas26}.
When $\dim {\rm Sing}\,X_{0} >0$ and $\xi$ is topologically trivial near ${\rm Sing}\,X_{0}$, 
a similar formula for $\kappa$ holds, where $\kappa$ is expressed by $\alpha_{\pi,K_{X/S}(\xi)}$, ${\rm rk}(\xi)$ and 
the eigenvalues of the monodromy (Theorem~\ref{thm:sing:tors:N:pos} (1)). 
When $\pi$ is given locally by a quadric polynomial of rank $2$ near $\Sigma_{\pi}$, $\kappa$ is expressed as
the integral of the Bismut $E$-class \cite{Bismut97} (Proposition~\ref{prop:kappa:q:rk2}). 
Calculating $\kappa$ for such examples, we observe that $\kappa$ can take an arbitrary sign when $n=2$ and $\dim {\rm Sing}\,X_{0} =1$
(Section~\ref{sect:5.3.4} and Example~\ref{ex:quad:sing}), which is in contrast to various results and conjectures in \cite{ErikssonFreixas26}
concerning the sign of $\kappa$ for isolated hypersurface singularities. 
\par
Applications of these results to the holomorphic torsion invariants of Calabi-Yau manifolds will be presented in our forthcoming papers
\cite{KawaguchiYoshikawa26, Yoshikawa26}. For an application to the holomorphic torsion invariant of irreducible holomorphic 
symplectic fourfolds of $K3^{[2]}$-type with involution \cite{Imaike25}, we refer the reader to \cite{Imaike24}.
\par
We briefly outline the strategy used to prove the above theorems.
Since the equivariant analytic torsion is the ratio of the equivariant Quillen metric to the equivariant $L^{2}$-metric on the equivariant
determinant of the cohomology \cite{BGS88, Bismut95}, Theorem~\ref{Mthm:asym:exp} reduces to determining the behavior of these metrics 
as $s \to 0$. In the non-equivariant setting, the singularity of Quillen metric was calculated by Bismut \cite{Bismut97} and 
the author \cite{Yoshikawa07}, where the Bismut-Lebeau embedding theorem for Quillen metrics \cite{BismutLebeau91}
played a central role. In the equivariant setting, the same strategy works by replacing the Bismut-Lebeau embedding theorem 
with the Bismut embedding theorem \cite{Bismut95} for equivariant Quillen metrics (Theorems~\ref{Theorem4.1} and \ref{Thm:Sing:Q:adj}).
In the case where $\xi$ is trivial, the singularity of the $L^{2}$-metric on $R^{q}\pi_{*}K_{X/S}$ reduces to known results 
in variations of Hodge structures (VHS) \cite{Schmid73, EFM21}. However, since the techniques in VHS are not applicable to non-trivial $\xi$, 
a new approach is required for general case. 
Following Takegoshi \cite{Takegoshi95} and Mourougane-Takayama \cite{MourouganeTakayama09}, we represent 
the basis elements of $R^{q}f_{*}K_{Y/T}(F^{*}\xi)$ using harmonic forms. Since their restrictions to the fibers remain harmonic
\cite{MourouganeTakayama08}, the entries of the $L^{2}$-metric on $R^{q}f_{*}K_{Y/T}(F^{*}\xi)$ can be expressed as fiber integrals 
of specific differential forms. 
Consequently, the $L^{2}$-metric admits an asymptotic expansion of Barlet-Takayama type \cite{Barlet82, Takayama21, Takayama22}, 
which can be proved to be uniformly non-degenerate as $t \to 0$. 
The structure of the singularity of the $L^{2}$-metric, which is of independent interest, is described as follows 
(see Theorem~\ref{thm:str:sing:L2} for more details). 
Let ${\mathfrak m}^{\infty}_{0}(T)$ be the smooth functions on $T$ vanishing at $t=0$.

\begin{theorem}
\label{Mthm:Sing:L2}
By choosing suitable bases $\{ {\mathbf e}_{1},\ldots,{\mathbf e}_{h_{W}^{q}} \}$ of $R^{q}\pi_{*}K_{X/S}(\xi)_{W}$
and $\{ \widetilde{\mathbf e}_{1}, \ldots, \widetilde{\mathbf e}_{h_{W}^{q}}\}$ of $R^{q}f_{*}K_{Y/T}(F^{*}\xi)_{W}$,
there exist integers $e^{q}_{1},\ldots, e^{q}_{h_{W}^{q}} \geq0$ with the following properties:
\par{\rm (1)}
The $(h_{W}^{q}, h_{W}^{q})$-Hermitian matrices $H(s) = ( H_{\alpha\beta}(s) )$, 
$H_{\alpha\beta}(s) = ( {\mathbf e}_{\alpha} , {\mathbf e}_{\beta} )_{L^{2},X_{s}}$, and 
$\widetilde{H}(t) = ( \widetilde{H}_{\alpha\beta}(t) )$, 
$\widetilde{H}_{\alpha\beta}(t) = ( \widetilde{\mathbf e}_{\alpha} , \widetilde{\mathbf e}_{\beta} )_{L^{2},Y_{t}}$, 
are related via the following identity:
$$
H(\mu(t)) = D(t)\cdot \widetilde{H}(t)\cdot\overline{D(t)},
\qquad
D(t) = {\rm diag}( t^{-e^{q}_{1}},\ldots,t^{-e^{q}_{h_{W}^{q}}} ).
$$
Moreover, $\widetilde{H}(t)$ admits an asymptotic expansion of Barlet-Takayama type
$$
\widetilde{H}(t) \equiv \sum_{0\leq m \leq n} (\log |t|^{-2})^{m} A_{m} 
\mod \bigoplus_{0\leq k \leq n} (\log |t|^{-2})^{k} {\mathfrak m}^{\infty}_{0}(T) \otimes M_{h_{W}^{q}}( {\mathbf C} )
$$
with constant $(h_{W}^{q}, h_{W}^{q})$-Hermitian matrices $A_{m}$ $(1\leq m \leq n)$. 
In particular, there exist constants $a_{m,W}^{q} \in {\mathbf R}$ $( 1 \leq m \leq nh_{W}^{q})$ such that 
$$
\det \widetilde{H}(t) = \sum_{0 \leq m \leq nh_{W}^{q}} a_{m,W}^{q} (\log |t|^{-2})^{m} 
\mod \bigoplus_{0\leq k \leq nh_{W}^{q}} (\log |t|^{-2})^{k} {\mathfrak m}^{\infty}_{0}(T).
$$
\par{\rm (2)}
There exists a constant $C>0$ such that $\widetilde{H}(t) \geq C\,I_{h_{W}^{q}}$ on $T\setminus\{0\}$ as positive definite Hermitian matrices.
In particular, $a_{m,W}^{q} \not=0$ for some $0 \leq m \leq nh_{W}^{q}$.
\par{\rm (3)}
There exists a constant $c_{W}^{q} \in {\mathbf R}$ such that as $s\to 0$,
$$
\log\| {\mathbf e}_{1}\wedge \cdots\wedge {\mathbf e}_{h^{q}} \|_{L^{2}}^{2} 
= -\delta_{W}^{q}\,\log |s|^{2} + \varrho_{W}^{q}\log\log(|s|^{-2}) + c_{W}^{q} + O\left(1/\log (|s|^{-1}) \right).
$$
\end{theorem}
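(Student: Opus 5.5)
The plan is to work on the semi-stable model $f\colon Y\to T$ and transfer the results to $X$ through the Mourougane--Takayama inclusion $R^{q}f_{*}K_{Y/T}(F^{*}\xi)\hookrightarrow \mu^{*}R^{q}\pi_{*}K_{X/S}(\xi)$, which respects the isotypic decomposition.

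\emph{Step 1: bases and the matrix $D(t)$.} Since the inclusion is an injection of locally free sheaves of the same rank $h^{q}_{W}$ on the disc $T$, the elementary divisor theorem over the PID ${\mathcal O}_{T,0}$ gives bases $\{\widetilde{\mathbf e}_{\alpha}\}$ of $R^{q}f_{*}K_{Y/T}(F^{*}\xi)_{W}$ and $\{{\mathbf e}_{\alpha}\}$ of $R^{q}\pi_{*}K_{X/S}(\xi)_{W}$ with $\widetilde{\mathbf e}_{\alpha}=t^{e^{q}_{\alpha}}\mu^{*}{\mathbf e}_{\alpha}$. Since $F\colon Y_{t}\cong X_{\mu(t)}$ is an isometry for $t\neq0$ once we use the pulled-back metrics, and the $L^{2}$ inner product on $K$-valued $(n,q)$-forms only uses the metric on $\xi$, we get $H(\mu(t))=D(t)\widetilde H(t)\overline{D(t)}$. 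Moreover $\sum_{\alpha}e^{q}_{\alpha}=\dim(\mu^{*}R^{q}\pi_{*}/R^{q}f_{*})_{0}=\deg\mu\cdot\delta^{q}_{W}$. We may also arrange that $G$ acts compatibly, using the natural $G$-structure on $R^{q}f_{*}$.

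\emph{Step 2: the Barlet--Takayama expansion.} Following Takegoshi and Mourougane--Takayama, we represent each $\widetilde{\mathbf e}_{\alpha}$ by a $\xi$-valued $(n+1,q)$-form $u_{\alpha}$ on $Y$ such that $u_{\alpha}=dt\wedge\widetilde u_{\alpha}$ and $\widetilde u_{\alpha}|_{Y_{t}}$ is harmonic on each fiber. Then $\widetilde H_{\alpha\beta}(t)$ is the fiber integral $f_{*}(c_{n,q}\,h_{\xi}(u_{\alpha}\wedge\bar u_{\beta}))/(dt\wedge d\bar t)$ of a smooth $(n+1+q,n+1+q)$-form divided by the Kähler power. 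Because $Y_{0}$ is reduced normal crossing, the Barlet and Takayama results give an expansion in $|t|^{2a}(\log|t|^{-2})^{m}$ with $m\le n$. Reducedness forces the only exponent $a$ that is not a positive integer, and is not absorbed into ${\mathfrak m}^{\infty}_{0}$, to be $0$. This yields the constant matrices $A_{m}$. Expanding the determinant then gives the stated form with $m\le nh^{q}_{W}$.

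\emph{Step 3: uniform positivity, the main obstacle.} We need $\widetilde H(t)\ge C I$. The plan is to argue by contradiction. Suppose unit vectors $v_{k}$ and points $t_{k}\to0$ satisfy $\|\sum v_{k,\alpha}\widetilde{\mathbf e}_{\alpha}\|^{2}_{L^{2},Y_{t_{k}}}\to0$. Pass to a limit $v$. The $L^{2}$ norm of an $(n,q)$ form with values in $K$ is bounded below by its integral over a fixed compact subset of $Y_{0}^{\rm reg}$ away from the double locus, and these integrals converge. So the section $\sum v_{\alpha}\widetilde{\mathbf e}_{\alpha}$ restricts to zero on the smooth part of $Y_{0}$. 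Using Takegoshi's injectivity/torsion-freeness (harmonic representatives are determined by restriction to open sets, via $\bar\partial^{*}$-closedness and unique continuation), we would conclude that $\sum v_{\alpha}\widetilde{\mathbf e}_{\alpha}$ vanishes in the fiber $R^{q}f_{*}\otimes{\mathbb C}(0)$. This contradicts the fact that it is part of a basis. The delicate point is the identification of the fiber at $0$ with the $L^{2}$ cohomology on $Y_{0}$, i.e.\ the semistable case of Mourougane--Takayama. Once positivity is known, $\det\widetilde H\ge C^{h}>0$ forces some $a^{q}_{m,W}\neq0$.

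\emph{Step 4: conclusion.} From (1) we get $\log\det H(s)=-\sum_{\alpha}e^{q}_{\alpha}\log|t|^{2}+\log\det\widetilde H(t)$, and $\log|t|^{2}=\frac{1}{\deg\mu}\log|s|^{2}$, so the first term equals $-\delta^{q}_{W}\log|s|^{2}$. From Step 2 we have $\det\widetilde H=a_{\varrho}(\log|t|^{-2})^{\varrho}(1+O(1/\log|t|^{-1}))$ with $\varrho=\varrho^{q}_{W}$ and $a_{\varrho}>0$ by Step 3. Since $\log|t|^{-2}=\frac{1}{d}\log|s|^{-2}$, this gives $\varrho^{q}_{W}\log\log|s|^{-2}+c^{q}_{W}+O(1/\log|s|^{-1})$, which is (3).
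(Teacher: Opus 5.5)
\textbf{Verdict: Step 3 has a genuine gap.} Steps 1, 2 and 4 follow the paper's route: elementary divisors for the Mourougane--Takayama inclusion, a fiber-integral formula with the Barlet--Takayama local computation on the reduced normal crossing model, and then $\det\widetilde H=a_{\varrho}(\log|t|^{-2})^{\varrho}(1+O(1/\log|t|^{-1}))$.

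\textbf{Where Step 3 breaks.} Your remark in Step 1 that the $L^{2}$ inner product ``only uses the metric on $\xi$'' holds only for $q=0$. For $q\geq1$ the entries are $q!\int_{Y_t}i^{(n-q)^2}h(\widetilde\theta_\alpha\wedge\overline{\widetilde\theta}_\beta)\wedge\kappa_{Y_t}^{q}$, and the form on $Y$ that makes $F$ fiberwise isometric is $\kappa_Y=F^{*}\kappa_X+f^{*}\kappa_T$. On a component $D$ of $Y_0$ this restricts to $F^{*}\kappa_X|_D$, which is degenerate everywhere on $D$ when $\dim F(D)<n$. Such components come from the log resolution and from the resolution of $X'\times_S T$.

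Write $\theta_v=\sum v_\alpha\theta_\alpha=f^{*}dt\wedge\eta_v$ near $Y_0^{\rm reg}$. Your limiting argument then only gives $\int_K i^{(n-q)^2}h(\eta_v\wedge\bar\eta_v)\wedge\kappa_Y^{q}=0$ for compact $K\subset Y_0^{\rm reg}$. On a component where $\kappa_Y|_D$ has rank $r<n$, this controls only the components of $\eta_v$ that pair non-trivially with $(\kappa_Y|_D)^{q}$, and nothing at all if $r<q$. So you cannot conclude $\eta_v|_{Y_0}=0$, nor $\sum v_\alpha\widetilde{\mathbf e}_\alpha\in t\,R^{q}f_{*}K_{Y/T}(F^{*}\xi)$.

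For the same reason, the ``semistable case of Mourougane--Takayama'' you defer to (their Lemmas 4.7--4.8) is proved for a non-degenerate K\"ahler metric on $Y$ and does not apply to $\kappa_Y$. Remark~\ref{remark:inaccuracy} discusses exactly this pitfall. For $q=0$ your argument is essentially fine, since the integrand does not involve the K\"ahler form.

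\textbf{The missing idea: monotonicity.} For $(n,q)$-forms, $|u|^{2}_{\omega}dv_{\omega}$ is non-increasing in $\omega$, and harmonic representatives minimize the norm in their class. Hence $\omega\leq\omega'$ implies $h_{L^2,\omega'}\leq h_{L^2,\omega}$ on $H^{q}(K(E))$; this is Lemma~\ref{lemma:comparison:L2}. Take a genuine K\"ahler form $\kappa'_Y\geq\kappa_Y$ on $Y$ (after scaling and shrinking $T$). Mourougane--Takayama applies to $(Y,\kappa'_Y)$ because $Y_0$ is reduced normal crossing, giving $\|\sum u_\alpha\widetilde{\mathbf e}_\alpha\|'_{L^2,Y_t}\geq C_0|u|$. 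Monotonicity transfers this to $\kappa_Y$, so $\widetilde H(t)\geq C_0^{2}I$, and no compactness argument is needed.

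\textbf{A smaller point in Step 2.} The smooth top form that Barlet--Takayama is applied to is $i^{(n-q+1)^2}h(\theta_\alpha\wedge\bar\theta_\beta)\wedge\kappa_Y^{q}$. Here the $\theta_\alpha$ are holomorphic $(n+1-q)$-forms on all of $Y$ with $\theta_\alpha\wedge f^{*}dt=0$, $D\theta_\alpha=0$, and $\Theta_\alpha=[\theta_\alpha\wedge\kappa_Y^{q}]$ off $Y_0$. Your ``$(n+1+q,n+1+q)$-form divided by the K\"ahler power'' is not this object. Producing the $\theta_\alpha$ is a Takegoshi-type theorem for the degenerate $\kappa_Y$, which Mourougane--Takayama's Proposition~4.4 does not give directly. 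The paper obtains it in Proposition~\ref{Proposition:Takegoshi} by passing to a model $Z\to Y$ reached from $X'\times_S T$ by smooth blow-ups, so that $\kappa_Y$ can be approximated by K\"ahler forms.
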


When $q=0$, ${\rm rk}\,\xi=1$, and if $X_{0}$ is a reduced normal crossing divisor, this theorem is due to Takayama \cite{Takayama22}.
\par
To prove Theorem~\ref{Mthm:kappa:local}, we compare the Quillen metrics on $\lambda(K_{Y/T}(F^{*}\xi))$ with respect to 
the degenerate metric $F^{*}h_{X} + f^{*}h_{T}$ and a non-degenerate K\"ahler metric $h_{Y}$ on $Y$. 
The singularity of the Quillen metric with respect to $h_{Y}$, $F^{*}h_{\xi}$ was previously determined in \cite{Yoshikawa07},
while the singularities of the $L^{2}$-metrics with respect to both $F^{*}h_{X} + f^{*}h_{T}$ and $h_{Y}$ are established in
Theorem~\ref{Mthm:Sing:L2}. Consequently, the problem reduces to comparing the ratio of these Quillen metrics. 
By the anomaly formula of Bismut-Gillet-Soul\'e \cite{BGS88}, this further reduces to determining the asymptotic behavior of 
the fiber integral of a certain Bott-Chern form on $Tf$ associated with the metrics $F^{*}h_{X} + f^{*}h_{T}$ and $h_{Y}$. 
Although $Tf$ is not a vector bundle on $Y$ and the metric $F^{*}h_{X} + f^{*}h_{T}$ degenerates along certain divisors,
we determine the asymptotic behavior of its fiber integral by extending the theory of Bott-Chern forms to this degenerate setting
(Theorems~\ref{thm:pusf:forward:Bott:Chern}, \ref{thm:pusf:forward:Bott:Chern:2}, \ref{thm:singularity:BottChern}). 
Theorem~\ref{Mthm:kappa:IHS} follows from Theorem~\ref{Mthm:kappa:local} and a formula due to
Eriksson and Freixas i Montplet \cite{ErikssonFreixas26}.
\par
This article is organized as follows.
In Section~\ref{sect:1}, we recall the notion of equivariant analytic torsion, equivariant Quillen metrics, equivariant determinants of
cohomology, and we prove the smoothness of equivariant Quillen metrics. 
In Section~\ref{sect:2}, we recall some results of Takegoshi \cite{Takegoshi95} and Mourougane-Takayama \cite{MourouganeTakayama08}
concerning Nakano semi-positive vector bundles.
In Section~\ref{sect:3}, we determine the singularity of equivariant Quillen metrics.
In Section~\ref{sect:4}, we prove Theorem~\ref{Mthm:Sing:L2}.
In Section~\ref{sect:5}, we prove Theorem~\ref{Mthm:asym:exp}. When $X_{0}$ has only canonical singularities, we also prove that
the $L^{2}$-metric on $R^{q}\pi_{*}K_{X/S}(\xi)|_{S\setminus\{0\}}$ extends to a continuous Hermitian metric defined on $S$.
In Section~\ref{sect:6}, we construct Bott-Chern type currents for homomorphisms between holomorphic Hermitian vector bundles of equal rank, 
assuming a degeneracy locus of codimension one, and determine the asymptotic behavior of their fiber integrals. 
In Section~\ref{sect:7}, we prove Theorems~\ref{Mthm:kappa:local} and \ref{Mthm:kappa:IHS}.
Applying these results, we prove that, when $\dim {\rm Sing}\,X_{0}=0$, up to a nowhere vanishing continuous function on $S$, 
the ratio of analytic torsions for the trivial and ample line bundles is given by 
the ratio of the associated period-type integrals (Theorem~\ref{thm:McKean:Singer:prod}).
In Section~\ref{sect:8}, we describe the asymptotic behavior of the first Chern form of 
the $L^{2}$-metric on $R^{q}\pi_{*}K_{X/S}(\xi)$ and establish the Poincar\'e boundedness of its curvature (Theorem~\ref{thm:c1:dir:im:K}).
In Section~\ref{sect:9}, we extend a theorem of Takegoshi \cite{Takegoshi95} under the degeneracy of K\"ahler metric
dealt with in Section~\ref{sect:4}, and we compare the Bott-Chern type currents with the Bott-Chern singular currents \cite{BGS90b}
in some special situation. 
\par
Finally, this article consolidates and extends the results of the author's three earlier preprints \cite{Yoshikawa10a}, \cite{Yoshikawa10b}, 
and \cite[Appendix A]{Imaike24}.

\medskip
\par{\bf Notation }
For a complex manifold, we set $d^{c}=\frac{1}{4\pi i}(\partial-\bar{\partial})$. Hence $dd^{c}=\frac{1}{2\pi i}\bar{\partial}\partial$.
For a complex manifold $M$ and a holomorphic vector bundle $E$, $A^{p,q}(M,E)$ denotes the smooth $(p,q)$-forms on $M$
with values in $E$. When $E={\mathcal O}_{M}$, we write $A^{p,q}(M)$.
We set $\widetilde{A}(M)=\bigoplus_{p\geq0}A^{p,p}(M)/{\rm Im}\,\partial+{\rm Im}\,\bar{\partial}$. We define
$$
{\mathcal B}(S) = C^{\infty}(S) \oplus \bigoplus_{r\in{\bf Q}\cap(0,1),\, 0 \leq k \leq n} |s|^{2r}(\log|s|)^{k}C^{\infty}(S) \oplus
\bigoplus_{1\leq k \leq n} |s|^{2}(\log|s|)^{k}C^{\infty}(S).
$$
If $\phi,\psi\in C^{\infty}(S^{o})$ satisfies $\phi-\psi\in{\mathcal B}(S)\subset C^{0}(S)$, we write $\phi\equiv_{\mathcal B}\psi$.

\medskip
\par
{\bf Acknowledgements }
The author is grateful to Professors Jean-Michel Bismut, Dennis Eriksson, Gerard Freixas i Montplet, Christophe Mourougane, 
and Shigeharu Takayama for enlightening and helpful discussions. 
The author was partially supported by JSPS KAKENHI Grant Numbers 21H00984, 21H04429.
DeepL and Gemini were used solely to refine the English phrasing. All mathematical content was developed solely by the author.

\section
{Equivariant analytic torsion and equivariant Quillen metrics}
\label{sect:1}
\par
In this section, we recall equivariant analytic torsion and equivariant Quillen metrics and prove the smoothness of 
equivariant Quillen metrics for smooth projective morphisms. 
For a more general treatment including smooth K\"ahler morphisms, we refer the reader to \cite[III, Sects.\,2 and 3]{BGS88}.

\subsection
{\bf Equivariant analytic torsion and equivariant Quillen metrics}
\label{sect:1.1}
\par
Let $G$ be a compact Lie group. In the rest of this article, $\widehat{G}$ denotes the set of equivalence classes of complex irreducible representations of $G$. For $W\in\widehat{G}$, the corresponding irreducible character is denoted by $\chi_{W}$. 
Let $V$ be a compact K\"ahler manifold. Assume that $G$ acts on $V$ as holomorphic automorphisms. 
Let $h_{V}$ be a $G$-invariant K\"ahler metric on $V$. 
Let $F$ be a $G$-equivariant holomorphic vector bundle on $V$ endowed with a $G$-invariant Hermitian metric $h_{F}$.
We set $\overline{V}=(V,h_{V})$ and $\overline{F}=(F,h_{F})$.
Then $A^{p,q}(V,F)$ is endowed with the $L^{2}$ metric $(\cdot,\cdot)_{L^{2}}$ with respect to $h_{V}$ and $h_{F}$,
which is $G$-invariant with respect to the standard $G$-action on $A^{p,q}(V,F)$.
\par
Let $\square_{F}^{p,q} = (\bar{\partial}_{F}+\bar{\partial}^{*}_{F})^{2}$ be the Hodge-Kodaira Laplacian acting on $A^{p,q}(V,F)$. 
We denote by $\sigma(\square_{F}^{p,q})$ the eigenvalues of $\square_{F}^{p,q}$.
Let $E(\lambda; \square_{F}^{p,q})$ be the eigenspace of $\square_{F}^{p,q}$ with eigenvalue $\lambda\in\sigma(\square_{F}^{p,q})$. 
Since $G$ preserves the metrics $h_{V}$ and $h_{F}$, $\square_{F}^{p,q}$ commutes with the $G$-action on $A^{p,q}(V,F)$. 
Consequently, $G$ acts on $E(\lambda; \square_{F}^{p,q})$.
For $g\in G$ and $s\in{\mathbf C}$ with ${\rm Re}\,s\gg0$, we define
$$
\zeta_{G}(g)(s) := 
\sum_{q\geq 0} (-1)^{q}q \sum_{\lambda\in\sigma(\square_{F}^{p,q})\setminus\{0\}} 
\lambda^{-s}\,{\rm Tr}\left[ g|_{E(\lambda; \square_{F}^{p,q})} \right].
$$ 
It is classical that $\zeta_{G}(g)(s)$ extends to a meromorphic function on ${\mathbf C}$ and is holomorphic at $s=0$. 
For $g\in G$, we define
$$
\log\tau_{G}(\overline{V},\overline{F})(g) := -\zeta_{G}'(g)(0).
$$
In this article, $\tau_{G}(\overline{V},\overline{F})(g)$ is called the equivariant analytic torsion of $(\overline{V},\overline{F})$.
\par
Since $F$ is a $G$-equivariant holomorphic vector bundle on $V$, $G$ acts on $H(V,F) = \bigoplus_{q\in{\mathbf Z}} H^{q}(V,F)$ and preserves its ${\mathbf Z}$-grading. Consider the isotypic decomposition of the ${\mathbf Z}$-graded $G$-vector space
$$
H(V,F) \cong \bigoplus_{W\in\widehat{G}} {\rm Hom}_{G}(W,H(V,F))\otimes W.
$$
Since $H(V,F)$ is finite dimensional, ${\rm Hom}_{G}(W,H(V,F))=0$ except for finitely many $W\in\widehat{G}$. We set
$$
\begin{aligned}
\lambda_{W}(F) 
&= \det{\rm Hom}_{G}(W,H(V,F))\otimes W 
\\
&:= \bigotimes_{q\geq0}(\det {\rm Hom}_{G}(W,H^{q}(V,F))\otimes W)^{(-1)^{q}}.
\end{aligned}
$$
The equivariant determinant of the cohomology of $F$ is defined as
$$
\lambda_{G}(F):=\prod_{W\in\widehat{G}}\lambda_{W}(F).
$$
Notice that our sign convention is different form the one in \cite[Eq.\,(2.9)]{Bismut95}. 
When ${\rm Hom}_{G}(W,H(V,F))=0$, $\lambda_{W}(F)$ is canonically isomorphic to ${\mathbf C}$ by definition. 
In this case, the canonical element of $\lambda_{W}(F)$ corresponding to $1\in{\mathbf C}$ is denoted by $1_{\lambda_{W}(F)}$.
A vector $\alpha = (\alpha_{W})_{W\in\widehat{G}} \in \lambda_{G}(F)$ is said to be {\em admissible} 
if $\alpha_{W}\not=0$ for all $W\in\widehat{G}$ and if $\alpha_{W}=1_{\lambda_{W}(F)}$ except for finitely many $W\in\widehat{G}$. 
The set of admissible elements of $\lambda_{G}(F)$ is identified with the direct sum 
$\bigoplus_{W\in\widehat{G}}\lambda_{W}(F)^{\times}$, 
where $\lambda_{W}(F)^{\times}:=\lambda_{W}(F)\setminus\{0\}$ is the set of non-zero elements of $\lambda_{W}(F)$.
\par
By Hodge theory, we have an isomorphism of ${\mathbf Z}$-graded $G$-spaces $H(V,F) \cong \bigoplus_{q\geq0} E(0; \square_{F}^{0,q})$. 
The $G$-invariant metric on $H(V,F)$ induced from the $L^{2}$-metric on 
$\bigoplus_{q\geq0} E(0; \square_{F}^{0,q}) \subset \bigoplus_{q\geq 0} A^{0,q}(V,F)$ via this isomorphism is denoted by $h_{H(V,F)}$. 
Then the isotypic decomposition of $H(V,F)$ is orthogonal with respect to $h_{H(V,F)}$. 
Let $\|\cdot\|_{L^{2},\lambda_{W}(F)}$ be the Hermitian metric on $\lambda_{W}(F)$ induced from $h_{H(V,F)}$. 
Recall that $\chi_{W}$ is the character of $W\in\widehat{G}$. For an admissible element
$\alpha = (\alpha_{W})_{W\in\widehat{G}} \in \bigoplus_{W\in\widehat{G}}\lambda_{W}(F)^{\times}$, we set
$$
\log\|\alpha\|^{2}_{Q,\lambda_{G}(F)}(g) 
:= -\zeta_{G}'(g)(0) + \sum_{W\in\widehat{G}}\frac{\chi_{W}(g)}{\dim W} \log\|\alpha_{W}\|_{L^{2},\lambda_{W}(F)}^{2}.
$$
The ${\mathbf C}$-valued function $\log\|\cdot\|^{2}_{Q,\lambda_{G}(F)}(g)$ on $\bigoplus_{W\in\widehat{G}}\lambda_{W}(F)^{\times}$ 
is called the equivariant Quillen metric on $\lambda_{G}(F)$ with respect to $h_{V}$, $h_{F}$. 
We refer the reader to \cite{BGS88, Bismut95, KohlerRoessler01, Ma00} 
for further details about equivariant analytic torsion, equivariant determinants, and equivariant Quillen metrics.

\subsection
{\bf Characteristic forms}
\label{sect:1.2}
\par
We denote by $c_{i}(F,h_{F}) \in \bigoplus_{p\geq0}A^{p,p}(V)$ the $i$-th Chern form of $(F,h_{F})$.
For $g \in G$, ${\rm Td}_{g}(F, h_{F}),{\rm ch}_{g}(F,h_{F}) \in \bigoplus_{p\geq0}A^{p,p}(V^{g})$ denote
the equivariant Todd form and the equivariant Chern character form of $(F,h_{F})$, respectively. 
For an explicit formulas for ${\rm Td}_{g}(F, h_{F})$ and ${\rm ch}_{g}(F,h_{F})$ in terms of the curvature form of $(F,h_{F})$,
we refer the reader to \cite[Def.\,2.4]{Bismut95}.

\subsection
{\bf Smoothness of equivariant Quillen metrics}
\label{sect:1.3}
\par
In this subsection, we prove the smoothness of equivariant Quillen metrics.
Let $M$ be a connected complex manifold with holomorphic $G$-action. Let $B$ be a connected complex manifold with trivial $G$-action.
Let $\pi \colon M \to B$ be a flat proper morphism. 
Assume that $\pi$ is $G$-equivariant and that there is a $G$-equivariant ample line bundle on $M$. 
Let $F$ be a $G$-equivariant holomorphic vector bundle on $M$. 
Let $h_{M}$ be a $G$-invariant K\"ahler metric on $M$ and let $h_{F}$ be a $G$-invariant Hermitian metric on $F$. 
We set $M_{b}:=\pi^{-1}(b)$ and $F_{b}:=F|_{M_{b}}$ for $b\in B$. 
Then $G$ preserves the fibers $M_{b}$ and $F_{b}$, and $\pi \colon M\to B$ is a family of projective algebraic manifolds.
\par
For $W\in\widehat{G}$ and an open subset $U \subset B$, we define 
$$
{\rm Hom}_{G}(W,R^{q}\pi_{*}F)(U) 
:= \{ \phi \in {\rm Hom}\left( W|_{\pi^{-1}(U)} , R^{q}\pi_{*}F|_{\pi^{-1}(U)}\right) ;\, g \phi = \phi g \,\,(\forall\, g \in G)\},
$$
where we identify $W$ with $W \otimes {\mathcal O}_{S}$. We define
${\rm Hom}_{G}(W,R^{q}\pi_{*}F)$ to be the sheaf on $B$ associated to the presheaf
$U \mapsto {\rm Hom}_{G}(W,R^{q}\pi_{*}F)(U)$.

\subsubsection
{The isotypic decomposition of the direct image sheaves}
\label{sect:1.3.1}
\par
Since there is a $G$-equivariant ample line bundle on $M$, by replacing $B$ with a relatively compact open subset of $B$ if necessary, 
there exist a complex of $G$-equivariant holomorphic vector bundles 
$$
F_{\bullet} \colon 0\longrightarrow F_{0}\longrightarrow F_{1}\longrightarrow \cdots\longrightarrow F_{m}\longrightarrow0
$$
over $M$ and a homomorphism $i \colon F\to F_{0}$ of $G$-equivariant vector bundles with the following properties \cite[Sect.\,7.2.7]{Quillen73}:
\begin{itemize}
\item[(i)]
The complex $0\to F\to F_{0}\to\cdots\to F_{m}\to0$ is acyclic.
\item[(ii)]
$H^{q}(M_{b},F_{i}|_{M_{b}})=0$ for all $q>0$, $i\geq0$, $b \in B$. 
\end{itemize}
We remark that since $F$ and the ample line bundle on $M$ are $G$-equivariant, the construction in \cite[Sect.\,7.2.7]{Quillen73} 
yields the $G$-equivariance of the vector bundles $F_{k}$ and the homomorphisms $F_{k} \to F_{k+1}$.
Then the cohomology sheaves of the complex of $G$-equivariant locally free sheaves
\begin{equation}
\label{eqn:complex:v:b}
\pi_{*}F_{\bullet} \colon 0 \longrightarrow \pi_{*}F_{0} \longrightarrow \pi_{*}F_{1} \longrightarrow\cdots\longrightarrow 
\pi_{*}F_{m} \longrightarrow 0
\end{equation}
calculate the direct image sheaves of $F$. Namely, for all $q\geq0$, 
we have the following canonical isomorphism of $G$-equivariant coherent sheaves on $B$:
\begin{equation}
\label{eqn:direct:im}
R^{q}\pi_{*}F \cong {\mathcal H}^{q}(\pi_{*}F_{\bullet}) := \ker\{\pi_{*}F_{q}\to\pi_{*}F_{q+1}\}/{\rm Im}\{\pi_{*}F_{q-1}\to\pi_{*}F_{q}\}.
\end{equation}
Since $\pi_{*}F_{i}$ is a $G$-equivariant holomorphic vector bundle on $B$, there exist $\nu \in {\mathbf N}$, irreducible representations 
$W_{1},\ldots,W_{\nu} \in \widehat{G}$, holomorphic vector bundles $E_{i}^{1}, \ldots, E_{i}^{\nu}$ on $B$ with trivial $G$-action, 
and homomorphisms of holomorphic vector bundles $\phi_{i}^{\alpha} \colon E_{i}^{\alpha} \to E_{i+1}^{\alpha}$ such that 
${\rm Hom}_{G}(W, \pi_{*}F_{i}) = 0$ for $W \not= W_{\alpha}$ $(1 \leq \alpha \leq \nu)$, 
\begin{equation}
\label{eqn:equiv:str:v:b}
\pi_{*}F_{i} = \bigoplus_{1 \leq \alpha \leq \nu} E_{i}^{\alpha} \otimes W_{\alpha} ,
\qquad
E_{i}^{\alpha} \cong {\rm Hom}_{G}(W_{\alpha}, \pi_{*}F_{i})
\end{equation}
and such that under the splitting \eqref{eqn:equiv:str:v:b}, the homomorphism $\pi_{*}F_{i} \to \pi_{*}F_{i+1}$ in \eqref{eqn:complex:v:b} 
is given by $\bigoplus_{\alpha} \phi_{i}^{\alpha} \otimes 1_{W_{\alpha}}$. 
By \eqref{eqn:direct:im}, \eqref{eqn:equiv:str:v:b}, we have the following canonical isomorphism of $G$-equivariant coherent sheaves on $B$
\begin{equation}
\label{eqnequiv:str:coh}
R^{q}\pi_{*}F \cong {\mathcal H}^{q}(\pi_{*}F_{\bullet}) =
\bigoplus_{1 \leq \alpha \leq \nu} {\mathcal H}^{q}( E_{\bullet}^{\alpha}, \phi_{\bullet}^{\alpha}) \otimes W_{\alpha},
\end{equation}
where ${\mathcal H}^{q}( E_{\bullet}^{\alpha}, \phi_{\bullet}^{\alpha})$ is the $q$-th cohomology sheaf of the complex of locally free sheaves
$( E_{\bullet}^{\alpha}, \phi_{\bullet}^{\alpha})$ with trivial $G$-action. 
Since ${\mathcal H}^{q}( E_{\bullet}^{\alpha}, \phi_{\bullet}^{\alpha}) \cong {\mathcal H}^{q}({\rm Hom}_{G}(W_{\alpha},\pi_{*}F_{\bullet}))$ 
by \eqref{eqn:equiv:str:v:b}, we deduce from \eqref{eqnequiv:str:coh}
the following canonical isomorphism of sheaves on $B$:
\begin{equation}
\label{eqn:isotyp}
{\rm Hom}_{G}(W_{\alpha}, R^{q}\pi_{*}F) \cong {\mathcal H}^{q}({\rm Hom}_{G}(W_{\alpha}, \pi_{*}F_{\bullet})).
\end{equation}
Since ${\rm Hom}_{G}(W_{\alpha},\pi_{*}F_{\bullet}) \cong (E^{\alpha}_{\bullet}, \phi^{\alpha}_{\bullet})$ 
is a complex of locally free sheaves on $B$, 
all ${\rm Hom}_{G}(W_{\alpha},R^{q}\pi_{*}F)$ $(q\geq0)$ are coherent sheaves on $B$ by \eqref{eqn:isotyp}. 
For $W \in \widehat{G}$, set 
$$
(R^{q}\pi_{*}F)_{W} :=  {\rm Hom}_{G}(W,R^{q}\pi_{*}F)\otimes W.
$$
The $(R^{q}\pi_{*}F)_{W}$ are coherent sheaves on $B$.
We deduce from \eqref{eqnequiv:str:coh}, \eqref{eqn:isotyp} the isotypic decomposition of $R^{q}\pi_{*}F$ as coherent sheaves on $B$ 
\begin{equation}
\label{eqn:dir:sum:dir:im}
R^{q}\pi_{*}F \cong \bigoplus_{W\in\widehat{G}} (R^{q}\pi_{*}F)_{W}.
\end{equation}

\subsubsection
{Equivariant determinant of the cohomology}
\label{sect:1.3.2}
\par
For any coherent sheaf ${\mathscr S}$ on $B$, we can associate the invertible sheaf $\det{\mathscr S}$ on $B$ 
(\cite{KnudsenMumford76}, \cite[III, Sect.\,3]{BGS88}). We define
$$
\lambda_{W}(F) := \bigotimes_{q\geq0} \det (R^{q}\pi_{*}F)_{W}^{(-1)^{q}}
\qquad
(W \in \widehat{G}),
$$
$$
\lambda_{G}(F) := \prod_{W\in\widehat{G}}\lambda_{W}(F).
$$
Let $F_{\bullet}$ be the complex of holomorphic vector bundles on $M$ as in Section~\ref{sect:1.3.1}.
Since ${\rm Hom}_{G}(W, R^{q}\pi_{*}F)$ is canonically identified with the $q$-th cohomology sheaf of the complex of locally free sheaves
${\rm Hom}_{G}(W, \pi_{*}F_{\bullet})$, we deduce from \eqref{eqn:isotyp} the following canonical isomorphisms of line bundles on $B$:
\begin{equation}
\label{eqn:det:equiv:coh}
\begin{aligned}
\lambda_{W}(F) 
&\cong 
\bigotimes_{q\geq0} \det\left({\rm Hom}_{G}(W, {\mathcal H}^{q}(\pi_{*}F_{\bullet}))\otimes W\right)^{(-1)^{q}}
\\
&\cong
\bigotimes_{i\geq0} \det\left({\rm Hom}_{G}(W, \pi_{*}F_{i})\otimes W\right)^{(-1)^{i}}.
\end{aligned}
\end{equation}
If ${\rm Hom}_{G}(W,R^{q}\pi_{*}F)=0$ for all $q\geq0$, then $\lambda_{W}(F)$ is canonically isomorphic to ${\mathcal O}_{B}$.
In this case, the canonical section of $\lambda_{W}(F)$ corresponding to $1\in H^{0}(B,{\mathcal O}_{B})$ is denoted by $1_{\lambda_{W}(F)}$. 
By \eqref{eqn:det:equiv:coh} and the base change theorem, for any $b\in B$, there is a canonical isomorphism of complex lines
\begin{equation}
\label{eqn:fiber}
\lambda_{W}(F)_{b} := \lambda_{W}(F) \otimes ({\mathcal O}_{B}/{\mathcal I}_{b}) \cong \lambda_{W}(F|_{M_{b}}),
\end{equation}
where ${\mathcal I}_{b} \subset {\mathcal O}_{B}$ is the ideal sheaf defined by the point $b \in B$.
\par
For an open subset $U\subset B$, a holomorphic section $\sigma=(\sigma_{W})_{W\in\widehat{G}}$ of $\lambda_{G}(F)|_{U}$ is said to be {\em admissible}
if $\sigma_{W}$ is nowhere vanishing on $U$ for all $W\in\widehat{G}$ and if $\sigma_{W}=1_{\lambda_{W}(F)}$ except for finitely many $W\in\widehat{G}$.

\subsubsection
{Smoothness of equivariant Quillen metrics}
\label{sect:1.3.3}
\par
Assume that $\pi \colon M \to B$ is a $G$-equivariant proper holomorphic submersion. Let $TM/B$ be the relative tangent bundle of $\pi$.
Let $g \in G$. By the canonical isomorphism \eqref{eqn:fiber}, $\lambda_{G}(F)$ is endowed with the equivariant Quillen metric 
$\|\cdot\|_{Q,\lambda_{G}(F)}(g)$ with respect to $h_{M}|_{TM/B}$, $h_{F}$ such that for all $b\in B$,
$$
\|\cdot\|_{Q,\lambda_{G}(F)}(g)(b):=\|\cdot\|_{Q,\lambda_{G}(F_{b})}(g).
$$

\begin{theorem}
\label{thm:smooth:eq:Quillen}
Let $g \in G$. Let $U\subset B$ be a small open subset such that $\lambda_{W}(F_{i})|_{U}\cong{\mathcal O}_{U}$ for all $i$ and $W\in\widehat{G}$.
Let $\sigma=(\sigma_{W})_{W\in\widehat{G}}$ be an admissible holomorphic section of $\lambda_{G}(F)|_{U}$. 
Then $\log\|\sigma\|_{Q,\lambda_{G}(F)}^{2}(g)$ is a smooth function on $U$.
\end{theorem}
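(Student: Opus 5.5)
Proving the smoothness of $\log\|\sigma\|_{Q,\lambda_{G}(F)}^{2}(g)$ on $U$ reduces to the non-equivariant smoothness theorem of Bismut--Gillet--Soul\'e \cite[III]{BGS88} (as extended by Ma \cite{Ma00} to the equivariant setting), applied isotypic component by isotypic component. The key observation is that the isotypic decomposition \eqref{eqn:dir:sum:dir:im} is compatible with the Hodge-theoretic description of the fibers. Since $G$ is compact and the metrics are $G$-invariant, the fiberwise Laplacians $\square^{0,q}_{b}$ commute with $G$. Hence for $u>0$ the spectral projector onto eigenvalues $<u$, denoted $K^{q}_{b,(0,u)}$, splits as $\bigoplus_{W}{\rm Hom}_{G}(W,K^{q}_{b,(0,u)})\otimes W$.

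\emph{Local reduction near a point $b_{0}\in U$.} Fix $b_{0}\in U$ and choose $u>0$ not in the spectrum of any $\square^{0,q}_{b_{0}}$. By continuity of the spectrum, $u$ stays off the spectrum for $b$ near $b_{0}$. Then $K^{\bullet}_{(0,u)}=\bigoplus_{q} E(<u;\square^{0,q})$ forms a $G$-equivariant smooth finite-rank vector bundle complex, with differential $\bar\partial$, over a neighborhood $U_{0}$ of $b_{0}$. As in \cite[III, Sect.\,3]{BGS88}, this complex has a holomorphic structure, and its cohomology is canonically $H(M_{b},F_{b})$.

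For each $W$, the complex ${\rm Hom}_{G}(W,K^{\bullet}_{(0,u)})\otimes W$ computes $(R^{\bullet}\pi_{*}F)_{W}$, and hence gives a canonical isomorphism $\lambda_{W}(F)\cong\det{\rm Hom}_{G}(W,K^{\bullet}_{(0,u)})\otimes W$. This isomorphism is compatible with \eqref{eqn:det:equiv:coh} through the comparison with the Quillen resolution $F_{\bullet}$, which is the same quasi-isomorphism argument as in the non-equivariant case. It also carries the $L^{2}$-metric on the $K$-side to the metric defined via harmonic forms, up to the torsion of the finite-dimensional complex.

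\emph{Splitting the Quillen metric.} The equivariant Quillen metric then decomposes as
\[
\log\|\sigma\|^{2}_{Q}(g)=-\zeta'_{G,(u,\infty)}(g)(0)+\sum_{W}\frac{\chi_{W}(g)}{\dim W}\log\|\sigma_{W}\|^{2}_{L^{2},\det(K_{(0,u)})_{W}}.
\]
This formula holds because the small-eigenvalue part of the zeta function splits over $W$ with weight $\chi_{W}(g)/\dim W$. That weight comes from ${\rm Tr}[g|_{V\otimes W}]=\dim V\cdot\chi_{W}(g)$. The finite-dimensional torsion identity is then applied separately on each $W$-isotypic piece.

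\emph{Smoothness of the two terms.} The second term is smooth, since $K_{(0,u)}$ is a smooth bundle with smooth metric and $\sigma_{W}$ is nowhere vanishing. Only finitely many $W$ contribute nontrivially, because $\sigma_{W}=1$ otherwise and the corresponding $K$-pieces are then acyclic. For those $W$, one uses the finitely many $W_{\alpha}$ occurring in the $\pi_{*}F_{i}$ together with the trivialization hypothesis on $U$.

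The first term requires showing that $\zeta_{G,(u,\infty)}(g)$ depends smoothly on $b$, uniformly near $s=0$. For this I would write
\[
\zeta_{G,(u,\infty)}(g)(s)=\Gamma(s)^{-1}\int_{0}^{\infty}t^{s-1}\sum_{q}(-1)^{q}q\,{\rm Tr}\bigl[g\,e^{-t\square^{0,q}_{b}}(1-P_{(0,u)})\bigr]\,dt.
\]
The large-$t$ part is smooth in $b$, with exponential decay $e^{-tu}$. For the small-$t$ part, the equivariant heat kernel of a smooth family of fiberwise elliptic operators has an asymptotic expansion ${\rm Tr}[g e^{-t\square}]\sim\sum_{j\ge -\dim}a_{j}(b)t^{j/2}$ localized near $M_{b}^{g}$. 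The coefficients are smooth in $b$ and the remainders are uniform; this is Bismut's equivariant local index computations \cite{Bismut95}, or standard parametrix theory with the $g$-action. Note that $M^{g}\to B$ is itself a submersion, since $g$ commutes with $\pi$ and $g$ acts trivially on $B$.

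I expect this uniformity of the equivariant small-time expansion in the parameter $b$ to be the main technical step. It is nevertheless standard: the family $\square_{b}$ can be pulled back to a fixed fiber via a $G$-equivariant trivialization of $M$ over a small ball, obtained for instance by integrating $G$-averaged horizontal lifts. This yields a smooth family of $g$-commuting elliptic operators on a fixed manifold, and to such a family the usual Duhamel/parametrix estimates apply. Finally, smoothness is local, so patching the neighborhoods $U_{0}$ over $U$ finishes the proof. The result is independent of $u$ by the usual argument in \cite[III, Thm.\,1.22]{BGS88}.
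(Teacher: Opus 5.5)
Your proposal is correct in outline, but it takes a genuinely different route from the paper.

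\textbf{Your route.} You redo the small-eigenvalue method of \cite[III]{BGS88} equivariantly. You truncate the spectrum at $u$ and split $\log\|\sigma\|^{2}_{Q}(g)$ into $-\zeta'_{G,(u,\infty)}(g)(0)$ plus $L^{2}$-norms on the finite complex $K^{\bullet}_{<u}$. (That complex must contain the kernel, i.e.\ eigenvalues in $[0,u)$, despite your notation $K_{(0,u)}$.) You then control the first term by uniform equivariant heat asymptotics.

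\textbf{The paper's route.} The paper does no spectral analysis at all.
\begin{itemize}
\item It takes the acyclic Quillen resolution $0\to F\to F_{0}\to\cdots\to F_{m}\to 0$ and applies Bismut's equivariant immersion formula \cite[Th.\,0.1]{Bismut95} to the empty immersion $\emptyset\hookrightarrow M_{b}$, fiber by fiber.
\item This gives \eqref{eqn:anomaly}: $\log\|\sigma\|^{2}_{Q}(g)$ equals $\sum_{i}(-1)^{i}\log\|\sigma_{i}\|^{2}_{Q,\lambda_{G}(F_{i})}(g)$ minus the fiber integral of ${\rm Td}_{g}\,\widetilde{\rm ch}_{g}(\overline{F},\overline{F}_{\bullet})$, which is plainly smooth.
\item Since $H^{q}(M_{b},F_{i}|_{M_{b}})=0$ for $q>0$, each $\pi_{*}F_{i}$ is a $G$-equivariant holomorphic vector bundle with smooth $L^{2}$-metric. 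The torsion of $F_{i}$ is smooth by \cite[Th.\,2.12]{Ma00}.
\item The hypothesis $\lambda_{W}(F_{i})|_{U}\cong\mathcal{O}_{U}$ is used only to choose admissible $\sigma_{i}$ with $\varphi_{W}(\sigma_{W})=\bigotimes_{i}(\sigma_{i})_{W}^{(-1)^{i}}$. It plays no role in your argument.
\end{itemize}

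\textbf{What each buys.} The paper's route is short. Taking $dd^{c}$ of \eqref{eqn:anomaly} also immediately gives the curvature formula of Theorem~\ref{thm:curvature} without local freeness; you would have to redo the equivariant curvature computation of BGS III. Your route avoids the deep immersion theorem.

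\textbf{Where the difficulty actually lies in your route.} You identify the uniformity in $b$ of the small-time expansion of ${\rm Tr}[g\,e^{-t\square_{b}}]$ as the main step, but that is routine. The real crux is your one-line assertion that the canonical isomorphism $\lambda_{W}(F)\cong\det\bigl({\rm Hom}_{G}(W,K^{\bullet}_{<u})\otimes W\bigr)$ is compatible with \eqref{eqn:det:equiv:coh}. Equivalently, the algebraically defined $\sigma_{W}$ must become a smooth nowhere-vanishing section of the smooth line bundle $\det(K^{\bullet}_{<u})_{W}$. This is the content of BGS III, Section~3, and it is exactly what the paper's resolution argument delivers for free.

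Note also that $K^{\bullet}_{<u}$ is only a complex of smooth bundles whose cohomology jumps. So it does not ``compute $(R^{\bullet}\pi_{*}F)_{W}$'' as sheaves. All you have is the fiberwise statement together with the base change \eqref{eqn:fiber} for determinants.

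Once that comparison is carried out, your argument goes through. It does split over $W$, because the isotypic projectors commute with the fiberwise $\bar\partial$, the metrics, and the holomorphic structure along $B$.
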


\begin{pf}
By \eqref{eqn:det:equiv:coh}, we have the canonical isomorphism of holomorphic line bundles 
$\varphi_{W} \colon \lambda_{W}(F) \cong \bigotimes_{i\geq0}\lambda_{W}(F_{i})^{(-1)^{i}}$ on $B$. 
Since $U$ is a small open subset, for every $i$, there is an admissible holomorphic section 
$\sigma_{i}=(({\sigma}_{i})_{W})_{W\in\widehat{G}}$ of $\lambda_{G}(F_{i})|_{U}$ 
such that $\varphi_{W}(\sigma_{W}) = \bigotimes_{i\geq0}({\sigma}_{i})_{W}^{(-1)^{i}}$ for all $W\in\widehat{G}$. 
\par
Let $h_{F_{i}}$ be a $G$-invariant Hermitian metric on $F_{i}$ and 
let $\|\cdot\|_{Q,\lambda_{G}(F_{i})}(g)$ be the equivariant Quillen metric on $\lambda_{G}(F_{i})$ 
with respect to the $G$-invariant metrics $h_{M/B}=h_{M}|_{TM/B}$ and $h_{F_{i}}$. 
Since $0 \to F \to F_{\bullet} \to 0$ is an exact sequence of holomorphic vector bundles on $M$, 
there exists by \cite[I, f)]{BGS88}, \cite[Sect.\,1.2.3]{GilletSoule90}, \cite[Sect.\,3.3]{KohlerRoessler01} 
the Bott-Chern secondary class $\widetilde{\rm ch}_{g}(\overline{F},\overline{F}_{\bullet})\in\widetilde{A}(M^{g})$ such that
$$
dd^{c}\widetilde{\rm ch}_{g}(\overline{F},\overline{F}_{\bullet}) = {\rm ch}_{g}(F,h_{F}) - \sum_{i\geq0}(-1)^{i}{\rm ch}_{g}(F_{i},h_{F_{i}}).
$$
By the immersion formula of Bismut \cite[Th.\,0.1]{Bismut95} applied to the immersion $\emptyset\hookrightarrow M_{b}$, $b\in U$, 
we obtain the following identity of complex-valued functions on $U$
\begin{equation}
\label{eqn:anomaly}
\begin{aligned}
\log \| \sigma \|_{Q,\lambda_{G}(F)}^{2}(g)
&=
\sum_{i\geq0} (-1)^{i} \log\|\sigma_{i}\|_{Q,\lambda_{G}(F_{i})}^{2}(g)
-
\left[ \pi_{*} \{ {\rm Td}_{g}( \overline{TM/B} ) \widetilde{\rm ch}_{g}( \overline{F}, \overline{F}_{\bullet} ) \} \right]^{(0)}
\\
&\equiv
\sum_{i\geq0} (-1)^{i} \log \| \sigma_{i} \|_{Q,\lambda_{G}(F_{i})}^{2}(g) \mod C^{\infty}(U),
\end{aligned}
\end{equation}
where we set $\overline{TM/B} := (TM/B,h_{M/B})$, $\overline{F} := (F,h_{F})$ and we used \cite[Cor.\,3.10]{KohlerRoessler01} to identify 
the Bott-Chern current $T_{g}(\overline{F},\overline{F}_{\bullet})$ with the Bott-Chern class $-\widetilde{\rm ch}_{g}(\overline{F},\overline{F}_{\bullet})$ in the first equality. In \eqref{eqn:anomaly}, $\pi_{*}$ denotes the integration along the fibers of $\pi$ and $[\omega]^{(2d)}$ denotes the component of degree $2d$ of a differential form $\omega$. 
Since the morphism $\pi\colon M \to B$ is a proper holomorphic submersion and since $h^{0}(F_{i}|_{M_{b}})$ is a constant function on $U$, 
$(\pi_{*}F_{i}, h_{L^{2}})$ is a $G$-equivariant holomorphic Hermitian vector bundle for all $i$. In particular, so is its direct summand
$(W\otimes{\rm Hom}_{G}(W, \pi_{*}F_{i}), h_{L^{2}})$. Since the function $b \mapsto \log \tau_{G}(M_{b},F_{i}|_{M_{b}})(g)$ is smooth by \cite[Th.\,2.12]{Ma00},
$\log \| \sigma_{i} \|_{Q,\lambda_{G}(F_{i})}^{2}(g)$ is a smooth function on $U$. This, together with \eqref{eqn:anomaly}, implies the result.
\end{pf}

The curvature $-dd^{c}\log\|\sigma\|^{2}_{Q,\lambda_{G}(F)}(g)$ was calculated by Bismut-Gillet-Soul\'e \cite[Th.\,0.1]{BGS88} 
when $G$ is trivial and by Ma \cite[Th.\,2.12]{Ma00} when all $R^{q}\pi_{*}F$, $q\geq0$ are locally free. 
Recently, the curvature formula is further generalized by Ma \cite[Th.\,3.6]{Ma21} for submersions of orbifolds.
By Theorem~\ref{thm:smooth:eq:Quillen}, the curvature formula of Bismut-Gillet-Soul\'e and Ma remains valid in the following sense, 
without the assumption of the local freeness of the direct image sheaves $R^{q}\pi_{*}F$.

\begin{theorem}
\label{thm:curvature}
Suppose that $\pi \colon M \to B$ is a $G$-equivariant proper holomorphic submersion.
Let $\sigma$ be a locally defined admissible holomorphic section of $\lambda_{G}(F)$. Then the following identity holds:
\begin{equation}
\label{eqn:curv:eq:Q}
-dd^{c} \log \left\| \sigma \right\|_{Q,\lambda_{G}(F)}^{2}(g)
=
\left[\pi_{*}\{{\rm Td}_{g}(\overline{TM/B})\,{\rm ch}_{g}(\overline{F})\}\right]^{(2)}.
\end{equation}
\end{theorem}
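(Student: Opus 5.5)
The plan is to reduce to the locally free case via the resolution $F_{\bullet}$ of Section~\ref{sect:1.3.1}, exactly as in the proof of Theorem~\ref{thm:smooth:eq:Quillen}. Fix $b_{0}\in B$ and a small neighborhood $U$ on which all $\lambda_{W}(F_{i})$ are trivial. By \eqref{eqn:det:equiv:coh} we may write $\varphi_{W}(\sigma_{W})=\bigotimes_{i}(\sigma_{i})_{W}^{(-1)^{i}}$ with admissible holomorphic sections $\sigma_{i}$ of $\lambda_{G}(F_{i})|_{U}$. Identity \eqref{eqn:anomaly} then gives, on $U$,
$$
\log\|\sigma\|^{2}_{Q,\lambda_{G}(F)}(g)=\sum_{i\geq0}(-1)^{i}\log\|\sigma_{i}\|^{2}_{Q,\lambda_{G}(F_{i})}(g)-\left[\pi_{*}\{{\rm Td}_{g}(\overline{TM/B})\widetilde{\rm ch}_{g}(\overline{F},\overline{F}_{\bullet})\}\right]^{(0)} .
$$
Every term on the right is a smooth function: the first by \cite[Th.\,2.12]{Ma00} and the second because $\pi$ is a submersion. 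We may therefore apply $-dd^{c}$ termwise.

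For each $F_{i}$ the higher cohomology vanishes fiberwise, so $R^{0}\pi_{*}F_{i}=\pi_{*}F_{i}$ is locally free and $R^{q}\pi_{*}F_{i}=0$ for $q>0$. Ma's curvature theorem \cite[Th.\,2.12]{Ma00} then applies and gives
$$
-dd^{c}\log\|\sigma_{i}\|^{2}_{Q,\lambda_{G}(F_{i})}(g)=\left[\pi_{*}\{{\rm Td}_{g}(\overline{TM/B})\,{\rm ch}_{g}(\overline{F_{i}})\}\right]^{(2)} .
$$
For the secondary term, fiber integration commutes with $d$ and $d^{c}$ along a proper submersion. Using $d{\rm Td}_{g}=0$ and the defining equation of $\widetilde{\rm ch}_{g}$, we get
$$
dd^{c}\left[\pi_{*}\{{\rm Td}_{g}\,\widetilde{\rm ch}_{g}\}\right]^{(0)}=\left[\pi_{*}\{{\rm Td}_{g}(\overline{TM/B})({\rm ch}_{g}(\overline{F})-\textstyle\sum_{i}(-1)^{i}{\rm ch}_{g}(\overline{F_{i}}))\}\right]^{(2)} .
$$
Here $M^{g}\to B$ is itself a proper submersion, since $g$ acts fiberwise and $G$ is compact, so the fixed-point set is a union of submanifolds transverse to the fibers. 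Adding these contributions with the alternating sum and the minus sign, the $F_{i}$ terms cancel and we obtain \eqref{eqn:curv:eq:Q} on $U$. Since $b_{0}$ is arbitrary, and both sides are independent of the choice of $\sigma$ (changing $\sigma$ by a nowhere-vanishing holomorphic function adds a pluriharmonic term), this proves the statement.

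The main point needing care is the ambiguity of $\widetilde{\rm ch}_{g}$: it is defined only modulo ${\rm Im}\,\partial+{\rm Im}\,\bar\partial$. Its degree-zero fiber integral is still a well-defined function once a representative is fixed in \eqref{eqn:anomaly}. Its $dd^{c}$ is independent of the representative, because $\pi_{*}$ of a $\partial$- or $\bar\partial$-exact term is exact, and $dd^{c}$ kills $\partial\alpha+\bar\partial\beta$ in degree $0$ after pushforward. One must also check that the sign conventions of \cite{KohlerRoessler01} for $T_{g}=-\widetilde{\rm ch}_{g}$ match those used in \eqref{eqn:anomaly}. The identity itself is then a formal consequence.
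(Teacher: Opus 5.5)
Your proposal is correct and follows the same route as the paper, which proves the statement by applying $dd^{c}$ to the first line of \eqref{eqn:anomaly}. It then uses Ma's curvature formula for the bundles $F_{i}$, whose direct images are locally free, together with the defining equation of $\widetilde{\rm ch}_{g}$, so that the $F_{i}$-terms cancel; you have simply written out these steps, and the submersion property of $M^{g}\to B$, explicitly.
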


\begin{pf}
Taking $dd^{c}$ of the both sides of the first line of \eqref{eqn:anomaly}, we get \eqref{eqn:curv:eq:Q}.
\end{pf}

\section
{Nakano semi-positive vector bundles}
\label{sect:2}
\par
In this section, we recall some results of Takegoshi \cite{Takegoshi95} and Mourougane-Takayama \cite{MourouganeTakayama08} 
about Nakano semi-positive vector bundles. 
\par
Let $M$ be a complex manifold of dimension $n+1$. Let $(E,h)$ be a holomorphic Hermitian vector bundle of rank $r$ on $M$
and let $D^{E}=D^{(E,h)}$ be its Chern connection. Then $D^{E} = (D^{E})^{1,0} + \bar{\partial}$, 
where $(D^{E})^{1,0}(A^{p,q}(M,E)) \subset A^{p+1,q}(M,E)$ and $\bar{\partial}$ is the Dolbeault operator.
Let $R^{E}$ be the curvature of $(E, D^{E})$. Then we can express
$$
h_{E}(i R^{E}(\cdot),\cdot)
=
\sum_{a,b,\alpha,\beta}R_{\alpha\bar{\beta}a\bar{b}}\,(e^{\lor}_{\alpha}\otimes\bar{e}_{\beta}^{\lor})\otimes(\theta_{a}\wedge\bar{\theta}_{b}),
$$
where $\{e_{\alpha}^{\lor}\}$ (resp. $\{\theta_{a}\}$) is a local {\em unitary} frame of $E^{\lor}$ (resp. $\Omega^{1}_{M}$).
The holomorphic Hermitian vector bundle $(E,h)$ is said to be {\em Nakano semi-positive} 
(resp. {\em semi-negative in the dual Nakano sense}) if for all $(\zeta_{a}^{\alpha})\in T^{(1,0)}M\otimes E$,
$$
\sum_{a,b,\alpha,\beta}R_{a\bar{b}\alpha\bar{\beta}}\zeta_{a}^{\alpha}\bar{\zeta}_{b}^{\beta} \geq 0 
\qquad
( \text{resp.}\quad \sum_{a,b,\alpha,\beta}R_{a\bar{b}\alpha\bar{\beta}}\zeta_{b}^{\alpha}\bar{\zeta}_{a}^{\beta} \leq 0).
$$ 
If the strict inequality holds for $(\zeta_{a}^{\alpha})\not=(0)$, then $(E,h)$ is called {\em Nakano positive} 
(resp. {\em negative in the dual Nakano sense}). Note the difference of indices in these two definitions.
By \cite[Lemma 4.3]{Siu82}, $(E,h)$ is Nakano semi-positive if and only if $(E^{\lor},h^{\lor})$ is semi-negative in the dual Nakano sense,
where $h^{\lor}$ is the dual metric. 
\par
Set $\Omega_{M}^{p} := \Lambda^{p}\Omega_{M}^{1}$. 
Then $K_{M} := \Omega_{M}^{n+1}$ is the canonical bundle of $M$. Let $\varDelta\subset{\bf C}$ be the unit disc. 
Let $\pi\colon M\to\varDelta$ be a proper surjective holomorphic map with critical locus $\varSigma$. 
We define $\Omega_{M/\varDelta}^{1} := \Omega_{M}^{1}/\pi^{*}\Omega_{\varDelta}^{1}$ and 
$\Omega_{M/\varDelta}^{p} := \Lambda^{p} \Omega_{M/\varDelta}^{1}$. 
Then $\Omega_{M/\varDelta}^{p}|_{M\setminus\varSigma}$ is a holomorphic vector bundle on $M \setminus \varSigma$. 
The relative canonical bundle $K_{M/\varDelta}$ is defined as $K_{M/\varDelta} := K_{M}\otimes \pi^{*}K_{\varDelta}^{-1}$.
We write $\Omega_{M}^{p}(E)$ and $\Omega_{M/\varDelta}^{p}(E)$ for $\Omega_{M}^{p}\otimes E$ and 
$\Omega_{M/\varDelta}^{p}\otimes E$, respectively. Similarly, we write $K_{M}(E)$ and $K_{M/\varDelta}(E)$ in place of $K_{M}\otimes E$
and $K_{M/\varDelta}\otimes E$, respectively. 
\par
Let $\kappa$ be a K\"ahler form on $M$.
For $t\in\varDelta$, we set $M_{t}:=\pi^{-1}(t)$, $\kappa_{t}:=\kappa|_{M_{t}}$ and $h_{t}:=h|_{M_{t}}$. 
The Dolbeault operator on $M_{t}$ is denoted by $\bar{\partial}_{t}$.

\begin{theorem}[\cite{Takegoshi95, MourouganeTakayama08}]
\label{Thm:Takegoshi}
Suppose that $(E,h)$ is Nakano semi-positive. Let $\alpha \in H^{q}(M, K_{M}(E))$ with $q\geq 1$. Then the following hold.
\begin{itemize}
\item[(1)]
There exists $\sigma\in \Gamma(M,\Omega^{n+1-q}_{M}(E))$ such that
$$
\alpha = [ \sigma\wedge\kappa^{q} ],
\qquad
D^{E}\sigma = 0,
\qquad
(\pi^{*}dt)\wedge\sigma=0.
$$
\item[(2)]
There exists $v\in \Gamma(M\setminus\varSigma,\Omega^{n-q}_{M/\varDelta}(E))$ such that
$$
\alpha|_{M\setminus\varSigma} = [ v\wedge\kappa^{q}\wedge(\pi^{*}dt) ],
\qquad
\sigma|_{M\setminus\varSigma} = v \wedge (\pi^{*}dt).
$$
\item[(3)]
For all $t\in\varDelta\setminus\pi(\varSigma)$, $v|_{M_{t}}\wedge\kappa_{t}^{q}\in A^{n,q}(M_{t},E|_{M_{t}})$ is a harmonic form 
with respect to $\kappa_{t}$, $h_{t}$.
\end{itemize}
\end{theorem}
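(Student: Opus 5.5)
This is the theorem of Takegoshi and Mourougane--Takayama, so the plan follows their $L^{2}$-harmonic theory. Since $\varDelta$ is Stein and $\pi$ is proper, we may shrink $\varDelta$ and equip $M$ with a complete K\"ahler metric on $M\setminus\varSigma$, or use the weakly pseudoconvex exhaustion $\Phi=\pi^{*}|t|^{2}+$const, which is plurisubharmonic. We then work with the space of $L^{2}$ $E$-valued $(n+1,q)$-forms that are harmonic with respect to $\kappa$ and $h$. Takegoshi's harmonic theory on weakly pseudoconvex K\"ahler manifolds represents each $\alpha\in H^{q}(M,K_{M}(E))$ (after shrinking) by a harmonic form $u$. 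The Bochner--Kodaira--Nakano identity, combined with Nakano semi-positivity and the plurisubharmonicity of $\Phi$, yields the following vanishing properties of $u$:
$$
\bar\partial^{*}u=0,\qquad (D^{E})^{1,0\,*}u=0,\qquad \langle iR^{E}\Lambda u,u\rangle=0,\qquad \bar\partial\Phi\wedge\Lambda\text{-type term}=0 .
$$
The boundary term in the integration by parts is handled by cutting off along the sublevel sets of $\Phi$. The main technical obstacle is making this rigorous, since $M$ is not compact; I would follow Takegoshi's argument with cutoff functions $\chi(\Phi)$ and exhaust by sublevel sets, using the Ohsawa--Takegoshi-type twisted identity.

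\textbf{Proof of (1).} Set $\sigma:=*u$, suitably normalized, or equivalently use the Lefschetz decomposition. Harmonicity together with Nakano semi-positivity forces $u$ to be primitive-dual: $u=\sigma\wedge\kappa^{q}$ with $\sigma$ an $(n+1-q,0)$-form. Holomorphicity of $\sigma$ and $D^{E}\sigma=0$ follow from $\bar\partial^{*}u=0$ and $(D^{E})^{1,0*}u=0$ via the Hodge-star/Lefschetz commutation on the K\"ahler manifold. The vanishing term $\|\bar\partial\Phi\wedge *u\|=0$ gives $\partial\Phi\wedge\sigma=0$. Since $\partial\Phi=\bar t\,\pi^{*}dt$, we get $\pi^{*}dt\wedge\sigma=0$ off $\pi^{-1}(0)$, hence everywhere by continuity.

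\textbf{Proof of (2).} On $M\setminus\varSigma$, $\pi^{*}dt$ is nowhere vanishing. By de Rham's division lemma, $\pi^{*}dt\wedge\sigma=0$ implies $\sigma=v\wedge\pi^{*}dt$ for a holomorphic $(n-q)$-form $v$. This $v$ is well defined modulo $\pi^{*}dt$, so it gives a section of $\Omega^{n-q}_{M/\varDelta}(E)$. Substituting into $\alpha=[\sigma\wedge\kappa^{q}]$ gives the stated representative.

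\textbf{Proof of (3).} Fix a regular fiber $M_{t}$. The form $v_{t}\wedge\kappa_{t}^{q}$ is $\bar\partial_{t}$-closed, since $v_{t}$ is holomorphic and $\kappa_{t}$ is closed. To see that it is $\bar\partial_{t}^{*}$-closed, note that on $M_{t}$ the fiberwise Hodge star gives $*_{t}(v_{t}\wedge\kappa_{t}^{q})=c\,\bar{v}_{t}$-type form, by the same Lefschetz computation in dimension $n$, because $v_{t}$ is an $(n-q,0)$-form. Hence $\bar\partial_{t}^{*}(v_{t}\wedge\kappa_{t}^{q})=\pm *_{t}D^{1,0}_{t}(*_{t}\,\cdot)$ reduces to $D^{E,1,0}_{t}v_{t}$. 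This vanishes because $D^{E}\sigma=0$ and $\sigma=v\wedge\pi^{*}dt$ restrict to give $D^{E}_{t}v_{t}=0$.
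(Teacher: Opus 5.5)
Your proposal follows the paper's route: you represent $\alpha$ by Takegoshi's harmonic form $u$ (with $\bar\partial u=0$, $\bar\partial^{*}u=0$, $\epsilon(\pi^{*}d\bar t)^{*}u=0$) and put $\sigma=C\star u$; you get $\bar\partial\sigma=0$ from the Nakano/Bochner--Kodaira vanishing $(D^{E})^{1,0*}u=0$, $(D^{E})^{1,0}\sigma=0$ from $\bar\partial^{*}u=-\star(D^{E})^{1,0}\star u=0$, and $\pi^{*}dt\wedge\sigma=0$ from the last condition; then you divide by $\pi^{*}dt$ for (2) and use $\star_{t}(v_{t}\wedge\kappa_{t}^{q})=C'v_{t}$ with $(D^{E_{t}})^{1,0}v_{t}=0$ for (3), so you are simply unpacking steps the paper cites from Takegoshi and Mourougane--Takayama. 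Two small corrections: the identity $u=\sigma\wedge\kappa^{q}$ is pure linear algebra (the Lefschetz map $L^{q}$ is an isomorphism from $(n+1-q,0)$-forms onto $(n+1,q)$-forms) and does not come from harmonicity or semi-positivity, and $\pi^{*}|t|^{2}$ is not an exhaustion of $M$; take instead e.g.\ $\Phi=-\log(1-\pi^{*}|t|^{2})$, whose $\partial\Phi$ is still a multiple of $\bar t\,\pi^{*}dt$, so your argument goes through unchanged.
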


\begin{pf}
Write $\star$ (resp. $\star_{t}$) for the Hodge star operator of the K\"ahler manifold $(M, \kappa)$ (resp. $(M_{t}, \kappa_{t})$). 
Let $\bar{\partial}^{*}_{h}$ (resp. $\bar{\partial}^{*}_{h_{t}}$) be the formal adjoint of $\bar{\partial}$ (resp. $\bar{\partial}_{t}$) with respect to 
the $L^{2}$-inner product on $A^{p,q}(M,E)$ (resp. $A^{p,q}(M_{t},E_{t})$) defined by $\kappa$, $h$ (resp. $\kappa_{t}$, $h_{t}$).
By \cite[Th.\,5.2 (i) and p.405 l.8]{Takegoshi95}, there exists a unique $u \in A^{n+1,q}(M,E)$ representing $\alpha$ such that 
$\bar{\partial}u=0$, 
$\bar{\partial}^{*}_{h} u =0$, $\epsilon(\pi^{*}d\bar{t})^{*}u=0$, where $\epsilon(\pi^{*}d\bar{t})^{*}$ is the adjoint of $\epsilon(\pi^{*}d\bar{t})$, 
the exterior multiplication by $\pi^{*}d\bar{t}$. We set $\sigma := C\star u \in A^{n+1-q, 0}(M,E)$, 
where $C=C_{n+1,q}$ is a universal constant depending only on $n+1$, $q$ such that $u = \sigma \wedge \kappa^{q}$.
By \cite[Th.\,4.3 (i)]{Takegoshi95}, we have $\bar{\partial} \sigma =0$. Namely, $\sigma \in \Gamma(M,\Omega^{n+1-q}_{M}(E))$.
Since $0= \bar{\partial}^{*}_{h} u = -\star (D^{E})^{1,0} \star u$, we get $(D^{E})^{1,0}\sigma=0$. Hence $D^{E}\sigma = 0$.
Since $0 = \epsilon(\pi^{*}d\bar{t})^{*}u = \pm \star \epsilon(\pi^{*}dt) \star u$, we get $(\pi^{*}dt)\wedge\sigma=0$. 
This proves (1).  
By \cite[Th.\,5.2 (ii)]{Takegoshi95} applied to $u$, we get (2). 
Set $v_{t} := v|_{M_{t}}$. Let $D^{E_{t}} = (D^{E_{t}})^{1,0} + \bar{\partial}_{t}$ be the Chern connection of $(E_{t}, h_{t})$.
Since $D^{E}\sigma=0$, we get $(D^{E_{t}})^{1,0}v_{t} =0$ for all $t\in\varDelta\setminus\pi(\varSigma)$
(cf. \cite[Lemmas~4.5 and 4.7 (1)]{MourouganeTakayama08}).
Since $\star_{t}(v_{t} \wedge \kappa_{t}^{q}) = C' v_{t}$ with $C'=C'_{n,q}$ being a universal constant depending only on $n$, $q$, 
this implies $\bar{\partial}_{h_{t}}^{*} (v_{t} \wedge \kappa_{t}^{q}) =0$.
Since $\bar{\partial}_{t} (v_{t} \wedge \kappa_{t}^{q}) =0$ is obvious, we get (3). 
\end{pf}

We remark that, by Theorem~\ref{Thm:Takegoshi} (3), $\int_{M_{t}} h(v_{t}, v_{t}) [\kappa_{t}]^{q}$ is well-defined
for all $t \in \Delta\setminus\pi(\varSigma)$, where $[\kappa_{t}]$ is the K\"ahler class of $\kappa_{t}$.
In Section~\ref{sect:4}, Theorem~\ref{Thm:Takegoshi} and its extension by Mourougane-Takayama \cite[Prop.\,4.4]{MourouganeTakayama09} 
will play a key role to determine the structure of the singularity of the $L^{2}$-metric on the direct image sheaves 
$R^{q}\pi_{*}K_{M/\varDelta}(E)$.
We refer the reader to \cite[Sect.\,4]{Siu82}, \cite[Chap.\,VII]{Demailly12} for further details about the positivity and negativity of vector bundles.
\par
By Takegoshi \cite[Th.\,6.5 (i)]{Takegoshi95}, $R^{q}\pi_{*}K_{M/\varDelta}(E)$ is a torsion free sheaf on $\varDelta$ for $q\geq0$.
Since $\dim \varDelta=1$, this implies the local freeness of $R^{q}\pi_{*}K_{M/\varDelta}(E)$. In fact, we have the following:

\begin{lemma}[\cite{MourouganeTakayama08}]
\label{lemma:local:freeness}
For all $q\geq 0$, $t \mapsto h^{q}(M_{t}, K_{M}(E)|_{M_{t}})$ is a constant function on $\varDelta$. 
In particular, $R^{q}\pi_{*}K_{M/\varDelta}(E)$ is a locally free sheaf on $\varDelta$ for all $q\geq0$. 
\end{lemma}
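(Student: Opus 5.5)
The plan is to combine Takegoshi's torsion-freeness, already recalled above, with the semicontinuity theorem and a comparison of Euler characteristics. By \cite[Th.\,6.5 (i)]{Takegoshi95}, each $R^{q}\pi_{*}K_{M/\varDelta}(E)$ is torsion free on the one-dimensional disc $\varDelta$, hence locally free. The second assertion of Lemma~\ref{lemma:local:freeness} is therefore immediate. The real content is the constancy of $h^{q}(t) := h^{q}(M_{t},K_{M}(E)|_{M_{t}})$, which is equivalent to the base change maps $R^{q}\pi_{*}K_{M/\varDelta}(E)\otimes k(t)\to H^{q}(M_{t},K_{M}(E)|_{M_{t}})$ being isomorphisms for all $q$ and $t$. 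Note that $K_{M/\varDelta}|_{M_{t}}\cong K_{M}|_{M_{t}}$, since $K_{\varDelta}$ is trivial.

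I would argue by descending induction on $q$, using Grauert's base change criterion. Since $\pi$ is flat (over a smooth curve, a surjective map from an irreducible $M$ is flat) and $K_{M}(E)$ is locally free, hence flat over $\varDelta$, the cohomology-and-base-change theorem applies. The statement is: if $R^{q+1}\pi_{*}$ is locally free and the base change map $\varphi^{q+1}(t)$ is surjective, then $\varphi^{q+1}(t)$ is an isomorphism. Moreover, $\varphi^{q}(t)$ is then surjective if and only if $R^{q+1}\pi_{*}$ is locally free near $t$. For $q>n$ everything vanishes, which gives the base case. At each step, $R^{q+1}\pi_{*}K_{M/\varDelta}(E)$ is locally free by Takegoshi, so $\varphi^{q}(t)$ is surjective. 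Since $R^{q}\pi_{*}K_{M/\varDelta}(E)$ is locally free as well, $\varphi^{q}(t)$ is an isomorphism. It follows that $h^{q}(t)$ equals the rank of $R^{q}\pi_{*}K_{M/\varDelta}(E)$ for every $t$, including $t\in\pi(\varSigma)$.

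The main obstacle is conceptual rather than technical. One must make sure the relevant torsion-freeness statement is genuinely Takegoshi's, namely the vanishing of the torsion of $R^{q}\pi_{*}K_{M}(E)$ for a proper map to a disc when $E$ is Nakano semi-positive. One must also check that the descending induction runs over all $q$ down to $0$ without further hypotheses. Over a one-dimensional base the chain ``locally free $\Rightarrow$ surjective base change in the next lower degree'' is exactly Grauert's criterion, so no further input is needed.

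As a cross-check, I would use upper semicontinuity of $h^{q}(t)$ together with invariance of $\chi(M_{t},K_{M}(E)|_{M_{t}})$ under flat deformation. These alone give constancy once $h^{q}(t)$ is known to be minimal. The harmonic representatives of Theorem~\ref{Thm:Takegoshi} provide, at least generically, lifts of fiber classes to global classes, which is the analytic counterpart of the surjectivity of base change used above.
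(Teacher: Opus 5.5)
Your argument is correct and is essentially the paper's. The paper takes Takegoshi's injectivity of multiplication by $\pi^{*}(t-t_{0})$ on $R^{q}\pi_{*}K_{M}(E)$ (Th.~6.8\,(i), the same input as the torsion-freeness you invoke) and feeds it into the long exact sequence of $0\to K_{M}(E)\to K_{M}(E)\to K_{M}(E)\otimes\mathcal{O}_{M_{t_{0}}}\to 0$ to get surjectivity of the restriction maps; it then concludes with the Mourougane--Takayama base-change lemma, which your descending induction via Grauert's criterion reproduces, although that exact sequence already gives $R^{q}\pi_{*}K_{M}(E)/(t-t_{0})\cong H^{q}(M_{t_{0}},K_{M}(E)|_{M_{t_{0}}})$ directly, so the induction and the Euler-characteristic cross-check can be dropped.
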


\begin{pf}
The proof is parallel to that of \cite[Lemma 4.9]{MourouganeTakayama08}. For the sake of completeness, we give it here.
Let $t_{0}\in\varDelta$. By \cite[Th.\,6.8 (i)]{Takegoshi95}, the homomorphism of direct image sheaves
$R^{q}\pi_{*}(\pi^{*}(t-t_{0})) \colon R^{q}\pi_{*}K_{M}(E) \to R^{q}\pi_{*}K_{M}(E)$ induced by the homomorphism of sheaves 
$\otimes\pi^{*}(t-t_{0}) \colon K_{M}(E) \to K_{M}(E)$ is injective for all $q\geq0$. 
By the long exact sequence of direct image sheaves associated to the short exact sequence
$\begin{CD} 0 \longrightarrow K_{M}(E)  @>\otimes\pi^{*}(t-t_{0})>> K_{M}(E) \longrightarrow K_{M}(E) \otimes {\mathcal O}_{M_{t_{0}}} \longrightarrow 0 \end{CD}$, 
this implies that the natural restriction map $R^{q}\pi_{*}K_{M}(E) \to H^{q}(M_{t_{0}}, K_{M}(E)|_{M_{t_{0}}})$ is surjective 
for all $q\geq0$ and $t_{0}\in \varDelta$. The result then follows from e.g. \cite[Lemma 4.8]{MourouganeTakayama08}.
\end{pf}

We refer the reader to \cite{Berndtsson09, MourouganeTakayama08, MourouganeTakayama09, Takegoshi95} 
for further details about the direct images of the relative canonical bundle twisted by a Nakano semi-positive vector bundle.

\section
{Equivariant Quillen metrics under degenerations}
\label{sect:3}
\par

\subsection
{Set up}
\label{sect:3.1}
\par
We introduce the basic set up and the notation used throughout this article.
Let ${\mathcal X}$ be a connected projective algebraic manifold of dimension $n+1$ and let $C$ be a compact Riemann surface.
Let $\pi\colon{\mathcal X}\to C$ be a surjective holomorphic map with connected fibers. Let $\Sigma_{\pi}$ be the critical locus of $\pi$.
\par
Let $G$ be a compact Lie group acting holomorphically on ${\mathcal X}$ and preserving the fibers of $\pi$. 
Assume that there exists a $G$-equivariant ample line bundle on ${\mathcal X}$.
Set 
$$
\Delta=\pi(\Sigma_{\pi}), 
\quad
C^{o}=C\setminus\Delta, 
\quad
{\mathcal X}^{o}={\mathcal X}|_{\pi^{-1}(C^{o})}, 
\quad
\pi^{o}=\pi|_{{\mathcal X}^{o}}.
$$
For $s\in C$, we set $X_{s}:=\pi^{-1}(s)$. 
Then $\pi^{o}\colon{\mathcal X}^{o}\to C^{o}$ is a family of projective algebraic manifolds with $G$-action.
\par
Let $T{\mathcal X}/C$ be the $G$-equivariant subbundle of $T{\mathcal X}|_{{\mathcal X}\setminus\Sigma_{\pi}}$ defined as
$T{\mathcal X}/C=\ker\pi_{*}|_{{\mathcal X}\setminus\Sigma_{\pi}}$.
Let $h_{\mathcal X}$ be a $G$-invariant K\"ahler metric on ${\mathcal X}$ and set $h_{{\mathcal X}/C}=h_{\mathcal X}|_{T{\mathcal X}/C}$.
Let $K_{\mathcal X}$ be the canonical bundle of ${\mathcal X}$ and let $K_{{\mathcal X}/C} := K_{\mathcal X}\otimes(\pi^{*} K_{C})^{-1}$ be the relative canonical bundle of $\pi\colon{\mathcal X}\to C$. Let $\xi\to{\mathcal X}$ be a $G$-equivariant holomorphic vector bundle on ${\mathcal X}$ endowed with a $G$-invariant Hermitian metric $h_{\xi}$. In what follows, we write $K_{{\mathcal X}/C}(\xi) = K_{{\mathcal X}/C}\otimes\xi$. 
We set $\xi_{s}=\xi|_{X_{s}}$ for $s\in C$.
\par
For $g\in G$, let 
$$
{\mathcal X}^{g}=\{x\in{\mathcal X};\,g\cdot x=x\}
$$ 
be its fixed-point set. Since $g$ is an isometry of ${\mathcal X}$, ${\mathcal X}^{g}$ is the disjoint union of compact complex submanifolds of ${\mathcal X}$. Then we have the following decomposition
$$
{\mathcal X}^{g}={\mathcal X}^{g}_{H}\amalg{\mathcal X}^{g}_{V},
$$
where ${\mathcal X}^{g}_{H}$ is the horizontal submanifold, i.e., $\pi|_{{\mathcal X}^{g}_{H}}\colon{\mathcal X}^{g}_{H}\to C$ is a flat holomorphic map and ${\mathcal X}^{g}_{V}$ is the vertical submanifold, i.e., $\pi({\mathcal X}_{V}^{g})$ is a proper subset of $C$. 
Since the $G$-action on $C$ is trivial, we have ${\mathcal X}^{g}_{V}\subset\Sigma_{\pi}$ and $\pi({\mathcal X}^{g}_{V})\subset\Delta$ by the $G$-equivariance of $\pi$.
\par
Let $0\in\Delta$ be a critical value of $\pi$. Let $(S,s)$ be a coordinate neighborhood of $C$ centered at $0$ such that $S \cap\Delta=\{0\}$. 
We set 
$$
X=\pi^{-1}(S),
\qquad
S^{o}=S\setminus\{0\}.
$$
\par
For $g\in G$ and $s\in S^{o}$, let $\tau_{G}(X_{s}, K_{X_{s}}(\xi_{s}))(g)$
be the equivariant analytic torsion of $(X_{s}, K_{X_{s}}(\xi_{s}))$ with respect to $h_{X_{s}}=h_{\mathcal X}|_{X_{s}}$ and 
$h_{\xi_{s}}=h_{\xi}|_{X_{s}}$, where $K_{X_{s}}$ is the canonical bundle of $X_{s}$ for $s\not=0$. 
In this article, we determine the asymptotic behavior of $\log \tau_{G}(X_{s}, K_{X_{s}}(\xi_{s}))(g)$ as $s \to 0$. 
To this end, we focus on the singularity of the equivariant Quillen metrics in this section.

\par
Let $\lambda_{G}(\xi)$ be the equivariant determinant of the cohomology of $\xi$. 
Then $\lambda_{G}(\xi)|_{C^{o}}$ is endowed with the equivariant Quillen metric $\|\cdot\|^{2}_{\lambda_{G}(\xi),Q}(\cdot)$ with respect to 
the $G$-invariant metrics $h_{{\mathcal X}/C}$, $h_{\xi}$. Let $\sigma$ be an admissible holomorphic section of $\lambda_{G}(\xi)|_{S}$. 
Let $g \in G$. By Theorem~\ref{thm:smooth:eq:Quillen}, $\log\|\sigma(s)\|^{2}_{\lambda_{G}(\xi),Q}(g)$ is a smooth function on $S^{o}$. 
In this section, we determine the behavior of $\log\|\sigma(s)\|^{2}_{\lambda_{G}(\xi),Q}(g)$ as $s\to0$.

\subsection
{Gauss map and its equivariant resolution}
\label{sect:3.2}
\par
Let $\Omega^{1}_{\mathcal X}$ be the holomorphic cotangent bundle of ${\mathcal X}$.
Let $\varPi\colon{\mathbf P}(\Omega^{1}_{\mathcal X}\otimes\pi^{*}TC)\to{\mathcal X}$ 
be the projective-space bundle associated with $\Omega^{1}_{\mathcal X}\otimes\pi^{*}TC$. 
Let $\varPi^{\lor}\colon{\mathbf P}(T{\mathcal X})^{\lor}\to{\mathcal X}$ 
be the dual projective-space bundle of ${\mathbf P}(T{\mathcal X})$, 
whose fiber ${\bf P}(T_{x}{\mathcal X})^{\lor}$ is the hyperplanes of $T_{x}{\mathcal X}$ passing through $0_{x}\in T_{x}{\mathcal X}$. 
Since $\dim C=1$, we have 
${\mathbf P}(\Omega^{1}_{\mathcal X}\otimes\pi^{*}TC) = {\mathbf P}(\Omega^{1}_{\mathcal X})\cong{\bf P}(T{\mathcal X})^{\lor}$. 
Since $T{\mathcal X}$ and $\Omega_{\mathcal X}^{1}$ are $G$-equivariant holomorphic vector bundles on ${\mathcal X}$, 
the projective-space bundles ${\mathbf P}(T{\mathcal X})$, ${\mathbf P}(T{\mathcal X})^{\lor}$ and 
${\mathbf P}(\Omega_{\mathcal X}^{1}\otimes \pi^{*}TC)$ are endowed with the natural $G$-action. 
\par
Let $x\in{\mathcal X}\setminus\Sigma_{\pi}$. Let $s$ be a holomorphic local coordinate of $C$ near $\pi(x)\in C$. 
Define the Gauss maps
$\nu\colon{\mathcal X}\setminus\Sigma_{\pi}\to{\bf P}(\Omega^{1}_{\mathcal X}\otimes\pi^{*}TC)$ 
and $\gamma\colon{\mathcal X}\setminus\Sigma_{\pi} \to {\mathbf P}(T{\mathcal X})^{\lor}$ by
$$
\nu(x) := [ d\pi_{x} ] = \left[ \sum_{i=0}^{n}\frac{\partial(s\circ\pi)} {\partial z_{i}}(x)\,dz_{i} \otimes \frac{\partial}{\partial s} \right],
\qquad
\gamma(x) := [T_{x}{\mathcal X}_{\pi(x)}].
$$
Under the canonical isomorphism ${\mathbf P}(\Omega^{1}_{\mathcal X}\otimes\pi^{*}TC) \cong {\mathbf P}(T{\mathcal X})^{\lor}$, we have $\nu=\gamma$.
\par
Let ${\mathcal H}:={\mathcal O}_{{\mathbf P}(T{\mathcal X})^{\lor}}(1)$ be the tautological quotient bundle 
and let ${\mathcal U}$ be the universal hyperplane bundle of ${\mathbf P}(T{\mathcal X})^{\lor}$. 
We have the following exact sequence of $G$-equivariant holomorphic vector bundles on ${\mathbf P}(T{\mathcal X})^{\lor}$
$$
{\mathcal S}^{\lor} \colon 0 \longrightarrow  {\mathcal U} \longrightarrow (\varPi^{\lor})^{*}T{\mathcal X} \longrightarrow {\mathcal H} \longrightarrow 0.
$$
\par
Let $h_{\mathcal U}$ be the Hermitian metric on ${\mathcal U}$ induced from $(\varPi^{\lor})^{*}h_{\mathcal X}$, 
and let $h_{\mathcal H}$ be the Hermitian metric on ${\mathcal H}$ induced from 
$(\varPi^{\lor})^{*}h_{\mathcal X}$ by the $C^{\infty}$-isomorphism ${\mathcal H}\cong{\mathcal U}^{\perp}$.
Set $\overline{\mathcal U}:=({\mathcal U},h_{\mathcal U})$ and $\overline{\mathcal H}:=({\mathcal H},h_{\mathcal H})$.
On ${\mathcal X}\setminus\Sigma_{\pi}$, we have
$(T{\mathcal X}/C,h_{{\mathcal X}/C})=\gamma^{*}\overline{\mathcal U}$.
\par
Let 
$$
{\mathcal L} := 
{\mathcal O}_{{\mathbf P}(\Omega^{1}_{\mathcal X}\otimes\pi^{*}TC)}(-1) \subset \varPi^{*}(\Omega^{1}_{\mathcal X}\otimes\pi^{*}TC)
$$
be the tautological line bundle over ${\mathbf P}(\Omega^{1}_{\mathcal X}\otimes\pi^{*}TC)$. Let $h_{C}$ be a Hermitian metric on $C$. 
Let $h_{\Omega^{1}_{\mathcal X}}$ be the Hermitian metric on $\Omega^{1}_{\mathcal X}$ induced from $h_{\mathcal X}$.
Let $h_{\mathcal L}$ be the Hermitian metric on ${\mathcal L}$ induced from the metric $\varPi^{*}(h_{\Omega^{1}_{\mathcal X}}\otimes\pi^{*}h_{C})$ 
via the inclusion ${\mathcal L}\subset\varPi^{*}(\Omega^{1}_{\mathcal X}\otimes\pi^{*}TC)$. 
\par
Since $\Sigma_{\pi}$ is a proper subvariety of ${\mathcal X}$, the Gauss maps $\nu$ and $\gamma$ extend to rational maps
$\nu\colon{\mathcal X}\dashrightarrow{\bf P}(\Omega^{1}_{\mathcal X}\otimes\pi^{*}TC)$
and $\gamma\colon{\mathcal X}\dashrightarrow{\mathbf P}(T{\mathcal X})^{\lor}$.
By \cite[Th.\,13.2]{BierstoneMilman97}, there exist a projective algebraic manifold $\widetilde{\mathcal X}$, 
a normal crossing divisor $E\subset\widetilde{\mathcal X}$, 
a birational holomorphic map $q\colon\widetilde{\mathcal X}\to{\mathcal X}$ with $E=q^{-1}(\Sigma_{\pi})$
and holomorphic maps 
$\widetilde{\nu}\colon\widetilde{\mathcal X} \to {\mathbf P}(\Omega^{1}_{\mathcal X}\otimes\pi^{*}TC)$,
$\widetilde{\gamma}\colon\widetilde{\mathcal X} \to {\mathbf P}(T{\mathcal X})^{\lor}$ with the following properties:
\begin{itemize}
\item[(a)]
The $G$-action on ${\mathcal X}$ lifts to a $G$-action on $\widetilde{\mathcal X}$ and 
$q|_{\widetilde{\mathcal X}\setminus E} \colon \widetilde{\mathcal X}\setminus E\to{\mathcal X}\setminus\Sigma_{\pi}$ is a $G$-equivariant isomorphism.
\item[(b)]
$(\pi\circ q)^{-1}(b)$ is a normal crossing divisor of $\widetilde{\mathcal X}$ for all $b\in\Delta$.
\item[(c)]
$\widetilde{\nu}=\nu\circ q$ and $\widetilde{\gamma}=\gamma\circ q$ on $\widetilde{\mathcal X}\setminus E$.
\end{itemize}
Then $\widetilde{\nu}=\widetilde{\gamma}$ under the canonical isomorphism 
${\mathbf P}(\Omega^{1}_{\mathcal X}\otimes\pi^{*}TC)\cong{\mathbf P}(T{\mathcal X})^{\lor}$.
\par
Let $\widetilde{\mathcal X}^{g}_{H}\subset\widetilde{\mathcal X}$ be the proper transform of ${\mathcal X}^{g}_{H}\subset{\mathcal X}$.
Since $\widetilde{\mathcal X}^{g}_{H}\subset(\widetilde{\mathcal X})^{g}$, we get 
$\widetilde{\gamma}(\widetilde{\mathcal X}^{g}_{H})\subset({\bf P}(T{\mathcal X})^{\lor})^{g}$ 
by the $G$-equivariance of $\widetilde{\gamma}$.
Consequently, $g\in G$ preserves the fibers of $(\widetilde{\gamma}^{*}{\mathcal U})|_{\widetilde{\mathcal X}^{g}_{H}}$.

\subsection
{The singularity of equivariant Quillen metrics}
\label{sect:3.3}
\par
Let $\Gamma\subset{\mathcal X}\times C$ be the graph of $\pi$. 
Then $\Gamma$ is a smooth divisor on ${\mathcal X}\times C$ preserved by the $G$-action on ${\mathcal X}\times C$. 
Let $[\Gamma]$ be the holomorphic line bundle on ${\mathcal X}\times C$ associated to $\Gamma$. 
Let $\varsigma_{\Gamma} \in H^{0}({\mathcal X}\times C,[\Gamma])$ be the canonical section of $[\Gamma]$ such that ${\rm div}(\varsigma_{\Gamma})=\Gamma$. 
Then $[\Gamma]$ is endowed with the $G$-equivariant structure such that $\varsigma_{\Gamma}$ is a $G$-invariant section. 
Namely, the $G$-equivariant structure on $[\Gamma]|_{({\mathcal X}\times C) \setminus \Gamma}$ induced by the nowhere vanishing $G$-invariant section
$\varsigma_{\Gamma}$ and the $G$-equivariant structure on $[\Gamma]|_{\Gamma}$ induced from the one on the normal bundle
$N_{\Gamma/{\mathcal X}\times C}$ via  the canonical isomorphism $ [\Gamma]|_{\Gamma} \cong N_{\Gamma/{\mathcal X}\times C}$ are compatible
and give rise to the structure of a $G$-equivariant holomorphic line bundle on $[\Gamma]$.
We identify ${\mathcal X}$ with $\Gamma$ via the natural projection $\Gamma\to{\mathcal X}$.
\par
Let $i\colon\Gamma\hookrightarrow{\mathcal X}\times C$ be the inclusion.
Let $p_{1}\colon{\mathcal X}\times C\to{\mathcal X}$ and $p_{2}\colon{\mathcal X}\times C\to C$ be the projections.
By the $G$-equivariance of $i$, $p_{1}$, $p_{2}$ and the $G$-invariance of $\varsigma_{\Gamma}$, 
we have the exact sequence of $G$-equivariant coherent sheaves on ${\mathcal X}\times C$,
\begin{equation}
\label{eqn:ex:seq:total}
\begin{CD}
0
\longrightarrow
{\mathcal O}_{{\mathcal X}\times C}( [\Gamma]^{-1}\otimes p_{1}^{*}\xi ) 
@>\otimes \varsigma_{\Gamma} >>
{\mathcal O}_{{\mathcal X}\times C}( p_{1}^{*}\xi )
\longrightarrow
i_{*}{\mathcal O}_{\Gamma}( p_{1}^{*}\xi )
\longrightarrow0.
\end{CD}
\end{equation}
\par
Let $\lambda_{G}(p_{1}^{*}\xi)$, $\lambda_{G}([\Gamma]^{-1}\otimes p_{1}^{*}\xi)$, $\lambda_{G}(\xi)$ be the equivariant determinants 
of the cohomology for the bundles $p_{1}^{*}\xi$, $[\Gamma]^{-1}\otimes p_{1}^{*}\xi$ on ${\mathcal X}\times C$ 
and the bundle $\xi$ on ${\mathcal X}$, respectively (cf. Section~\ref{sect:1.3.2}). 
Under the isomorphism $p_{1}^{*}\xi|_{\Gamma}\cong\xi$ induced from the identification $p_{1}\colon\Gamma\to{\mathcal X}$, 
the holomorphic vector bundle $\lambda_{G}$ on $C$ defined as
$$
\begin{array}{ll}
\lambda_{G}
&=
\lambda_{G}\left([\Gamma]^{-1}\otimes p_{1}^{*}\xi\right) \otimes \lambda_{G}(p_{1}^{*}\xi)^{-1} \otimes \lambda_{G}(\xi)
:=
\prod_{W\in\widehat{G}}\lambda_{W},
\\
\lambda_{W}
&:=
\lambda_{W}\left([\Gamma]^{-1}\otimes p_{1}^{*}\xi\right) \otimes \lambda_{W}(p_{1}^{*}\xi)^{-1} \otimes \lambda_{W}(\xi)
\end{array}
$$
carries the canonical holomorphic section $\sigma_{KM}=((\sigma_{KM})_{W})_{W\in\widehat{G}}$ such that 
$(\sigma_{KM})_{W}$ is identified with $1\in H^{0}(C,{\mathcal O}_{C})$ under the canonical isomorphism 
$\lambda_{W}\cong{\mathcal O}_{C}$.
By \eqref{eqn:dir:sum:dir:im}, $\lambda_{W}$ is the determinant of the acyclic complex of coherent sheaves on $C$ 
obtained as the $W$-component of the long exact sequence of the direct image sheaves associated to \eqref{eqn:ex:seq:total}.
Hence $\lambda_{W}$ is canonically isomorphic to ${\mathcal O}_{C}$ (\cite{Bismut95, BismutLebeau91, KnudsenMumford76}). 
Then $\sigma_{\rm KM}$ is admissible.
\par
Let $U\subset S$ be a relatively compact neighborhood of $0\in\Delta$ and set $U^{o}:=U\setminus\{0\}$. 
On $X=\pi^{-1}(S)$, we identify $\pi$ (resp. $d\pi$) with $s\circ\pi$ (resp. $d(s\circ\pi)$). 
Hence $\pi\in{\mathcal O}(X)$ and $d\pi\in H^{0}(X,\Omega^{1}_{X})$ in what follows.
\par
Let $h_{[\Gamma]}$ be a $G$-invariant smooth Hermitian metric on $[\Gamma]$ such that
\begin{equation}
\label{eqn:herm:met}
h_{[\Gamma]}(\varsigma_{\Gamma},\varsigma_{\Gamma})(w,t)
=
\begin{cases}
\begin{array}{lcr}
|\pi(w)-t|^{2}&\hbox{if}&(w,t)\in\pi^{-1}(U)\times U,
\\
1&\hbox{if}&(w,t)\in({\mathcal X}\setminus X)\times U
\end{array}
\end{cases}
\end{equation}
and let $h_{[\Gamma]^{-1}}$ be the metric on $[\Gamma]^{-1}$ induced from $h_{[\Gamma]}$.
\par
For $g\in G$, let $\|\cdot\|_{Q,\lambda_{G}(\xi)}(g)$ be the equivariant Quillen metric on $\lambda_{G}(\xi)$ with respect to 
$h_{{\mathcal X}/C}$, $h_{\xi}$.
Let $\|\cdot\|_{Q,\lambda_{G}([\Gamma]^{-1}\otimes p_{1}^{*}\xi)}(g)$ (resp. $\|\cdot\|_{Q,\lambda_{G}(p_{1}^{*}\xi)}(g)$) be 
the equivariant Quillen metric on $\lambda_{G}([\Gamma]^{-1}\otimes p_{1}^{*}\xi)$ (resp. $\lambda_{G}(p_{1}^{*}\xi)$) 
with respect to $h_{\mathcal X}$, $h_{[\Gamma]^{-1}}\otimes h_{\xi}$ (resp. $h_{\mathcal X}$, $h_{\xi}$).
Let $\|\cdot\|_{Q,\lambda_{G}}(g)$ be the equivariant Quillen metric on $\lambda_{G}$ defined as the component-wise tensor product of 
those on $\lambda_{G}([\Gamma]^{-1}\otimes p_{1}^{*}\xi)$, $\lambda_{G}(p_{1}^{*}\xi)^{-1}$, $\lambda_{G}(\xi)$.
\par
For the germ $(\pi\colon({\mathcal X},X_{0})\to(S,0),\xi)$, we define its topological invariant by
$$
{\frak a}_{\pi,\xi}(g)
=
\int_{E_{0}\cap\widetilde{\mathcal X}^{g}_{H}}
\widetilde{\gamma}^{*} 
\left\{ \frac{1-{\rm Td}({\mathcal H})^{-1}}{c_{1}({\mathcal H})} \right\}\, q^{*}\{ {\rm Td}_{g}(T{\mathcal X}){\rm ch}_{g}(\xi) \}
-
\int_{{\mathcal X}^{g}_{V}\cap X_{0}} {\rm Td}_{g}(T{\mathcal X}){\rm ch}_{g}(\xi).
$$

\begin{theorem}
\label{Theorem4.1}
For any $g\in G$, the following identity of functions on $S^{o}$ holds:
$$
\log \left\| \sigma_{KM} \right\|^{2}_{Q,\lambda_{G}}(g) \equiv_{\mathcal B} {\frak a}_{\pi,\xi}(g)\,\log |s|^{2}.
$$
\end{theorem}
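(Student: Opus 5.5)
The plan is to apply Bismut's equivariant embedding formula for Quillen metrics \cite{Bismut95} to the $G$-equivariant embedding $i\colon\Gamma\hookrightarrow{\mathcal X}\times C$, fibred over $C$ via $p_{2}$, together with the resolution \eqref{eqn:ex:seq:total} of $i_{*}{\mathcal O}_{\Gamma}(p_{1}^{*}\xi)$. This is the equivariant analogue of the argument of Bismut \cite{Bismut97} and \cite{Yoshikawa07}, where the Bismut--Lebeau theorem is used instead.

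\textbf{Step 1: the embedding formula.} For $t\in U^{o}$, the fiber $\Gamma_{t}=X_{t}$ is a smooth submanifold of ${\mathcal X}\times\{t\}\cong{\mathcal X}$. The two-term complex $[\Gamma]^{-1}\otimes p_{1}^{*}\xi\to p_{1}^{*}\xi$, with metrics $h_{[\Gamma]^{-1}}\otimes h_{\xi}$ and $h_{\xi}$, satisfies Bismut's assumption (A) with respect to the metric on $N_{\Gamma_{t}/{\mathcal X}}$ induced by $h_{[\Gamma]}$. Indeed, by \eqref{eqn:herm:met} the section $\varsigma_{\Gamma}$ restricted to ${\mathcal X}\times\{t\}$ is $\pi-t$, so its differential along $X_{t}$ is $d\pi$. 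The induced normal metric is therefore $|d\pi|^{-2}$ times the quotient metric, and one has to check compatibility with $h_{{\mathcal X}}$ or account for the difference by an extra anomaly term. Bismut's formula \cite[Th.\,0.1]{Bismut95} then gives, for $t\in U^{o}$,
\[
\log\|\sigma_{KM}\|^{2}_{Q,\lambda_{G}}(g)(t)
=-\int_{X_{t}^{g}}{\rm Td}_{g}(\overline{TX_{t}})^{-1}\,\cdots
+\int_{{\mathcal X}^{g}}{\rm Td}_{g}(T{\mathcal X})\,T_{g}(\cdots)
+\int_{X_{t}^{g}}{\rm Td}_{g}(TX_{t})\,R_{g}(N)\,{\rm ch}_{g}(\xi).
\]
The first term is the Bott--Chern class $\widetilde{\rm Td}_{g}$ of $0\to TX_{t}\to T{\mathcal X}|_{X_{t}}\to N_{t}\to0$. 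The second is the equivariant Bott--Chern current $T_{g}$ of the resolution. The last term is Bismut's equivariant $R$-genus term, which is topological on compact fibers.

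\textbf{Step 2: analysis of each term as $t\to0$.} The $R$-genus term is a cohomological integral over the fibers, so it is constant in $t$. The current $T_{g}$ on ${\mathcal X}\times\{t\}$ depends smoothly on $t$ away from $\Sigma_{\pi}$. Near $\Sigma_{\pi}$ it is expressed through $\log|\pi-t|^{2}$ and Bott--Chern terms in $h_{[\Gamma]}$. Its integral against ${\rm Td}_{g}(T{\mathcal X})$ over ${\mathcal X}^{g}$ will be shown to lie in ${\mathcal B}(S)$ plus the term $-\log|t|^{2}\int_{{\mathcal X}^{g}_{V}\cap X_{0}}{\rm Td}_{g}{\rm ch}_{g}(\xi)$. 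This term comes from the vertical components: there $\pi\equiv0$, so $\log|\pi-t|^{2}=\log|t|^{2}$ exactly.

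The main term is the Bott--Chern term. Using $TX_{t}=\gamma^{*}{\mathcal U}$ and $N_{t}\cong\gamma^{*}{\mathcal H}$, I will pull back via $q$ to $\widetilde{\mathcal X}$. There the metrics become $\widetilde{\gamma}^{*}$ of smooth metrics, up to the factor $|d\pi|^{2}$, which vanishes along $E$ to prescribed orders. The Bott--Chern class of the exact sequence then equals $\widetilde\gamma^{*}(\text{smooth form})$ plus $\log\|d\pi\|^{2}$ times $\widetilde{\gamma}^{*}\{({\rm Td}({\mathcal H})^{-1}-1)/c_{1}({\mathcal H})\}$-type classes. This is the standard identity for the Bott--Chern class of a line-bundle rescaling.

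\textbf{Step 3: fiber integrals and the main obstacle.} The fiber integrals $\int_{X_{t}^{g}}$ are pushforwards along $\pi\circ q$ restricted to $\widetilde{\mathcal X}^{g}_{H}$, whose central fiber is a normal crossing divisor by property (b). By Barlet's theorem on fiber integrals, the pushforward of a smooth form times powers of $\log$ of a normal crossing function lies in ${\mathcal B}(S)$. The exception is the coefficient of $\log|t|^{2}$, which arises from $\log|d\pi|^{2}\sim\log|t|^{2}$ restricted to the exceptional part. The Poincar\'e--Lelong formula turns this coefficient into the integral over $E_{0}\cap\widetilde{\mathcal X}^{g}_{H}$ appearing in ${\frak a}_{\pi,\xi}(g)$.

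I expect this last extraction to be the hardest step. The difficulties are to control $\log|d\pi|^{2}$ uniformly near $E$ in the equivariant setting on the fixed locus, to verify the residue identification with exactly the class $(1-{\rm Td}({\mathcal H})^{-1})/c_{1}({\mathcal H})$, and to keep all remainder terms in the class ${\mathcal B}(S)$. I would handle it by following \cite{Yoshikawa07}, applied componentwise on the fixed-point manifolds $\widetilde{\mathcal X}^{g}_{H}$, to which $g$ restricts fibrewise on $\widetilde{\gamma}^{*}{\mathcal U}$.
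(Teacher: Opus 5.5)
Your plan is essentially the paper's proof: Bismut's equivariant immersion formula for $X_s\hookrightarrow{\mathcal X}$ with the Koszul resolution \eqref{eqn:ex:seq} gives three contributions. First, the explicit Bott--Chern current produces $-\{\int_{{\mathcal X}^{g}_{V}\cap X_{0}}{\rm Td}_{g}(T{\mathcal X})\,{\rm ch}_{g}(\xi)\}\log|s|^{2}$ from the vertical fixed components and a ${\mathcal B}$-term from ${\mathcal X}^{g}_{H}$ (the paper uses \cite[Th.\,9.1]{Yoshikawa07} for the latter). Second, the splitting $\widetilde{\rm Td}_{g}({\mathcal E}_{s};h_{X_{s}},h_{\mathcal X},a_{N_{s}})=\widetilde{\rm Td}_{g}({\mathcal E}_{s};h_{X_{s}},h_{\mathcal X},h_{N_{s}})+{\rm Td}_{g}(TX_{s})\,\widetilde{\rm Td}(N_{s};a_{N_{s}},h_{N_{s}})$ is handled by Barlet's theorem for the Gauss-map pullback and by \cite[Cor.\,4.6]{Yoshikawa07} for the $\log\|d\pi\|^{2}$ term.

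One small correction: the assumption-(A) metric is normalized by $a_{N_{s}^{*}}(d\pi|_{X_{s}},d\pi|_{X_{s}})=1$, so on $N_{s}$ it is $\|d\pi\|^{2}$ (not $\|d\pi\|^{-2}$) times the quotient metric $h_{N_{s}}$. This normalization gives $c_{1}(N_{s},a_{N_{s}})=0$ and fixes the sign of the $\log\|d\pi\|^{2}$ term.
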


\begin{pf}
We follow \cite[Sect.\,5]{Bismut97}, \cite[Th.\,6.3]{Yoshikawa04}, \cite[Th.\,5.1]{Yoshikawa07}, \cite[Appendix]{Imaike24}. 
The proof is quite parallel to those of \cite[Th.\,5.1]{Yoshikawa07}, \cite[Th.\,A.3]{Imaike24}. 
The primary differences from \cite[Th.\,5.1]{Yoshikawa07} arise from the presence of both horizontal and vertical components 
 (${\mathcal X}^{g}_{H}$ and ${\mathcal X}^{g}_{V}$) in the fixed locus ${\mathcal X}^{g}$. 
 These two components yield distinct contributions to the singularity of the equivariant Quillen metric.
In \cite[Th.\,A.3]{Imaike24}, $g$ was assumed to be an involution. For the sake of completeness, we give a detailed proof.

\par{\em (Step 1) }
Let $[X_{s}]=[\Gamma]|_{X_{s}}$ be the holomorphic line bundle on ${\mathcal X}$ associated to the divisor $X_{s}$. 
The canonical section of $[X_{s}]$ is defined as $\varsigma_{s}=\varsigma_{\Gamma}|_{{\mathcal X}\times\{s\}}\in H^{0}({\mathcal X},[X_{s}])$. 
Then ${\rm div}(\varsigma_{s})=X_{s}$. Let $i_{s}\colon X_{s}\hookrightarrow{\mathcal X}$ be the natural embedding.
By \eqref{eqn:ex:seq:total}, we get the exact sequence of $G$-equivariant coherent sheaves on ${\mathcal X}$,
\begin{equation}
\label{eqn:ex:seq}
\begin{CD}
0
\longrightarrow
{\mathcal O}_{\mathcal X}([X_{s}]^{-1}\otimes\xi) @>\otimes\varsigma_{s}>> {\mathcal O}_{\mathcal X}(\xi) \longrightarrow 
(i_{s})_{*}{\mathcal O}_{X_{s}}(\xi)
\longrightarrow
0
\end{CD}
\end{equation}
and hence the canonical isomorphism 
$(\lambda_{G})_{s} \cong \lambda_{G}([X_{s}]^{-1}\otimes\xi)\otimes\lambda_{G}(\xi)^{-1}\otimes\lambda_{G}(\xi_{s})$.
\par
Set $h_{[X_{s}]}=h_{[\Gamma]}|_{{\mathcal X}\times\{s\}}$, which is a $G$-invariant Hermitian metric on $[X_{s}]$. 
Let $h_{[X_{s}]^{-1}}$ be the $G$-invariant Hermitian metric on $[X_{s}]^{-1}$ induced from $h_{[X_{s}]}$.
Let $N_{s}=N_{X_{s}/{\mathcal X}}$ (resp. $N_{s}^{*}=N^{*}_{X_{s}/{\mathcal X}}$) be the normal (resp. conormal) bundle of 
$X_{s}$ in ${\mathcal X}$. 
Then $d\pi|_{X_{s}}\in H^{0}(X_{s},N_{s}^{*})$ generates $N^{*}_{s}$ for $s\in S^{o}$ and $d\pi|_{X_{s}}$ is $G$-invariant 
(cf. \cite[Eq.\,(2.2)]{Bismut95}). 
Let $a_{N^{*}_{s}}$ be the $G$-invariant Hermitian metric on $N^{*}_{s}$ defined by $a_{N^{*}_{s}}(d\pi|_{X_{s}},d\pi|_{X_{s}})=1$.
Let $a_{N_{s}}$ be the $G$-invariant Hermitian metric on $N_{s}$ induced from $a_{N^{*}_{s}}$. 
We have the equality $c_{1}(N_{s},a_{N_{s}})=0$ for $s\in U^{o}$.
By \cite[Proof of Th.\,5.1 Step 1]{Yoshikawa07}, the $G$-invariant metrics $h_{[X_{s}]^{-1}}\otimes h_{\xi}$ 
and $h_{\xi}$ verify assumption (A) of Bismut \cite[Def.1.5]{Bismut90} with respect to $a_{N_{s}}$ and $h_{\xi}|_{X_{s}}$.

\par{\em (Step 2) }
Let ${\mathcal E}_{s}$ be the exact sequence of $G$-equivariant holomorphic vector bundles on $X_{s}$ 
defined as ${\mathcal E}_{s}\colon0\to TX_{s}\to T{\mathcal X}|_{X_{s}}\to N_{s}\to0$.
By \cite[I, Th.\,1.29]{BGS88}, \cite[Th.\,1.2.2]{GilletSoule90}, \cite[Th.\,3.4]{KohlerRoessler01}, we have the Bott-Chern class
$\widetilde{\rm Td}_{g}({\mathcal E}_{s};h_{X_{s}},h_{\mathcal X},a_{N_{s}})\in\widetilde{A}_{X^{g}_{s}}$ 
such that (we follow the sign convention in \cite{GilletSoule90})
$$
dd^{c}\widetilde{\rm Td}_{g}({\mathcal E}_{s};h_{X_{s}},h_{\mathcal X},a_{N_{s}})
=
{\rm Td}_{g}(TX_{s},h_{X_{s}})\,{\rm Td}(N_{s},a_{N_{s}})|_{X^{g}_{s}} - {\rm Td}_{g}(T{\mathcal X},h_{\mathcal X}).
$$
Here, to obtain the equality ${\rm Td}_{g}(N_{s},a_{N_{s}})={\rm Td}(N_{s},a_{N_{s}})|_{X_{s}^{g}}$, 
we used the triviality of the $G$-action on $N_{s}|_{X_{s}^{g}}$.
Set $(X_{H}^{g})_{s}:={\mathcal X}_{H}^{g}\cap X_{s}$.
Applying the embedding formula of Bismut \cite[Th.\,0.1]{Bismut95} (see also \cite[Th.\,5.6]{Bismut97}) to the $G$-equivariant embedding
$i_{s}\colon X_{s}\hookrightarrow{\mathcal X}$ and to the exact sequence \eqref{eqn:ex:seq}, for all $s\in U^{o}$, we obtain
\begin{equation}
\label{eqn:im:formula}
\begin{aligned}
\log\|\sigma_{KM}(s)\|^{2}_{Q,\lambda_{G}}(g)
&
=
\int_{({\mathcal X}^{g}_{V}\times\{s\})\amalg({\mathcal X}^{g}_{H}\times\{s\})}
-\frac{{\rm Td}_{g}(T{\mathcal X},h_{\mathcal X})\,{\rm ch}_{g}(\xi,h_{\xi})}{{\rm Td}([\Gamma],h_{[\Gamma]})} 
\log h_{[\Gamma]}(\varsigma_{\Gamma},\varsigma_{\Gamma})
\\
&
\quad
-\int_{(X^{g}_{H})_{s}} \frac{\widetilde{\rm Td}_{g} ({\mathcal E}_{s};\,h_{X_{s}},h_{\mathcal X},a_{N_{s}})\,
{\rm ch}_{g}(\xi,h_{\xi})}{{\rm Td}(N_{s},a_{N_{s}})} +C(g),
\end{aligned}
\end{equation}
where $C(g)$ is a topological constant independent of $s\in U^{o}$.
Here we used the triviality of the $G$-action on $[X_{s}]|_{X_{s}^{g}} \cong N_{X_{s}/X}|_{X_{s}^{g}}$ and the explicit formula for 
the Bott-Chern current \cite[Rem.\,3.5, especially (3.23), Th.\,3.15, Th.\,3.17]{BGS90} 
to obtain the first term of the right hand side of \eqref{eqn:im:formula}.
Substituting \eqref{eqn:herm:met} and $c_{1}(N_{s},a_{N_{s}})=0$ into \eqref{eqn:im:formula}, for $s\in U^{o}$, we find
\begin{equation}
\label{eqn:KM}
\begin{aligned}
\,
&
\log\|\sigma_{KM}(s)\|^{2}_{Q,\lambda_{G}}(g)
\equiv_{\mathcal B}
-
\int_{{\mathcal X}^{g}_{H}\times\{s\}} {\rm Td}_{g}(T{\mathcal X},h_{\mathcal X})\,{\rm ch}_{g}(\xi,h_{\xi}) \log|\pi-s|^{2}
\\
&
-\int_{({\mathcal X}^{g}_{V}\cap X_{0})\times\{s\}} {\rm Td}_{g}(T{\mathcal X})\,{\rm ch}_{g}(\xi)\log|s|^{2}
-\int_{(X^{g}_{H})_{s}} \widetilde{\rm Td}_{g}({\mathcal E}_{s};\,h_{X_{s}},h_{\mathcal X},a_{N_{s}})\,{\rm ch}_{g}(\xi,h_{\xi})
\\
&\equiv_{\mathcal B} -\{\int_{{\mathcal X}^{g}_{V}\cap X_{0}}{\rm Td}_{g}(T{\mathcal X})\,{\rm ch}_{g}(\xi)\}\,\log|s|^{2}
-
\int_{(X^{g}_{H})_{s}} \widetilde{\rm Td}_{g}({\mathcal E}_{s};\,h_{X_{s}},h_{\mathcal X},a_{N_{s}})\,{\rm ch}_{g}(\xi,h_{\xi}),
\end{aligned}
\end{equation}
where we used the equalities $h_{[\Gamma]}(\varsigma_{\Gamma},\varsigma_{\Gamma})|_{X_{0}\times\{s\}}=|s|^{2}$ and 
${\mathcal X}_{V}^{g}\cap\pi^{-1}(U)={\mathcal X}_{V}^{g}\cap X_{0}$ to get the first equality and \cite[Th.\,9.1]{Yoshikawa07} 
to get the second equality.

\par{\em (Step 3) }
Let $h_{N_{s}}$ be the Hermitian metric on $N_{s}$ induced from $h_{\mathcal X}$ by the smooth isomorphism $N_{s}\cong(TX_{s})^{\perp}$. 
Let $\widetilde{\rm Td}(N_{s};\,a_{N_{s}},h_{N_{s}})\in\widetilde{A}_{X_{s}}$ be the Bott--Chern secondary class 
\cite[I e)]{BGS88}, \cite[Sect.\,1.2.4]{GilletSoule90} such that
$$
dd^{c} \widetilde{\rm Td}( N_{s}; a_{N_{s}}, h_{N_{s}} ) = {\rm Td}( N_{s}, a_{N_{s}} ) - {\rm Td}( N_{s}, h_{N_{s}} ).
$$
Since $G$ acts trivially on $N_{s}|_{X_{s}^{g}}$, we deduce from \cite[I, Props.\,1.3.2 and 1.3.4]{GilletSoule90} and 
\cite[Th.\,3.4, especially the formula for $\widetilde{\phi_{\zeta}\phi_{\eta}}$]{KohlerRoessler01} that
\begin{equation}
\label{eqn:eq:Todd}
\begin{aligned}
\,&
\widetilde{\rm Td}_{g}({\mathcal E}_{s};\,h_{X_{s}},h_{\mathcal X},a_{N_{s}})
=
\widetilde{\rm Td}_{g}({\mathcal E}_{s};\,h_{X_{s}},h_{\mathcal X},h_{N_{s}}) 
+ {\rm Td}_{g}(TX_{s},h_{X_{s}})\,\widetilde{\rm Td}(N_{s};\,a_{N_{s}},h_{N_{s}})
\\
&=
\widetilde{\rm Td}_{g}({\mathcal E}_{s};\,h_{X_{s}},h_{\mathcal X},h_{N_{s}})
+
\gamma^{*}{\rm Td}_{g}(\overline{\mathcal U})\, 
\nu^{*} \left\{ \frac{1-{\rm Td}(-c_{1}(\overline{\mathcal L}))}{-c_{1}(\overline{\mathcal L})} \right\} \log\|d\pi\|^{2}
|_{(X^{g}_{H})_{s}}
\\
&=
\left[
\gamma^{*}\widetilde{\rm Td}_{g} ( {\mathcal S}^{\lor};\, h_{\mathcal U},(\varPi^{\lor})^{*}h_{\mathcal X},h_{\mathcal H} )
+
\gamma^{*}{\rm Td}_{g}( \overline{\mathcal U} ) 
\nu^{*} \left\{ \frac{1-{\rm Td}(-c_{1}(\overline{\mathcal L}))}{-c_{1}(\overline{\mathcal L})} \right\} \log\|d\pi\|^{2}
\right]
|_{(X^{g}_{H})_{s}}.
\end{aligned}
\end{equation}
To get the second equality, we used \cite[Eq.\,(13)]{Yoshikawa07} and $(TX_{s},h_{X_{s}})=\gamma^{*}\overline{\mathcal U}|_{X_{s}}$; 
to get the third equality, we used the functoriality of the Bott--Chern class \cite[Th.\,3.4]{KohlerRoessler01} and 
$({\mathcal E}_{s},h_{X_{s}},h_{\mathcal X},h_{N_{s}}) = 
\gamma^{*}({\mathcal S}^{\lor},h_{\mathcal U},(\varPi^{\lor})^{*}h_{\mathcal X},h_{\mathcal H})|_{X_{s}}$.
Set $\widetilde{\pi}:=\pi\circ q$.
Substituting \eqref{eqn:eq:Todd} into \eqref{eqn:KM} and using $[(\pi|_{X^{g}_{H}})_{*}\omega]^{(0)}\equiv_{\mathfrak B} 0$ 
for any smooth differential form $\omega$ on $X^{g}_{H}$ (cf. \cite{Barlet82}), 
we deduce from the same argument as in \cite[p.74 l.1-l.13]{Yoshikawa07} that
\begin{equation}
\label{eqn:sing:KM}
\begin{aligned}
\,&
\log \left\| \sigma_{KM} \right\|^{2}_{Q,\lambda_{G}}(g) 
\equiv_{\mathcal B}
-\left\{ \int_{{\mathcal X}^{g}_{V}\cap X_{0}}{\rm Td}_{g}(T{\mathcal X})\,{\rm ch}_{g}(\xi) \right\} \log|s|^{2}
\\
&
\qquad+
(\widetilde{\pi}|_{{\mathcal X}^{g}_{H}})_{*}
\left[ 
\widetilde{\gamma}^{*} {\rm Td}_{g}( \overline{\mathcal U} ) 
\widetilde{\nu}^{*} \left\{ 
\frac{{\rm Td}(-c_{1}(\overline{\mathcal L}))-1}{-c_{1}(\overline{\mathcal L})} \right\} q^{*}{\rm ch}_{g}(\xi,h_{\xi})\,(q^{*}\log\|d\pi\|^{2})
\right]^{(0)}.
\end{aligned}
\end{equation}
By \cite[Cor.\,4.6]{Yoshikawa07} applied to the second term of the right hand side of \eqref{eqn:sing:KM}, we get
$$
\begin{aligned}
\log \left\| \sigma_{KM} \right\|^{2}_{Q,\lambda_{G}}(g)
&\equiv_{\mathcal B}
\left[
\int_{\widetilde{\mathcal X}^{g}_{H}\cap E_{0}}
\widetilde{\gamma}^{*} \left\{ {\rm Td}_{g}({\mathcal U}) \frac{{\rm Td}({\mathcal H})-1}{c_{1}({\mathcal H})} \right\} q^{*}{\rm ch}_{g}(\xi)
\right]
\log|s|^{2}
\\
&\qquad
-\left\{ \int_{{\mathcal X}^{g}_{V}\cap X_{0}} {\rm Td}_{g}( T{\mathcal X} ) {\rm ch}_{g}( \xi ) \right\} \log|s|^{2}
=
{\mathfrak a}_{\pi, \xi}(g)\,\log | s |^{2},
\end{aligned}
$$
where the last equality follows from the identity ${\rm Td}_{g}({\mathcal U}) {\rm Td}({\mathcal H}) = (\varPi^{\lor})^{*}{\rm Td}_{g}(T{\mathcal X})$.
This completes the proof.
\end{pf}

\begin{theorem}
\label{Theorem1.1}
For any $g\in G$, the following identity holds:
$$
\log \left\| \sigma \right\|^{2}_{Q,\lambda_{G}(\xi)}(g) \equiv_{{\mathcal B}} {\mathfrak a}_{\pi, \xi}(g) \log | s |^{2}.
$$
\end{theorem}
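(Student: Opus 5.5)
The plan is to deduce Theorem~\ref{Theorem1.1} from Theorem~\ref{Theorem4.1}. We show that the two auxiliary factors $\lambda_{G}([\Gamma]^{-1}\otimes p_{1}^{*}\xi)$ and $\lambda_{G}(p_{1}^{*}\xi)$ contribute only smooth functions near $s=0$, and that $\sigma_{KM}$ differs from $\sigma$ by such factors.

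\emph{Step 1: the auxiliary families.} Consider the projection $p_{2}\colon{\mathcal X}\times C\to C$. It is a $G$-equivariant proper holomorphic submersion with fibers ${\mathcal X}\times\{s\}\cong{\mathcal X}$. There is a $G$-equivariant ample line bundle on ${\mathcal X}\times C$, namely the pullback of the ample bundle on ${\mathcal X}$ tensored with an ample bundle on $C$ carrying trivial action. The metrics $h_{\mathcal X}$, $h_{\xi}$ and $h_{[\Gamma]^{-1}}\otimes h_{\xi}$ are smooth and $G$-invariant on the total space. Hence Theorem~\ref{thm:smooth:eq:Quillen} applies to both $p_{1}^{*}\xi$ and $[\Gamma]^{-1}\otimes p_{1}^{*}\xi$ on a small neighborhood $U$ of $0$.

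For any admissible holomorphic sections $\sigma_{1}$ of $\lambda_{G}([\Gamma]^{-1}\otimes p_{1}^{*}\xi)|_{U}$ and $\sigma_{2}$ of $\lambda_{G}(p_{1}^{*}\xi)|_{U}$, the functions $\log\|\sigma_{i}\|^{2}_{Q}(g)$ are therefore smooth on all of $U$, including $s=0$. This is where the degeneration disappears: the fibers of $p_{2}$ are never singular. Such sections exist after shrinking $U$. The equivariant determinants are line bundles on a disc, hence trivial, and only finitely many $W$ occur.

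\emph{Step 2: comparison of sections.} Since $(\sigma_{KM})_{W}$ trivializes $\lambda_{W}$, we can write, on $U$,
$$
\sigma_{KM} = \sigma_{1}\otimes\sigma_{2}^{-1}\otimes (f_{W}\,\sigma_{W})_{W\in\widehat{G}}
$$
with $f_{W}$ nowhere-vanishing holomorphic functions on $U$. The trivializations are all nowhere vanishing holomorphic sections of line bundles over $U$, so $f_{W}\in{\mathcal O}^{*}(U)$. Also $f_{W}=1$ for all but finitely many $W$, by admissibility of all three sections.

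By the definition of the tensor product Quillen metric,
$$
\log\|\sigma_{KM}\|^{2}_{Q,\lambda_{G}}(g)=\log\|\sigma_{1}\|^{2}_{Q}(g)-\log\|\sigma_{2}\|^{2}_{Q}(g)+\log\|\sigma\|^{2}_{Q,\lambda_{G}(\xi)}(g)+\sum_{W}\frac{\chi_{W}(g)}{\dim W}\log|f_{W}|^{2}.
$$
Here I must check that the equivariant Quillen metric scales on the $W$-component by the weight $\chi_{W}(g)/\dim W$, which is immediate from the definition in Section~\ref{sect:1.1}. All terms except $\log\|\sigma\|^{2}$ are smooth on $U$, hence lie in ${\mathcal B}(S)$.

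\emph{Step 3: conclusion.} Combining the identity of Step 2 with Theorem~\ref{Theorem4.1} gives the result on $U^{o}$. Changing $\sigma$ to another admissible section of $\lambda_{G}(\xi)|_{S}$ again only adds $\sum_{W}\chi_{W}(g)/\dim W\cdot\log|u_{W}|^{2}$ with $u_{W}$ holomorphic and invertible on $S$, which is smooth. On $S\setminus U$ both sides are smooth by Theorem~\ref{thm:smooth:eq:Quillen}.

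The only delicate point, which is mild, is Step 1: verifying that the hypotheses of Theorem~\ref{thm:smooth:eq:Quillen} hold for the family $p_{2}$. In particular, the relative metric is $h_{\mathcal X}$ on each fiber, so it is a smooth family of fiber metrics even though $\Gamma$ meets the fibers ${\mathcal X}\times\{s\}$ in the singular $X_{0}$ when $s=0$. This is harmless, since the twisting bundle $[\Gamma]^{-1}$ with metric $h_{[\Gamma]^{-1}}$ is smooth on ${\mathcal X}\times C$.
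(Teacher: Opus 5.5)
Your proposal is correct and is essentially the paper's proof, which follows Bismut's Th.~5.9. The paper chooses admissible sections $\alpha$ of $\lambda_{G}(p_{1}^{*}\xi)$ and $\beta$ of $\lambda_{G}([\Gamma]^{-1}\otimes p_{1}^{*}\xi)$ with $\sigma_{KM}=\beta\otimes\alpha^{-1}\otimes\sigma$ exactly, so your unit factors $f_{W}$ are simply absorbed into $\beta$, and it then concludes from Theorem~\ref{thm:smooth:eq:Quillen} (applied to the submersion $p_{2}$) together with Theorem~\ref{Theorem4.1}.
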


\begin{pf}
We follow \cite[Th.\,5.9]{Bismut97}. There exist admissible holomorphic sections
$$
\alpha=(\alpha_{W})_{W\in\widehat{G}}\in\Gamma(U,\lambda_{G}(p_{1}^{*}\xi)),
\qquad
\beta=(\beta_{W})_{W\in\widehat{G}}\in\Gamma(U,\lambda_{G}([\Gamma]^{-1}\otimes p_{1}^{*}\xi))
$$
such that $\sigma_{KM}=\beta\otimes\alpha^{-1}\otimes\sigma$ on $S$, i.e., 
$(\sigma_{KM})_{W}=\beta_{W}\otimes\alpha_{W}^{-1}\otimes\sigma_{W}$ for all $W\in\widehat{G}$. Then
$$
\begin{aligned}
\log \|\sigma\|^{2}_{Q,\lambda_{G}(\xi)}(g)
&=
\log \|\sigma_{KM}\|^{2}_{Q,\lambda_{G}}(g) + \log \|\alpha\|^{2}_{Q,\lambda_{G}(p_{1}^{*}\xi)}(g)
-
\log \|\beta\|^{2}_{Q,\lambda_{G}([\Gamma]\otimes p_{1}^{*}\xi)}(g)
\\
&\equiv_{\mathcal B}
{\mathfrak a}_{\pi,\xi}(g)\,\log | s |^{2}
\end{aligned}
$$
by Theorems~\ref{thm:smooth:eq:Quillen} and \ref{Theorem4.1}. This completes the proof.
\end{pf}

\begin{corollary}
\label{cor:curv:current}
For any $g \in G$, the following equation of $(1,1)$-currents on $S$ holds:
$$
-dd^{c} \log \left\| \sigma \right\|_{Q,\lambda_{G}(\xi)}^{2}(g)
=
\pi_{*} \left[ {\rm Td}_{g}(T{\mathcal X}/C,h_{{\mathcal X}/C})\,{\rm ch}_{g}(\xi,h_{\xi}) \right]^{(1,1)} - {\mathfrak a}_{\pi,\xi}(g)\, \delta_{0},
$$
where $\delta_{0}$ denotes the Dirac $\delta$-current supported at the origin.
\end{corollary}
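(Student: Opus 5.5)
The plan is to combine the smooth curvature formula on $S^{o}$ (Theorem~\ref{thm:curvature}) with the singularity structure from Theorem~\ref{Theorem1.1}. Write $\varphi(s)=\log\|\sigma\|^{2}_{Q,\lambda_{G}(\xi)}(g)$. By Theorem~\ref{Theorem1.1}, $\varphi={\mathfrak a}_{\pi,\xi}(g)\log|s|^{2}+\psi$ with $\psi\in{\mathcal B}(S)$. Hence $\varphi$ is locally integrable on $S$ and defines a current. Since $dd^{c}\log|s|^{2}=\delta_{0}$ with the normalization $dd^{c}=\frac{1}{2\pi i}\bar\partial\partial$ used in the paper, we get $-dd^{c}\varphi=-{\mathfrak a}_{\pi,\xi}(g)\delta_{0}-dd^{c}\psi$ as currents on $S$.

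The remaining task is to identify $-dd^{c}\psi$ as a current on $S$.
\begin{itemize}
\item On $S^{o}$, Theorem~\ref{thm:curvature} applied to the submersion $\pi|_{X\setminus X_{0}}$ gives $-dd^{c}\psi=\omega$ pointwise, where $\omega=\pi_{*}[{\rm Td}_{g}(T{\mathcal X}/C,h_{{\mathcal X}/C}){\rm ch}_{g}(\xi,h_{\xi})]^{(1,1)}$.
\item Across $0$, we need two facts. First, $\omega$, a priori a smooth form on $S^{o}$, extends to a locally integrable form, so that $\pi_{*}[\cdots]$ makes sense as a current on $S$. Second, $dd^{c}\psi$ charges no mass at the origin, i.e.\ the current $-dd^{c}\psi$ equals the $L^{1}_{\rm loc}$ extension of $\omega$.
\end{itemize}

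For the first fact, I would write ${\rm Td}_{g}(T{\mathcal X}/C){\rm ch}_{g}(\xi)$ on ${\mathcal X}^{g}_{H}\setminus\Sigma_{\pi}$ as $\gamma^{*}$ of smooth forms on ${\mathbf P}(T{\mathcal X})^{\lor}$, using $(T{\mathcal X}/C,h_{{\mathcal X}/C})=\gamma^{*}\overline{\mathcal U}$. The pullback $\widetilde\gamma^{*}$ is smooth on $\widetilde{\mathcal X}$, so $\omega$ is the fiber integral along $\widetilde\pi|_{\widetilde{\mathcal X}^{g}_{H}}$ of a smooth form. Its $(1,1)$-part is therefore locally integrable by Barlet's theory: the coefficients have the form $|s|^{2r}(\log|s|)^{k}\times$ smooth, divided by at most $|s|^{2}$ up to logarithms, which is of the form $|s|^{-2+2r}$ with $r>0$ or $|s|^{-2}/(\log)^{m}$. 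In fact the cleaner route is to define $\pi_{*}[\cdots]$ directly as the push-forward current of the smooth form on $\widetilde{\mathcal X}^{g}_{H}$. It coincides with $\omega$ on $S^{o}$ and puts no mass on $\{0\}$, because the push-forward of a smooth top-degree-compatible form by a proper map has no atom at points where the fiber has positive codimension.

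The main obstacle is showing that $dd^{c}\psi$ has no Dirac component at $0$ for $\psi\in{\mathcal B}(S)$. I would take $\chi_{\epsilon}$ a cutoff equal to $1$ near $0$ and supported in $|s|<\epsilon$. For a test function $f$, compare $\int\psi\,dd^{c}f$ with $\int_{|s|>\epsilon}\omega f$. The Stokes boundary terms on $|s|=\epsilon$ involve $\epsilon\,|\partial\psi|$ and $|\psi|$ integrated over a circle of length $O(\epsilon)$. Each generator of ${\mathcal B}(S)$, namely $|s|^{2r}(\log|s|)^{k}$ with $r>0$ or smooth functions, has $|s|\,|\partial\psi|=O(|s|^{2r}|\log|s||^{k})\to0$. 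Hence the boundary flux, which would carry the $\delta_{0}$-coefficient, vanishes as $\epsilon\to0$. Combining this with the $L^{1}$ convergence of $\omega$ yields the stated identity of currents.
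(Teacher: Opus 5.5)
Your proposal is correct and is essentially the paper's argument. The paper simply combines the curvature formula \eqref{eqn:curv:eq:Q} on $S^{o}$ with Theorem~\ref{Theorem1.1} and $dd^{c}\log|s|^{2}=\delta_{0}$; your Stokes estimate for $\psi\in{\mathcal B}(S)$ and the no-atom property of the push-forward current make explicit what the paper leaves implicit. (Your Barlet aside allowing densities like $|s|^{-2}(\log|s|)^{-m}$ is inaccurate but unneeded: $\psi\in{\mathcal B}(S)$ already forces $\omega=-dd^{c}\psi$ on $S^{o}$ to be a sum of terms $O(|s|^{2r-2}|\log|s||^{k})$ with $r>0$, which is locally integrable.)
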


\begin{pf}
The result follows from the curvature formula \eqref{eqn:curv:eq:Q} and Theorem~\ref{Theorem1.1}.
\end{pf}

\subsection
{The case of twisted relative canonical bundle}
\label{sect:3.4}
\par
In this subsection, we study the behavior of the equivariant Quillen metric for the vector bundle $K_{{\mathcal X}/C}(\xi)$.
We set $\Omega_{{\mathcal X}/C}^{1} := \Omega_{\mathcal X}^{1}/\pi^{*}\Omega_{C}^{1}$ and
$\Omega_{{\mathcal X}/C}^{q}:=\Lambda^{q}\Omega_{{\mathcal X}/C}^{1}$. Recall that
$K_{{\mathcal X}/C} = K_{\mathcal X} \otimes (\pi^{*}K_{C})^{-1} = K_{\mathcal X} \otimes (\pi^{*}\Omega^{1}_{C})^{-1}$.
On ${\mathcal X}\setminus\Sigma_{\pi}$, there is a canonical isomorphism $\Omega^{n}_{{\mathcal X}/C} \cong K_{{\mathcal X}/C}$ 
induced by the short exact sequence $0 \to \pi^{*}\Omega_{C}^{1} \to \Omega_{\mathcal X}^{1} \to \Omega_{{\mathcal X}/C}^{1} \to 0$.
The holomorphic vector bundle $\Omega^{q}_{{\mathcal X}/C}$ on ${\mathcal X}\setminus\Sigma_{\pi}$ is endowed with 
the Hermitian metric induced from $h_{{\mathcal X}/C}$.
Since $\pi^{*}\Omega_{C}^{1}\subset\Omega_{\mathcal X}^{1}$, $K_{\mathcal X}$ and $K_{{\mathcal X}/C}$ are 
endowed with the Hermitian metrics $h_{K_{\mathcal X}}$ and 
$h_{K_{{\mathcal X}/C}} := h_{K_{\mathcal X}}\otimes h_{\pi^{*}(\Omega_{C}^{1})^{\lor}}$ induced from $h_{\mathcal X}$, respectively. 
Then the canonical isomorphism $\Omega^{n}_{{\mathcal X}/C} \cong K_{{\mathcal X}/C}$ is an isometry. 
Note that $h_{K_{{\mathcal X}/C}}$ is divergent on $\Sigma_{\pi}$.
\par
For $g\in G$, let $\|\cdot\|_{Q,\lambda_{G}(K_{{\mathcal X}/C}(\xi))}(g)$ be the equivariant Quillen metric on 
$\lambda_{G}(K_{{\mathcal X}/C}(\xi))$ with respect to $h_{{\mathcal X}/C}$, $h_{\xi}$, $h_{K_{{\mathcal X}/C}}$.
Let $\varsigma = (\varsigma_{W})_{W\in\widehat{G}}$ be an admissible holomorphic section of 
$\lambda_{G}(K_{{\mathcal X}/C}(\xi))$ on $S$.
Define a complex number $\alpha_{\pi, K_{X/S}(\xi)}(g)$ by 
\begin{equation}
\label{eqn:log:coeff:tw}
\begin{aligned}
\alpha_{\pi,K_{X/S}(\xi)}(g)
&:=
\int_{E_{0}\cap\widetilde{\mathcal X}^{g}_{H}}
\widetilde{\gamma}^{*}
\left\{
\frac{{\rm Td}({\mathcal H}^{\lor})^{-1}-1}{c_{1}({\mathcal H}^{\lor})}
\right\}\,
q^{*}\{{\rm Td}_{g}(T{\mathcal X}){\rm ch}_{g}(K_{\mathcal X}(\xi))\}
\\
&\quad
-\int_{{\mathcal X}^{g}_{V}\cap X_{0}}{\rm Td}_{g}(T{\mathcal X}){\rm ch}_{g}(K_{\mathcal X}(\xi)).
\end{aligned}
\end{equation}

\begin{theorem}
\label{Thm:Sing:Q:adj}
For any $g\in G$, the following identity of functions on $S^{o}$ holds:
$$
\log \left\| \varsigma \right\|^{2}_{Q,\lambda_{G}(K_{{\mathcal X}/C}(\xi))}(g) \equiv_{{\mathcal B}} \alpha_{\pi,K_{X/S}(\xi)}(g)\, \log | s |^{2}.
$$
\end{theorem}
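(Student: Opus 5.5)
The plan is to reduce the statement to Theorem~\ref{Theorem1.1}, applied to the $G$-equivariant bundle $K_{\mathcal X}(\xi)=K_{\mathcal X}\otimes\xi$ with its \emph{smooth} $G$-invariant metric $h_{K_{\mathcal X}}\otimes h_{\xi}$, and then to account for the difference between the two Quillen metrics with the equivariant anomaly formula.

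\emph{Step 1: comparing the two bundles.} On each fiber $X_{s}$, $s\in S^{o}$, we have $K_{{\mathcal X}/C}|_{X_{s}}=K_{\mathcal X}|_{X_{s}}\otimes(\pi^{*}K_{C})^{-1}|_{X_{s}}$. The second factor is trivialized by $(d s)^{-1}$. Hence $\lambda_{G}(K_{{\mathcal X}/C}(\xi))\cong\lambda_{G}(K_{\mathcal X}(\xi))$ over $S$, and under this isomorphism admissible sections correspond to admissible sections. Changing $\varsigma$ by a nowhere vanishing holomorphic factor only alters $\log\|\varsigma\|^{2}$ by a smooth function, so we may compare the two sides directly.

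\emph{Step 2: the metric discrepancy.} The metric on $\pi^{*}\Omega^{1}_{C}$ induced from $h_{\mathcal X}$ through $\pi^{*}\Omega^{1}_{C}\subset\Omega^{1}_{\mathcal X}$ gives $d\pi$ the norm $\|d\pi\|$. Consequently, on $X_{s}$,
$$
h_{K_{{\mathcal X}/C}}=h_{K_{\mathcal X}}\cdot\|d\pi\|^{-2}
$$
under the above trivialization, and this holds up to a factor depending only on $s$. That factor is smooth in $s$ and only contributes to ${\mathcal B}$.

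\emph{Step 3: the anomaly formula.} Apply the equivariant anomaly formula of Bismut--Gillet--Soul\'e and Bismut \cite{BGS88, Bismut95} for a change of the metric on the coefficient bundle. For a line bundle whose metric is multiplied by $e^{-\phi}$, the Bott--Chern class is explicit. With $\phi=\log\|d\pi\|^{2}$ this gives
$$
\log\|\varsigma\|^{2}_{Q,\lambda_{G}(K_{{\mathcal X}/C}(\xi))}(g)-\log\|\varsigma\|^{2}_{Q,\lambda_{G}(K_{\mathcal X}(\xi))}(g)
=\int_{(X^{g}_{H})_{s}}{\rm Td}_{g}(TX_{s})\,{\rm ch}_{g}(K_{\mathcal X}(\xi))\,\frac{1-e^{-c_{1}}}{c_{1}}\,\log\|d\pi\|^{2}.
$$
The characteristic forms here are taken with the relevant metrics, and $c_{1}$ is the first Chern form of $(\pi^{*}K_{C},\|\cdot\|)|_{X_{s}}$, which equals $\nu^{*}c_{1}(\overline{\mathcal L})$. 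The fixed locus of $g$ on $X_{s}$ meets only ${\mathcal X}^{g}_{H}$, because ${\mathcal X}^{g}_{V}\subset X_{0}$. Next rewrite $TX_{s}=\gamma^{*}\overline{\mathcal U}$ and pull the integrand back to $\widetilde{\mathcal X}$. Then \cite[Cor.\,4.6]{Yoshikawa07} applies, exactly as in Step 3 of the proof of Theorem~\ref{Theorem4.1}: $q^{*}\log\|d\pi\|^{2}$ is $\log$ of the norm of a section of $\widetilde{\nu}^{*}{\mathcal L}^{\lor}\otimes q^{*}\pi^{*}(\cdots)$ whose zero divisor is supported on $E$. So the fiber integral is $\equiv_{\mathcal B}$ a constant times $\log|s|^{2}$, with constant
$$
\int_{E_{0}\cap\widetilde{\mathcal X}^{g}_{H}}\widetilde{\gamma}^{*}\Bigl\{{\rm Td}_{g}({\mathcal U})\,\frac{1-e^{x}}{-x}\Bigr\}\,q^{*}{\rm ch}_{g}(K_{\mathcal X}(\xi)),
\qquad x=c_{1}({\mathcal H}),\quad c_{1}({\mathcal L})=-x.
$$
The signs and the exact power series are to be checked carefully at this point.

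\emph{Step 4: matching the coefficients.} By Theorem~\ref{Theorem1.1}, $\log\|\varsigma\|^{2}_{Q,\lambda_{G}(K_{\mathcal X}(\xi))}(g)\equiv_{\mathcal B}{\mathfrak a}_{\pi,K_{\mathcal X}(\xi)}(g)\log|s|^{2}$. It remains to show that ${\mathfrak a}_{\pi,K_{\mathcal X}(\xi)}(g)$ plus the constant from Step 3 equals $\alpha_{\pi,K_{X/S}(\xi)}(g)$. Both contain the same vertical term $-\int_{{\mathcal X}^{g}_{V}\cap X_{0}}{\rm Td}_{g}(T{\mathcal X})\,{\rm ch}_{g}(K_{\mathcal X}(\xi))$. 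Using ${\rm Td}_{g}({\mathcal U})={\rm Td}_{g}(T{\mathcal X})\,{\rm Td}(x)^{-1}$, the horizontal integrands reduce to the following power-series identity in $x$:
$$
\frac{1-{\rm Td}(x)^{-1}}{x}+{\rm Td}(x)^{-1}\,\frac{e^{x}-1}{x}
=\frac{{\rm Td}(-x)^{-1}-1}{-x}.
$$
Here ${\rm Td}(x)^{-1}=(1-e^{-x})/x$ and ${\rm Td}(-x)^{-1}=(e^{x}-1)/x$. Because only classes on the compact set $E_{0}\cap\widetilde{\mathcal X}^{g}_{H}$ matter, the identity can be checked modulo exact forms.

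I expect the main obstacle to be Step 3. The metric $h_{K_{{\mathcal X}/C}}$ blows up along $\Sigma_{\pi}$, so one must make sure that the anomaly formula on each smooth fiber, combined with Barlet's asymptotics $[(\pi|_{X^{g}_{H}})_{*}\omega]^{(0)}\equiv_{\mathcal B}0$, yields exactly a $\log|s|^{2}$ term with no extra singular contributions. It is also easy to get the signs and normalizations of the Bott--Chern class wrong, and these must make the identity in Step 4 come out exactly. If the identity fails by a sign, I would recheck the convention $c_{1}(\overline{\mathcal L})=-\widetilde{\nu}^{*}c_{1}({\mathcal H})$ versus $+$.
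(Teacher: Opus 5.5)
Your strategy is the paper's: Theorem~\ref{Theorem1.1} for $K_{\mathcal X}(\xi)$, the isometry $(K_{{\mathcal X}/C},h_{K_{{\mathcal X}/C}})\cong(K_{\mathcal X},\|d\pi\|^{-2}h_{K_{\mathcal X}})$, the equivariant anomaly formula with the explicit Bott--Chern form of a conformal change, \cite[Cor.\,4.6]{Yoshikawa07}, and a power-series identity. As written, however, the argument does not close. The anomaly formula displayed in Step~3 has the wrong overall sign, and as a result the identity you need in Step~4 is false. Indeed, ${\rm Td}(x)^{-1}\frac{e^{x}-1}{x}=e^{x}{\rm Td}(x)^{-2}=1+O(x^{2})$. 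So the constant term of your left-hand side is $\tfrac12+1=\tfrac32$, whereas that of $\frac{{\rm Td}(-x)^{-1}-1}{-x}$ is $-\tfrac12$.

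A constant rescaling shows the correct sign. Suppose $\|d\pi\|^{2}\equiv e^{c}$ on $X_{s}$ and take $g=1$. Then $h_{K_{{\mathcal X}/C}(\xi)}=e^{-c}h_{K_{\mathcal X}(\xi)}$ and the Laplacians are unchanged. The $L^{2}$-metric on $\lambda$ is multiplied by $e^{-c\chi(X_{s},K_{X_{s}}(\xi_{s}))}$, so the log-ratio of Quillen metrics is $-\int{\rm Td}\,{\rm ch}\,\log\|d\pi\|^{2}$, not $+$.

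In general, set $\phi=\log\|d\pi\|^{2}$, so that $dd^{c}\phi=-\gamma^{*}c_{1}(\overline{\mathcal L})$ on the fibres. The Bott--Chern form with $dd^{c}\widetilde{\rm ch}_{g}={\rm ch}_{g}(K_{\mathcal X}(\xi),h_{K_{\mathcal X}(\xi)})-{\rm ch}_{g}(K_{\mathcal X}(\xi),e^{-\phi}h_{K_{\mathcal X}(\xi)})$ is
\[
\widetilde{\rm ch}_{g}=-{\rm ch}_{g}(K_{\mathcal X}(\xi),h_{K_{\mathcal X}(\xi)})\,\frac{1-e^{-\gamma^{*}c_{1}(\overline{\mathcal L})}}{\gamma^{*}c_{1}(\overline{\mathcal L})}\,\log\|d\pi\|^{2}.
\]
After \cite[Cor.\,4.6]{Yoshikawa07}, with $x=c_{1}({\mathcal H})$, the correction term is therefore
\[
-\Bigl[\int_{E_{0}\cap\widetilde{\mathcal X}^{g}_{H}}\widetilde{\gamma}^{*}\Bigl\{{\rm Td}_{g}({\mathcal U})\,\frac{e^{x}-1}{x}\Bigr\}\,q^{*}{\rm ch}_{g}(K_{\mathcal X}(\xi))\Bigr]\log|s|^{2}.
\]
Step~4 then requires $\frac{1-{\rm Td}(x)^{-1}}{x}-e^{x}{\rm Td}(x)^{-2}=\frac{{\rm Td}(-x)^{-1}-1}{-x}$. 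This identity is true: both sides equal $\frac1x+\frac{1-e^{x}}{x^{2}}$.

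Your suggested fallback, flipping the relation $c_{1}({\mathcal L})=-c_{1}({\mathcal H})$, would not help. That relation is correct on the fibres, and the error lies in the sign in front of the $\log\|d\pi\|^{2}$ term. Once that sign is corrected, your argument coincides with the paper's proof.
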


\begin{pf}
By Theorem~\ref{Theorem1.1} applied to $K_{\mathcal X}(\xi)$, we have
\begin{equation}
\label{eqn:sing:eq:Q}
\log \left\| \varsigma \right\|^{2}_{Q,\lambda_{G}(K_{\mathcal X}(\xi))}(g) 
\equiv_{{\mathcal B}} {\mathfrak a}_{\pi, K_{\mathcal X}(\xi)}(g)\,\log | s |^{2}.
\end{equation}
Since $\pi^{*}\Omega^{1}_{C}$ is generated by the $G$-invariant section $d\pi$ on $X\setminus\Sigma_{\pi}$, 
we have an isomorphism of $G$-equivariant holomorphic Hermitian line bundles
$(K_{{\mathcal X}/C},h_{K_{{\mathcal X}/C}}) \cong (K_{\mathcal X},\|d\pi\|^{-2}h_{K_{\mathcal X}})$ on $X$. 
Set $h_{K_{\mathcal X}(\xi)} := h_{K_{\mathcal X}}\otimes h_{\xi}$ and $h_{K_{{\mathcal X}/C}(\xi)} := h_{K_{{\mathcal X}/C}}\otimes h_{\xi}$.
By the anomaly formula for (equivariant) Quillen metrics \cite[Th.\,2.5]{Bismut95}, \cite[I, Th.\,0.3]{BGS88}, 
\begin{equation}
\label{eqn:anomaly:eq:Q}
\begin{aligned}
\,
&
\log\|\varsigma\|^{2}_{Q,\lambda_{G}(K_{{\mathcal X}/C}(\xi))}(g)
=
\log\|\varsigma\|^{2}_{Q,\lambda_{G}(K_{\mathcal X}(\xi))}(g) 
+ 
\log\frac{\|\cdot\|^{2}_{Q,\lambda_{G}(K_{{\mathcal X}/C}(\xi))}(g)}{\|\cdot\|^{2}_{Q,\lambda_{G}(K_{\mathcal X}(\xi))}(g)}
\\
&=
\log\|\varsigma\|^{2}_{Q,\lambda_{G}(K_{\mathcal X}(\xi))}(g)
+
\pi_{*}\left( {\rm Td}_{g}(T{\mathcal X}/C,h_{{\mathcal X}/C})
\widetilde{{\rm ch}}_{g}(K_{\mathcal X}(\xi);h_{K_{\mathcal X}(\xi)},h_{K_{{\mathcal X}/C}(\xi)}) \right)^{(0)}.
\end{aligned}
\end{equation}
Here $\widetilde{{\rm ch}}_{g}(K_{{\mathcal X}}(\xi);h_{K_{\mathcal X}(\xi)},h_{K_{{\mathcal X}/C}(\xi)})$ is the Bott-Chern class such that
$$
dd^{c} \widetilde{{\rm ch}}_{g}(K_{\mathcal X}(\xi);h_{K_{\mathcal X}(\xi)},h_{K_{{\mathcal X}/C}(\xi)})
=
{\rm ch}_{g}(K_{\mathcal X}(\xi),h_{K_{\mathcal X}(\xi)}) - {\rm ch}_{g}(K_{\mathcal X}(\xi),h_{K_{{\mathcal X}/C}(\xi)}).
$$
Since
$(K_{{\mathcal X}/C}(\xi),h_{K_{{\mathcal X}/C}(\xi)}) \cong (K_{\mathcal X}(\xi),\|d\pi\|^{-2}h_{K_{\mathcal X}(\xi)})$ 
and $-dd^{c}\log\|d\pi\|^{2}=\gamma^{*}c_{1}({\mathcal L},h_{\mathcal L})$, 
we deduce from \cite[I, (1.2.5.1), (1.3.1.2)]{GilletSoule90} (see also \cite[Eqs.\,(3.7), (5.5)]{FLY08}) that
\begin{equation}
\label{eqn:eq:Chern}
\begin{aligned}
\,&
\widetilde{{\rm ch}}_{g}(K_{\mathcal X}(\xi);h_{K_{\mathcal X}(\xi)},h_{K_{{\mathcal X}/C}(\xi)})
=
\widetilde{{\rm ch}}_{g}(K_{\mathcal X};h_{K_{\mathcal X}},\|d\pi\|^{-2}h_{K_{\mathcal X}}) {\rm ch}_{g}(\xi,h_{\xi})
\\
&=
{\rm ch}_{g}(K_{\mathcal X},h_{K_{\mathcal X}})
\frac{e^{-\gamma^{*}c_{1}({\mathcal L},h_{\mathcal L})}-1}{-\gamma^{*}c_{1}({\mathcal L},h_{\mathcal L})}\,
(-\log\|d\pi\|^{2}) \wedge {\rm ch}_{g}(\xi,h_{\xi})
\\
&=
-{\rm ch}_{g}(K_{\mathcal X}(\xi),h_{K_{\mathcal X}(\xi)}) 
\frac{1-e^{-\gamma^{*}c_{1}({\mathcal L},h_{\mathcal L})}}{\gamma^{*}c_{1}({\mathcal L},h_{\mathcal L})}\,
\log\|d\pi\|^{2}.
\end{aligned}
\end{equation}
Since ${\rm Td}_{g}(T{\mathcal X}/C, h_{{\mathcal X}/C})=\gamma^{*}{\rm Td}_{g}({\mathcal U})$, 
it follows from \eqref{eqn:anomaly:eq:Q}, \eqref{eqn:eq:Chern}, \cite[Cor.\,4.6]{Yoshikawa07} that
\begin{equation}
\label{eqn:ratio:eq:Q}
\begin{aligned}
\,&
\log \left[ \|\cdot\|^{2}_{Q,\lambda_{G}(K_{{\mathcal X}/C}(\xi))}(g) / \|\cdot\|^{2}_{Q,\lambda_{G}(K_{\mathcal X}(\xi))}(g) \right]
\\
&=
-\pi_{*}\left( 
\gamma^{*}{\rm Td}_{g}({\mathcal U}, h_{\mathcal U}) {\rm ch}_{g}(K_{\mathcal X}(\xi), h_{K_{\mathcal X}(\xi)}) 
\frac{1-e^{-\gamma^{*}c_{1}({\mathcal L},h_{\mathcal L})}}{\gamma^{*}c_{1}({\mathcal L},h_{\mathcal L})} \log \| d\pi \|^{2}
\right)
\\
&\equiv_{\mathcal B}
-\left\{
\int_{E_{0}\cap\widetilde{\mathcal X}_{H}^{g}}
\widetilde{\gamma}^{*}{\rm Td}_{g}({\mathcal U})q^{*}{\rm ch}_{g}(K_{\mathcal X}(\xi))
\frac{1-e^{-\widetilde{\gamma}^{*}c_{1}({\mathcal L})}}{\widetilde{\gamma}^{*}c_{1}({\mathcal L})}\,
\right\}
\log|s|^{2}
\\
&\equiv_{\mathcal B}
-\left[
\int_{E_{0}\cap\widetilde{\mathcal X}_{H}^{g}} 
\widetilde{\gamma}^{*}\left\{ {\rm Td}_{g}({\mathcal U})
\frac{e^{c_{1}({\mathcal H})}-1}{c_{1}({\mathcal H})} \right\} q^{*}{\rm ch}_{g}(K_{\mathcal X}(\xi))
\right]
\log|s|^{2}.
\end{aligned}
\end{equation}
Here, to obtain the last equality, we used the relation $c_{1}({\mathcal L}) = -c_{1}({\mathcal H}) + \pi^{*}c_{1}(C)$.
Since ${\rm Td}_{g}({\mathcal U}){\rm Td}({\mathcal H})=(\varPi^{\lor})^{*}{\rm Td}_{g}(T{\mathcal X})$ and
$$
\frac{1-{\rm Td}(x)^{-1}}{x}-e^{x}{\rm Td}(x)^{-2}=\frac{{\rm Td}(-x)^{-1}-1}{-x}
\qquad\text{with}\qquad
{\rm Td}(x) := \frac{x}{1-e^{-x}},
$$
we get
\begin{equation}
\label{eqn:dominant:term}
{\mathfrak a}_{\pi, K_{\mathcal X}(\xi)}(g) 
- 
\int_{E_{0}\cap\widetilde{\mathcal X}_{H}^{g}} \widetilde{\gamma}^{*}
\left\{ {\rm Td}_{g}({\mathcal U})\frac{e^{c_{1}({\mathcal H})}-1}{c_{1}({\mathcal H})} \right\} q^{*}{\rm ch}_{g}(K_{\mathcal X}(\xi))
=
\alpha_{\pi, K_{X/S}(\xi)}(g).
\end{equation}
The result follows from \eqref{eqn:sing:eq:Q}, \eqref{eqn:anomaly:eq:Q}, \eqref{eqn:ratio:eq:Q}, \eqref{eqn:dominant:term}. 
This completes the proof.
\end{pf}

\section
{Asymptotic behavior of $L^{2}$-metrics under degenerations}
\label{sect:4}
\par
In the remainder of this article, we make the following:

\medskip
\par\noindent
{\bf Assumption}\quad
{\em $(\xi,h_{\xi})$ is Nakano semi-positive on $X$ and $(S,0)\cong(\varDelta,0)$.}

\medskip
In this section, we determine the behavior of the $L^{2}$-metric $h_{L^{2}}$ on the direct image sheaves $R^{q}\pi_{*}K_{X/S}(\xi)$
as $s \to 0$.
Once fixing a basis of $R^{q}\pi_{*}K_{X/S}(\xi)$ as a free ${\mathcal O}_{S}$-module (after shrinking $S$ if necessary), 
$h_{L^{2}}$ is viewed as a function on $S^{o}$ with values in $(h_{W}^{q}, h_{W}^{q})$-Hermitian matrices, 
where $h_{W}^{q}$ is the rank of $R^{q}\pi_{*}K_{X/S}(\xi)_{W}$. 
To describe the singularity of $h_{L^{2}}$, we fix a semi-stable reduction $f \colon Y \to T$
of $\pi \colon X \to S$ induced by a finite map $\mu \colon T \to S$ and we consider the function $\mu^{*}h_{L^{2}}$ on $T$.
In Theorem~\ref{thm:str:sing:L2}, we will prove that $\mu^{*}h_{L^{2}}$ admits a decomposition 
$\mu^{*}h_{L^{2}}(t) = D(t) \widetilde{H}(t) \overline{D(t)}$, where $D(t)$ is a diagonal matrix of the form 
${\rm diag}( t^{-e_{1}}, \ldots, t^{-e_{h_{W}^{q}}} )$, $e_{i} \in {\mathbf Z}_{\geq0}$, 
the entries of $\widetilde{H}(t)$ lie in $\bigoplus_{k=1}^{n} {\mathbf C} ( \log |t|^{-2} )^{k} \oplus {\mathcal B}(T)$, 
and $\widetilde{H}(t)$ is non-degenerate in the sense that $\widetilde{H}(t) \geq C I_{h_{W}^{q}}$ on $T\setminus\{0\}$ with a constant $C>0$.
This leads to the structure theorem for the singularity of the $L^{2}$-metric on $\lambda(K_{X/S}(\xi))_{W}$.
When $\xi|_{X} \cong {\mathcal O}_{X}$ and $G=\{1\}$, these results reduce to known results in Hodge theory (\cite{Schmid73}, \cite{EFM21}).  
Since the techniques of Hodge theory do not apply when $(\xi, h_{\xi})|_{X}$ is  non-trivial, 
we will employ the theory of harmonic integrals for open K\"ahler manifolds \cite{Takegoshi95}, 
the existence of the asymptotic expansion for fiber integrals \cite{Barlet82, Takayama21, Takayama22}
and the non-degeneracy of the $L^{2}$-metrics \cite{MourouganeTakayama09}.

\subsection
{Semi-stable reduction}
\label{sect:4.1}
\par
Let 
$$
\beta \colon X' \to X 
$$ 
be a log-resolution of $X_{0}$ such that $\beta^{*}X_{0}$ is a normal crossing divisor on $X'$, such that 
$\beta \colon X'\setminus X'_{0} \to X\setminus X_{0}$ is an isomorphism, and such that $\beta$ is a composite of blow-ups 
with non-singular centers. Let $\pi' \colon X' \to S$ be the projection induced from $\pi$.
Let $T$ be the unit disc in ${\mathbf C}$. For $0<\epsilon<1$, we set $T(\epsilon) := \{t\in T;\,|t|<\epsilon\}$ and $T^{o}:=T\setminus\{0\}$.
By the semi-stable reduction theorem \cite[Chap.\,II]{Mumford73}, there is a commutative diagram
\begin{equation}
\label{eqn:ss:red}
\begin{CD}
(Y,Y_{0})@> \rho >> (X'\times_{S}T,X'_{0}) @>{\rm pr}_{1}>>  (X',X'_{0}) @> \beta >> (X,X_{0})
\\
@V f VV @V{\rm pr}_{2}VV  @V \pi' VV @V \pi VV
\\
(T,0) @>{\rm id}_{T}>> (T,0) @>\mu>> (S,0) @>{\rm id}_{S}>> (S,0)
\end{CD}
\end{equation}
such that $Y_{0}=f^{-1}(0)$ is a {\em reduced} normal crossing divisor. Here $\mu\colon(T,0)\to(S,0)$ is given by 
$$
\mu(t)=t^{d}
$$ 
for some $d \in{\mathbf Z}_{>0}$ and 
$$
\rho \colon Y\to X'\times_{S}T
$$ 
is a resolution, which is a projective morphism. Set 
$$
F' := {\rm pr}_{1}\circ \rho \colon Y\to X',
\qquad
F := \beta \circ F' = \beta \circ {\rm pr}_{1}\circ \rho \colon Y \to X.
$$
Since $(F^{*}\xi,h_{F^{*}\xi}:=F^{*}h_{\xi})$ is Nakano semi-positive, $R^{q}f_{*}K_{Y}(F^{*}\xi)$ is a locally free ${\mathcal O}_{T}$-module 
by Lemma~\ref{lemma:local:freeness}. Since $f|_{Y\setminus Y_{0}}\colon Y\setminus Y_{0}\to T^{o}$ is $G$-equivariant, 
$R^{q}f_{*}K_{Y}(F^{*}\xi)|_{T^{o}}$ is viewed as a $G$-equivariant holomorphic vector bundle on $T^{o}$.

\begin{lemma}
\label{lemma:G:equiv:ext}
The $G$-action on $R^{q}f_{*}K_{Y}(F^{*}\xi)|_{T^{o}}$ extends to a holomorphic $G$-action on $R^{q}f_{*}K_{Y}(F^{*}\xi)$.
\end{lemma}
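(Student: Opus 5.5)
The strategy is to realize the locally free sheaf $R^{q}f_{*}K_{Y}(F^{*}\xi)$ as a $G$-stable subsheaf of a locally free sheaf on which $G$ already acts holomorphically over all of $T$. The natural candidate is $\mu^{*}R^{q}\pi_{*}K_{X/S}(\xi)\otimes K_{T}$, or more precisely the base change $R^{q}({\rm pr}_{2})_{*}$ of $K_{X}(\xi)$ pulled back to $X\times_{S}T$.

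First, $G$ acts holomorphically on $X$ and trivially on $S$. Hence $G$ acts on $X\times_{S}T$ through the first factor, and $R^{q}\pi_{*}K_{X}(\xi)$ is a $G$-equivariant locally free sheaf on $S$: it is locally free by Lemma~\ref{lemma:local:freeness}, and $G$-equivariant by Section~\ref{sect:1.3.1}. Therefore ${\mathcal V}:=\mu^{*}R^{q}\pi_{*}K_{X/S}(\xi)$ is a $G$-equivariant holomorphic vector bundle on $T$. By \cite{MourouganeTakayama09} there is a natural injective homomorphism $\iota\colon R^{q}f_{*}K_{Y/T}(F^{*}\xi)\hookrightarrow{\mathcal V}$. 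Over $T^{o}$, the map $F\colon Y_{t}\to X_{\mu(t)}$ is a $G$-equivariant isomorphism, so $\iota|_{T^{o}}$ is a $G$-equivariant isomorphism of vector bundles. Here $R^{q}f_{*}K_{Y}(F^{*}\xi)=R^{q}f_{*}K_{Y/T}(F^{*}\xi)\otimes K_{T}$, and $G$ acts trivially on $K_{T}$.

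Second, I transport the action. For $g\in G$ and a local holomorphic section $\sigma$ of $R^{q}f_{*}K_{Y/T}(F^{*}\xi)$ near $0$, the section $g\cdot\iota(\sigma)$ is a holomorphic section of ${\mathcal V}$. On $T^{o}$ it equals $\iota(g\cdot\sigma)$, where $g\cdot\sigma$ is defined via the action on $Y\setminus Y_{0}$. It remains to show that $g\cdot\sigma$ extends holomorphically across $0$ as a section of $R^{q}f_{*}K_{Y/T}(F^{*}\xi)$, i.e.\ that the image ${\rm Im}\,\iota\subset{\mathcal V}$ is $G$-stable at $0$. Once this holds, the action on $R^{q}f_{*}$ is defined by $\iota^{-1}\circ g\circ\iota$. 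It is holomorphic in $t$, it is a group action because it is one on $T^{o}$ and by the identity theorem, and it depends continuously on $g$ (the action on the quotient is determined by finitely many generators).

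The main obstacle is the $G$-stability of ${\rm Im}\,\iota$. I would use the intrinsic $L^{2}$-characterization of \cite{MourouganeTakayama09}. In that characterization, ${\rm Im}\,\iota$ consists of the sections $s$ of ${\mathcal V}$ for which $\|s(t)\|_{L^{2},X_{\mu(t)}}$ has the growth permitted for sections coming from $Y$. Concretely, one represents $s$ by Takegoshi's forms from Theorem~\ref{Thm:Takegoshi}, writes $u=v\wedge\kappa^{q}\wedge dt$, and requires $\int_{Y_{t}}|F^{*}v|^{2}$, computed with respect to $F^{*}h_{X}$ and $F^{*}h_{\xi}$, to stay bounded as $t\to0$. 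This condition does not depend on the Kähler metric chosen on $Y$, by quasi-isometry of $(n,q)$-forms after wedging with $\kappa^{q}$. Since $h_{X}$ and $h_{\xi}$ are $G$-invariant, $g$ acts isometrically on the fibers $X_{\mu(t)}$. So $\|g\cdot s(t)\|_{L^{2}}=\|s(t)\|_{L^{2}}$, and the boundedness condition is $G$-invariant.

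If this characterization is not available in the needed form, a fallback exists when $q=0$ and $K_{Y/T}(F^{*}\xi)$ is considered. Sections of $f_{*}K_{Y}(F^{*}\xi)$ correspond to $(n+1)$-forms on $Y\setminus Y_{0}$ with values in $F^{*}\xi$ that are locally $L^{2}$ near $Y_{0}$, and the latter condition is $G$-invariant on $X\times_{S}T$. For higher $q$, one reduces to this case via the $\sigma$ of Theorem~\ref{Thm:Takegoshi}(1), which is a holomorphic $(n+1-q)$-form. Here the isomorphism $\alpha\mapsto\sigma$ is $G$-equivariant by uniqueness of harmonic representatives and $G$-invariance of $\kappa$, and the extendability of $F^{*}\sigma$ is again characterized by a local $L^{2}$ condition on $Y\setminus Y_{0}$.
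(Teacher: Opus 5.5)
The gap is at the step you single out yourself: the $G$-stability of ${\rm Im}\,\iota$ at $t=0$. The concrete criterion you give is that $s\in{\rm Im}\,\iota$ iff $\int_{Y_{t}}|F^{*}v|^{2}$ stays bounded, and it is false. Sections coming from $Y$ generally have $L^{2}$-norm growing like a power of $\log|t|^{-1}$, which is exactly what $\varrho^{q}_{W}>0$ measures. For example, take a semi-stable degeneration of elliptic curves to a cycle of rational curves, with $q=0$, $\xi$ trivial and $\mu={\rm id}$. Then $\iota$ is the identity, but the generator $\omega$ of $f_{*}K_{Y/T}$ has $\|\omega(t)\|^{2}_{L^{2}}\sim c\log|t|^{-1}$. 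So the subsheaf of sections of bounded norm is $t\,{\mathcal O}_{T}\,\omega$, a proper subsheaf of ${\rm Im}\,\iota$, and its $G$-stability says nothing about ${\rm Im}\,\iota$.

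The correct criterion is: $s\in{\rm Im}\,\iota$ iff $\|s(t)\|^{2}_{L^{2},X_{\mu(t)}}=o(|t|^{-2})$. This condition is indeed $G$-invariant. Proving it, however, needs two-sided bounds $C\,I\leq\widetilde{H}(t)\leq C'(\log|t|^{-1})^{n}I$ near $t=0$ for a frame of the whole bundle $R^{q}f_{*}K_{Y/T}(F^{*}\xi)$. The lower bound forces a section of ${\mathcal V}$ with a pole in this frame to have norm squared at least $c|t|^{-2}$. The upper bound shows that sections of ${\rm Im}\,\iota$ grow at most polylogarithmically. You attribute the characterization to \cite{MourouganeTakayama09} without proof. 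In the paper, the lower bound is Proposition~\ref{prop:nonvanishing}, which combines Mourougane--Takayama's non-degeneracy with Lemma~\ref{lemma:comparison:L2}. The upper bound needs the degenerate Takegoshi theorem (Proposition~\ref{Proposition:Takegoshi}) together with Proposition~\ref{prop:asym:ex}. Applied with $G$ trivial, none of these use the lemma, so your route can be repaired, but only by importing the main machinery of Section~\ref{sect:4}.

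Your fallback does not close the gap for $q>0$. For $q=0$ it is fine: the $L^{2}$ condition on top-degree forms is independent of any metric on $Y$, and $L^{2}$ holomorphic top forms extend across $Y_{0}$, so $f_{*}K_{Y}(F^{*}\xi)$ is determined by $Y\setminus Y_{0}$ and $(\xi,h_{\xi})$. For an $(n+1-q)$-form with $q>0$, though, local $L^{2}$-integrability near $Y_{0}$ must be measured with a metric non-degenerate along $Y_{0}$. Since $G$ does not act on $Y$, nothing makes that condition $G$-invariant. With respect to the $G$-invariant metric $F^{*}h_{X}$ the condition is invariant, but it no longer characterizes extendability.

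The missing idea is the birational invariance of higher direct images of twisted canonical bundles, which is how the paper argues. Take a $G$-equivariant resolution $\rho''\colon X''\to X\times_{S}T$, with $\varpi''={\rm pr}_{2}\circ\rho''$ and $\varPi''={\rm pr}_{1}\circ\rho''$. Then $R^{q}\varpi''_{*}K_{X''}(\varPi^{''*}\xi)$ is a $G$-equivariant vector bundle on $T$. Apply \cite[Th.\,6.9 (i)]{Takegoshi95} on a resolution of $X\times_{S}T$ dominating both $X''$ and $Y$. This gives $R^{q}f_{*}K_{Y}(F^{*}\xi)\cong R^{q}\varpi''_{*}K_{X''}(\varPi^{''*}\xi)$, compatibly with the identifications over $T^{o}$, and the action is transported. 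No embedding into $\mu^{*}R^{q}\pi_{*}K_{X/S}(\xi)$ and no growth estimates are needed.
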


\begin{pf}
Let $\rho'' \colon X''\to X\times_{S}T$ be a $G$-equivariant resolution and set $\varpi'' := {\rm pr}_{2}\circ\rho''$ and 
$\varPi'' := {\rm pr}_{1}\circ\rho''$. 
By the $G$-equivariance of $\varpi'' \colon X'' \to T$, $R^{q}\varpi''_{*}K_{X''}(\varPi^{''*}\xi)$ is a $G$-equivariant holomorphic 
vector bundle on $T$. Since $Y$ is birational to $X''$, considering a resolution of $X\times_{S}T$ dominating both of $X''$ and $Y$,
we get an isomorphism $R^{q}f_{*}K_{Y}(F^{*}\xi)\cong R^{q}\varpi''_{*}K_{X''}(\varPi^{''*}\xi)$ of holomorphic vector bundles 
on $T$ by \cite[Th.\,6.9 (i)]{Takegoshi95}, which induces the desired $G$-action on $R^{q}f_{*}K_{Y}(F^{*}\xi)$.
\end{pf}

\subsection
{Set up}
\label{sect:4.2}
\par
Let $\kappa_{\mathcal X}$ be the K\"ahler form of $h_{\mathcal X}$ and set $\kappa_{X} := \kappa_{\mathcal X}|_{X}$.
We write $t$ for the coordinate of $T\cong\varDelta$ centered at $0$ and we set $\kappa_{T} := i\,dt\wedge d\bar{t}$. 
Then $\kappa_{T}$ is a K\"ahler form on $T$. Define
$$
\kappa_{Y} := F^{*}\kappa_{X} + f^{*}\kappa_{T}.
$$
Then $\kappa_{Y}$ is a smooth $(1,1)$-form on $Y$, which is a K\"ahler form only on $Y\setminus Y_{0}$.
To understand the asymptotic behavior of the $L^{2}$-metric on $R^{q}f_{*}K_{Y/T}(F^{*}\xi)$ near $t=0$ with respect to 
the {\em degenerate} K\"ahler form $\kappa_{Y}$, we need an analogue of Theorem~\ref{Thm:Takegoshi} for 
the Nakano semi-positive vector bundle $(F^{*}\xi,F^{*}h_{\xi})$ on $(Y,\kappa_{Y})$, as well as a bound from below of the $L^{2}$-metric. 
Such an extension of Theorem~\ref{Thm:Takegoshi} and an estimate for the $L^{2}$-metric were obtained by 
Mourougane-Takayama \cite[Prop.\,4.4]{MourouganeTakayama09}, \cite[Lemmas 4.7 and 4.8]{MourouganeTakayama09}. 
However, their results are not directly applicable to our setting for the following reasons. Set
$$
\varSigma' := {\rm Sing}(X'\times_{S}T),
\qquad
U' := (X'\times_{S}T)\setminus\varSigma'.
$$
We may assume by \cite[Chap.\,II]{Mumford73} that $\rho \colon Y\setminus \rho^{-1}(\varSigma)\to U'$ is an isomorphism and that 
$\rho^{-1}(\varSigma)\subset Y_{0}$ is a normal crossing divisor. However, it is not clear from the construction in \cite[Chap.\,II]{Mumford73}
if $\rho \colon Y\to X'\times_{S}T$ is a {\em composite of blowing-ups with non-singular centers} disjoint from $U'$. 
Since this condition is essential in the construction of a sequence of K\"ahler forms on $Y$ approximating $\kappa_{Y}$ 
(cf. \cite[Proof of Prop.\,4.4 Step 1]{MourouganeTakayama09}), we are not able to apply the arguments in \cite[Prop.\,4.4]{MourouganeTakayama09} to the bundle $(F^{*}\xi,h_{F^{*}\xi})$ on $(Y,\kappa_{Y})$ at once. 
Another point is that the non-vanishing \cite[Lemmas 4.7 and 4.8]{MourouganeTakayama09} was proved 
under the assumption that the metric on $Y$ is non-degenerate (when $\dim S=1$). 
Since $\kappa_{Y}$ is degenerate on some components of $Y_{0}$, this non-vanishing result is not applicable to our setting. 
We proceed as follows to resolve these problems.
\par
For the first point, in stead of applying the argument of Mourougane-Takayama to $(Y,\kappa_{Y})$, we apply it to a manifold $Z$ 
dominating $Y$, whose construction is as follows. By \cite[Th.\,13.2]{BierstoneMilman97}, there exists a resolution of singularity 
$r \colon W\to X'\times_{S}T$, which is a composite of blowing-ups with non-singular centers disjoint from $U'$. 
We apply \cite[Lemma 1.3.1]{AKMW02} by setting $X_{1}=W$ and $X_{2}=Y$. Consequently, there exist a resolution 
$$
\widetilde{\rho} \colon Z\to X'\times_{S}T
$$ 
and a projective birational morphism 
$$
\varphi\colon Z\to Y
$$ 
such that $\widetilde{\rho}$ is a composite of blowing-ups with non-singular centers disjoint from $U'$ and 
$$
\widetilde{\rho} = \rho \circ \varphi.
$$ 
In particular, 
$Z\setminus\widetilde{\rho}^{-1}(\varSigma')\cong Y\setminus \rho^{-1}(\varSigma')\cong U'$ and $Z_{0}={\varphi}^{-1}(Y_{0})$ is a normal crossing divisor.
We set 
\begin{equation}
\label{eqn:proj:Z}
\varpi := {\rm pr}_{2} \circ \widetilde{\rho} \colon Z\to T,
\qquad
\widetilde{F} := \beta \circ{\rm pr}_{1} \circ \widetilde{\rho} \colon Z\to X.
\end{equation}
Then 
$$
\widetilde{F} = F \circ \varphi 
\qquad\text{and}\qquad 
\varpi = f \circ \varphi.
$$
In Section~\ref{sect:4.3}, following Mourougane-Takayama \cite[Prop.\,4.4]{MourouganeTakayama09}, we prove a version of 
Theorem~\ref{Thm:Takegoshi} for the vector bundle $K_{Z}(\widetilde{F}^{*}\xi)$ on $Z$ with respect to a degenerate metric, 
which leads to a corresponding result for the vector bundle $K_{Y}(F^{*}\xi)$ on $Y$. 
Once we obtain a Takegoshi type theorem for the cohomology group $H^{q}(Y, K_{Y}(F^{*}\xi))$, we prove that the Hermitian matrix of 
the $L^{2}$-metric $(\cdot,\cdot)_{L^{2},Y_{t}}$ with respect to a holomorphic frame of $R^{q}f_{*}K_{Y/T}(F^{*}\xi)$ admits 
an asymptotic expansion of Barlet-Takayama type (Section~\ref{sect:4.4}). 
To prove the non-degeneracy of the $L^{2}$-metric on $R^{q}f_{*}K_{Y/T}(F^{*}\xi)$, which is exactly the second point mentioned above, 
we fix a K\"ahler form $\kappa'_{Y}$ on $Y'$ with $\kappa_{Y} \leq \kappa'_{Y}$, which yields 
the corresponding $L^{2}$-metric $( \cdot, \cdot)'_{L^{2},Y_{t}}$ on $H^{q}(Y_{t}, K_{Y_{t}}(F^{*}\xi))$. 
In Section~\ref{sect:4.5}, we compare the metrics $( \cdot, \cdot)_{L^{2},Y_{t}}$ and $( \cdot, \cdot)'_{L^{2},Y_{t}}$ to prove that the 
non-degeneracy result \cite[Lemmas~4.7, 4.8]{MourouganeTakayama09} for $( \cdot, \cdot)'_{L^{2},Y_{t}}$ implies a non-degeneracy result 
for the metric $( \cdot, \cdot)_{L^{2},Y_{t}}$.

\subsection
{Representation of cohomology classes by harmonic forms}
\label{sect:4.3}
\par

\begin{proposition}
\label{Proposition:Takegoshi}
For every cohomology class $\Theta\in H^{q}(Y, K_{Y}(F^{*}\xi))$, 
there exists a holomorphic differential form $\theta\in H^{0}(Y,\Omega_{Y}^{n+1-q}(F^{*}\xi))$ such that
$$
\Theta|_{Y\setminus Y_{0}}=[\theta\wedge\kappa_{Y}^{q}]|_{Y\setminus Y_{0}}
$$
as elements of $H^{q}(Y\setminus Y_{0}, K_{Y}(F^{*}\xi))$ and
$$
\theta\wedge f^{*}dt=0,
\qquad
D^{F^{*}\xi, F^{*}h_{\xi}}\theta =0.
$$
\end{proposition}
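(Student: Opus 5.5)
The plan is to follow Mourougane–Takayama \cite[Prop.\,4.4]{MourouganeTakayama09}, but to run their argument on the auxiliary manifold $Z$ of Section~\ref{sect:4.2} rather than on $Y$, and then push the result down to $Y$ via $\varphi$.

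\textbf{Reduction to $Z$.} Since $\varphi\colon Z\to Y$ is a projective birational morphism of manifolds, $R^{j}\varphi_{*}K_{Z}(\widetilde{F}^{*}\xi)=0$ for $j>0$ (Takegoshi's vanishing \cite[Th.\,6.9]{Takegoshi95}, or Grauert–Riemenschneider twisted by the pulled-back bundle $\widetilde{F}^{*}\xi=\varphi^{*}F^{*}\xi$). We also have $\varphi_{*}K_{Z}=K_{Y}$. Hence
\[
\varphi^{*}\colon H^{q}(Y,K_{Y}(F^{*}\xi))\to H^{q}(Z,K_{Z}(\widetilde{F}^{*}\xi))
\]
is an isomorphism. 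Moreover, $\varphi$ is an isomorphism over $Y\setminus Y_{0}$, because both $Z$ and $Y$ are isomorphic to $U'$ there.

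\textbf{Takegoshi-type statement on $Z$.} Put $\kappa_{Z}:=\widetilde{F}^{*}\kappa_{X}+\varpi^{*}\kappa_{T}=\varphi^{*}\kappa_{Y}$. This form is semi-positive, and it is Kähler off $Z_{0}$. Because $\widetilde{\rho}$ is a composite of blow-ups with smooth centers disjoint from $U'$, there are exceptional divisors $E_{i}$ and rational numbers $a_{i}>0$ such that $-\sum a_{i}E_{i}$ is $\widetilde{\rho}$-ample. Choosing a smooth metric on $\mathcal{O}(-\sum a_{i}E_{i})$ with positive curvature along the fibres of $\widetilde{\rho}$, we set
\[
\kappa_{\epsilon}:=\kappa_{Z}+\epsilon\bigl(c\,(\mathrm{pr}_{1}\circ\widetilde{\rho})^{*}\kappa_{X'}+\omega_{E}\bigr),
\]
which gives a family of Kähler forms on $Z$ (after shrinking $T$) decreasing to $\kappa_{Z}$ as $\epsilon\to 0$. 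Here $\kappa_{X'}$ is a Kähler form on $X'$; one also uses that $\beta$ is a composite of blow-ups, so $X'$ carries a Kähler form dominating $\beta^{*}\kappa_{X}$ up to a similar correction. This construction is Step 1 of \cite[Prop.\,4.4]{MourouganeTakayama09}.

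For each $\epsilon$ we apply Theorem~\ref{Thm:Takegoshi}(1) to $(Z,\kappa_{\epsilon})$ and the Nakano semi-positive bundle $(\widetilde{F}^{*}\xi,\widetilde{F}^{*}h_{\xi})$. For the class $\varphi^{*}\Theta$ this produces holomorphic forms $\sigma_{\epsilon}\in H^{0}(Z,\Omega^{n+1-q}_{Z}(\widetilde{F}^{*}\xi))$ with
\[
D\sigma_{\epsilon}=0,\qquad \sigma_{\epsilon}\wedge\varpi^{*}dt=0,\qquad \varphi^{*}\Theta=[\sigma_{\epsilon}\wedge\kappa_{\epsilon}^{q}].
\]
Since $\sigma_{\epsilon}\wedge\kappa_{\epsilon}^{q}$ is the harmonic representative, its $L^{2}$-norm is bounded by the norm of any fixed smooth representative computed with $\kappa_{\epsilon}$. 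Comparing volume forms shows that $\int_{Z}|\sigma_{\epsilon}|^{2}_{\kappa_{\epsilon}}dV_{\kappa_{\epsilon}}$ is uniformly bounded. For a holomorphic $(n+1-q,0)$-form this gives uniform $L^{2}$ bounds with respect to the smaller metric $\kappa_{Z}$, hence locally uniform bounds on compacts of $Z\setminus Z_{0}$, and in fact on all of $Z$ by the $L^{2}$-integrability of $(p,0)$-forms against a fixed smooth volume. Montel's theorem then gives a subsequence $\sigma_{\epsilon}\to\sigma$ locally uniformly on $Z$ (after shrinking $T$). The limit $\sigma$ is holomorphic and satisfies $D\sigma=0$ and $\sigma\wedge\varpi^{*}dt=0$. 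Passing to the limit in the cohomology relation with the $\bar\partial$-potentials controlled by the $L^{2}$ estimate, as in \cite{MourouganeTakayama09}, yields $\varphi^{*}\Theta=[\sigma\wedge\kappa_{Z}^{q}]$ on $Z\setminus Z_{0}$.

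\textbf{Descent to $Y$.} On $Y\setminus Y_{0}\cong Z\setminus Z_{0}$ we set $\theta:=(\varphi^{-1})^{*}\sigma$. Holomorphic $p$-forms extend across the codimension $\geq 2$ indeterminacy, so $\varphi_{*}$ of a holomorphic form on $Z$ extends to a holomorphic form on $Y$ by Hartogs: $\varphi$ is an isomorphism off a codimension-two subset of $Y$. Hence $\theta\in H^{0}(Y,\Omega^{n+1-q}_{Y}(F^{*}\xi))$. The identities $\theta\wedge f^{*}dt=0$ and $D\theta=0$ hold on a dense open set and therefore hold everywhere. Finally, $\Theta|_{Y\setminus Y_{0}}=[\theta\wedge\kappa_{Y}^{q}]$ follows from the isomorphism over $Y\setminus Y_{0}$.

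\textbf{Main obstacle.} I expect the hard step to be the limiting argument on $Z$: showing that the cohomology relation survives as $\epsilon\to0$ when $\kappa_{Z}$ degenerates along components of $Z_{0}$. Here I would follow the $L^{2}$ $\bar\partial$-estimates of \cite[Prop.\,4.4, Steps 2--3]{MourouganeTakayama09} verbatim, restricting to $Z\setminus Z_{0}$, where all the metrics are uniformly equivalent on compacts.
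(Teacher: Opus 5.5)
Your outer structure is the paper's: pass to $Z$, apply Theorem~\ref{Thm:Takegoshi} to Kähler approximations of $\kappa_Z$, then descend along the proper modification $\varphi$. The descent step is fine. The gap is in the limiting argument on $Z$, namely the claimed uniform bound on $\int_Z|\sigma_\epsilon|^2_{\kappa_\epsilon}dV_{\kappa_\epsilon}$ near $Z_0$.

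\textbf{Why the uniform bound fails.} Minimality of the harmonic representative gives $\|\sigma_\epsilon\wedge\kappa_\epsilon^q\|_{\kappa_\epsilon}\le\|w\|_{\kappa_\epsilon}$. However, for forms of type $(n+1,q)$ the density $|w|^2_{\kappa}dV_{\kappa}$ \emph{increases} as $\kappa$ decreases; this is the inequality behind Lemma~\ref{lemma:comparison:L2}. So $\|w\|_{\kappa_\epsilon}$ increases to $\|w\|_{\kappa_Z}$, which is in general infinite, because $\kappa_Z$ loses rank along components of $Z_0$. For instance, an eigenvalue $\sim|z_0|^2$ produces a factor $|z_0|^{-2}$ in the density, which is not locally integrable.

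More intrinsically, write $\sigma_\epsilon=v_\epsilon\wedge\varpi^*dt$ and integrate over fibres. Then $\int_Z|\sigma_\epsilon|^2$ increases to a constant times $\int_T\|\varphi^*\Theta\otimes dt^{-1}|_{Z_t}\|^2_{L^2,\kappa_Z|_{Z_t}}\,i\,dt\wedge d\bar t$. Your bound is therefore equivalent to local integrability at $t=0$ of the fibrewise $L^2$-norm with respect to the degenerate form $\kappa_Y$. The paper only obtains that afterwards, in Proposition~\ref{prop:asym:ex}, and it does so \emph{from} this proposition.

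Over compacts of $Z\setminus Z_0$ you do get bounds, by Lemma~\ref{lemma:comparison:L2} applied fibrewise. But then Montel only gives $\sigma$ on $Z\setminus Z_0$, with no control of its extension across $Z_0$. Nor can you cite \cite{MourouganeTakayama09} ``verbatim'': the step used there needs approximating forms that \emph{coincide} with $\kappa_Z$ away from the central fibre. Your $\kappa_\epsilon=\kappa_Z+\epsilon\omega_1$ differ from $\kappa_Z$ everywhere.

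\textbf{The missing idea removes the limit altogether.} Every blow-up centre of $\beta$ and of $\widetilde\rho$ lies over $0$. So one can build Kähler forms $\kappa_{Z,k}$ on $Z$ with $\kappa_{Z,k}=\kappa_Z$ on $\varpi^{-1}(\{|t|\geq 1/k\})$. This uses metrics on the $\mathcal{O}(E_i)$ with $\|\sigma_i\|\equiv1$ off small neighbourhoods of the exceptional divisors, as in Section~\ref{sect:9.1}.

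Let $\theta_k=v_k\wedge\varpi^*dt$ be Takegoshi's form for $\kappa_{Z,k}$. For every $t$ with $|t|\geq\max(1/k,1/l)$, both $v_k|_{Z_t}\wedge\kappa_Z^q|_{Z_t}$ and $v_l|_{Z_t}\wedge\kappa_Z^q|_{Z_t}$ are the $\kappa_Z|_{Z_t}$-harmonic representative of the same class, by Theorem~\ref{Thm:Takegoshi}(3). Since wedging with $\kappa_Z^q$ is injective on $(n-q,0)$-forms on the $n$-dimensional fibre, $v_k=v_l$ there. The identity theorem then gives $\theta_k=\theta_l$ on all of $Z$.

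Thus $\theta_\infty:=\theta_k$ is holomorphic on all of $Z$ for free and satisfies both identities. Moreover $[\theta_\infty\wedge\kappa_Z^q]=\varphi^*\Theta$ over $\{|t|>1/k\}$ for every $k$. Since $T^o$ is Stein, $H^q(Z\setminus Z_0,\,\cdot\,)\cong\Gamma(T^o,R^q\varpi_*(\,\cdot\,))$, so this gives the identity on $Z\setminus Z_0$. Your descent step then finishes the proof.
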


\begin{pf}
We follow \cite[Prop.\,4.4]{MourouganeTakayama09}.
Since the assertion is obvious for $q=0$, we suppose $q>0$. Then $\varphi^{*}\Theta\in H^{q}(Z, K_{Z}(\widetilde{F}^{*}\xi))$.
Define 
$$
\kappa_{Z} := \widetilde{\rho}^{*}( \beta^{*}\kappa_{X} + ({\rm pr}_{2})^{*}\kappa_{T} ),
$$ 
which is a degenerate K\"ahler form on $Z$ with $\kappa_{Z} = \varphi^{*}\kappa_{Y}$. 
Since $Z$ is obtained from $X'\times_{S}T$ by a composite of blowing-ups with non-singular centers,
we deduce from e.g. \cite[Prop.\,12.4]{Demailly12}, \cite[Proof of Prop.\,4.4 Step 1]{MourouganeTakayama09} that 
there exists a sequence of K\"ahler forms $\{\kappa_{Z,k}\}_{k\geq1}$ on $Z$ converging to $\kappa_{Z}$ uniformly on $Z$
such that $\kappa_{Z,k}=\kappa_{Z}$ on $Z\setminus\varpi^{-1}(T(\frac{1}{k}))$.
By Theorem~\ref{Thm:Takegoshi} applied to the Nakano semi-positive vector bundle $(\widetilde{F}^{*}\xi, \widetilde{F}^{*}h_{\xi})$ 
on the K\"ahler manifold $(Z,\kappa_{Z,k})$, 
there is a holomorphic differential form $\theta_{k}\in \Gamma(Z,\Omega_{Z}^{n+1-q}(\widetilde{F}^{*}\xi))$ with
\begin{equation}
\label{eqn:Takegoshi}
\varphi^{*}\Theta=[\theta_{k}\wedge\kappa_{Z,k}^{q}],
\qquad
(\varpi^{*}dt)\wedge\theta_{k}=0,
\qquad
D^{\widetilde{F}^{*}\xi, F^{*}h_{\xi}} \theta_{k} =0.
\end{equation}
Here $D^{\widetilde{F}^{*}\xi, F^{*}h_{\xi}}$ denotes the Chern connection of $(\widetilde{F}^{*}\xi, \widetilde{F}^{*}h_{\xi})$.
Let $O \subset T$ be an open subset  such that $O \subset T\setminus T(\frac{1}{k})$ for all $k>1$.
Since $\kappa_{Z,k}=\kappa_{Z}$ on $Z\setminus\varpi^{-1}(T(\frac{1}{k}))$, we get $\kappa_{Z,k}=\kappa_{Z}$ on $\varpi^{-1}(O)$.
By \cite[Th.\,5.2 (iv)]{Takegoshi95}, \cite[Proof of Prop.\,4.4 Step 3]{MourouganeTakayama09}, 
the equality $\kappa_{Z,k}|_{\varpi^{-1}(O)}=\kappa_{Z,l}|_{\varpi^{-1}(O)}$ yields that $\theta_{k}|_{\varpi^{-1}(O)}=\theta_{l}|_{\varpi^{-1}(O)}$. 
Hence $\theta_{k}=\theta_{l}$ for all $k,l>1$. We set $\theta_{\infty}:=\theta_{k}$. By \eqref{eqn:Takegoshi}, for all $k>1$, we have
\begin{equation}
\label{eqn:Takegoshi:2}
\varphi^{*}\Theta=[\theta_{\infty}\wedge\kappa_{Z,k}^{q}],
\qquad
(\varpi^{*}dt)\wedge\theta_{\infty}=0,
\qquad
D^{\widetilde{F}^{*}\xi, \widetilde{F}^{*}h_{\xi}} \theta_{\infty} =0.
\end{equation} 
Since $\kappa_{Z,k}=\kappa_{Z}$ on $Z\setminus\varpi^{-1}(T(\frac{1}{k}))$, we get the following equality of cohomology classes:
\begin{equation}
\label{eqn:equality:coh}
\varphi^{*}\Theta|_{Z\setminus\varpi^{-1}(T(\frac{1}{k}))} = [\theta_{\infty}\wedge\kappa_{Z}^{q}]|_{Z\setminus\varpi^{-1}(T(\frac{1}{k}))}
\qquad(k>1).
\end{equation}
Set $Z_{0} := \varpi^{-1}(0)$.
Let $\sigma_{\varphi^{*}\Theta}, \sigma_{ [\theta_{\infty}\wedge\kappa_{Z}^{q}]} \in \Gamma(T^{o}, R^{q}\varpi_{*}K_{Z}(\widetilde{F}^{*}\xi))$ 
be the holomorphic sections corresponding to $\varphi^{*}\Theta|_{Z\setminus Z_{0}}$ and 
$[\theta_{\infty}\wedge\kappa_{Z}^{q}]|_{Z\setminus Z_{0}}$, respectively.
By \eqref{eqn:equality:coh}, we get $\sigma_{\varphi^{*}\Theta} = \sigma_{ [\theta_{\infty}\wedge\kappa_{Z}^{q}]}$ on $T^{o}$.
Since $T^{o}$ is Stein, $H^{q}(Z\setminus Z_{0}, K_{Z}(\widetilde{F}^{*}\xi)) \cong \Gamma(T^{o}, R^{q}\varpi_{*}K_{Z}(\widetilde{F}^{*}\xi))$ 
by the Leray spectral sequence. 
Hence
\begin{equation}
\label{eqn:equality:coh:2}
\varphi^{*}\Theta|_{Z\setminus Z_{0}} = [ \theta_{\infty}\wedge\kappa_{Z}^{q} ] |_{Z\setminus Z_{0}}.
\end{equation}
(We remark that \eqref{eqn:equality:coh:2} follows directly from the fact that $\kappa_{Z,k}$ can be constructed in such a way that 
$[\kappa_{Z,k}]|_{Z\setminus Z_{0}} = [\kappa_{Z}]|_{Z\setminus Z_{0}}$.
Indeed, $\varphi^{*}\Theta=[\theta_{\infty}\wedge\kappa_{Z}^{q}]$. See Section~\ref{sect:9.2}.)
\par
Since $\varphi\colon Z\to Y$ is a proper modification,
there exists $\theta\in H^{0}(Y,\Omega_{Y}^{n+1-q}(F^{*}\xi))$ with $\varphi^{*}\theta = \theta_{\infty}$.
Since $\kappa_{Z}=\varphi^{*}\kappa_{Y}$, we get 
$$
\Theta|_{Y\setminus Y_{0}}
=
(\varphi^{-1})^{*}\varphi^{*}\Theta|_{Z\setminus Z_{0}}
=
(\varphi^{-1})^{*}[\theta_{\infty}\wedge\kappa_{Z}^{q}]|_{Z\setminus Z_{0}}
=
[\theta\wedge\kappa_{Y}^{q}]|_{Y\setminus Y_{0}}.
$$
Since $\varpi = f \circ \varphi$ and hence $(\varphi^{-1})^{*}\varpi^{*} = f^{*}$, we get $(f^{*}dt)\wedge\theta=0$ by the relation 
$(\varpi^{*}dt)\wedge\theta_{\infty}=0$. 
Since $(\varphi^{-1})^{*}\widetilde{F}^{*} = F^{*}$, $\theta|_{Y\setminus Y_{0}} = (\varphi^{-1})^{*}\theta_{\infty}$ and 
$Y\setminus Y_{0}$ is a dense open subset of $Y$, we get $D^{F^{*}\xi}\theta = 0$ by $D^{\widetilde{F}^{*}\xi} \theta_{\infty} =0$. 
This completes the proof.
\end{pf}

\begin{remark}
In Proposition~\ref{Proposition:Takegoshi}, the equality $\Theta=[\theta\wedge\kappa_{Y}^{q}]$ holds. See Section~\ref{sect:9.2}.
\end{remark}

\subsection
{Asymptotic expansion of the $L^{2}$-metric}
\label{sect:4.4}
\par
By the $G$-equivariance of the vector bundle $R^{q}f_{*}K_{Y}(F^{*}\xi)$, 
its direct summand $R^{q}f_{*}K_{Y}(F^{*}\xi)_{W}$ is a $G$-equivariant holomorphic vector bundle on $T$. 
Let $h_{W}^{q} \in{\mathbf Z}_{\geq0}$ be its rank. Shrinking $T$ if necessary, we assume that $R^{q}f_{*}K_{Y}(F^{*}\xi)$ is a free
${\mathcal O}_{T}$-module on $T$. 
Let $\{\Theta_{1},\ldots,\Theta_{h_{W}^{q}}\} \subset \Gamma(T, R^{q}f_{*}K_{Y}(F^{*}\xi)_{W})$ be a basis of 
$R^{q}f_{*}K_{Y}(F^{*}\xi)_{W}$ as a free ${\mathcal O}_{T}$-module. 
Shrinking $T$ if necessary, we regard $\Theta_{\alpha} \in \Gamma(T, R^{q}f_{*}K_{Y}(F^{*}\xi)) = H^{q}(Y, K_{Y}(F^{*}\xi))$ 
via the isomorphism \eqref{eqn:dir:sum:dir:im}.
By Proposition~\ref{Proposition:Takegoshi}, there exist a holomorphic differential form 
$\theta_{\alpha} \in \Gamma(Y,\Omega_{Y}^{n-q+1}(F^{*}\xi))$ 
and a holomorphic relative differential form $\widetilde{\theta}_{\alpha} \in \Gamma(Y\setminus Y_{0},\Omega_{Y/T}^{n-q}(F^{*}\xi))$ 
such that
\begin{equation}
\label{eqn:theta:1}
\Theta_{\alpha}|_{Y\setminus Y_{0}}=[\theta_{\alpha}\wedge\kappa_{Y}^{q}]|_{Y\setminus Y_{0}} \in H^{q}(Y\setminus Y_{0}, K_{Y}(F^{*}\xi)),
\end{equation}
\begin{equation}
\label{eqn:theta:2}
\theta_{\alpha}|_{Y\setminus Y_{0}} = \widetilde{\theta}_{\alpha}\wedge f^{*}dt,
\end{equation}
\begin{equation}
\label{eqn:flatness}
D^{F^{*}\xi, F^{*}h_{\xi}} \theta_{\alpha} =0.
\end{equation}
For $t\in T^{o}$, set $\kappa_{Y_{t}}:=\kappa_{Y}|_{Y_{t}}$. 
By Theorem~\ref{Thm:Takegoshi} (3), $\widetilde{\theta}_{\alpha}|_{Y_{t}}\wedge\kappa_{Y_{t}}^{q}$ is the harmonic representative of 
the class $[\widetilde{\theta}_{\alpha}\wedge\kappa_{Y}^{q}]|_{Y_{t}}$ with respect to the metrics $\kappa_{Y_{t}}$, $h_{F^{*}\xi}|_{Y_{t}}$. 
\par
Let $\{e_{1},\ldots,e_{r}\}$ be a local holomorphic frame of $F^{*}\xi$ and 
let $\{e_{1}^{\lor},\ldots,e_{r}^{\lor}\}$ be its dual frame of $F^{*}\xi^{\lor}$. 
We can express locally $\widetilde{\theta}_{\alpha}=\sum_{i} \widetilde{\theta}_{\alpha,i}\otimes e_{i}$ and 
$h_{F^{*}\xi}=\sum_{i,j}h_{i\bar{j}}e_{i}^{\lor}\otimes\bar{e}_{j}^{\lor}$,
where the $\widetilde{\theta}_{\alpha,i}$ are local holomorphic sections of $\Omega_{Y/T}^{q}$ and 
the $h_{i\bar{j}}$ are local smooth functions. We define a smooth relative $(q,q)$-form by
$$
h_{F^{*}\xi}(\widetilde{\theta}_{\alpha}\wedge\overline{\widetilde{\theta}}_{\beta})
:=
\sum_{i,j}h_{i\bar{j}} \widetilde{\theta}_{\alpha,i}\wedge\overline{\widetilde{\theta}}_{\beta,j}.
$$
For any $t\in T^{o}$, we have
$h_{F^{*}\xi}(\widetilde{\theta}_{\alpha}\wedge\overline{\widetilde{\theta}}_{\beta})|_{Y_{t}} = 
h_{F^{*}\xi}(\widetilde{\theta}_{\alpha}|_{Y_{t}}\wedge\overline{{\widetilde{\theta}}}_{\beta}|_{Y_{t}}) 
\in A^{q,q}(Y_{t})$.
Since $\Theta_{\alpha} \otimes (f^{*}dt)^{-1} \in H^{q}(Y, K_{Y/T}(F^{*}\xi))$ and $K_{Y/T}|_{Y_{t}} \cong K_{Y_{t}}$ in the canonical way, 
its restriction $\Theta_{\alpha} \otimes (f^{*}dt)^{-1}|_{Y_{t}}$ is viewed as an element of $H^{q}(Y_{t}, K_{Y_{t}}(F^{*}\xi))$. 
By \eqref{eqn:theta:1}, \eqref{eqn:theta:2}, under this identification, we have
$$
\Theta_{\alpha} \otimes (f^{*}dt)^{-1}|_{Y_{t}} = [ \widetilde{\theta}_{\alpha}|_{Y_{t}} \wedge \kappa_{Y_{t}}^{q} ].
$$
\par
We set
\begin{equation}
\label{eqn:L2:met:f}
\widetilde{H}_{\alpha\bar{\beta}}(t) 
:= \left( \Theta_{\alpha}\otimes(f^{*}dt)^{-1}|_{Y_{t}},\Theta_{\beta}\otimes(f^{*}dt)^{-1}|_{Y_{t}} \right)_{L^{2}}
\qquad(t\in T^{o}).
\end{equation}
Then $\widetilde{H}(t)=( \widetilde{H}_{\alpha\bar{\beta}}(t) )$ is a positive-definite $(h_{W}^{q}, h_{W}^{q})$-Hermitian matrix.

\begin{lemma}
\label{lemma:fiber:integral}
For any $t\in T^{o}$, the following identity holds:
$$
\widetilde{H}_{\alpha\bar{\beta}}(t)
=
q! \int_{Y_{t}} i^{(n-q)^{2}} h_{F^{*}\xi}(\widetilde{\theta}_{\alpha}\wedge\overline{\widetilde{\theta}}_{\beta})|_{Y_{t}}
\wedge\kappa_{Y_{t}}^{q}.
$$
\end{lemma}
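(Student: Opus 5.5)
The identity is a pointwise (in $t\in T^{o}$) statement about the $L^{2}$-inner product of harmonic forms on the compact Kähler manifold $(Y_{t},\kappa_{Y_{t}})$. The plan is to compute the inner product of the two harmonic representatives directly, using the Hodge star of primitive forms.

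First, the representatives. By the discussion after \eqref{eqn:flatness}, which rests on Theorem~\ref{Thm:Takegoshi}~(3), the form $u_{\alpha}:=\widetilde{\theta}_{\alpha}|_{Y_{t}}\wedge\kappa_{Y_{t}}^{q}\in A^{n,q}(Y_{t},F^{*}\xi)$ is the harmonic representative of $\Theta_{\alpha}\otimes(f^{*}dt)^{-1}|_{Y_{t}}$. Since the Hodge isomorphism is used to define $h_{H}$, we get
$$
\widetilde{H}_{\alpha\bar\beta}(t)=(u_{\alpha},u_{\beta})_{L^{2}}=\int_{Y_{t}}h_{F^{*}\xi}\bigl(u_{\alpha}\wedge\star\overline{u_{\beta}}\bigr)
$$
with the sesquilinear convention of Section~\ref{sect:1.1}. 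The question therefore reduces to pointwise linear algebra.

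Second, the Hodge star. Write $v=\widetilde{\theta}_{\beta}|_{Y_{t}}$, a holomorphic $(n-q,0)$-form with values in $F^{*}\xi$. Any $(p,0)$-form is primitive, since $\Lambda$ kills it for degree reasons. The standard formula for the Hodge star of $L^{r}$ of a primitive $k$-form $\omega$ on a Kähler manifold of dimension $n$ reads
$$
\star L^{r}\omega=(-1)^{k(k+1)/2}\frac{r!}{(n-k-r)!}L^{n-k-r}\mathcal{J}(\omega),
$$
where $\mathcal{J}$ is multiplication by $i^{p-q}$ on $(p,q)$-forms. Apply it with $k=n-q$ and $r=q$. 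Then $n-k-r=0$, so $\star(v\wedge\kappa^{q})$ is a constant multiple of $\bar v$ or $v$, depending on the conjugation convention. This is exactly the constant $C'_{n,q}$ that appears in the proof of Theorem~\ref{Thm:Takegoshi}. The resulting integrand is $q!$ times a unimodular constant times $h_{F^{*}\xi}(\widetilde{\theta}_{\alpha}\wedge\overline{\widetilde{\theta}}_{\beta})\wedge\kappa_{Y_{t}}^{q}$.

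Third, the constant. I will identify the unimodular constant with $i^{(n-q)^{2}}$. The cleanest check is to take $\alpha=\beta$ and compute in a unitary frame at a point, with $v=dz_{1}\wedge\cdots\wedge dz_{n-q}$ and $\kappa=i\sum dz_{j}\wedge d\bar z_{j}$. Reordering $v\wedge\bar v$ into $\prod(dz_{j}\wedge d\bar z_{j})$ produces the sign $(-1)^{(n-q)(n-q-1)/2}$. Combined with the factor $i^{n-q}$ coming from the $dz\wedge d\bar z$ pairs, and with $\kappa^{q}$ giving $q!$ times the volume form on the complementary coordinates, this yields $i^{(n-q)^{2}}v\wedge\bar v\wedge\kappa^{q}=q!\,|v|^{2}\,\kappa^{n}/n!$ when $\kappa^{n}/n!$ is normalized as the volume form. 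Meanwhile $|u_{\alpha}|^{2}=|v\wedge\kappa^{q}|^{2}$ must be computed with the same normalization. Polarizing and summing over the frame $e_{i}$ with the coefficients $h_{i\bar j}$ then gives the formula for general $\alpha,\beta$.

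The main obstacle is the bookkeeping of normalizations. The pointwise norm of $\kappa^{q}$ on forms and the volume-form convention ($\kappa^{n}/n!$ versus $\kappa^{n}$) must be consistent with the paper's $L^{2}$ convention, so that the outcome is exactly $q!$ with no extra combinatorial factor. I would fix these by the explicit unitary-frame computation above, normalizing $\kappa^{q}$ by the Lefschetz formula.
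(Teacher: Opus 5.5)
Your proposal is correct and is essentially the paper's argument: both identify $\widetilde{H}_{\alpha\bar{\beta}}(t)$ with the $L^{2}$-pairing of the harmonic forms $\widetilde{\theta}_{\alpha}|_{Y_{t}}\wedge\kappa_{Y_{t}}^{q}$, and both reduce the computation to the Hodge-star identity for primitive $(n-q,0)$-forms. The paper applies this identity to $\widetilde{\theta}_{\alpha}$ itself, $\widetilde{\theta}_{\alpha}\wedge\kappa_{Y_{t}}^{q}=(q!/(-i)^{(n-q)^{2}})\star_{t}\widetilde{\theta}_{\alpha}$ (citing Mourougane--Takayama), and uses that $\star_{t}$ is an isometry, whereas you apply it to $L^{q}\widetilde{\theta}_{\alpha}$ and fix the phase with a unitary-frame check; this difference is cosmetic, and both give the same constant $q!\,i^{(n-q)^{2}}$.
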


\begin{pf}
Let $\star_{t}$ be the Hodge star operator with respect to $\kappa_{Y_{t}}$.
Since $\widetilde{\theta}_{\alpha}|_{Y_{t}} \wedge \kappa_{Y_{t}}^{q}$ is the harmonic representative of the cohomology class 
$[\widetilde{\theta}_{\alpha} \wedge \kappa_{Y}^{q}]|_{Y_{t}}$ by Theorem~\ref{Thm:Takegoshi} (3), we have
$H_{\alpha\bar{\beta}}(t) 
= \int_{Y_{t}} \langle \widetilde{\theta}_{\alpha}|_{Y_{t}} \wedge \kappa_{Y_{t}}^{q} , \widetilde{\theta}_{\beta}|_{Y_{t}} 
\wedge\kappa_{Y_{t}}^{q} \rangle_{t} dv_{t}$ with $dv_{t}=\kappa_{Y_{t}}^{n}/n!$.
Since $\widetilde{\theta}_{\alpha}|_{Y_{t}} \wedge \kappa_{Y_{t}}^{q} = (q!/(-i)^{(n-q)^{2}}) (\star_{t} \widetilde{\theta}_{\alpha}|_{Y_{t}})$ 
(cf. \cite[Cor.\,2.2 (1)]{MourouganeTakayama08}), we get 
$$
\langle \widetilde{\theta}_{\alpha}|_{Y_{t}}\wedge\kappa_{Y_{t}}^{q}, \widetilde{\theta}_{\beta}|_{Y_{t}}\wedge\kappa_{Y_{t}}^{q} \rangle_{t}
=
(q!)^{2} \langle \star_{t}( \widetilde{\theta}_{\alpha}|_{Y_{t}}), \star_{t}(\widetilde{\theta}_{\beta}|_{Y_{t}}) \rangle_{t}
=
(q!)^{2} \langle \widetilde{\theta}_{\alpha}|_{Y_{t}}, \widetilde{\theta}_{\beta}|_{Y_{t}} \rangle_{t}.
$$
Since
$\langle \widetilde{\theta}_{\alpha}|_{Y_{t}}, \widetilde{\theta}_{\beta}|_{Y_{t}} \rangle_{t} dv_{t} 
= h_{F^{*}\xi}( \widetilde{\theta}_{\alpha}|_{Y_{t}} \wedge \overline{\star}_{t} ( \widetilde{\theta}_{\beta}|_{Y_{t}}) )
= C_{n,q} h_{F^{*}\xi}( \widetilde{\theta}_{\alpha}|_{Y_{t}} \wedge \overline{\widetilde{\theta}_{\beta}}|_{Y_{t}} ) 
\wedge \kappa_{Y_{t}}^{q}$ with $C_{n,q} = i^{(n-q)^{2}}/q!$, we get the result.
\end{pf}

\begin{remark}
As is mentioned after Theorem~\ref{Thm:Takegoshi}, $H_{\alpha\bar{\beta}}(t)$ is independent of the choice of K\"ahler form
within the K\"ahler class $[\kappa_{Y_{t}}]$. Indeed, for a $(q, q-1)$-form $\chi$ on $Y_{t}$, we have 
$h_{F^{*}\xi}( \widetilde{\theta}_{\alpha}|_{Y_{t}} \wedge \overline{\widetilde{\theta}_{\beta}}|_{Y_{t}} ) \wedge \bar{\partial}\chi
= \bar{\partial}_{t}\{ h_{F^{*}\xi}( \widetilde{\theta}_{\alpha}|_{Y_{t}} \wedge \overline{\widetilde{\theta}_{\beta}}|_{Y_{t}} ) \wedge \chi \}$
by the equality $D^{F^{*}\xi}_{t} (\widetilde{\theta}_{\alpha}|_{Y_{t}}) =0$ derived from the equality $D^{F^{*}\xi, F^{*}h_{\xi}}\theta_{\alpha} =0$
in Proposition~\ref{Proposition:Takegoshi}, where $D^{F^{*}\xi}_{t}$ is the Chern connection of $(F^{*}\xi, F^{*}h_{\xi})|_{Y_{t}}$.
\end{remark}

\begin{proposition}
\label{prop:asym:ex}
There exist smooth functions $A_{\alpha\bar{\beta};m}(t) \in C^{\infty}(T)$ $(1\leq\alpha,\beta\leq h_{W}^{q}$, $0\leq m\leq n)$ such that
\begin{equation}
\label{eqn:asym:exp:H}
\widetilde{H}_{\alpha\bar{\beta}}(t) = \sum_{0 \leq m \leq n} A_{\alpha\bar{\beta};m}(t)\,(\log|t|^{-2})^{m}.
\end{equation}
In particular, there exist smooth functions $a_{m}(t) \in C^{\infty}(T)$ $( 0\leq m\leq n h_{W}^{q} )$ with
\begin{equation}
\label{eqn:asym:exp:det:H}
\det \widetilde{H}(t) = \sum_{0 \leq m \leq n h_{W}^{q}} a_{m}(t) (\log|t|^{-2})^{m}.
\end{equation}
\end{proposition}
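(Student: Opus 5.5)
The plan is to write $\widetilde{H}_{\alpha\bar\beta}(t)$ as the fiber integral of a single smooth $(n+1,n+1)$-form on $Y$, divided by the volume form of $T$. Then the Barlet--Takayama asymptotic expansion for fiber integrals over a semi-stable degeneration applies.

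\textbf{Step 1 (global form).} By Lemma~\ref{lemma:fiber:integral}, $\widetilde H_{\alpha\bar\beta}(t)$ is $q!\int_{Y_t} i^{(n-q)^2}h_{F^*\xi}(\widetilde\theta_\alpha\wedge\overline{\widetilde\theta}_\beta)\wedge\kappa_{Y_t}^q$. Only the relative forms $\widetilde\theta_\alpha$ are singular along $Y_0$, so I will replace them by the global holomorphic forms $\theta_\alpha\in H^0(Y,\Omega^{n+1-q}_Y(F^*\xi))$ of Proposition~\ref{Proposition:Takegoshi}. Since $\theta_\alpha=\widetilde\theta_\alpha\wedge f^*dt$ on $Y\setminus Y_0$, define on $Y$ the smooth $(n+1,n+1)$-form
$$
\Phi_{\alpha\bar\beta}:=c_{n,q}\,h_{F^*\xi}(\theta_\alpha\wedge\overline{\theta}_\beta)\wedge\kappa_Y^{q},
$$
with a universal constant $c_{n,q}$ of modulus one chosen to absorb sign reorderings. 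On $Y\setminus Y_0$ this form equals $i^{(n-q)^2}h_{F^*\xi}(\widetilde\theta_\alpha\wedge\overline{\widetilde\theta}_\beta)\wedge\kappa_Y^q\wedge i\,f^*(dt\wedge d\bar t)$ up to that constant. Hence $f_*\Phi_{\alpha\bar\beta}=(q!)^{-1}\widetilde H_{\alpha\bar\beta}(t)\,i\,dt\wedge d\bar t$ on $T^o$. In this identity the $f^*\kappa_T$-part of $\kappa_Y$ drops out, because it is wedged against $f^*dt$. The form $\Phi_{\alpha\bar\beta}$ is smooth on all of $Y$, since $\theta_\alpha$, $h_\xi$ and $\kappa_Y$ are smooth there.

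\textbf{Step 2 (asymptotics).} $f\colon Y\to T$ is proper and $Y_0$ is a reduced normal crossing divisor. Locally on $Y$, therefore, $t=z_0\cdots z_k$ with $k\le n$. For a smooth compactly supported $(n+1,n+1)$-form $\Phi$ on $Y$, the density of $f_*\Phi$ with respect to $i\,dt\wedge d\bar t$ then lies in $\bigoplus_{0\le m\le n}C^\infty(T)(\log|t|^{-2})^m$; this is Barlet's theorem in the reduced semi-stable case, refined by Takayama. No fractional powers $|t|^{2r}$ occur, because every multiplicity is one. I would verify this locally with a partition of unity. In the chart the fiber integral is $\int\phi\,\prod_{j\ge1}|z_j|^{-2}\,d\mathrm{vol}$ over $\{z_0\cdots z_k=t\}$. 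Changing to polar and log coordinates gives an integral over a simplex of size $\log|t|^{-1}$ of a function that is smooth in $(|z_j|^2,\arg z_j)$, and Taylor expansion in the radial variables produces polynomials in $\log|t|^{-2}$ of degree $\le k$ with smooth coefficients. This proves \eqref{eqn:asym:exp:H} with $A_{\alpha\bar\beta;m}\in C^\infty(T)$.

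\textbf{Step 3 (determinant).} \eqref{eqn:asym:exp:det:H} follows by expanding $\det\widetilde H$ as a polynomial in the entries. Each product of $h^q_W$ entries is a polynomial of degree at most $nh^q_W$ in $\log|t|^{-2}$ with smooth coefficients.

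\textbf{Main obstacle.} I expect the main difficulty to be Step 2: proving smoothness of the coefficients in $t$, rather than merely a sum of terms $|t|^{2r}$ times smooth functions, and bounding the log-degree by $n$. This requires the reducedness of $Y_0$. I would either cite Takayama's result for reduced normal crossing fibers directly, or run the local Mellin-transform / simplex computation carefully, checking that the smooth dependence on the angular variables of $z_j$ integrates to smooth dependence on $t$.
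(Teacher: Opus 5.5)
Your proposal is correct and is essentially the paper's proof. The paper likewise uses $\theta_\alpha=\widetilde\theta_\alpha\wedge f^*dt$ to write $\widetilde H_{\alpha\bar\beta}(t)$ as the fiber integral of the smooth top form $i^{(n-q+1)^2}h_{F^*\xi}(\theta_\alpha\wedge\overline{\theta}_\beta)\wedge\kappa_Y^q$; it then localizes by a partition of unity to polydisc charts with $f=z_0\cdots z_k$, integrates out the $n-k$ transverse variables, and applies Takayama's computation to get log-degree at most $k\le n$, with the determinant following exactly as in your Step 3.
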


\begin{pf}
On $Y\setminus Y_{0}$, by \eqref{eqn:theta:2}, we have
\begin{equation}
\label{eqn:rel:diff:form}
f^{*}(i\,dt\wedge d\bar{t}) \wedge \{i^{(n-q)^{2}}h_{F^{*}{\xi}}(\widetilde{\theta}_{\alpha}\wedge\overline{\widetilde{\theta}}_{\beta}) 
\wedge \kappa_{Y}^{q}\}
=
i^{(n-q+1)^{2}}h_{F^{*}{\xi}}(\theta_{\alpha}\wedge\overline{\theta}_{\beta}) \wedge \kappa_{Y}^{q}.
\end{equation}
\par
Let $\{{\mathcal V}_{\lambda}\}_{\lambda\in\Lambda}$ be a finite open covering of $Y$ such that there is a system of coordinates 
$(z_{0},\ldots,z_{n})$ on ${\mathcal V}_{\lambda}$ with $f|_{{\mathcal V}_{\lambda}}(z)=z_{0}\cdots z_{k}$. 
Here $k$ depends on $\lambda\in\Lambda$.
We may suppose that ${\mathcal V}_{\lambda}$ is biholomorphic to a polydisc of dimension $n+1$ for all $\lambda \in \Lambda$. 
Let us write $u=(u_{0}, \ldots, u_{k})$ and $v=(v_{k+1}, \ldots, v_{n})$.
Then 
$$
Y_{t}\cap V_{\lambda} = W_{t} \times \varDelta^{n-k},
\qquad
W_{t} := \{ u \in \varDelta^{k+1} ;\, u_{0} \cdots u_{k} =t \} .
$$
Let $\{\varrho_{\lambda}\}_{\lambda\in\Lambda}$ be a partition of unity subject to $\{{\mathcal V}_{\lambda}\}_{\lambda\in\Lambda}$.
We set $du := du_{0}\wedge \cdots \wedge du_{k}$ and $dv := dv_{k+1}\wedge \cdots \wedge dv_{n}$. 
Since $\theta_{\alpha}$ and $\theta_{\beta}$ are holomorphic $n-q+1$-forms on $Y$ and $\kappa_{Y}\in A^{1,1}(Y)$,
there exists $B_{\alpha\bar{\beta}}^{(\lambda)}(u,v)\in C^{\infty}_{0}({\mathcal V}_{\lambda})$ such that
\begin{equation}
\label{eqn:rel:diff:form:2}
i^{(n-q+1)^{2}}
\varrho_{\lambda}(z) h_{F^{*}{\xi}}( \theta_{\alpha} \wedge \overline{\theta}_{\beta} ) \wedge \kappa_{Y}^{q} |_{{\mathcal V}_{\lambda}}
=
B_{\alpha\bar{\beta}}^{(\lambda)}(u,v)\,du\wedge \overline{du} \wedge dv \wedge \overline{dv}.
\end{equation}
Define $C_{\alpha\bar{\beta}}^{(\lambda)}(u) \in C^{\infty}_{0}( \varDelta^{k+1})$ by 
$C_{\alpha\bar{\beta}}^{(\lambda)}(u) := \int_{\varDelta^{n-k}} B_{\alpha\bar{\beta}}^{(\lambda)}(u,v)\,dv\wedge \overline{dv}$.
By Takayama~\cite[Proof of Prop.\,6.1]{Takayama22} applied to the fiber integral 
$\int_{W_{t}} C_{\alpha\bar{\beta}}^{(\lambda)}(u) du\wedge \overline{du}$, there exist smooth functions 
$A_{\alpha\bar{\beta};m}^{(\lambda)}(t) \in C^{\infty}(T)$ $(0 \leq m \leq k)$ such that
\begin{equation}
\label{eqn:asym:exp:fib:int1}
\begin{aligned}
\int_{Y_{t}\cap {\mathcal V}_{\lambda}} B_{\alpha\bar{\beta}}^{(\lambda)}(u, v) du \wedge d\overline{u} \wedge dv \wedge d\overline{v} 
&= 
\int_{W_{t}} C_{\alpha\bar{\beta}}^{(\lambda)}(u) du\wedge \overline{du}
\\
&=
\{ \sum_{0 \leq m \leq k} A_{\alpha\bar{\beta};m}^{(\lambda)}(t) ( \log |t| )^{m} \}\, i \, dt \wedge d \overline{t}.
\end{aligned}
\end{equation}
By \eqref{eqn:rel:diff:form}, \eqref{eqn:rel:diff:form:2}, \eqref{eqn:asym:exp:fib:int1}, as functions on $T^{o}$, we have
\begin{equation}
\label{eqn:asym:fib:int}
\int_{Y_{t}\cap{\mathcal V}_{\lambda}} i^{(n-q)^{2}}\,\varrho_{\lambda}(z)
h_{F^{*}{\xi}}(\widetilde{\theta}_{\alpha}\wedge\overline{\widetilde{\theta}}_{\beta})|_{Y_{t}}\wedge\kappa_{Y_{t}}^{q}
=
\sum_{0 \leq m \leq k} A_{\alpha\bar{\beta};m}^{(\lambda)}(t) (\log|t|^{2})^{m}.
\end{equation}
Since
$\widetilde{H}_{\alpha\bar{\beta}} =\sum_{\lambda\in\Lambda}i^{(n-q)^{2}}
f_{*}(\varrho_{\lambda}\,h_{F^{*}{\xi}}(\widetilde{\theta}_{\alpha}\wedge\overline{\widetilde{\theta}}_{\beta})\wedge\kappa_{Y}^{q})$,
the result follows from \eqref{eqn:asym:fib:int}. This completes the proof.
\end{pf}

\begin{remark}
For $q=0$, Proposition~\ref{prop:asym:ex} is a special case of \cite[Prop.\,6.1]{Takayama22}. 
In fact, for $q=0$, this proposition holds even if $(\xi, h_{\xi})$ is not Nakano semi-positive.
\end{remark}

\subsection
{Non-degeneracy of the $L^{2}$-metric}
\label{sect:4.5}
\par
To prove the non-vanishing of $a_{m}(0)$ for some $0 \leq m \leq n h_{W}^{q}$ in Proposition~\ref{prop:asym:ex}, 
we need the following:

\begin{lemma}
\label{lemma:comparison:L2}
Let $M$ be a compact complex manifold and let $\omega$, $\omega'$ be K\"ahler metrics on $M$. 
Let $(E,h)$ be a holomorphic Hermitian vector bundle on $M$.
Let $h_{L^{2},\omega,h}$ (resp. $h_{L^{2},\omega',h}$) be the $L^{2}$-metric on $H^{q}(M, K_{M}(E))$ with respect to 
the metrics $\omega$, $h$ (resp. $\omega'$, $h$). If $\omega \leq \omega'$ as Hermitian forms on $T^{(1,0)}M$, then
$$
h_{L^{2},\omega',h} \leq h_{L^{2},\omega,h}.
$$
\end{lemma}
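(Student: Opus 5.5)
The plan is to combine two standard facts: a pointwise monotonicity of the $L^{2}$-density of $(n,q)$-forms in the K\"ahler metric, and the minimality of harmonic representatives in their Dolbeault class. Since $h_{L^{2},\omega,h}$ and $h_{L^{2},\omega',h}$ are Hermitian forms on the same finite-dimensional space $H^{q}(M,K_{M}(E))$, it suffices to prove $\|c\|^{2}_{L^{2},\omega',h}\leq\|c\|^{2}_{L^{2},\omega,h}$ for every class $c$. The inequality of Hermitian forms then follows, because a Hermitian form is determined by its quadratic form.

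\emph{Step 1 (pointwise inequality).} Let $u\in A^{n,q}(M,E)$. We claim that, as top-degree densities,
$$
|u|^{2}_{\omega',h}\,dv_{\omega'}\leq|u|^{2}_{\omega,h}\,dv_{\omega}.
$$
Fix a point $x$ and choose coordinates at $x$ that simultaneously diagonalize the two metrics: $\omega=\sum_{j}i\,dz_{j}\wedge d\bar z_{j}$ and $\omega'=\sum_{j}\lambda_{j}\,i\,dz_{j}\wedge d\bar z_{j}$. The hypothesis $\omega\leq\omega'$ gives $\lambda_{j}\geq1$. Write $u=\sum_{|J|=q}u_{J}\,dz_{1}\wedge\cdots\wedge dz_{n}\wedge d\bar z_{J}$ with $u_{J}\in E_{x}$. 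Then
$$
|u|^{2}_{\omega',h}\,dv_{\omega'}=\sum_{J}|u_{J}|_{h}^{2}\Bigl(\prod_{j}\lambda_{j}\Bigr)^{-1}\Bigl(\prod_{j\in J}\lambda_{j}\Bigr)^{-1}\Bigl(\prod_{j}\lambda_{j}\Bigr)dv_{\omega}=\sum_{J}|u_{J}|_{h}^{2}\prod_{j\in J}\lambda_{j}^{-1}\,dv_{\omega}.
$$
Here the factor $(\prod_{j}\lambda_{j})^{-1}$ comes from the $(n,0)$-part, and $(\prod_{j\in J}\lambda_{j})^{-1}$ from the antiholomorphic part. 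The cancellation between the canonical part and the volume form is the reason for twisting by $K_{M}$. Since every $\lambda_{j}\geq1$, the right-hand side is at most $\sum_{J}|u_{J}|^{2}_{h}\,dv_{\omega}$. Integrating over $M$ gives $\|u\|_{\omega',h}\leq\|u\|_{\omega,h}$.

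\emph{Step 2 (minimality of harmonic representatives).} By Hodge theory for $(M,\omega',h)$, the $\omega'$-harmonic representative $u'$ of $c$ is $L^{2}_{\omega'}$-orthogonal to ${\rm Im}\,\bar\partial$. Hence, for any other representative $v=u'+\bar\partial w$, we have $\|v\|^{2}_{\omega'}=\|u'\|^{2}_{\omega'}+\|\bar\partial w\|^{2}_{\omega'}\geq\|u'\|^{2}_{\omega'}$.

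\emph{Step 3 (conclusion).} Take $u$ to be the $\omega$-harmonic representative of $c$. Combining Steps 2 and 1,
$$
\|c\|^{2}_{L^{2},\omega',h}=\|u'\|^{2}_{\omega'}\leq\|u\|^{2}_{\omega'}\leq\|u\|^{2}_{\omega}=\|c\|^{2}_{L^{2},\omega,h}.
$$

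The only step with real content is Step 1. There the main point is to track the exponents correctly, so that the $(n,0)$-part exactly absorbs the change of volume form and leaves only the factors $\lambda_{j}^{-1}$ with $j\in J$. Everything else is formal.
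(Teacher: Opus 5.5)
Your proof is correct and is essentially the paper's argument. The paper also combines the pointwise monotonicity of $|u|^{2}_{\omega,h}\,dv_{\omega}$ for $(n,q)$-forms with the $L^{2}$-minimality of harmonic representatives in their Dolbeault class. The only difference is that the paper cites the pointwise inequality from Demailly, Chap.~VIII, Lemma~6.3, whereas you verify it directly by simultaneous diagonalization.
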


\begin{pf}
Let $\theta \in H^{q}(M, K_{M}(E))$. Let ${\mathcal H}_{\omega,h}(\theta)$ (resp. ${\mathcal H}_{\omega',h}(\theta)$) 
be the harmonic representative of $\theta$ with respect to the metrics $\omega$, $h$ (resp. $\omega'$, $h$). 
Let $dv_{\omega}$ and $dv_{\omega'}$ be the volume forms of $\omega$ and $\omega'$, respectively. Then we have
$$
\int_{M} | {\mathcal H}_{\omega,h}(\theta) |_{\omega,h}^{2} dv_{\omega}
\geq 
\int_{M} | {\mathcal H}_{\omega,h}(\theta) |_{\omega',h}^{2} dv_{\omega'}
\geq
\int_{M} | {\mathcal H}_{\omega',h}(\theta) |_{\omega',h}^{2} dv_{\omega'},
$$
where the first inequality follows from \cite[Chap.VIII, Lemma~6.3]{Demailly12} and the second inequality follows from the fact 
that the harmonic form has the minimal $L^{2}$-norm among all $\bar{\partial}$-closed differential forms in the given cohomology class.
The result follows from this inequality.
\end{pf}

Let $\kappa'_{Y}$ be a K\"ahler form on $Y$. Multiplying a positive constant and shrinking $T$ if necessary, we may assume
$\kappa_{Y} \leq \kappa'_{Y}$. Let $( \cdot, \cdot)_{L^{2}, Y_{t}}$ (resp. $( \cdot, \cdot)'_{L^{2}, Y_{t}}$) be the $L^{2}$-metric on
$H^{q}(Y_{t}, K_{Y_{t}}(F^{*}\xi))$ with respect to $\kappa_{Y}|_{Y_{t}}$, $F^{*}h_{\xi}|_{Y_{t}}$ 
(resp. $\kappa'_{Y}|_{Y_{t}}$, $F^{*}h_{\xi}|_{Y_{t}}$).
The corresponding $L^{2}$-norms are denoted by $\| \cdot \|_{L^{2},Y_{t}}$ and $\| \cdot \|'_{L^{2},Y_{t}}$, respectively.
Since $\kappa'_{Y}$ is a non-degenerate K\"ahler form on $Y$ and $Y_{0}$ is a  reduced normal crossing divisor on $Y$, 
the non-degeneracy result of Mourougane-Takayama \cite[Lemmas~4.7, 4.8]{MourouganeTakayama09} is applicable to
the $L^{2}$-metric on $R^{q}f_{*}K_{Y/T}(F^{*}\xi)$ induced from $(\cdot,\cdot)'_{L^{2},t}$.

\begin{proposition}
\label{prop:nonvanishing} 
There exists a constant $C>0$ such that for all $t \in T^{o}$,
\begin{equation}
\label{eqn:nonvanishing:0}
\widetilde{H}(t) \geq C\, I_{h_{W}^{q}}
\end{equation}
as positive definite Hermitian matrices.
In particular, $\det \widetilde{H}(t)\geq C^{h_{W}^{q}}$ on $T^{o}$.
\end{proposition}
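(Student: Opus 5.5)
The plan is to reduce the lower bound for $\widetilde{H}(t)$ to the corresponding statement for the non-degenerate metric $\kappa'_{Y}$, for which Mourougane-Takayama \cite[Lemmas~4.7, 4.8]{MourouganeTakayama09} applies. For fixed $t\in T^{o}$ we have $\kappa_{Y}|_{Y_{t}}\leq\kappa'_{Y}|_{Y_{t}}$ as K\"ahler forms on the compact manifold $Y_{t}$. Lemma~\ref{lemma:comparison:L2}, applied with $M=Y_{t}$, $E=F^{*}\xi|_{Y_{t}}$, $\omega=\kappa_{Y}|_{Y_{t}}$, $\omega'=\kappa'_{Y}|_{Y_{t}}$, then gives
$$
(\cdot,\cdot)'_{L^{2},Y_{t}}\leq(\cdot,\cdot)_{L^{2},Y_{t}}
\qquad\text{on } H^{q}(Y_{t},K_{Y_{t}}(F^{*}\xi)).
$$
Set $\widetilde{H}'_{\alpha\bar{\beta}}(t):=(\Theta_{\alpha}\otimes(f^{*}dt)^{-1}|_{Y_{t}},\Theta_{\beta}\otimes(f^{*}dt)^{-1}|_{Y_{t}})'_{L^{2}}$. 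Evaluating the inequality on the vectors $\sum_{\alpha}c_{\alpha}\Theta_{\alpha}\otimes(f^{*}dt)^{-1}|_{Y_{t}}$ shows that $\widetilde{H}(t)\geq\widetilde{H}'(t)$ as Hermitian matrices for every $t\in T^{o}$. The inequality is pointwise in $t$, so no uniformity is needed at this stage.

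Next I need a uniform lower bound $\widetilde{H}'(t)\geq C\,I_{h_{W}^{q}}$ on $T^{o}$, possibly after shrinking $T$. The frame $\{\Theta_{\alpha}\otimes(f^{*}dt)^{-1}\}$ is a holomorphic frame of the locally free sheaf $R^{q}f_{*}K_{Y/T}(F^{*}\xi)_{W}$ over all of $T$, including the origin. The metric $\kappa'_{Y}$ is a genuine K\"ahler form on $Y$, $(F^{*}\xi,F^{*}h_{\xi})$ is Nakano semi-positive, and $Y_{0}$ is a reduced normal crossing divisor. Under exactly these hypotheses, Mourougane-Takayama show that the $L^{2}$-norm of a holomorphic section of $R^{q}f_{*}K_{Y/T}(F^{*}\xi)$ whose value at $0$ is non-zero stays bounded below by a positive constant as $t\to0$. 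I would use this in its quantitative form: for a section $\Theta$ of $R^{q}f_{*}K_{Y/T}(F^{*}\xi)$ over $T$, $\liminf_{t\to0}\|\Theta|_{Y_{t}}\|'_{L^{2},Y_{t}}>0$ unless $\Theta(0)=0$.

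The uniform matrix bound comes from a compactness argument. Suppose no such $C$ exists. Then there are $t_{k}\to0$ (or $t_{k}\to$ a point of $T^{o}$, which is excluded by continuity and positive-definiteness on a slightly smaller closed disc) and unit vectors $c^{(k)}\in{\mathbf C}^{h_{W}^{q}}$ with $c^{(k)*}\widetilde{H}'(t_{k})c^{(k)}\to0$. After passing to a subsequence, $c^{(k)}\to c$ with $|c|=1$. The section $\Theta_{c}=\sum c_{\alpha}\Theta_{\alpha}$ does not vanish at $0$, since the $\Theta_{\alpha}$ form a frame. The quadratic form $c\mapsto c^{*}\widetilde{H}'(t)c$ is the squared norm of a section, so the triangle inequality gives
$$
\|\Theta_{c}|_{Y_{t_{k}}}\|'\leq\|\Theta_{c^{(k)}}|_{Y_{t_{k}}}\|'+|c-c^{(k)}|\sum_{\alpha}\|\Theta_{\alpha}|_{Y_{t_{k}}}\|'.
$$

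The main obstacle is the last term in this inequality: it requires $\|\Theta_{\alpha}|_{Y_{t}}\|'$ to stay bounded as $t\to0$. With respect to the non-degenerate $\kappa'_{Y}$ I expect this from the same Mourougane-Takayama lemma, which asserts that $\widetilde{H}'$ extends continuously to $t=0$ as a positive definite matrix (\cite[Lemma~4.8]{MourouganeTakayama09}, using reducedness of $Y_{0}$). If that lemma gives continuity of $\widetilde{H}'$ at $0$ together with $\widetilde{H}'(0)>0$, the compactness step is unnecessary: on a smaller disc $\widetilde{H}'(t)\geq C\,I$, and hence $\widetilde{H}(t)\geq C\,I$. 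So I would first check that \cite{MourouganeTakayama09} indeed yields continuity and non-degeneracy at $0$ for the $\kappa'_{Y}$-metric. The determinant bound $\det\widetilde{H}(t)\geq C^{h_{W}^{q}}$ then follows by taking the product of the eigenvalues.
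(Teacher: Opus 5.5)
Your first step is the same as the paper's and is fine. Lemma~\ref{lemma:comparison:L2}, applied on each $Y_{t}$ with $\kappa_{Y}|_{Y_{t}}\leq\kappa'_{Y}|_{Y_{t}}$, gives $\widetilde{H}(t)\geq\widetilde{H}'(t)$ pointwise in $t$.

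The gap is the uniform lower bound for $\widetilde{H}'(t)$. You expect that \cite{MourouganeTakayama09} makes $\widetilde{H}'$ continuous at $t=0$ with $\widetilde{H}'(0)>0$. That is not what they prove, and it is false in general. For $q=0$ the $L^{2}$-metric does not depend on the K\"ahler form, so $\widetilde{H}'=\widetilde{H}$. Already for a semi-stable degeneration of elliptic curves with $\xi$ trivial, the norm of a frame of $f_{*}K_{Y/T}$ grows like $\log|t|^{-1}$. The terms $(\log|t|^{-2})^{m}A_{m}$ in Theorem~\ref{thm:str:sing:L2} are exactly this phenomenon.

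Consequently your compactness fallback fails at the step you flagged. The norms $\|\Theta_{\alpha}|_{Y_{t}}\|'$ are unbounded, so $|c-c^{(k)}|\,\|\Theta_{\alpha}|_{Y_{t_{k}}}\|'$ need not tend to $0$. More fundamentally, no argument can start from the per-section statement $\liminf_{t\to0}\|\Theta_{c}|_{Y_{t}}\|'>0$ alone. With $L=\log|t|^{-1}$, the matrix $H(t)=\begin{pmatrix}1+L^{2}&L\\ L&1\end{pmatrix}$ satisfies $c^{*}H(t)c=|c_{1}L+c_{2}|^{2}+|c_{1}|^{2}$. This has positive $\liminf$ for every fixed $c\neq0$, yet $\det H(t)=1$ forces its smallest eigenvalue to be of order $L^{-2}\to0$.

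What the paper takes from \cite[Lemmas~4.7, 4.8]{MourouganeTakayama09} is the uniform estimate $\|\sum_{\alpha}u_{\alpha}\Theta_{\alpha}\otimes(f^{*}dt)^{-1}|_{Y_{t}}\|'_{L^{2},Y_{t}}\geq C_{0}|u|$ for all $u\in{\mathbf C}^{h_{W}^{q}}$ and $t\in T^{o}$. It holds because $\kappa'_{Y}$ is non-degenerate on $Y$ and $Y_{0}$ is reduced normal crossing. Combined with your first step, this gives the claim with $C=C_{0}^{2}$.

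If you want to prove that bound yourself by compactness, run it on a truncated norm rather than the full one:
\begin{itemize}
\item By Theorem~\ref{Thm:Takegoshi} applied to $(Y,\kappa'_{Y})$, the $\kappa'$-harmonic representatives are $\widetilde{\theta}'_{\alpha}|_{Y_{t}}\wedge(\kappa'_{Y_{t}})^{q}$, where $\theta'_{\alpha}=\widetilde{\theta}'_{\alpha}\wedge f^{*}dt$ is holomorphic on all of $Y$.
\item Insert a cut-off $0\leq\chi\leq1$ supported away from ${\rm Sing}\,Y_{0}$ and positive on a non-empty open subset of every component of $Y_{0}$. This gives a Hermitian form in $u$ that bounds $(\|\cdot\|'_{L^{2},Y_{t}})^{2}$ from below and is continuous in $t$ up to $t=0$.
\item This form is positive definite at $t=0$, and that is where reducedness enters. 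If $\sum_{\alpha}u_{\alpha}\widetilde{\theta}'_{\alpha}$ vanished on $(Y_{0})_{\rm reg}$ (identity theorem on each component), then $\sum_{\alpha}u_{\alpha}\theta'_{\alpha}$ would be divisible by $f$, using Hartogs across ${\rm Sing}\,Y_{0}$.
\item That would give $\sum_{\alpha}u_{\alpha}\Theta_{\alpha}\in t\,R^{q}f_{*}K_{Y}(F^{*}\xi)$, contradicting that the $\Theta_{\alpha}$ form a frame.
\end{itemize}
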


\begin{pf}
$\{ \Theta_{1}\otimes(f^{*}dt)^{-1},\ldots, \Theta_{h_{W}^{q}} \otimes(f^{*}dt)^{-1}\}$ is a basis of the free ${\mathcal O}_{T}$-module 
$R^{q}f_{*}K_{Y/T}(F^{*}\xi)_{W}$.
Since $Y_{0}$ is a reduced normal crossing divisor on $Y$ and $\kappa'_{Y}$ is a non-degenerate K\"ahler from on $Y$, 
by \cite[Lemmas 4.7 and 4.8]{MourouganeTakayama09} applied to $f \colon Y \to T$ and $F^{*}\xi$ with respect to the metrics
$\kappa'_{Y}$, $F^{*}h_{\xi}$, there is a constant $C_{0}>0$ such that 
for all $u=(u_{\alpha})\in{\mathbf C}^{h_{W}^{q}}$ and $t\in T^{o}$,
\begin{equation}
\label{eqn:nonvanishing}
\| \sum_{\alpha} u_{\alpha} \Theta_{\alpha}\otimes (f^{*}dt)^{-1} |_{Y_{t}} \|'_{L^{2},Y_{t}} \geq C_{0}\,|u|.
\end{equation}
(Note that since $Y_{0}$ is a reduced normal crossing divisor, we can take $\tau^{\circ}=\nu=\mu={\rm id}_{X}$ and $\tau={\rm id}_{Y}$
in \cite[Sect.\,3.1]{MourouganeTakayama09} and hence $X''=X$ and $Y'=Y$ in \cite[Sect.\,4.3]{MourouganeTakayama09}.)
Since $\kappa_{Y}|_{Y_{t}} \leq \kappa'_{Y}|_{Y_{t}}$, 
we deduce from Lemma~\ref{lemma:comparison:L2} that for all $\sigma \in H^{q}(Y_{t}, K_{Y_{t}}(F^{*}\xi))$,
\begin{equation}
\label{eqn:comparison:L2}
\| \sigma \|_{L^{2}, Y_{t}} \geq \| \sigma \|'_{L^{2}, Y_{t}}.
\end{equation}
By \eqref{eqn:nonvanishing}, \eqref{eqn:comparison:L2}, setting $\sigma = \sum_{\alpha}u_{\alpha}\Theta_{\alpha}\otimes(f^{*}dt)^{-1}$,
we get \eqref{eqn:nonvanishing:0} with $C = C_{0}^{2}$.
\end{pf}

\begin{remark}
\label{remark:Nk:pos}
Suppose that $(\xi, h_{\xi})|_{X}$ is {\em Nakano positive}. By the Nakano vanishing theorem \cite[Cor.\,7.5]{Demailly12}, 
$R^{q}\pi_{*}K_{X/S}(\xi)=0$ for $q>0$. 
Let $h'_{\xi}$ be a Hermitian metric on $\xi$ such that $(\xi, h'_{\xi})|_{X}$ is {\em not} necessarily Nakano semi-positive. 
Even in this setting, Propositions~\ref{Proposition:Takegoshi}, ~\ref{prop:asym:ex}, ~\ref{prop:nonvanishing}
for $h'_{\xi}$ remain valid. 
For Proposition~\ref{Proposition:Takegoshi}, this is obvious. Since the proof of Proposition~\ref{prop:asym:ex} works 
if Proposition~\ref{Proposition:Takegoshi} holds, Proposition~\ref{prop:asym:ex} remains valid for $h'_{\xi}$. 
Since there exists a constant $C>0$ such that $C^{-1}h_{\xi} \leq h'_{\xi} \leq C h_{\xi}$ on $X$, 
Proposition~\ref{prop:nonvanishing} for $h'_{\xi}$ follows from that for $h_{\xi}$. 
\end{remark}

\subsection
{Singularity of the $L^{2}$-metric on $R^{q}\pi_{*}K_{X/S}(\xi)$}
\label{sect:4.6}
\par
In this subsection, we describe the structure of the singularity of the $L^{2}$-metric on $R^{q}\pi_{*}K_{X/S}(\xi)$.
By Lemma~\ref{lemma:local:freeness} or \cite[Th.\,6.5 (i)]{Takegoshi95}, $R^{q}\pi_{*}K_{X/S}(\xi)$ is a locally free sheaf on $S$.

\begin{proposition}[\cite{MourouganeTakayama09}]
\label{Proposition:MourouganeTakayama}
There is a $G$-equivariant injective homomorphism of sheaves with the following properties:
$$
\varphi \colon R^{q}f_{*} K_{Y/T}(F^{*}\xi) \hookrightarrow \mu^{*} R^{q}\pi_{*} K_{X/S}(\xi).
$$
\par\noindent
{\rm (1) }
$\mu^{*}R^{q}\pi_{*} K_{X/S}(\xi)/\varphi(R^{q}f_{*} K_{Y/T}(F^{*}\xi))$ is a torsion sheaf on $T$ supported at $0$.
\par\noindent
{\rm (2) }
$\varphi^{*}\mu^{*}h_{R^{q}\pi_{*} K_{X/S}(\xi)} = h_{R^{q}f_{*} K_{Y/T}(F^{*}\xi)}$.
Here $h_{R^{q}\pi_{*} K_{X/S}(\xi)}$ (resp. $h_{R^{q}f_{*} K_{Y/T}(F^{*}\xi)}$) is the $L^{2}$-metric on $R^{q}\pi_{*} K_{X/S}(\xi)$ 
(resp. $R^{q}f_{*} K_{Y/T}(F^{*}\xi)$) with respect to $\kappa_{X}$, $h_{\xi}$ (resp. $\kappa_{Y}$, $F^{*}h_{\xi}$).
\end{proposition}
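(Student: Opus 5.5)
The map $\varphi$ will be built from the factorization $F=\beta\circ{\rm pr}_{1}\circ\rho$ in \eqref{eqn:ss:red}. It is a composite of three natural maps:
$$
R^{q}f_{*}K_{Y/T}(F^{*}\xi)\longrightarrow R^{q}({\rm pr}_{2})_{*}\bigl(K_{X'\times_{S}T/T}\otimes{\rm pr}_{1}^{*}\beta^{*}\xi\bigr)\longrightarrow\mu^{*}R^{q}\pi'_{*}K_{X'/S}(\beta^{*}\xi)\cong\mu^{*}R^{q}\pi_{*}K_{X/S}(\xi).
$$
\begin{itemize}
\item[(a)] The first arrow is the trace map $\rho_{*}K_{Y}\to K_{X'\times_{S}T}$, or equivalently $\rho_{*}\omega_{Y/T}\to\omega_{X'\times_{S}T/T}$, on relative dualizing sheaves. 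Since $X'\times_{S}T$ is normal and Cohen--Macaulay (a hypersurface in the smooth $X'\times T$), this map exists. The normal crossing structure of $X'_{0}$ shows the singularities are toroidal, hence rational, so $R^{i}\rho_{*}K_{Y}=0$ for $i>0$ by Grauert--Riemenschneider.
\item[(b)] The second arrow is flat base change by $\mu$ for the relative dualizing sheaf: $\omega_{X'\times_{S}T/T}={\rm pr}_{1}^{*}\omega_{X'/S}$, and $R^{q}$ commutes with this flat base change.
\item[(c)] The last isomorphism is Takegoshi's birational invariance \cite[Th.\,6.9 (i)]{Takegoshi95}, as used in Lemma~\ref{lemma:G:equiv:ext}.
\end{itemize}
This is the construction of \cite{MourouganeTakayama09}, and I would follow it.

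$G$-equivariance holds because every arrow is canonical and $G$-equivariant over $T^{o}$, where all of them are isomorphisms. By Lemma~\ref{lemma:G:equiv:ext}, the $G$-action on the source is the unique holomorphic extension of the action over $T^{o}$. So equivariance on $T^{o}$ implies it on $T$, by the identity theorem applied to sections of locally free sheaves.

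Injectivity and property (1) both follow from one observation. Over $T^{o}$, $F$ restricts to $Y_{t}\cong X_{\mu(t)}$, so $\varphi$ is an isomorphism away from $0$. The source is locally free by Lemma~\ref{lemma:local:freeness}, hence torsion free, so the kernel, being supported at $0$, vanishes. The cokernel is coherent and supported at $0$, so it is a torsion sheaf supported at the origin.

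For (2), I would compare on $T^{o}$, where both metrics are defined. Take a class in $H^{q}(Y_{t},K_{Y_{t}}(F^{*}\xi))$. Its harmonic representative with respect to $\kappa_{Y}|_{Y_{t}}=F^{*}\kappa_{X}|_{Y_{t}}$ is the pullback under the biholomorphism $F\colon Y_{t}\to X_{\mu(t)}$ of the harmonic representative on $X_{\mu(t)}$. This uses that $f^{*}\kappa_{T}$ restricts to zero on fibres, and that $F^{*}h_{\xi}$ is the pulled-back metric.
\begin{itemize}
\item The $L^{2}$-norms therefore agree, provided the identification $K_{Y/T}|_{Y_{t}}\cong K_{Y_{t}}$ matches $K_{X/S}|_{X_{s}}\cong K_{X_{s}}$ under $\varphi$.
\item The point to check is the twist by $dt$ versus $ds=d\mu=dt^{d}/\ldots$. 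Since $\mu^{*}ds=d\,t^{d-1}dt$, a factor appears. I would verify that the definition of $\varphi$ on $K_{Y/T}=K_{Y}\otimes f^{*}K_{T}^{-1}$ is normalised via the fibrewise residue (Poincaré residue) isomorphism. Both metrics are defined through that residue, as in the definition of $\widetilde{H}$ in \eqref{eqn:L2:met:f}, so the factors cancel fibrewise and (2) holds exactly.
\end{itemize}

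I expect the main obstacle to be the construction of the first arrow: the trace map together with the vanishing of higher direct images, for the possibly singular $X'\times_{S}T$ and the non-blow-up resolution $\rho$. If needed, I would pass through $Z$ and $\widetilde{\rho}$, as in Section~\ref{sect:4.2}, and use Takegoshi's invariance for $\varphi\colon Z\to Y$.
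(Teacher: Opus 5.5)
Your proposal is correct and follows the paper's route. The paper likewise reduces from $X$ to $X'$ via Takegoshi's isomorphism $R^{q}\pi_{*}K_{X/S}(\xi)\cong R^{q}\pi'_{*}K_{X'/S}(\beta^{*}\xi)$, takes $\varphi$ together with (1) and (2) from \cite[Lemmas 3.3 and 4.2]{MourouganeTakayama09} (your steps (a)--(c) and the fibrewise isometry argument unpack that citation), and obtains $G$-equivariance from equivariance over $T^{o}$ exactly as you do. One correction to step (a), which does not affect the argument: $X'\times_{S}T$ is in general not normal (at a generic point of a component of $X'_{0}$ of multiplicity $e\geq 2$ with $e\mid d$ it is locally $z^{e}=t^{d}$, which has $e$ branches meeting in codimension one), and your route still works because the trace map $\rho_{*}K_{Y}\to\omega_{X'\times_{S}T}$ exists on this reduced Gorenstein hypersurface anyway and Grauert--Riemenschneider vanishing of $R^{i}\rho_{*}K_{Y}$ needs no rationality of the singularities.
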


\begin{pf}
Since $R^{q}\pi_{*} K_{X/S}(\xi) \cong R^{q}\pi'_{*} K_{X'/S}(\beta^{*}\xi)$ in the canonical way by \cite[Th.\,6.5 (i) ($\alpha$)]{Takegoshi95}
and $\beta_{*}K_{X'}(\beta^{*}\xi) = K_{X}(\xi)$,
except for the $G$-equivariance of $\varphi$, the assertions were proved in \cite[Lemmas 3.3 and 4.2]{MourouganeTakayama09}.
Since $\varphi|_{T^{o}}$ is $G$-equivariant and $\varphi$ is defined on $T$, $\varphi$ is $G$-equivariant on $T$.
\end{pf}

For $q\geq0$ and $W \in \widehat{G}$, we set
\begin{equation}
\label{eqn:length}
\delta_{W}^{q}
:=
\frac{1}{\deg \mu} \dim_{\mathbf C}\left( \frac{\mu^{*}R^{q}\pi_{*} K_{X/S}(\xi)_{W}}{R^{q}f_{*} K_{Y/T}(F^{*}\xi)_{W}} \right)_{0}
\in{\mathbf Q}_{\geq0}.
\end{equation}
\par
We are now in a position to state the main theorem of this section, which extends the structure theorem of the singularity of 
the $L^{2}$-metrics on the direct image sheaves of the relative canonical bundles \cite[Prop.\,2.10]{EFM21} 
to the case of those twisted by Nakano semi-positive vector bundles. Write $M_{r}({\mathbf C})$ for the complex $(r,r)$-matrices.
Let ${\mathfrak m}^{\infty}_{0}(T)$ be the smooth functions on $T$ vanishing at $t=0$.

\begin{theorem}
\label{thm:str:sing:L2}
By choosing suitable bases $\{ {\mathbf e}_{1},\ldots,{\mathbf e}_{h_{W}^{q}} \}$ of $R^{q}\pi_{*}K_{X/S}(\xi)_{W}$ and 
$\{ \widetilde{\mathbf e}_{1}, \ldots, \widetilde{\mathbf e}_{h_{W}^{q}}\}$ of $R^{q}f_{*}K_{Y/T}(F^{*}\xi)_{W}$,
there exist integers $e^{q}_{1},\ldots, e^{q}_{h_{W}^{q}} \geq0$ with the following properties:
\par{\rm (1) }
The $(h_{W}^{q}, h_{W}^{q})$-Hermitian matrices $H(s) := ( H_{\alpha\beta}(s) )$, 
$H_{\alpha\beta}(s) := ( {\mathbf e}_{\alpha} , {\mathbf e}_{\beta} )_{L^{2},X_{s}}$, and
$\widetilde{H}(t) = ( \widetilde{H}_{\alpha\beta}(t) )$, 
$\widetilde{H}_{\alpha\beta}(t) := ( \widetilde{\mathbf e}_{\alpha} , \widetilde{\mathbf e}_{\beta} )_{L^{2},Y_{t}}$, 
are related via the following identity
\begin{equation}
\label{eqn:decomp:H}
H(\mu(t)) = D(t)\cdot \widetilde{H}(t)\cdot\overline{D(t)},
\qquad
D(t) = {\rm diag}( t^{-e^{q}_{1}},\ldots,t^{-e^{q}_{h_{W}^{q}}} ).
\end{equation}
Moreover, $\widetilde{H}(t)$ admits an asymptotic expansion of Barlet-Takayama type
\begin{equation}
\label{eqn:asym:exp:H:2}
\widetilde{H}(t) \equiv \sum_{0\leq m \leq n} (\log |t|^{-2})^{m} A_{m} 
\mod \bigoplus_{0\leq k \leq n} (\log |t|^{-2})^{k} {\mathfrak m}^{\infty}_{0}(T) \otimes M_{h_{W}^{q}}( {\mathbf C} )
\end{equation}
with constant $(h_{W}^{q}, h_{W}^{q})$-Hermitian matrices $A_{m}$ $(1\leq m \leq n)$. 
In particular, there exist constants $a_{m}^{q} \in {\mathbf R}$ $( 1 \leq m \leq n h_{W}^{q})$ such that 
\begin{equation}
\label{eqn:asym:exp:det:H:2}
\det \widetilde{H}(t) = \sum_{0 \leq m \leq n h_{W}^{q}} a_{m}^{q} (\log |t|^{-2})^{m} 
\mod \bigoplus_{0\leq k \leq n h_{W}^{q}} (\log |t|^{-2})^{k} {\mathfrak m}^{\infty}_{0}(T).
\end{equation}
\par{\rm (2) }
There exists a constant $C>0$ such that $\widetilde{H}(t) \geq C\,I_{h_{W}^{q}}$ for all $t\in T^{o}$ as positive definite Hermitian matrices.
Especially, $a_{m}^{q} \not=0$ for some $1 \leq m \leq n h_{W}^{q}$.
\par{\rm (3) }
There exist real-valued smooth functions $\psi_{j,k}^{q}(s)$ on $S$ such that
\begin{equation}
\label{eqn:L2:asym:eqv}
\| {\mathbf e}_{1}\wedge \cdots\wedge {\mathbf e}_{h_{W}^{q}} \|_{L^{2}}^{2} =
|s|^{-2\delta_{W}^{q}} \sum_{0\leq m \leq n h_{W}^{q}} \{ a_{m}^{q} 
+ \sum_{1 \leq j \leq d} |s|^{\frac{2j}{d}} \psi_{j,m}^{q}(s) \} (\log |s|^{-2})^{m}. 
\end{equation}
Let
\begin{equation}
\label{eqn:rho:q}
\varrho_{W}^{q} := \max\{ 0 \leq m \leq n h_{W}^{q};\, a_{m}^{q} \not=0 \}.
\end{equation}
Then the following equality holds as $s\to 0$:
\begin{equation}
\label{eqn:asymptotoc:L2:X}
\log\| {\mathbf e}_{1}\wedge \cdots\wedge {\mathbf e}_{h_{W}^{q}} \|_{L^{2}}^{2} 
= -\delta_{W}^{q}\,\log |s|^{2} + \varrho_{W}^{q}\log\log(|s|^{-2}) + c_{W}^{q} + O\left(1/\log |s|^{-1} \right).
\end{equation}
\par{\rm (4) }
The rational numbers $e^{q}_{\alpha}/\deg \mu$ ($1 \leq \alpha \leq h_{W}^{q}$) are independent of the choice of semi-stable reduction.
\end{theorem}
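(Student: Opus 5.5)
The plan is to combine the injection $\varphi$ of Proposition~\ref{Proposition:MourouganeTakayama} with a Smith normal form argument over the discrete valuation ring ${\mathcal O}_{T,0}$, and then feed in Propositions~\ref{prop:asym:ex} and \ref{prop:nonvanishing}.

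\textbf{Bases and the diagonal matrix $D(t)$.} Since $\varphi$ is $G$-equivariant, it preserves isotypic components. Hence it restricts to an injection of free ${\mathcal O}_{T}$-modules $R^{q}f_{*}K_{Y/T}(F^{*}\xi)_{W}\hookrightarrow\mu^{*}R^{q}\pi_{*}K_{X/S}(\xi)_{W}$ of equal rank $h_{W}^{q}$, with torsion cokernel supported at $0$. By the elementary divisor theorem over ${\mathcal O}_{T,0}$ (after shrinking $T$), there are bases $\{\widetilde{\mathbf e}_{\alpha}\}$ of the source and $\{{\mathbf e}'_{\alpha}\}$ of $\mu^{*}R^{q}\pi_{*}K_{X/S}(\xi)_{W}$ with $\varphi(\widetilde{\mathbf e}_{\alpha})=t^{e^{q}_{\alpha}}{\mathbf e}'_{\alpha}$, where $e^{q}_{\alpha}\ge0$ and $\sum_{\alpha}e^{q}_{\alpha}=\dim(\mathrm{coker})_{0}=d\,\delta_{W}^{q}$.

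We need ${\mathbf e}_{\alpha}$ to be a basis on $S$, not on $T$. Here I would use that $\mu^{*}R^{q}\pi_{*}K_{X/S}(\xi)_{W}$ is pulled back and the $\mu_{d}$-Galois action is compatible with $\varphi$ on $T^{o}$. Decompose into eigen-pieces, or more simply take any basis $\{{\mathbf e}_{\alpha}\}$ on $S$ and write $\mu^{*}{\mathbf e}=P(t){\mathbf e}'$ with $P\in GL(h_{W}^{q},{\mathcal O}_{T})$. Then $H(\mu(t))=P\,D\widetilde H\overline D\,\overline{P}^{T}$. For part (1) exactly as stated, I would choose ${\mathbf e}'$ adapted: the image of $\varphi$ is $\mu_{d}$-stable, so the filtration by $t$-order is defined over $S$, and one can choose ${\mathbf e}'_{\alpha}=\mu^{*}{\mathbf e}_{\alpha}$ up to reordering. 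I expect this compatibility to be the main obstacle. If it fails, I would be content with $P$, which changes determinants only by $|\det P|^{2}$, a smooth positive function.

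\textbf{Expansion and non-degeneracy.} By Proposition~\ref{Proposition:MourouganeTakayama}(2), $\widetilde H_{\alpha\beta}(t)=(\widetilde{\mathbf e}_{\alpha},\widetilde{\mathbf e}_{\beta})_{L^{2},Y_{t}}$ equals $t^{e_{\alpha}}\bar t^{e_{\beta}}({\mathbf e}'_{\alpha},{\mathbf e}'_{\beta})$, which gives \eqref{eqn:decomp:H}. Proposition~\ref{prop:asym:ex} applied to the basis $\widetilde{\mathbf e}_{\alpha}$ gives smooth coefficients $A_{\alpha\bar\beta;m}(t)$. Setting $A_{m}:=(A_{\alpha\bar\beta;m}(0))$ yields \eqref{eqn:asym:exp:H:2}, and expanding the determinant gives \eqref{eqn:asym:exp:det:H:2}. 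Each $A_m$ is Hermitian because $\widetilde H$ is Hermitian and the functions $(\log|t|^{-2})^{m}$ are linearly independent modulo the error classes. Part (2) is Proposition~\ref{prop:nonvanishing}. If all $a_{m}^{q}$ vanished, then $\det\widetilde H(t)\to0$ along $t\to0$, because each $(\log)^{k}\cdot o(1)$ term is $O(|t|(\log)^{k})$ for smooth functions vanishing at $0$. This contradicts $\det\widetilde H\ge C^{h}$.

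\textbf{Parts (3) and (4).} Taking determinants gives $\det H(\mu(t))=|t|^{-2\sum e_{\alpha}}\det\widetilde H(t)=|s|^{-2\delta_{W}^{q}}\det\widetilde H(t)$. The left side is $\mu_{d}$-invariant, so each smooth coefficient in the $\log$-expansion, after averaging over $\mu_{d}$, is a smooth function of $t$ invariant under rotation by $d$-th roots of unity. Rewriting these in terms of $s$, with $|t|^{2j}=|s|^{2j/d}$ and angular parts absorbed into smooth functions of $s$, gives \eqref{eqn:L2:asym:eqv}. The real-valuedness follows from Hermitian positivity. Then \eqref{eqn:asymptotoc:L2:X} follows by factoring out $a_{\varrho}^{q}(\log|s|^{-2})^{\varrho}$ and taking the logarithm, with $a_{\varrho}^{q}>0$ forced by positivity. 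For (4), the numbers $e_{\alpha}/d$ are the jumps of the $L^{2}$-growth filtration on $R^{q}\pi_{*}K_{X/S}(\xi)_{W}$ defined intrinsically on $S$: a section $\sigma$ has norm$^{2}\sim|s|^{-2r}(\log)^{k}$. By (1) and (2) these exponents are exactly $e_{\alpha}/d$. They depend only on $h_{L^{2}}$ on $X$, hence not on the semi-stable reduction.
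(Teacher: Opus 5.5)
Your treatment of (1)--(3) follows the paper's route: elementary divisors for $\varphi$ from Proposition~\ref{Proposition:MourouganeTakayama}(1), the identity \eqref{eqn:decomp:H} from part (2), the expansion from Proposition~\ref{prop:asym:ex}, the lower bound from Proposition~\ref{prop:nonvanishing}, and the determinant plus a ramified Taylor argument (Lemma~\ref{lemma:Taylor:ram:cov}) for (3). For (4) you argue differently. The paper uses that two semi-stable reductions are dominated by a third. You instead recover $\{e^{q}_{\alpha}/d\}$ as the jumps of the $L^{2}$-growth filtration of $(R^{q}\pi_{*}K_{X/S}(\xi)_{W},h_{L^{2}})$ on $S$. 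This is more intrinsic and avoids tracking how the $e^{q}_{\alpha}$ change under further base change, but it relies on (1) with $\{\mathbf e_{\alpha}\}$ a basis over $S$.

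\textbf{The gap.} It is the step you flag yourself. The fallback with $P\in\mathrm{GL}_{h^{q}_{W}}(\mathcal{O}_{T})$ does not give \eqref{eqn:decomp:H} as stated, and without \eqref{eqn:decomp:H} your proof of (4) loses its footing too. Torsion of the cokernel alone cannot produce a basis pulled back from $S$. Take $d=2$ and $V=\mathcal{O}_{S}^{\oplus 2}$, and let $L'$ be the $\mathcal{O}_{T}$-span of $(1,t)$ and $(0,t^{2})$. Then $\mu^{*}V/L'$ has length $2$ at $0$. Yet $L'$ has no basis $\{t^{k_{\alpha}}\mu^{*}\mathbf e_{\alpha}\}$ with $\{\mathbf e_{\alpha}\}$ a basis of $V$. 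Indeed, some $k_{\alpha}$ must be $0$, and the condition $b\equiv ta \bmod t^{2}$ defining $L'$ then forces $\mathbf e_{\alpha}(0)=0$. The paper is equally brief here, saying only ``by choosing bases suitably''. What is needed is that $L:=\varphi(R^{q}f_{*}K_{Y/T}(F^{*}\xi)_{W})$ is $\mu_{d}$-stable. You assert this without proof, and it is not automatic: $\mu_{d}$ need not act on $Y$, and compatibility over $T^{o}$ says nothing about the extension across $t=0$.

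\textbf{How to close it.} This uses only results already available; set $V:=R^{q}\pi_{*}K_{X/S}(\xi)_{W}$ and shrink $S$.
\begin{enumerate}
\item $L$ is exactly the set of holomorphic sections $\sigma$ of $\mu^{*}V$ with $\|\sigma\|^{2}=O((\log|t|^{-1})^{N})$ for some $N$. This follows from Proposition~\ref{Proposition:MourouganeTakayama}(2), Proposition~\ref{prop:asym:ex} and Proposition~\ref{prop:nonvanishing}. Write $\sigma=\sum_{\alpha}c_{\alpha}\varphi(\widetilde{\mathbf e}_{\alpha})$ with $c_{\alpha}$ meromorphic. The bound $\widetilde H\ge C I$ controls $|c|^{2}$ polylogarithmically, so the poles are removable.
\item Since $\mu^{*}h_{L^{2}}$ is $\mu_{d}$-invariant, $L$ is $\mu_{d}$-stable. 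Hence $L=\bigoplus_{0\le j<d}t^{j}\mu^{*}M_{j}$ for full-rank $\mathcal{O}_{S}$-submodules $sM_{d-1}\subset M_{0}\subset\cdots\subset M_{d-1}\subset V$.
\item In fact $M_{d-1}=V$. For $\sigma\in V$, $\|\sigma\|^{2}$ is locally integrable in $s$, since it is a fibre integral of a smooth top form on $X$ (Theorem~\ref{Thm:Takegoshi}, as in \eqref{eqn:met:ten:fib:int}). Writing $\mu^{*}\sigma=\sum_{\alpha}c_{\alpha}\varphi(\widetilde{\mathbf e}_{\alpha})$ and using $\widetilde H\ge CI$, the function $t^{d-1}c(t)$ is $L^{2}$ near $0$, hence holomorphic.
\item The $M_{j}/sV$ therefore form a flag in $V/sV$. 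Lifting an adapted basis gives $\mathbf e_{\alpha}$ with $L=\bigoplus_{\alpha}t^{e_{\alpha}}\mathcal{O}_{T}\,\mu^{*}\mathbf e_{\alpha}$ and $0\le e_{\alpha}\le d-1$.
\item Setting $\widetilde{\mathbf e}_{\alpha}:=\varphi^{-1}(t^{e_{\alpha}}\mu^{*}\mathbf e_{\alpha})$ yields \eqref{eqn:decomp:H} exactly.
\end{enumerate}
With this in place, the rest of your argument, including your proof of (4), goes through.
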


\begin{pf}
By choosing bases $\{ {\mathbf e}_{\alpha} \}$, $\{ \widetilde{\mathbf e}_{\alpha} \}$ suitably, 
it follows from Proposition~\ref{Proposition:MourouganeTakayama} (1) that there exist integers 
$e_{\alpha} = e^{q}_{\alpha} \in {\mathbf Z}_{\geq0}$ ($1\leq\alpha\leq h_{W}^{q}$) satisfying
\begin{equation}
\label{eqn:rel:bases}
\mu^{*}{\mathbf e}_{\alpha} = t^{-e^{q}_{\alpha}}\varphi( \widetilde{\mathbf e}_{\alpha} )
\qquad
(t \in T).
\end{equation}
By \eqref{eqn:rel:bases} and Proposition~\ref{Proposition:MourouganeTakayama} (2), we get
$H_{\alpha\bar{\beta}}(\mu(t)) = t^{-e_{\alpha}}\bar{t}^{-e_{\beta}} \widetilde{H}_{\alpha\bar{\beta}}(t)$.
This proves \eqref{eqn:decomp:H}. The expansions \eqref{eqn:asym:exp:H:2}, \eqref{eqn:asym:exp:det:H:2} follow from
\eqref{eqn:asym:exp:H}, \eqref{eqn:asym:exp:det:H}, respectively. This proves (1).
The inequality $\widetilde{H}(t) \geq C I_{h_{W}^{q}}$ follows from Proposition~\ref{prop:nonvanishing}. 
By this inequality, $a_{m}^{q}\not=0$ for some $0 \leq m \leq n h_{W}^{q}$.
This proves (2). Since
\begin{equation}
\label{eqn:elm:exp:length}
\frac{1}{\deg \mu} \sum_{\alpha} e^{q}_{\alpha} = \delta_{W}^{q}
\end{equation}
by \eqref{eqn:length}, \eqref{eqn:rel:bases}, we deduce from \eqref{eqn:decomp:H}, \eqref{eqn:elm:exp:length} that
\begin{equation}
\label{eqn:comp:det:L2}
\det H(\mu(t)) = |t|^{-2\deg \mu\cdot \delta_{W}^{q}} \det \widetilde{H}(t).
\end{equation}
Since $|t|=|s|^{1/\deg \mu}$ and $\| {\mathbf e}_{1}\wedge \cdots\wedge {\mathbf e}_{h_{W}^{q}} \|_{L^{2}}^{2} = \det H(s)$, 
\eqref{eqn:L2:asym:eqv} follows from \eqref{eqn:asym:exp:det:H}, \eqref{eqn:asym:exp:det:H:2}, \eqref{eqn:comp:det:L2} 
and Lemma~\ref{lemma:Taylor:ram:cov} below. 
We deduce \eqref{eqn:asymptotoc:L2:X} from \eqref{eqn:L2:asym:eqv}. This proves (3).
Since any two semi-stable reductions of $\pi \colon X \to S$ are dominated by the third one and since the integers $e^{q}_{\alpha}$ 
($1\leq \alpha \leq h_{W}^{q}$) depend only on the semi-stable reduction, we get (4). This completes the proof.
\end{pf}

\begin{remark}
\label{remark:inaccuracy}
The proof of Theorem~\ref{thm:str:sing:L2} relies on Propositions~\ref{Proposition:Takegoshi}, \ref{prop:asym:ex} 
and \ref{prop:nonvanishing}.
In the previous version \cite{Yoshikawa10a}, the proof of \cite[Lemma~6.3]{Yoshikawa10a}
(corresponding to Proposition~\ref{prop:asym:ex} here) relied on a theorem of Barlet \cite[Th.\,4 bis]{Barlet82}. 
Since that theorem can no longer be relied upon in its original form, as shown by Takayama \cite[Th.\,7.1.3, Remark\,7.1.4]{Takayama21},
we now provide an alternative proof based on \cite[Prop.\,6.1]{Takayama22} (the calculations in \cite[p.140 Cas 1]{Barlet82}).
Furthermore, the proof of \cite[Lemma~6.4]{Yoshikawa10a} (now Proposition~\ref{prop:nonvanishing}) previously utilized 
a non-degeneracy result from Mourougane-Takayama \cite[Lemma 4.8]{MourouganeTakayama09} for the family $\varpi \colon Z \to T$. 
However, as the K\"ahler form $\kappa_{Z}$ on $Z$ may degenerate on certain non-$\widetilde{\rho}$-exceptional components of $Z_{0}$, 
\cite[Lemma\,4.1]{MourouganeTakayama09} does not necessarily hold for $\kappa_{Z}$. 
Consequently, \cite[Lemma 4.8]{MourouganeTakayama09} cannot be directly applied to the pair $\varpi \colon Z \to T$ and $\kappa_{Z}$. 
This difficulty is circumvented by employing Lemma~\ref{lemma:comparison:L2}.
\end{remark}

\begin{lemma}
\label{lemma:Taylor:ram:cov}
Let $\varphi(t) \in C^{\infty}(T)$. Set $s = t^{d}$. If $\varphi(\zeta t) = \varphi(t)$, $\zeta=e^{2\pi i/d}$,
then there exist $\psi_{j}(s) \in C^{\infty}(S)$ $(0\leq j \leq d-1)$ such that $\varphi(t) = \sum_{j=0}^{d-1} |s|^{\frac{2j}{d}} \psi_{j}(s)$.
\end{lemma}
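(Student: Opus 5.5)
We decompose $\varphi$ into Fourier modes in the angular variable and rewrite each mode as a smooth function of $s$ times a power of $|s|^{2/d}$. Write $t = re^{i\theta}$. The invariance $\varphi(\zeta t)=\varphi(t)$ means that only Fourier modes $e^{ik\theta}$ with $d\mid k$ occur in the angular expansion of $\varphi$. It is cleaner to work with the formal Taylor expansion at the origin, because smoothness forces the structure there.

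\textbf{Step 1: the Taylor series.} The Taylor series of $\varphi$ at $0$ is $\sum_{a,b} c_{ab}t^a\bar t^b$. Invariance gives $c_{ab}=0$ unless $a\equiv b \pmod d$. Any such monomial can be written as
\[
t^a\bar t^b=|t|^{2\min(a,b)}\,t^{a-b}\ \text{or}\ \bar t^{\,b-a}.
\]
Here $t^{a-b}=s^{(a-b)/d}$, and $|t|^{2m}=|s|^{2m/d}$. Writing $m=ld+j$ with $0\le j\le d-1$, we get $|t|^{2m}=|s|^{2j/d}|s|^{2l}$. Hence each monomial has the form $|s|^{2j/d}\times(\text{a monomial in } s,\bar s)$, and the Taylor series splits formally as $\sum_{j}|s|^{2j/d}\widehat\psi_j(s)$, with each $\widehat\psi_j$ a formal power series in $s,\bar s$.

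\textbf{Step 2: the flat remainder.} By Borel's lemma we choose $\psi_j^{0}\in C^\infty(S)$ with Taylor series $\widehat\psi_j$. Then
\[
R(t):=\varphi(t)-\sum_j |t|^{2j}\psi_j^{0}(t^d)
\]
is smooth, flat at $0$, and invariant. Since $R$ is flat, $R(s^{1/d})$ is a well-defined function of $s$, by invariance. It is smooth away from $0$, and flat at $0$: every derivative in $s$ is bounded by $|t|^{N-d\cdot(\text{order})}$ for all $N$, hence tends to zero. So $R(s^{1/d})$ is smooth on $S$, and we add it to $\psi_0^{0}$. Setting $\psi_j=\psi_j^{0}$ for $j\ge1$ and $\psi_0=\psi_0^{0}+R(s^{1/d})$ gives the required decomposition.

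\textbf{Main obstacle.} The delicate point is verifying that the flat invariant remainder descends to a smooth function of $s$. To do this we estimate the derivatives via the chain rule $\partial_s = (d t^{d-1})^{-1}\partial_t$: each application of $\partial_s$ loses at most a fixed power of $|t|$, and this loss is absorbed by the flatness of $R$.
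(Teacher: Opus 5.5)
Your proposal is correct and follows the same route as the paper. In both, Borel's lemma is used to match the invariant Taylor series of $\varphi$ by $\sum_{j}|t|^{2j}\psi_j(t^d)$, and the resulting flat invariant remainder is shown to descend to a smooth function of $s$ via derivative estimates, then absorbed into $\psi_0$. Your Step 1 only makes explicit the monomial bookkeeping ($a\equiv b \pmod d$) that the paper leaves implicit.
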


\begin{pf}
Since the map $C^{\infty}(T) \to {\mathbf C}[[ t, \bar{t} ]]$ assigning $F(t) \in C^{\infty}(T)$ to its Taylor series at the origin is surjective
by Borel,
there exists $\psi_{j}(s) \in C^{\infty}(S)$ $(0\leq j \leq d-1)$ such that $\chi(t) := \varphi(t) - \sum_{j=0}^{d-1} |s|^{\frac{2j}{d}} \psi_{j}(s)$
has the vanishing Taylor series at $t=0$. Since $\chi(\zeta t ) = \chi(t)$, $\chi(s^{\frac{1}{d}})$ is well-defined and lies in 
$C^{0}(S) \cap C^{\infty}(S^{o})$. Write $s=x+iy=r(\cos\theta + \sqrt{-1} \sin\theta)$. 
Since $\partial_{x}^{i}\partial_{y}^{j} \chi(s^{1/d})$ for $s\not=0$ is a linear combination of the various derivatives of $\chi(t)$ 
up to the order $i+j$ and some polynomials in the variables $r^{-1}$, $r^{1/d}$, $\cos(\theta/d)$, $\sin(\theta/d)$, 
the condition that $\chi(t)$ has the vanishing Taylor series at $t=0$ implies that
$\partial_{x}^{i}\partial_{y}^{j} \chi(s^{1/d}) \to 0$ as $s \to 0$ for all $i,j \geq0$. Hence $\chi(s^{1/d}) \in C^{\infty}(S)$.
Replacing $\psi_{0}(s)$ with $\psi_{0}(s)+\chi(s^{1/d})$, we get the result.
\end{pf}

After \cite[Def.\,2.5]{EFM21} and Theorem~\ref{thm:str:sing:L2}, we make the following:

\begin{definition}
\label{def:elem:exponent}
The rational numbers $\epsilon^{q}_{\alpha} := e^{q}_{\alpha}/\deg \mu$ ($1\leq \alpha \leq h_{W}^{q}$) are called 
the elementary exponents of the vector bundle $R^{q}\pi_{*}K_{X/S}(\xi)_{W}$. 
The elementary exponents of $R^{q}\pi_{*}K_{X/S}(\xi)$ are defined to be those for the trivial group $G=\{1\}$.
\end{definition}

By \eqref{eqn:elm:exp:length}, we have
\begin{equation}
\label{eqn:tr:elem:exp}
\delta_{W}^{q} = \sum_{\alpha} \epsilon^{q}_{\alpha}.
\end{equation}

We define
\begin{equation}
\label{eqn:elem:exp}
\delta_{\pi, K_{X/S}(\xi)}(g) 
:= \frac{1}{\deg \mu} \sum_{q\geq0}(-1)^{q}{\rm Tr}[ g|_{(\mu^{*}R^{q}\pi_{*} K_{X/S}(\xi) / R^{q}f_{*} K_{Y/T}(F^{*}\xi))_{0}}].
\end{equation}
By \eqref{eqn:rel:bases}, \eqref{eqn:tr:elem:exp}, we have
\begin{equation}
\label{eqn:elem:exp}
\delta_{\pi, K_{X/S}(\xi)}(g) 
= 
\sum_{W \in \widehat{G}} \sum_{q\geq0} (-1)^{q}\frac{\chi_{W}(g)}{\dim W} \delta_{W}^{q}
=
\sum_{W \in \widehat{G}} \sum_{q\geq0} \sum_{\alpha} (-1)^{q}\frac{\chi_{W}(g)}{\dim W} \epsilon_{\alpha}^{q}.
\end{equation}
We often write $\delta_{g}$ for $\delta_{\pi, K_{X/S}(\xi)}(g)$ when there is no possibility of confusion.
Similarly, we write $\delta_{\pi, K_{X/S}(\xi)}$ for $\delta_{\pi, K_{X/S}(\xi)}(1)$. In Corollary~\ref{cor:sing:L2} below, 
we will give an expression of $\delta_{\pi, K_{X/S}(\xi)}$ in terms of various characteristic classes associated to 
the semi-stable reduction $f \colon Y \to T$.

\begin{remark}
\label{remark:str:sing:L2}
If $(\xi, h_{\xi})|_{X}$ is Nakano positive, then Theorem~\ref{thm:str:sing:L2} holds for any Hermitian metric $h'_{\xi}$ on $\xi$, 
whose curvature is not necessarily semi-positive. Indeed, by Remark~\ref{remark:Nk:pos}, 
the proof of Theorem~\ref{thm:str:sing:L2} works in this case. 
\end{remark}

After Theorem~\ref{thm:str:sing:L2} and Remark~\ref{remark:str:sing:L2}, we ask the following:

\begin{question}
In the situation of Theorem~\ref{thm:str:sing:L2}, let $h'_{\xi}$ be any Hermitian metric on $\xi$, 
whose curvature form is not necessarily semi-positive definite.
Then does Theorem~\ref{thm:str:sing:L2} hold for $(\xi, h'_{\xi})|_{X}$? Remark~\ref{remark:str:sing:L2} implies that this is the case 
if $(\xi, h_{\xi})|_{X}$ is Nakano positive.
\end{question}

If we drop the semi-positivity of the curvature of the Hermitian metric on $\xi$,
we still have the following weak form of Theorem~\ref{thm:str:sing:L2} (3).

\begin{theorem}
\label{thm:asym:L2:weak}
Let $h'_{\xi}$ be another Hermitian metric on $\xi$ such that $(\xi, h'_{\xi})|_{X}$ is not necessarily Nakano semi-positive. 
Then, as $s \to 0$,
$$
\log\|\sigma_{W}^{q}(s)\|_{L^{2}, h'_{\xi}}^{2} = -\delta_{W}^{q}\,\log |s|^{2} + \varrho_{W}^{q} \log\log(|s|^{-2}) + O\left(1 \right),
$$
where $\delta_{W}^{q}$ and $\varrho_{W}^{q}$ are the same constants as in Theorem~\ref{thm:str:sing:L2} (3).
\end{theorem}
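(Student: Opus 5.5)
The plan is to compare $h'_{\xi}$ with $h_{\xi}$ and reduce to Theorem~\ref{thm:str:sing:L2} (3). Since ${\mathcal X}$ is compact and $h_{\xi}$, $h'_{\xi}$ are smooth Hermitian metrics on $\xi$, there is a constant $C\geq1$ with $C^{-1}h_{\xi}\leq h'_{\xi}\leq C h_{\xi}$ on $X$. The main point is that the $L^{2}$-metric on $H^{q}(X_{s},K_{X_{s}}(\xi_{s}))$ is defined through harmonic representatives, and these change when the fiber metric changes. So a pointwise comparison of the integrands alone is not enough; we also need the minimizing property of harmonic forms, as in the proof of Lemma~\ref{lemma:comparison:L2}.

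\textbf{Step 1: uniform comparison of $L^{2}$-norms.} Fix $s\in S^{o}$ and a class $\theta\in H^{q}(X_{s},K_{X_{s}}(\xi_{s}))$. Let ${\mathcal H}_{h}(\theta)$ and ${\mathcal H}_{h'}(\theta)$ be its harmonic representatives with respect to $(\kappa_{X_{s}},h_{\xi})$ and $(\kappa_{X_{s}},h'_{\xi})$. The $L^{2}$-norm for fixed $\kappa$ is monotone in the bundle metric, so
$$
\|\theta\|^{2}_{L^{2},h'}=\|{\mathcal H}_{h'}(\theta)\|^{2}_{h'}\leq\|{\mathcal H}_{h}(\theta)\|^{2}_{h'}\leq C\|{\mathcal H}_{h}(\theta)\|^{2}_{h}=C\|\theta\|^{2}_{L^{2},h}.
$$
The first inequality uses that the harmonic form minimizes the $L^{2}$-norm among $\bar\partial$-closed representatives of $\theta$. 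The symmetric argument gives $\|\theta\|^{2}_{L^{2},h}\leq C\|\theta\|^{2}_{L^{2},h'}$. Both constants are independent of $s$.

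\textbf{Step 2: comparison of Gram determinants.} Consider the Gram matrices $H(s)$ and $H'(s)$ of the basis $\{{\mathbf e}_{\alpha}\}$ of $R^{q}\pi_{*}K_{X/S}(\xi)_{W}$ with respect to the two metrics. Step 1 gives $C^{-1}H(s)\leq H'(s)\leq CH(s)$ as Hermitian matrices. Hence
$$
C^{-h_{W}^{q}}\det H(s)\leq\det H'(s)\leq C^{h_{W}^{q}}\det H(s).
$$
We take $\sigma_{W}^{q}={\mathbf e}_{1}\wedge\cdots\wedge{\mathbf e}_{h_{W}^{q}}$; any other local frame of $\det R^{q}\pi_{*}K_{X/S}(\xi)_{W}$ differs from it by a nowhere-vanishing holomorphic function, which only changes the $O(1)$ term. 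We obtain
$$
\log\|\sigma_{W}^{q}(s)\|^{2}_{L^{2},h'_{\xi}}=\log\|\sigma_{W}^{q}(s)\|^{2}_{L^{2},h_{\xi}}+O(1).
$$
Then \eqref{eqn:asymptotoc:L2:X} in Theorem~\ref{thm:str:sing:L2} (3) yields the stated expansion, with the same $\delta_{W}^{q}$ and $\varrho_{W}^{q}$.

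\textbf{Main obstacle: isotypic components.} The only delicate point is that the $L^{2}$-metric on the $W$-isotypic component is the restriction of the metric on the full space. If $h'_{\xi}$ is not $G$-invariant, the isotypic decomposition need not be orthogonal for $h'_{\xi}$. This does not matter, because we only take Gram determinants of a fixed basis of the subbundle, and the matrix inequality from Step 1 restricts to any subspace. The lost information is exactly that the lower-order terms are now only $O(1)$ rather than $c_{W}^{q}+O(1/\log|s|^{-1})$.
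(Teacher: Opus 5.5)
Your proposal is correct and follows the paper's proof. The paper likewise combines the uniform comparison $C^{-1}h'_{\xi}\leq h_{\xi}\leq C h'_{\xi}$ with the $L^{2}$-minimality of harmonic representatives to get $C^{-1}h^{\prime q}_{L^{2}}\leq h^{q}_{L^{2}}\leq C h^{\prime q}_{L^{2}}$, and then reads off the result from Theorem~\ref{thm:str:sing:L2}~(3); your remarks on Gram determinants and on the possible non-orthogonality of the isotypic decomposition for a non-invariant $h'_{\xi}$ just make explicit details the paper leaves implicit.
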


\begin{pf}
We write $h^{q}_{L^{2}}$ and $h^{\prime q}_{L^{2}}$ for the $L^{2}$-metrics on $R^{q}\pi_{*}K_{X/S}(\xi)$ with respect to 
$h_{\xi}$ and $h'_{\xi}$, respectively. For a cohomology class $\theta \in H^{q}(X_{s}, K_{X_{s}}(\xi_{s}))$, 
we write ${\mathcal H}(\theta)$ and ${\mathcal H}'(\theta)$ for the harmonic representatives of $\theta$ with respect to 
$h_{\xi}$ and $h'_{\xi}$, respectively. Let $dv_{s}$ be the volume form of $h_{X_{s}}$. 
Let $C>0$ be a constant such that $C^{-1} h'_{\xi} \leq h_{\xi} \leq C h'_{\xi}$ as Hermitian metrics on $\xi$. Then 
$$
\int_{X_{s}} \langle {\mathcal H}(\theta), {\mathcal H}(\theta) \rangle_{h_{\xi}} dv_{s}
\leq
\int_{X_{s}} \langle {\mathcal H}'(\theta), {\mathcal H}'(\theta) \rangle_{h_{\xi}} dv_{s}
\leq
C \int_{X_{s}} \langle {\mathcal H}'(\theta), {\mathcal H}'(\theta) \rangle_{h'_{\xi}} dv_{s},
$$
where the first inequality follows from the fact that the harmonic form minimizes the $L^{2}$-norm in the given Dolbeault cohomology class
and the second inequality follows from the inequality $h_{\xi} \leq C h'_{\xi}$. From this inequality, we obtain the inequality of 
Hermitian metrics $h^{q}_{L^{2}} \leq C h^{\prime q}_{L^{2}}$ on $R^{q}\pi_{*}K_{X/S}(\xi)$. 
Similarly, we get $C^{-1} h^{\prime q}_{L^{2}} \leq h^{q}_{L^{2}}$.
By these inequalities of the $L^{2}$-metrics, we obtain $\log( \| \cdot \|_{L^{2}, h'_{\xi}} / \| \cdot \|_{L^{2}, h_{\xi}} ) = O(1)$ as $s \to 0$.
This, together with Theorem~\ref{thm:str:sing:L2} (3), yields the result.
\end{pf}

\section
{The singularity of analytic torsion near the boundary}
\label{sect:5}
In this section, we prove the existence of the asymptotic expansion of the logarithm of (equivariant) analytic torsion 
viewed as a function on the punctured disc, whose leading term is a logarithmic singularity and whose subdominant term is 
a $\log\log$-type singularity. We determine the coefficient of the logarithmic singularity 
when $X_{0}$ has only canonical singularities or $\xi$ is the trivial Hermitian line bundle.

\subsection
{The existence of asymptotic expansion}
\label{sect:5.1}
\par
Recall that $\alpha_{\pi, K_{X/S}(\xi)}(g)$ and $\varrho^{q}_{W}$ were defined by \eqref{eqn:log:coeff:tw} and \eqref{eqn:rho:q}, respectively. 
Define
\begin{equation}
\label{eqn:def:kappa}
\kappa_{\pi, K_{X/S}(\xi)}(g) = \kappa_{g} := \alpha_{\pi, K_{X/S}(\xi)}(g) + \delta_{\pi, K_{X/S}(\xi)}(g),
\end{equation}
\begin{equation}
\label{eqn:def:rho}
\varrho_{\pi, K_{X/S}(\xi)}(g) = \varrho_{g} := \sum_{q\geq0,\,W\in\widehat{G}} (-1)^{q} \chi_{W}(g) \varrho_{W}^{q}/\dim W.
\end{equation}
When $g=1$, we write $\kappa_{K_{X/S}(\xi)}$ and $\varrho_{K_{X/S}(\xi)}$ in place of $\kappa_{\pi, K_{X/S}(\xi)}(1)$ and 
$\varrho_{\pi, K_{X/S}(\xi)}(1)$, respectively.

\begin{theorem}
\label{thm:main:1}
If $(\xi,h_{\xi})$ is Nakano semi-positive on $X=\pi^{-1}(S)$, then the equivariant analytic torsion $\tau_{G}(X_{s},K_{X_{s}}(\xi_{s}))(g)$ 
is expressed as follows on $S^{o}$:
\begin{equation}
\label{eqn:reg:tors}
\begin{aligned}
\,&
\log \tau_{G}(X_{s},K_{X_{s}}(\xi_{s}))(g) = 
\kappa_{g} \log |s|^{2} + c_{g} + \sum_{0\leq m \leq n}\sum_{i\in I} |s|^{2r_{i}} (\log |s|^{-2})^{m} \phi_{i,m,g}(s)
\\
&\quad - \sum_{q\geq0, W \in \widehat{G}} \frac{(-1)^{q}\chi_{W}(g)}{\dim W} 
\log \left( \sum_{0 \leq k \leq n h_{W}^{q}} \{ a_{k,W}^{q} + \sum_{1 \leq j \leq d} |s|^{\frac{2j}{d}} \psi_{j,k,W}^{q}(s) \} (\log |s|^{-2})^{k} \right),
\end{aligned}
\end{equation}
where $c_{g} \in {\mathbf C}$, $\{ r_{i} \}_{i \in I} \subset {\mathbf Q}\cap (0,1]$ is a finite set of positive rational numbers,
$(a_{0,W}^{q}, \ldots, a_{n h_{W}^{q},W}^{q}) \not= (0,\ldots,0)$ is the non-zero real constant vector 
appearing in \eqref{eqn:L2:asym:eqv}, the $\phi_{i,m,W}(s)$ are complex-valued smooth functions on $S$, 
and the $\psi_{j,k,W}^{q}(s)$ are real-valued smooth functions on $S$. 
In particular, there exists a constant $\gamma_{g}\in{\mathbf C}$ such that as $s\to0$
\begin{equation}
\label{eqn:asymp:eq:tors}
\log \tau_{G}(X_{s},K_{X_{s}}(\xi_{s}))(g) = \kappa_{g} \log |s|^{2} - \varrho_{g}\,\log\log( |s|^{-2}) + \gamma_{g} + O\left(1/\log |s|^{-1} \right).
\end{equation}
\end{theorem}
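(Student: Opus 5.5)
The torsion is the ratio of the equivariant Quillen metric to the equivariant $L^{2}$-metric, and I will read off the asymptotics of each factor from Theorem~\ref{Thm:Sing:Q:adj} and Theorem~\ref{thm:str:sing:L2}. Shrinking $S$, I fix for each $q$ and $W$ an ${\mathcal O}_{S}$-basis $\{{\mathbf e}^{q}_{W,\alpha}\}$ of $R^{q}\pi_{*}K_{X/S}(\xi)_{W}$ as in Theorem~\ref{thm:str:sing:L2}; these bundles are locally free by Lemma~\ref{lemma:local:freeness}. Set $\sigma_{W}^{q}={\mathbf e}^{q}_{W,1}\wedge\cdots\wedge{\mathbf e}^{q}_{W,h^{q}_{W}}$ and $\varsigma_{W}=\bigotimes_{q}(\sigma_{W}^{q})^{(-1)^{q}}$, with $\varsigma_{W}=1_{\lambda_{W}}$ whenever all $h^{q}_{W}=0$. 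Then $\varsigma=(\varsigma_{W})$ is an admissible holomorphic section of $\lambda_{G}(K_{{\mathcal X}/C}(\xi))|_{S}$, since only finitely many $W$ occur. The restriction of $K_{{\mathcal X}/C}$ to $X_{s}$ is $K_{X_{s}}$, isometrically for $h_{K_{{\mathcal X}/C}}$ by the isometry $\Omega^{n}_{{\mathcal X}/C}\cong K_{{\mathcal X}/C}$. By the definition of the equivariant Quillen metric in Section~\ref{sect:1.1}, on $S^{o}$ we therefore have
$$
\log\tau_{G}(X_{s},K_{X_{s}}(\xi_{s}))(g)=\log\|\varsigma\|^{2}_{Q,\lambda_{G}(K_{{\mathcal X}/C}(\xi))}(g)-\sum_{q,W}\frac{(-1)^{q}\chi_{W}(g)}{\dim W}\log\|\sigma^{q}_{W}\|^{2}_{L^{2}}.
$$

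For the first term, Theorem~\ref{Thm:Sing:Q:adj} gives $\alpha_{\pi,K_{X/S}(\xi)}(g)\log|s|^{2}+b(s)$ with $b\in{\mathcal B}(S)$. I then rewrite $b$ in the shape $c_{g}+\sum_{m,i}|s|^{2r_{i}}(\log|s|^{-2})^{m}\phi_{i,m,g}(s)$. A smooth function $\phi$ on $S$ is $\phi(0)+|s|\cdot(\text{bounded})$, but to stay inside smooth coefficients I instead write $\phi(s)-\phi(0)=s\,a(s)+\bar s\,b(s)$ and absorb $s/|s|$, $\bar s/|s|$ into smooth functions times $|s|^{2\cdot(1/2)}$. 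I am not sure that absorption is legitimate. The safer alternative is to note that $s\phi_{1}+\bar s\phi_{2}$ is itself of the form $|s|^{2}(\ldots)$ only after Taylor expansion, so I will accept exponents $r_{i}\in(0,1]$ as the statement allows. Terms $|s|^{2r}(\log|s|)^{k}$ with $r\in(0,1)$, and $|s|^{2}(\log|s|)^{k}$ for $k\ge1$, are directly of the required form. This gives the first line of \eqref{eqn:reg:tors}, up to the $\kappa$ coefficient.

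For the second term, Theorem~\ref{thm:str:sing:L2}(3), eq.~\eqref{eqn:L2:asym:eqv}, gives
$$
\log\|\sigma^{q}_{W}\|^{2}=-\delta^{q}_{W}\log|s|^{2}+\log\Bigl(\sum_{k}\{a^{q}_{k,W}+\sum_{j}|s|^{2j/d}\psi^{q}_{j,k,W}\}(\log|s|^{-2})^{k}\Bigr).
$$
Substituting, the $\log|s|^{2}$ coefficient becomes $\alpha_{\pi,K_{X/S}(\xi)}(g)+\sum(-1)^{q}\chi_{W}(g)\delta^{q}_{W}/\dim W$. The sign convention in \eqref{eqn:def:kappa} has to be checked against the introduction, where the sum is subtracted; I will follow the equality that the computation produces. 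When $R^{q}\pi_{*}K_{X/S}(\xi)_{W}=0$, the convention $a_{0}=1$ is automatic. This yields \eqref{eqn:reg:tors}.

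To get \eqref{eqn:asymp:eq:tors}, I factor out $a^{q}_{\varrho,W}(\log|s|^{-2})^{\varrho^{q}_{W}}$, which is nonzero by Theorem~\ref{thm:str:sing:L2}(2). The remaining ratio is $1+O(1/\log|s|^{-1})$: the lower powers of $\log$ contribute $O(1/\log)$, and the $|s|^{2j/d}(\log)^{k}$ terms are even smaller. Hence each logarithm equals $\varrho^{q}_{W}\log\log|s|^{-2}+\log a^{q}_{\varrho,W}+O(1/\log|s|^{-1})$. The $\mathcal B$-part of the first term tends to a constant at rate $O(|s|^{2r}|\log|s||^{n})$, which is $o(1/\log)$. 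Collecting terms gives $\gamma_{g}$. In the non-equivariant case $\kappa_{1}\in{\mathbf Q}$, because both the Todd and Chern integrals and the $\delta^{q}$ are rational, and $\gamma_{1}$ is real since all metrics are real.

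The main obstacle is the bookkeeping of the error term. I need ${\mathcal B}(S)$ to fit exactly the $\phi_{i,m,g}$ form with $r_{i}\in(0,1]$, in particular handling its $C^{\infty}(S)$ part, whose non-constant piece needs a half-power representation. Everything else is direct substitution of the earlier theorems.
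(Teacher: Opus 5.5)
Your proposal is the paper's proof: you write $\log\tau_{G}$ as the equivariant Quillen norm of an admissible section minus the $\chi_{W}(g)/\dim W$-weighted $L^{2}$-norms, then insert Theorem~\ref{Thm:Sing:Q:adj} and Theorem~\ref{thm:str:sing:L2}~(3). Your coefficient $\alpha_{\pi,K_{X/S}(\xi)}(g)+\delta_{\pi,K_{X/S}(\xi)}(g)$ is the right one and agrees with \eqref{eqn:def:kappa}; the minus sign in the introduction's definition of $\kappa_{g}$ is inconsistent with this. Your worry about the remainder is justified, because $s/|s|$ is not smooth and a term such as ${\rm Re}\,s$ from the $C^{\infty}(S)$ summand of ${\mathcal B}(S)$ cannot be written as $\sum|s|^{2r_{i}}(\log|s|^{-2})^{m}\phi_{i,m,g}(s)$ with $r_{i}>0$ and smooth $\phi_{i,m,g}$; however, the paper's proof simply asserts that form too, the harmless repair is to let $c_{g}$ be a smooth function on $S$, and \eqref{eqn:asymp:eq:tors} is unaffected since such terms are $O(|s|)$.
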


\begin{pf}
Let $\sigma:=(\sigma_{W})_{W\in\widehat{G}}$ be an admissible section of 
$\lambda_{G}(K_{X/S}(\xi)) = \lambda_{G}(K_{{\mathcal X}/C}(\xi))|_{S}$. 
By Theorem~\ref{Thm:Sing:Q:adj}, there exist a finite set of rational numbers $\{ r_{i} \}_{i \in I} \subset {\mathbf Q}\cap (0,1]$,
complex-valued smooth functions $\phi_{i,m,g}(s)$ on $S$ and a constant $c_{g} \in {\mathbf C}$ such that
\begin{equation}
\label{eqn:asymp:adm:sect}
\begin{aligned}
\log\| \sigma(s) \|_{\lambda_{G}(K_{{\mathcal X}/C}(\xi)),Q}^{2}(g) 
&= 
\alpha_{\pi, K_{X/S}(\xi)}(g)\,\log | s |^{2} + c_{g} 
\\
&\quad + \sum_{0\leq m \leq n}\sum_{i\in I} |s|^{2r_{i}} (\log |s|^{-2})^{m} \phi_{i,m,g}(s)
\end{aligned}
\end{equation}
as functions on $S^{o}$. By Theorem~\ref{thm:str:sing:L2} (2), we can express
\begin{equation}
\label{eqn:asym:exp:L2}
\begin{aligned}
\,&
\log \left( \frac{\|\sigma(s)\|_{\lambda_{G}(K_{{\mathcal X}/C}(\xi)),Q}^{2}(g)}{\tau_{G}(X_{s},K_{X_{s}}(\xi_{s}))(g)} \right)
=
\sum_{q\geq0,\,W\in\widehat{G}} \frac{(-1)^{q}\chi_{W}(g)}{\dim W} \log \|\sigma^{q}_{W}(s)\|_{L^{2}}^{2}
\\
&=
\sum_{q\geq0,\,W\in\widehat{G}} \frac{(-1)^{q}\chi_{W}(g)}{\dim W} 
\log \left( \sum_{0\leq k \leq n h_{W}^{q}} \{ a_{k,W}^{q} + \sum_{1\leq j \leq d} |s|^{\frac{2j}{d}} \psi_{j,k,W}^{q}(s) \} (\log |s|^{-2})^{k} \right).
\end{aligned}
\end{equation}
The expansion \eqref{eqn:reg:tors} follows from \eqref{eqn:asymp:adm:sect} and \eqref{eqn:asym:exp:L2}.
By \eqref{eqn:asymptotoc:L2:X}, \eqref{eqn:elem:exp}, \eqref{eqn:def:rho}, we find
\begin{equation}
\label{eqn:asym:eq:L2}
\begin{aligned}
\,&
\sum_{q\geq0,\,W\in\widehat{G}} \frac{(-1)^{q}\chi_{W}(g)}{\dim W} \log \|\sigma^{q}_{W}(s)\|_{L^{2}}^{2}
\\
&=
\sum_{q,\,W}(-1)^{q}\frac{\chi_{W}(g)}{\dim W} \left\{ -\delta_{W}^{q}\,\log |s|^{2} 
+ \varrho_{W}^{q} \log\log (|s|^{-2}) + c_{W}^{q} + O\left(1/\log |s|^{-1}\right)
\right\}
\\
&=
-\delta_{\pi, K_{X/S}(\xi)}(g) \log |s|^{2} + \varrho_{g} \log\log (|s|^{-2}) + c'_{g} + O\left( 1/\log |s|^{-1} \right),
\end{aligned}
\end{equation}
where we set $c'_{g} := \sum_{q\geq0,\,W\in\widehat{G}} (-1)^{q} \chi_{W}(g) c_{W}^{q}/\dim W$. 
Then \eqref{eqn:asymp:eq:tors} is derived from \eqref{eqn:asymp:adm:sect}, \eqref{eqn:asym:eq:L2} with $\gamma_{g} := c_{g} - c'_{g}$.
This completes the proof.
\end{pf}

\begin{corollary}
\label{cor:HessianTorsion}
As $s\to0$, the following equality holds:
$$
\partial_{s\bar{s}}\log\tau(X_{s}, K_{X_{s}}(\xi_{s}))(g) 
= \frac{\varrho_{\pi,K_{X/S}(\xi)}(g)}{|s|^{2}(\log|s|)^{2}} + O\left(\frac{1}{|s|^{2}(\log|s|)^{3}}\right).
$$
\end{corollary}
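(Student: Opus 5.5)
We apply $\partial_{s}\partial_{\bar s}$ term by term to the expansion \eqref{eqn:reg:tors} of Theorem~\ref{thm:main:1}. The whole question is which terms contribute at order $|s|^{-2}(\log|s|)^{-2}$ and which are smaller.

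\textbf{Harmless terms.} The constant $c_{g}$ and the term $\kappa_{g}\log|s|^{2}$ are harmonic on $S^{o}$, so $\partial_{s\bar s}$ kills them. Next take a term $|s|^{2r_{i}}(\log|s|^{-2})^{m}\phi_{i,m,g}(s)$ with $r_{i}>0$. Differentiating twice produces factors of at most $|s|^{-2}$. The result is $O(|s|^{2r_{i}-2}|\log|s||^{m})$, which is $o(|s|^{-2}(\log|s|)^{-3})$ because $|s|^{2r_{i}}$ beats any power of $\log|s|$.

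\textbf{The log-polynomial terms.} Fix $q$ and $W$, and write $L=\log|s|^{-2}$. Set
$$
P(s)=\sum_{k}\Bigl\{a_{k,W}^{q}+\sum_{1\leq j\leq d}|s|^{2j/d}\psi_{j,k,W}^{q}(s)\Bigr\}L^{k}.
$$
Factor out the top coefficient $a:=a_{\varrho,W}^{q}\neq0$, where $\varrho=\varrho^{q}_{W}$. Then
$$
\log P=\log a+\varrho\log L+\log\bigl(1+u(s)\bigr),
$$
where
$$
u=\sum_{k<\varrho}\frac{a_{k}}{a}L^{k-\varrho}+R(s),
$$
and $R$ collects the contributions with at least one factor $|s|^{2j/d}$. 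Terms of $R$ with $k>\varrho$ must be kept as well; they have coefficient $|s|^{2j/d}\psi$ and are still small.

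We then differentiate each piece.

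\begin{itemize}
\item[(a)] Since $\partial_{s}L=-1/s$ and $\partial_{\bar s}L=-1/\bar s$, we get
$$
\partial_{s\bar s}\log L=-\frac{1}{|s|^{2}L^{2}}.
$$
Hence
$$
\partial_{s\bar s}(\varrho\log L)=-\frac{\varrho}{4|s|^{2}(\log|s|)^{2}},
$$
because $L^{2}=4(\log|s|)^{2}$.

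\item[(b)] For the correction term $\log(1+u)$, the pure-log part of $u$ is a power series in $L^{-1}$ beginning at order $L^{-1}$. Using $\partial_{s\bar s}L^{-p}=p(p+1)L^{-p-2}|s|^{-2}$ and the chain rule for $\log(1+u)$ (the $|\partial u|^{2}$ term is of order $|s|^{-2}L^{-4}$), this part contributes $O(|s|^{-2}L^{-3})$.

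\item[(c)] The part $R$, together with its first and second derivatives multiplied by $|s|^{2}$, is $O(|s|^{2/d}L^{N})$. It therefore contributes $o(|s|^{-2}L^{-3})$.
\end{itemize}

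\textbf{Assembling.} By \eqref{eqn:reg:tors} the $(q,W)$ term enters with sign $-(-1)^{q}\chi_{W}(g)/\dim W$. Summing over $q$ and $W$ and using the definition \eqref{eqn:def:rho} of $\varrho_{g}$, the leading contribution is
$$
\frac{\varrho_{g}}{4|s|^{2}(\log|s|)^{2}}.
$$

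\textbf{The main obstacle is the normalization constant.} With $\partial_{s\bar s}=\partial_{s}\partial_{\bar s}$, the computation gives the factor $1/4$. The statement has no $1/4$, so I would check the paper's convention for $\partial_{s\bar s}$. If it is the real Laplacian-type normalization $4\partial_{s}\partial_{\bar s}$, the coefficient is exactly $\varrho_{g}$. Otherwise the claim holds up to this harmless constant.

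The other point needing care is the remainder bookkeeping in (b) and (c). The smooth functions $\psi_{j,k,W}^{q}$ have bounded derivatives in $s$, and the weights $|s|^{2j/d}$ lose at most $|s|^{-2}$ per pair of derivatives. This makes the estimate in (c) uniform.
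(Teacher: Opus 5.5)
Your argument is correct and is essentially the paper's proof. The paper simply differentiates \eqref{eqn:reg:tors} and delegates your steps (a)--(c) to Lemma~\ref{lemma:Mumf:good}. That lemma, as stated, assumes smooth coefficients with bounded derivatives, so your direct handling of the non-smooth factors $|s|^{2j/d}\psi^{q}_{j,k,W}(s)$ in the $s$-variable is, if anything, the cleaner way to do it.

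The factor $1/4$ you found is genuine, not a matter of convention. The paper uses $\partial_{s\bar s}=\partial_{s}\partial_{\bar s}$ (see Lemma~\ref{lemma:technical}), and the proof of Lemma~\ref{lemma:Mumf:good} itself yields $-\ell/(|t|^{2}(\log|t|^{2})^{2})$. So the stated corollary should have $(\log|s|^{2})^{2}$ in the denominator. Your coefficient $\varrho_{g}/4$ in front of $|s|^{-2}(\log|s|)^{-2}$ is the right one, and you can state that corrected form outright rather than hedging.
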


\begin{pf}
The result follows from \eqref{eqn:reg:tors} and Lemma~\ref{lemma:Mumf:good} below.
\end{pf}

\begin{corollary}
\label{cor:Nk:neg}
Let $(E,h_{E})$ be a holomorphic Hermitian vector bundle on ${\mathcal X}$. 
If $(E,h_{E})$ is semi-negative in the dual Nakano sense on $X$, then as $s\to0$,
$$
(-1)^{n+1}\log\tau_{G}(X_{s},E_{s})(g) = \kappa_{g} \log|s|^{2} - \varrho_{g} \log\log (|s|^{-2}) + c_{g} + O\left( 1/\log |s|^{-1} \right).
$$
Here $\kappa_{g},\varrho_{g},c_{g}$ are the constants given by \eqref{eqn:def:kappa}, \eqref{eqn:def:rho} with $\xi=E^{\lor}$.
\end{corollary}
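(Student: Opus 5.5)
The plan is to reduce the statement to Theorem~\ref{thm:main:1} by Serre duality at the level of equivariant analytic torsion. Set $\xi:=E^{\lor}$ with the dual metric $h_{\xi}:=h_{E}^{\lor}$. By \cite[Lemma 4.3]{Siu82}, $(E,h_{E})$ being semi-negative in the dual Nakano sense on $X$ is equivalent to $(\xi,h_{\xi})$ being Nakano semi-positive on $X$. Hence Theorem~\ref{thm:main:1} applies to $\xi$ and gives the asymptotic formula \eqref{eqn:asymp:eq:tors} for $\log\tau_{G}(X_{s},K_{X_{s}}(\xi_{s}))(g)$, with exactly the constants $\kappa_{g},\varrho_{g}$ defined by \eqref{eqn:def:kappa}, \eqref{eqn:def:rho} for $\xi=E^{\lor}$. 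It then suffices to show, for each fixed $s\in S^{o}$,
\[
\log\tau_{G}(X_{s},E_{s})(g)=(-1)^{n+1}\log\tau_{G}(X_{s},K_{X_{s}}(E_{s}^{\lor}))(\bar g),
\]
or an analogous relation, with $\dim X_{s}=n$. We would then take $c_{g}$ to be $\gamma_{g}$ (possibly conjugated), and the $O(1/\log|s|^{-1})$ remainder carries over unchanged.

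To prove this duality identity, we use the conjugate-linear Hodge star $\bar\star\colon A^{0,q}(X_{s},E_{s})\to A^{n,n-q}(X_{s},E_{s}^{\lor})=A^{0,n-q}(X_{s},K_{X_{s}}(E^{\lor}_{s}))$, built from $h_{X_{s}}$ and $h_{E}$. This map is an isometry that intertwines the Laplacians $\square^{0,q}_{E}$ and $\square^{0,n-q}_{K(E^{\lor})}$, so it gives isomorphisms $E(\lambda;\square^{0,q})\cong E(\lambda;\square^{0,n-q})$ for all $\lambda>0$. Since $G$ preserves the metrics, $\bar\star$ commutes with the $G$-action. Because $\bar\star$ is antilinear, the trace of $g$ on the target eigenspace is the complex conjugate of the trace on the source. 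For a compact group, the action is unitary, so this conjugate equals the trace of $g^{-1}$.

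Substituting into the definition of $\zeta_{G}(g)(s)$, we obtain
\[
\zeta_{E}(g)=\sum_q(-1)^q q\,\zeta^{q}_{E}(g)=\sum_{q}(-1)^{n-q}(n-q)\,\overline{\zeta^{q}_{K(E^\lor)}(g)}.
\]
Writing $n-q=n-q$ and using the identity $\sum_q(-1)^q\zeta^q=0$ for positive eigenvalues (the Hodge decomposition makes the nonzero spectrum supersymmetric), this yields
\[
\zeta'_{E}(g)(0)=(-1)^{n+1}\,\overline{\zeta'_{K(E^\lor)}(g)}(0).
\]
This step, namely the bookkeeping of the sign $(-1)^{n+1}$ and the vanishing of the $\sum(-1)^{q}\zeta^{q}$ term, is routine but is where errors tend to occur. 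One must check that the term $n\sum_q(-1)^q\zeta^q$ vanishes identically, not merely at $s=0$.

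The main obstacle is the complex conjugation in the equivariant case. As computed, one obtains $\tau_{G}(E)(g)$ related to $\overline{\tau_{G}(K(E^{\lor}))(g)}=\tau_{G}(K(E^{\lor}))(g^{-1})$. To close the argument, we would either observe that the statement is to be read with the constants for $\xi=E^{\lor}$ evaluated at the corresponding element, or we would conjugate \eqref{eqn:asymp:eq:tors}. Conjugating gives $\overline{\kappa_{g}}$ and $\overline{\varrho_{g}}$. Using $\overline{\chi_{W}(g)}=\chi_{W}(g^{-1})$, these equal $\kappa_{g^{-1}}$ and $\varrho_{g^{-1}}$, while $\delta^{q}_{W}$ and $\varrho^{q}_{W}$ are real. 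For $g=1$ everything is real, and for general $g$ we match the constants accordingly.
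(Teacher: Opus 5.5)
Your route is the paper's: Siu's lemma makes $(E^{\lor},h_{E}^{\lor})$ Nakano semi-positive. The Hodge star $\bar{\star}_{E_{s}}$ intertwines $\square^{0,q}_{E_{s}}$ with $\square^{0,n-q}_{K_{X_{s}}(E^{\lor}_{s})}$, which gives \eqref{eqn:serre:duality}, and Theorem~\ref{thm:main:1} finishes.

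Your extra care about antilinearity is justified, and the paper's proof does not address it. There \eqref{eqn:serre:duality} is written with the same $g$ on both sides. But $\bar{\star}_{E_{s}}$ is conjugate-linear and commutes with $G$; equivalently, Serre duality identifies the two $G$-modules as mutually dual. So the traces get conjugated, and the accurate identity is
\[
\log\tau_{G}(X_{s},E_{s})(g)=(-1)^{n+1}\log\tau_{G}(X_{s},K_{X_{s}}(E_{s}^{\lor}))(g^{-1})
=(-1)^{n+1}\,\overline{\log\tau_{G}(X_{s},K_{X_{s}}(E_{s}^{\lor}))(g)}.
\]

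What you should not do is close with ``match the constants accordingly''. Apply Theorem~\ref{thm:main:1} directly at $g^{-1}$. This also spares you checking $\overline{\alpha_{\pi,K_{X/S}(\xi)}(g)}=\alpha_{\pi,K_{X/S}(\xi)}(g^{-1})$, which your conjugation argument needs but does not verify; you only treat the $\delta$- and $\varrho$-parts. The conclusion is the stated expansion with $\kappa_{g^{-1}}=\overline{\kappa_{g}}$, $\varrho_{g^{-1}}=\overline{\varrho_{g}}$ and constant $\gamma_{g^{-1}}=\overline{\gamma_{g}}$.

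The coefficients of $\log|s|^{2}$ and $\log\log(|s|^{-2})$ in such an expansion are unique. Hence this coincides with the formula as literally stated exactly when $\kappa_{g}$ and $\varrho_{g}$ are real. That always holds for $g=1$, and whenever the characters involved are real (e.g.\ $g$ an involution). For general $g$ the correct statement is the one with $g^{-1}$, and you should say so explicitly rather than hedge.
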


\begin{pf}
Let $\square_{E_{s}}^{p,q}$ be the Laplacian acting on $A^{p,q}(X_{s},E_{s})$ and let 
$\bar{\star}_{E_{s}} \colon A^{p,q}(X_{s}, E_{s}) \to A^{n-p,n-q}(X_{s}, E_{s}^{\lor})$ be the Hodge star operator (cf. \cite[p.166]{Wells08}). 
Since 
$\bar{\star}_{E_{s}}\square_{E_{s}}^{0,q}\bar{\star}_{E_{s}}^{-1} = \square_{E_{s}^{\lor}}^{n,n-q} = \square_{K_{X_{s}}(E_{s}^{\lor})}^{0,n-q}$
by \cite[Chap.\,V Prop.\,2.4 (b)]{Wells08}, we have
\begin{equation}
\label{eqn:serre:duality}
\log\tau_{G}(X_{s},E_{s})(g) = (-1)^{n+1}\log\tau_{G}(X_{s}, K_{X_{s}}(E_{s}^{\lor}))(g).
\end{equation}
Since $(E^{\lor},h_{E^{\lor}})$ is Nakano semi-positive on $X$, the result follows from Theorem~\ref{thm:main:1} and \eqref{eqn:serre:duality}.
\end{pf}

\begin{corollary}
\label{cor:ss:degn}
If $(\xi,h_{\xi})$ is Nakano semi-positive on $X$ and $X_{0}$ is a reduced normal crossing divisor of $X$, then as $s\to0$
$$
\log\tau_{G}(X_{s}, K_{X_{s}}(\xi_{s}))(g) 
= \alpha_{\pi, K_{X/S}(\xi)}(g) \log|s|^{2} - \varrho_{g} \log\log (|s|^{-2}) + c_{g} + O\left(1/\log |s|^{-1} \right).
$$
\end{corollary}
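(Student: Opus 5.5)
By Theorem~\ref{thm:main:1}, the asymptotic expansion \eqref{eqn:asymp:eq:tors} already holds with leading coefficient $\kappa_{g}=\alpha_{\pi,K_{X/S}(\xi)}(g)+\delta_{\pi,K_{X/S}(\xi)}(g)$. To prove the corollary it therefore suffices to show that $\delta_{\pi,K_{X/S}(\xi)}(g)=0$ whenever $X_{0}$ is a reduced normal crossing divisor. By \eqref{eqn:elem:exp}, this in turn reduces to proving $\delta_{W}^{q}=0$ for all $q\geq0$ and $W\in\widehat{G}$. Equivalently, we must show that the injection $\varphi\colon R^{q}f_{*}K_{Y/T}(F^{*}\xi)\hookrightarrow\mu^{*}R^{q}\pi_{*}K_{X/S}(\xi)$ of Proposition~\ref{Proposition:MourouganeTakayama} is an isomorphism.

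The key observation is that, when $X_{0}$ is already reduced normal crossing, no base change is needed. We may take $\mu={\rm id}$ (so $d=1$), $Y=X$, $f=\pi$ and $F={\rm id}_{X}$ as the semi-stable reduction, with $\beta={\rm id}$ and $\rho={\rm id}$ in \eqref{eqn:ss:red}. With these choices $\varphi$ is the identity map, so the quotient in \eqref{eqn:length} vanishes and $\delta_{W}^{q}=0$.

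It remains to make sure that this choice is legitimate, namely that $\delta_{W}^{q}$ does not depend on which semi-stable reduction is used. This follows from Theorem~\ref{thm:str:sing:L2}~(4), which states that the elementary exponents $e_{\alpha}^{q}/\deg\mu$ are independent of the reduction, combined with \eqref{eqn:tr:elem:exp}. Alternatively, one can argue directly: $\delta_{W}^{q}$ is the coefficient $-\delta_{W}^{q}$ of $\log|s|^{2}$ in \eqref{eqn:asymptotoc:L2:X}, and that asymptotic depends only on the $L^{2}$-metric on $R^{q}\pi_{*}K_{X/S}(\xi)_{W}$, which is intrinsic to $\pi$. Either way, $\delta_{g}=0$, hence $\kappa_{g}=\alpha_{\pi,K_{X/S}(\xi)}(g)$, and substituting this into \eqref{eqn:asymp:eq:tors} gives the stated formula, with $c_{g}$ equal to the constant $\gamma_{g}$ there.

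The only step that needs care is confirming that every ingredient used in Section~\ref{sect:4} is valid for this trivial semi-stable reduction. The auxiliary manifold $Z$ may be taken to be $Y=X$, since no blow-ups are required. Also, $\kappa_{Y}=\kappa_{X}+\pi^{*}\kappa_{T}$ is then a genuine Kähler form, so Proposition~\ref{prop:nonvanishing} and the Mourougane--Takayama results apply directly. I expect no real obstacle here.
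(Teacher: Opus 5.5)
Your proposal is correct and is essentially the paper's own argument: since $\pi$ is already a semi-stable degeneration, you may take the trivial semi-stable reduction. Then $\delta_{\pi,K_{X/S}(\xi)}(g)=0$, and the corollary follows from Theorem~\ref{thm:main:1}. Your remarks on independence of the chosen reduction (via Theorem~\ref{thm:str:sing:L2}~(4), or the intrinsic $L^{2}$-asymptotics) and on $\kappa_{Y}$ being a genuine K\"ahler form only make explicit what the paper's one-line proof leaves implicit.
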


\begin{pf}
We have $\delta_{\pi, K_{X/S}(\xi)}(g) = 0$ in Theorem~\ref{thm:main:1}, since $\pi\colon X\to S$ is a semi-stable degeneration. 
The result follows from Theorem~\ref{thm:main:1}.
\end{pf}

\begin{remark}
Suppose that $(\xi, h_{\xi})|_{X}$ is Nakano positive. Let $h'_{\xi}$ be an arbitrary Hermitian metric on $\xi$. 
In particular, $(\xi, h'_{\xi})$ is not necessarily Nakano semi-positive on $X$. 
Even in this case, the asymptotic expansion~\eqref{eqn:asymp:eq:tors} still holds. In fact, since Theorem~\ref{thm:str:sing:L2} holds 
in this case by Remark~\ref{remark:str:sing:L2}, the proof of Theorem~\ref{thm:main:1} works.
\end{remark}

\begin{theorem}
\label{thm:asym:tors:weak}
Let $h'_{\xi}$ be another Hermitian metric on $\xi$ such that $(\xi, h'_{\xi})$ is not necessarily Nakano semi-positive. 
Let $\tau'_{G}(X_{s},K_{X_{s}}(\xi_{s}))(g)$ be the equivariant analytic torsion of $\xi_{s}$ with respect to the metrics 
$\kappa_{X_{s}}$, $h'_{\xi}$. Then, as $s \to 0$,
$$
\log \tau'_{G}(X_{s},K_{X_{s}}(\xi_{s}))(g) = \kappa_{g} \log |s|^{2} - \varrho_{g}\,\log\log( |s|^{-2}) + O\left( 1 \right).
$$
\end{theorem}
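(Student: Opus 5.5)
The idea is to split $\log\tau'_{G}$ into an equivariant Quillen-metric part and an $L^{2}$-metric part. The Quillen part is handled by the anomaly formula, which is smooth across $s=0$. The $L^{2}$ part is handled by Theorem~\ref{thm:asym:L2:weak}.

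First, fix an admissible section $\sigma=(\sigma_{W})$ of $\lambda_{G}(K_{{\mathcal X}/C}(\xi))$ on $S$. By definition of the equivariant Quillen metric, for each metric $h\in\{h_{\xi},h'_{\xi}\}$ on $S^{o}$ we have
$$
\log\tau_{G}(g)=\log\|\sigma\|^{2}_{Q}(g)-\sum_{q,W}\frac{(-1)^{q}\chi_{W}(g)}{\dim W}\log\|\sigma_{W}^{q}\|^{2}_{L^{2}}.
$$
Here the Quillen and $L^{2}$ norms are taken with respect to $\kappa_{X_{s}}$ and $h$.

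The $L^{2}$ term for $h'_{\xi}$ is controlled directly. By Theorem~\ref{thm:asym:L2:weak} it equals $-\delta_{W}^{q}\log|s|^{2}+\varrho_{W}^{q}\log\log(|s|^{-2})+O(1)$. Summing with the weights $(-1)^{q}\chi_{W}(g)/\dim W$ gives
$$
-\delta_{\pi,K_{X/S}(\xi)}(g)\log|s|^{2}+\varrho_{g}\log\log(|s|^{-2})+O(1).
$$

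For the Quillen term, I compare the Quillen metrics for $h'_{\xi}$ and $h_{\xi}$ using the equivariant anomaly formula \cite[Th.\,2.5]{Bismut95}, \cite[I, Th.\,0.3]{BGS88}, exactly as in \eqref{eqn:anomaly:eq:Q}. Only the metric on $\xi$ changes; the metrics on $T{\mathcal X}/C$ and on $K_{{\mathcal X}/C}$ stay fixed. The log of the ratio is therefore
$$
\pi_{*}\bigl({\rm Td}_{g}(T{\mathcal X}/C,h_{{\mathcal X}/C})\,{\rm ch}_{g}(K_{{\mathcal X}/C},h_{K_{{\mathcal X}/C}})\,\widetilde{\rm ch}_{g}(\xi;h_{\xi},h'_{\xi})\bigr)^{(0)}.
$$
This is where I expect the main care to be needed. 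The factor $h_{K_{{\mathcal X}/C}}$ is singular along $\Sigma_{\pi}$, so I would not argue naively. Instead I write $h_{K_{{\mathcal X}/C}}=\|d\pi\|^{-2}h_{K_{\mathcal X}}$ and use \eqref{eqn:eq:Chern}. This expresses ${\rm Td}_{g}\,{\rm ch}_{g}(K_{{\mathcal X}/C})$ as $\gamma^{*}$ of smooth forms on ${\mathbf P}(T{\mathcal X})^{\lor}$ times smooth forms on ${\mathcal X}$. No $\log\|d\pi\|^{2}$ factor appears, because ${\rm ch}$ of a metrized line bundle is a smooth function of its curvature $\gamma^{*}c_{1}({\mathcal L})+c_{1}(K_{\mathcal X})$. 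Meanwhile $\widetilde{\rm ch}_{g}(\xi;h_{\xi},h'_{\xi})$ is a smooth form on ${\mathcal X}^{g}$.

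Pulling back to $\widetilde{\mathcal X}$, the integrand becomes a smooth form on $\widetilde{\mathcal X}^{g}_{H}$, pushed forward by $\widetilde{\pi}$. Its degree-zero part lies in ${\mathcal B}(S)\subset C^{0}(S)$ (Barlet, \cite[Th.\,9.1]{Yoshikawa07}), so it is $O(1)$. The vertical fixed components do not meet $X_{s}$ for $s\neq0$, so they contribute nothing.

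Finally, I combine the pieces. Theorem~\ref{Thm:Sing:Q:adj} gives $\log\|\sigma\|^{2}_{Q,h_{\xi}}(g)=\alpha_{\pi,K_{X/S}(\xi)}(g)\log|s|^{2}+O(1)$. Adding the bounded anomaly term shows that $\log\|\sigma\|^{2}_{Q,h'_{\xi}}(g)=\alpha(g)\log|s|^{2}+O(1)$. Subtracting the $L^{2}$ contribution and using $\kappa_{g}=\alpha+\delta$ from \eqref{eqn:def:kappa} yields the claim.

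If the boundedness of the anomaly term were in doubt, a fallback is the following. The same anomaly argument in the form \eqref{eqn:anomaly} shows that $\log(\|\cdot\|_{Q,h'}/\|\cdot\|_{Q,h})$ extends continuously. This holds because, after the resolution $q$, all forms in the integrand are smooth.
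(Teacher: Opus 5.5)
Your proposal is correct and is essentially the paper's proof: you split $\log\tau'_{G}$ via the first equality of \eqref{eqn:asym:exp:L2} into a Quillen part and an $L^{2}$ part, control the $L^{2}$ part by Theorem~\ref{thm:asym:L2:weak}, and control the Quillen part by Theorem~\ref{Thm:Sing:Q:adj}. The only difference is your anomaly-formula comparison of the Quillen metrics for $h_{\xi}$ and $h'_{\xi}$, which is valid but unnecessary: Theorem~\ref{Thm:Sing:Q:adj}, and hence \eqref{eqn:asymp:adm:sect}, is proved in Section~\ref{sect:3} for an arbitrary $G$-invariant metric on $\xi$ without any positivity assumption, so the paper simply applies it directly to $h'_{\xi}$.
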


\begin{pf}
By Theorem~\ref{thm:asym:L2:weak}, we have the following weak form of \eqref{eqn:asym:eq:L2}:
$$
\sum_{q\geq0,\,W\in\widehat{G}} \frac{(-1)^{q}\chi_{W}(g)}{\dim W} \log \|\sigma^{q}_{W}(s)\|_{L^{2}}^{2}
=
-\delta_{\pi, K_{X/S}(\xi)}(g) \log |s|^{2} + \varrho_{g} \log\log (|s|^{-2}) + O\left( 1 \right).
$$
Substituting this into the first equality of \eqref{eqn:asym:exp:L2} and using \eqref{eqn:asymp:adm:sect}, we get the result.
\end{pf}

\subsection
{Hodge theory and the singularity of analytic torsion}
\label{sect:5.2}

Throughout this subsection, we assume that $\xi|_{X} = {\mathcal O}_{X}$ is the trivial line bundle and that $h_{\xi}|_{X}$ is the trivial metric.
Then the singularity of the $L^{2}$-metric on $\lambda(K_{X/S})$ was determined by 
Eriksson-Freixas i Montplet-Mourougane \cite[Prop.\,2.10]{EFM21}. 
When $X_{0}$ has at most isolated singularities, the singularity of the analytic torsion $\tau(X_{s}, {\mathcal O}_{X_{s}})$ was determined by 
Eriksson-Freixas i Montplet \cite[Cor.\,3.2]{ErikssonFreixas26}. 
In this subsection, we recall the formula for $\kappa_{\pi, K_{X/S}}$.
In Section~\ref{sect:7}, these results will be used to express $\kappa_{\pi, K_{X/S}(\xi)}$
when $\xi$ is topologically trivial near ${\rm Sing}\,X_{0}$.

\subsubsection
{Hodge theory and the singularity of the $L^{2}$-metrics}
\label{sect:5.2.1}
To describe the structure of the singularity of the $L^{2}$-metric on $R^{q}\pi_{*}K_{X/S}$ (\cite[Prop.\,2.10]{EFM21}), 
we introduce some notation.
Since $S \cong \varDelta$ is endowed with the coordinate $s$, fixing a reference point of $S^{o}$ and the corresponding fiber $X_{\infty}$, 
we have the limiting mixed Hodge structure on $H^{r}(X_{\infty})$ with limiting Hodge filtration $F^{\bullet}_{\infty}H^{r}(X_{\infty})$ 
(\cite{Schmid73}, \cite{Steenbrink77}).
Let $M \in {\rm GL}(H(X_{\infty}))$ be the monodromy for the family $\pi^{o} \colon X^{o} \to S^{o}$. 
Let $M_{s}$ be the semi-simple part of $M$. Then $M_{s}$ preserves $F^{k}_{\infty}H^{r}(X_{\infty})$ (\cite[Th.\,2.13]{Steenbrink77}). 
Since the $G$-action on $R^{r}\pi_{*}{\mathbf C} \otimes {\mathcal O}_{S^{o}}$ is parallel with respect to the Gauss-Manin connection, 
$G$ acts on the multi-valued flat sections of $R^{r}\pi_{*}{\mathbf C} \otimes {\mathcal O}_{S^{o}}$ over $S^{o}$
and the $G$-action on the flat sections of $R^{r}\pi_{*}{\mathbf C} \otimes {\mathcal O}_{S^{o}}$ commutes with $M$, hence $M_{s}$. 
Then $G$ acts on the (upper) canonical extension ${\mathcal H}^{r} \cong H^{r}(X_{\infty})\otimes{\mathcal O}_{S}$ of 
$R^{r}\pi_{*}{\mathbf C} \otimes {\mathcal O}_{S^{o}}$. 
Since the $G$-action on $R^{r}\pi_{*}{\mathbf C} \otimes {\mathcal O}_{S^{o}}$ preserves the Hodge bundles 
$F^{k}(R^{r}\pi_{*}{\mathbf C} \otimes {\mathcal O}_{S^{o}})$ $(k\geq 0)$, $G$ acts on the $F^{k}{\mathcal H}^{r}$ $(k\geq 0)$. 
In particular, $G$ acts on $F^{k}_{\infty}H^{r}(X_{\infty})$. 
Since the $M_{s}$-action commutes with the $G$-action, $M_{s}$ acts on 
$$
F^{k}_{\infty}H^{r}(X_{\infty})_{W} := {\rm Hom}_{G}(W, F^{k}_{\infty}H^{r}(X_{\infty})) \otimes W
$$ 
for all $W \in \widehat{G}$, where $M_{s}$ acts trivially on $W$, and the $M_{s}$-action on $F^{k}_{\infty}H^{r}(X_{\infty})$ 
is identified with the direct sum of these $M_{s}$-actions on $F^{k}_{\infty}H^{r}(X_{\infty})_{W}$. 
Let $\log z$ be the logarithm whose imaginary part lies in the interval $[0, 2\pi)$ and 
let $\log M_{s}$ be the corresponding logarithm of $M_{s}$. Then $\log M_{s}$ acts on $F^{k}H^{r}(X_{\infty})_{W}$.

\begin{proposition}
\label{prop:EFM21}
The elementary exponents of $(R^{q}\pi_{*}K_{X/S})_{W}$ are the eigenvalues of $\frac{1}{2\pi i}\log M_{s}$ 
acting on $F^{n}_{\infty}H^{n+q}(X_{\infty})_{W}$. In particular,
$$
\delta_{W}^{q} = \frac{1}{2\pi i} {\rm Tr} \left[ \left. \log M_{s} \right|_{F^{n}_{\infty}H^{n+q}(X_{\infty})_{W}} \right],
$$
$$
\delta_{\pi, K_{X/S}}(g) =
\frac{1}{2\pi i} \sum_{q\geq 0,\,W \in \widehat{G}} (-1)^{q} \frac{\chi_{W}(g)}{\dim W}
{\rm Tr} \left[ \left. \log M_{s} \right|_{F^{n}_{\infty}H^{n+q}(X_{\infty})_{W}} \right].
$$
\end{proposition}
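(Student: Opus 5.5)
Since $\xi$ is trivial, $R^{q}\pi_{*}K_{X/S} = R^{q}\pi_{*}\Omega^{n}_{X/S}$ in degree $n+q$, and I would identify it with a piece of the Hodge filtration of the canonical extension. The plan is to compare the two lattices in $\mu^{*}R^{q}\pi_{*}K_{X/S}$: the pullback lattice itself and the sublattice $R^{q}f_{*}K_{Y/T}$ coming from the semi-stable model. Both lattices are described through canonical extensions, so the elementary exponents $e^{q}_{\alpha}$ of \eqref{eqn:rel:bases} can be read off from monodromy eigenvalues. Concretely, I would first invoke Kollár/Saito-type results, or Steenbrink's theory for semi-stable families. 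These give that $R^{q}f_{*}K_{Y/T}$ equals $F^{n}$ of the (upper) canonical extension of $R^{n+q}f_{*}{\mathbf C}\otimes{\mathcal O}_{T^{o}}$, since $f$ is semi-stable and its monodromy is unipotent. On the other hand, $R^{q}\pi_{*}K_{X/S}$ is identified with $F^{n}$ of the upper canonical extension ${\mathcal H}^{n+q}$ on $S$, i.e.\ the extension with $\log M_{s}$ having eigenvalues $2\pi i\lambda$, $\lambda\in[0,1)$. This is the lower-canonical/Deligne-extension statement for the lowest Hodge piece and can be taken from \cite{EFM21}.

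Next I would compute the pullback $\mu^{*}$ of the upper canonical extension. A multivalued flat section $v$ with $M_{s}v = e^{2\pi i\lambda}v$ gives the section $\tilde v = \exp(\log s\cdot \frac{1}{2\pi i}\log M)v$ of ${\mathcal H}^{n+q}$. Pulled back by $s=t^{d}$, it becomes $t^{d\lambda}$ times the corresponding section of the unipotent (canonical) extension on $T$, where $d\lambda\in{\mathbf Z}$ because $M_{s}^{d}=1$. Choosing a basis of $F^{n}_{\infty}H^{n+q}(X_{\infty})$ adapted to the eigenspaces of $M_{s}$ (possible by \cite[Th.\,2.13]{Steenbrink77}), I would get $\mu^{*}{\mathbf e}_{\alpha} = t^{-e_{\alpha}}\widetilde{\mathbf e}_{\alpha}$ with $e_{\alpha}=d\lambda_{\alpha}$, up to the sign convention of the upper versus lower extension, which must be checked against \eqref{eqn:rel:bases}. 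Hence $\epsilon_{\alpha}=e_{\alpha}/d=\lambda_{\alpha}$ are the eigenvalues of $\frac{1}{2\pi i}\log M_{s}$ on $F^{n}_{\infty}$.

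For the $G$-refinement, $G$ commutes with $M_{s}$ and preserves $F^{n}_{\infty}$, so the adapted basis can be chosen compatibly with the isotypic decomposition. Restricting to the $W$-components then gives the statement for $(R^{q}\pi_{*}K_{X/S})_{W}$. Summing the eigenvalues via \eqref{eqn:tr:elem:exp} yields the trace formula for $\delta^{q}_{W}$, and \eqref{eqn:elem:exp} gives the formula for $\delta_{\pi,K_{X/S}}(g)$.

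The main obstacle is the identification of both lattices with $F^{n}$ of the appropriate canonical extensions, in particular that the map $\varphi$ of Proposition~\ref{Proposition:MourouganeTakayama} coincides with the natural inclusion of extensions. I would handle this by comparing the $L^{2}$-growth, i.e.\ Schmid's norm estimates, with Proposition~\ref{Proposition:MourouganeTakayama}(2), exactly as in \cite[Prop.\,2.10]{EFM21}.
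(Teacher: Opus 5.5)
Your outline is essentially a reconstruction of the proof of \cite[Prop.\,2.10]{EFM21}, which the paper cites rather than reproves. Your isotypic refinement ($G$ commutes with $M_{s}$ and preserves $F^{n}_{\infty}$) is the same observation the paper makes. However, you skip the one step the paper's proof actually supplies, and as written that leaves a gap.

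\textbf{The gap: reduction to a normal crossing fiber.} You take the identification of $R^{q}\pi_{*}K_{X/S}$ with $F^{n}$ of the upper canonical extension ${\mathcal H}^{n+q}$ from \cite{EFM21}. The paper applies that result only after reducing to the case where $X_{0}$ is a normal crossing divisor. In the proposition, $X_{0}$ is an arbitrary singular fiber.

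The paper does the reduction via the log resolution $\beta\colon X'\to X$. Since $\beta_{*}K_{X'}=K_{X}$ and $R^{q}\beta_{*}K_{X'}=0$ for $q>0$ \cite[Th.\,6.5 (i)]{Takegoshi95}, one gets $R^{q}\pi'_{*}K_{X'/S}\cong R^{q}\pi_{*}K_{X/S}$. This isomorphism is the identity over $S^{o}$, so it is compatible with the $G$-actions and the $L^{2}$-metrics. Because $X'_{s}=X_{s}$ for $s\neq0$, the data $X_{\infty}$, $M_{s}$, $F^{\bullet}_{\infty}$ are unchanged. The semi-stable reduction \eqref{eqn:ss:red} is built from $X'$ anyway, so the elementary exponents are unchanged as well. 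Only after this does the normal-crossing statement apply. Alternatively, you could invoke Koll\'ar's theorem that $R^{q}\pi_{*}\omega_{X/S}$ is the upper canonical extension for an arbitrary smooth total space, which has this reduction built in. Either way, you must invoke one of them explicitly.

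\textbf{Smaller point: obtaining the basis relation.} The flat-section computation does not by itself give $\mu^{*}{\mathbf e}_{\alpha}=t^{-e_{\alpha}}\widetilde{\mathbf e}_{\alpha}$. The sections $\tilde v$ lie in ${\mathcal H}^{n+q}$, not in $F^{n}{\mathcal H}^{n+q}$. Over $S^{o}$ the Hodge bundle need not split along the $M_{s}$-eigendecomposition; only its fiber $F^{n}_{\infty}$ at $0$ is $M_{s}$-stable. You need a leading-term argument:
\begin{itemize}
\item Take an ${\mathcal O}_{S}$-basis ${\mathbf e}_{\alpha}$ of $F^{n}{\mathcal H}^{n+q}$ with ${\mathbf e}_{\alpha}(0)$ in the $\lambda_{\alpha}$-eigenspace.
\item Write ${\mathbf e}_{\alpha}=\sum_{\beta}a_{\alpha\beta}(s)\tilde w_{\beta}$ in an eigen-adapted frame of ${\mathcal H}^{n+q}$. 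Then $a_{\alpha\beta}(0)=0$ unless $\lambda_{\beta}=\lambda_{\alpha}$, and $\mu^{*}\tilde w_{\beta}=t^{-d\lambda_{\beta}}\hat w_{\beta}$ with $\hat w_{\beta}$ in the unipotent extension on $T$.
\item Hence $t^{d\lambda_{\alpha}}\mu^{*}{\mathbf e}_{\alpha}=\sum_{\beta}a_{\alpha\beta}(t^{d})t^{d(\lambda_{\alpha}-\lambda_{\beta})}\hat w_{\beta}$ is holomorphic, since the off-diagonal terms are $O(t^{d(1+\lambda_{\alpha}-\lambda_{\beta})})$. Its values at $t=0$ are linearly independent.
\item These sections lie in $F^{n}$ over $T^{o}$. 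Hence they lie in $R^{q}f_{*}K_{Y/T}$, which is the intersection of the unipotent extension with $j_{*}F^{n}$.
\item They span a saturated subsheaf of the same rank, so they form a basis of $R^{q}f_{*}K_{Y/T}$. This gives $e_{\alpha}=d\lambda_{\alpha}$.
\end{itemize}

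\textbf{On your ``main obstacle''.} It is not a real obstacle. $\varphi$ is an injection of torsion-free sheaves, and its restriction to $T^{o}$ is the isomorphism induced by $F|_{Y_{t}}\colon Y_{t}\cong X_{\mu(t)}$. So it is automatically the inclusion of the two lattices inside $j_{*}$ of the Hodge bundle on $T^{o}$, and no norm estimates are needed.
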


\begin{pf}
Recall that $\beta \colon (X', X'_{0}) \to (X, X_{0})$ is a log-resolution of $X_{0}$ and $\pi' = \pi \circ \beta$.
Hence $X'_{0}$ is a normal crossing divisor. 
Since $R^{q}\beta_{*}K_{X'}=0$ ($q>0$) (\cite[Th.\,6.5 (i)]{Takegoshi95}) and $\beta_{*}K_{X'} = K_{X}$, 
we have the canonical isomorphism $R^{q}\pi'_{*}K_{X'/S} \cong R^{q}\pi_{*}K_{X/S}$.
Consequently, the elementary exponents of $R^{q}\pi_{*}K_{X/S}$ and $R^{q}\pi'_{*}K_{X'/S}$ coincide.
To prove the assertion, we can suppose that $X_{0}$ is a normal crossing divisor on $X$.
Since $F^{n}_{\infty}H^{n+q}(X_{\infty}) \cong \bigoplus_{W\in\widehat{G}} F^{n}_{\infty}H^{n+q}(X_{\infty})_{W}$ as $G$-modules
and $X_{0}$ is a normal crossing divisor, the result follows from \cite[Prop.\,2.10]{EFM21}.
\end{pf}

\begin{corollary}
\label{cor:sing:tors:triv}
If $(\xi, h_{\xi})|_{X}$ is the trivial line bundle endowed with the trivial metric, the coefficient of the logarithmic singularity of
$\log \tau_{G}(X_{s}, K_{X_{s}}(\xi_{s}))(g)$ is given by
$$
\kappa_{g} = 
\alpha_{\pi, K_{X/S}}(g)
+
\frac{1}{2\pi i} \sum_{q\geq 0,\,W \in \widehat{G}} (-1)^{q} \frac{\chi_{W}(g)}{\dim W}
{\rm Tr} \left[ \left. \log M_{s} \right|_{F^{n}_{\infty}H^{n+q}(X_{\infty})_{W}} \right].
$$
\end{corollary}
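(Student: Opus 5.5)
This is a direct specialization: combine the definition \eqref{eqn:def:kappa}, $\kappa_{g}=\alpha_{\pi,K_{X/S}(\xi)}(g)+\delta_{\pi,K_{X/S}(\xi)}(g)$, with the Hodge-theoretic description of $\delta$ in Proposition~\ref{prop:EFM21}. No further analysis is needed, since Theorem~\ref{thm:main:1} already shows that the coefficient of $\log|s|^{2}$ in $\log\tau_{G}(X_{s},K_{X_{s}}(\xi_{s}))(g)$ is exactly $\kappa_{g}$.

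The first step is to set $\xi|_{X}={\mathcal O}_{X}$ with the trivial metric, which is Nakano semi-positive, so the standing assumption of Section~\ref{sect:4} holds. Theorem~\ref{thm:main:1} then applies, and $K_{X/S}(\xi)=K_{X/S}$ on $X$. Second, by \eqref{eqn:elem:exp}, $\delta_{\pi,K_{X/S}}(g)=\sum_{q,W}(-1)^{q}\frac{\chi_{W}(g)}{\dim W}\delta^{q}_{W}$. Third, Proposition~\ref{prop:EFM21} gives $\delta^{q}_{W}=\frac{1}{2\pi i}{\rm Tr}[\log M_{s}|_{F^{n}_{\infty}H^{n+q}(X_{\infty})_{W}}]$. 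Substituting this into the sum for $\delta$ and then into \eqref{eqn:def:kappa} yields exactly the displayed formula.

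The main point to check is a compatibility issue. The elementary exponents in Definition~\ref{def:elem:exponent} are defined through the isotypic pieces $R^{q}f_{*}K_{Y/T}(F^{*}\xi)_{W}$, which carry the $G$-structure of Lemma~\ref{lemma:G:equiv:ext}. One has to confirm that this $G$-structure matches the $G$-action on $F^{n}_{\infty}H^{n+q}(X_{\infty})$ used in Proposition~\ref{prop:EFM21}. Both actions extend the same action on $T^{o}$ (respectively $S^{o}$), and extensions across the puncture are unique, so they agree. Proposition~\ref{prop:EFM21} is stated with this identification already built in, so I would simply cite it, and the corollary follows immediately.
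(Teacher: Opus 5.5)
Your proposal is correct and is essentially the paper's own argument: the paper proves the corollary by combining the definition \eqref{eqn:def:kappa} of $\kappa_{g}$ with Proposition~\ref{prop:EFM21}. Your extra check that the $G$-actions are compatible is already built into that proposition.

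You also use the right sign: $\kappa_{g}=\alpha_{\pi,K_{X/S}(\xi)}(g)+\delta_{\pi,K_{X/S}(\xi)}(g)$, as in \eqref{eqn:def:kappa}, is the convention forced by Theorem~\ref{thm:main:1}. The introduction's displayed definition of $\kappa_{g}$ has the opposite sign on the $\delta$-term and should not be used here.
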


\begin{pf}
The result follows from \eqref{eqn:def:kappa} and Proposition~\ref{prop:EFM21}
\end{pf}

\subsubsection
{The case of isolated hypersurface singularities}
\label{sect:5.2.2}
\par
In Subsection~\ref{sect:5.2.2}, we assume that ${\rm Sing}\,X_{0}$ consists of isolated points. 
We recall two invariants of isolated hypersurface singularities.
In what follows, we identify an isolated hypersurface singularity germ $(X_{0},0) \subset ({\mathbf C}^{n+1},0)$ 
with its defining equation $f(z)=0$, where $f(z) \in {\mathcal O}_{{\mathbf C}^{n+1},0}$ has an isolated critical point at the origin. 
We denote by $\mu(f)$ the Milnor number of $f$.
\par
Very recently, another invariant of $f$ called {\em spectral genus} is introduced by Eriksson-Freixas i Montplet \cite{ErikssonFreixas26}. 
Let ${\rm Mil}_{f}$ be the Milnor fiber of $f$. By Steenbrink \cite[Sect.\,3]{Steenbrink77}, 
$H^{n}({\rm Mil}_{f})$ carries a mixed Hodge structure. Let $F^{\bullet}H^{n}({\rm Mil}_{f})$ be the Hodge filtration on $H^{n}({\rm Mil}_{f})$. 
As before, let $M_{s} \in {\rm GL}( H^{n}({\rm Mil}_{f}) )$ be the semi-simple part of the monodromy acting on $H^{n}({\rm Mil}_{f})$. 
Recall that $\log z$ is the branch of the logarithm, whose imaginary part lies in $[0, 2\pi)$. 
Let $\log M_{s}$ be the corresponding logarithm of $M_{s}$. Since $M_{s}$ preserves $F^{\bullet}H^{n}({\rm Mil}_{f})$,
$\log M_{s}$ acts on ${\rm Gr}_{F}^{n} H^{n}({\rm Mil}_{f})$. By \cite{ErikssonFreixas26}, 
the spectral genus of $f$ is defined as the rational number 
$$
\widetilde{p}_{g}(f) := (2\pi i)^{-1} {\rm Tr} \left[ \left. \log M_{s} \right|_{{\rm Gr}_{F}^{n} H^{n}({\rm Mil}_{f})} \right].
$$
We refer the reader to \cite{ErikssonFreixas26} for further details about the spectral genus.
\par
The following formula for the asymptotic of the analytic torsion was obtained by Eriksson-Freixas i Montplet \cite[Cor.\,3.2]{ErikssonFreixas26}.

\begin{proposition}[\cite{ErikssonFreixas26}]
\label{prop:EF24}
Let $\pi_{x} \in {\mathcal O}_{X,x}$ is the germ defined by $\pi$ at $x \in X$. As $s \to 0$,
$$
\log \tau(X_{s}, K_{X_{s}}) 
= - \{ \sum_{x\in {\rm Sing}\,X_{0}} \frac{\mu(\pi_{x})}{(n+2)!} - \widetilde{p}_{g}(\pi_{x}) \} \log |s|^{2} +O\left( \log\log (|s|^{-2})\right).
$$
\end{proposition}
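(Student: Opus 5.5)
We would derive this from Theorem~\ref{thm:main:1} with $G$ trivial and $\xi$ trivial. By Corollary~\ref{cor:sing:tors:triv} we have $\kappa=\alpha_{\pi,K_{X/S}}+\delta_{\pi,K_{X/S}}$, and the $\log\log$ and bounded terms of \eqref{eqn:asymp:eq:tors} are absorbed into $O(\log\log(|s|^{-2}))$. So it suffices to prove two identities: $\alpha_{\pi,K_{X/S}}=-\sum_{x}\mu(\pi_{x})/(n+2)!$ and $\delta_{\pi,K_{X/S}}=-\sum_{x}\widetilde{p}_{g}(\pi_{x})$. Both sides are sums of local contributions over the points $x\in{\rm Sing}\,X_{0}$, so we may argue one singular point at a time.

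\emph{Computation of $\alpha$.} With $G$ trivial there is no vertical fixed locus, so only the first integral in \eqref{eqn:log:coeff:tw} survives. Take $q$ to be an isomorphism away from the isolated critical points. Then $E_{0}=\coprod_{x}E_{x}$ with $q(E_{x})=\{x\}$ and $\dim E_{x}=n$. Hence $q^{*}\{{\rm Td}(TX){\rm ch}(K_{X})\}$ contributes only its degree-$0$ part, namely $1$. With $y=c_{1}({\mathcal H}^{\lor})$ we have
$$
\frac{{\rm Td}(y)^{-1}-1}{y}=\sum_{k\geq1}\frac{(-y)^{k-1}}{(k+1)!},
$$
and its degree-$n$ part is $(-1)^{n}y^{n}/(n+2)!$. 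It then remains to show
$$
\int_{E_{x}}\widetilde{\gamma}^{*}c_{1}({\mathcal H})^{n}=\pm\mu(\pi_{x}).
$$
This is the classical fact that the Gauss map $[d\pi]$ near an isolated critical point has local degree equal to the Milnor number, i.e.\ the multiplicity of the Jacobian ideal. It is used in \cite{Yoshikawa04}, and we would prove it by deforming $\pi_{x}$ to a Morsification and comparing with the computation at a nondegenerate critical point. Fixing the sign carefully (${\mathcal H}$ versus ${\mathcal H}^{\lor}$, ${\mathcal L}=-{\mathcal H}$ near $X_{0}$) should give $\alpha_{x}=-\mu(\pi_{x})/(n+2)!$.

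\emph{Computation of $\delta$.} By Proposition~\ref{prop:EFM21}, $\delta=\frac{1}{2\pi i}\sum_{q}(-1)^{q}{\rm Tr}[\log M_{s}|_{F^{n}_{\infty}H^{n+q}(X_{\infty})}]$. Since our branch of $\log$ satisfies $\log1=0$, only the non-unipotent part of the monodromy contributes. For isolated singularities the vanishing cycle sequence
$$
\cdots\to H^{k}(X_{0})\to H^{k}(X_{\infty})\to\bigoplus_{x}\widetilde{H}^{k}({\rm Mil}_{\pi_{x}})\to H^{k+1}(X_{0})\to\cdots
$$
is an exact sequence of mixed Hodge structures compatible with $M_{s}$, and $M_{s}$ acts trivially on $H^{\bullet}(X_{0})$. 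Since $\widetilde{H}^{k}({\rm Mil})=0$ for $k\neq n$, the eigenvalue-$\neq1$ part of $H^{k}(X_{\infty})$ vanishes for $k\neq n$. In degree $n$ this part is identified, as a filtered $M_{s}$-module, with $\bigoplus_{x}H^{n}({\rm Mil}_{\pi_{x}})_{\neq1}$. Therefore only $q=0$ contributes, and the strictness of morphisms of MHS for $F^{n}$ gives $\delta=\sum_{x}\widetilde{p}_{g}(\pi_{x})$ up to sign. The sign is fixed by matching $F^{n}_{\infty}$ with ${\rm Gr}^{n}_{F}$, since $F^{n+1}=0$ on $H^{n}({\rm Mil})$, and by the convention for elementary exponents ($\mu^{*}{\bf e}=t^{-e}\widetilde{\bf e}$). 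We expect this to produce the sign $-\widetilde{p}_{g}$ in the final formula once combined with $\alpha$.

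\emph{Main obstacle.} The hardest step is the Hodge-theoretic one: establishing that the limiting MHS on $H^{n}(X_{\infty})$ restricted to the eigenvalue-$\neq1$ part agrees with Steenbrink's MHS on the Milnor fibers, including the Hodge filtration $F^{n}$ and not merely the underlying spaces. We would first try to cite Steenbrink's compatibility of the nearby/vanishing cycle sequences with MHS \cite{Steenbrink77}. Failing that, we would fall back on the semistable model $Y$ and compute both sides via the weight spectral sequence. The sign bookkeeping in both steps is the second risk.
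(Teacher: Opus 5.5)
Your argument is correct in outline, but it takes a genuinely different route from the paper. The paper computes nothing. It imports \cite[Cor.\,3.2]{ErikssonFreixas26}, which gives the asymptotics of $\log\tau(X_{s},{\mathcal O}_{X_{s}})$, and transports that result to $K_{X_{s}}$ via the Serre duality identity \eqref{eqn:serre:duality} with $E={\mathcal O}_{X}$, i.e.\ $\log\tau(X_{s},K_{X_{s}})=(-1)^{n+1}\log\tau(X_{s},{\mathcal O}_{X_{s}})$. You instead rebuild the coefficient inside the paper's own framework. Theorem~\ref{thm:main:1} and Corollary~\ref{cor:sing:tors:triv} reduce the problem to $\alpha_{\pi,K_{X/S}}$ and $\delta_{\pi,K_{X/S}}=\epsilon_{\pi}$ (Proposition~\ref{prop:EFM21}), so no duality step is needed. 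You then evaluate these two quantities with two classical inputs: the multiplicity of the Jacobian ideal, and Steenbrink's exact sequence of mixed Hodge structures for vanishing cohomology. This is non-circular, since none of those results uses Proposition~\ref{prop:EF24}. It also gives more than the citation: you get the separate identities $\alpha_{\pi,K_{X/S}}=-\sum_{x}\mu(\pi_{x})/(n+2)!$ and $\delta_{\pi,K_{X/S}}=\sum_{x}\widetilde{p}_{g}(\pi_{x})$. The value of $\alpha_{\pi,K_{X/S}}$ alone is exactly what is needed to pass from Theorem~\ref{thm:sing:tors:N:pos}~(2) to Theorem~\ref{thm:sing:L2:N:pos}~(2). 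The price is that you must do the Hodge-theoretic identification and the intersection computation yourself; the one-line proof outsources both.

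Your signs need repair, however.

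First, the target for the Hodge part is $\delta_{\pi,K_{X/S}}=+\sum_{x}\widetilde{p}_{g}(\pi_{x})$, not $-\sum_{x}\widetilde{p}_{g}(\pi_{x})$. The coefficient in the statement is $-\sum_{x}\mu(\pi_{x})/(n+2)!+\sum_{x}\widetilde{p}_{g}(\pi_{x})$. There is no convention left to match here. Both sides are traces of $(2\pi i)^{-1}\log M_{s}$ for the same branch, whose eigenvalues lie in $[0,1)$. The vanishing-cycle isomorphism on the eigenvalue-$\neq1$ parts is a bijective morphism of mixed Hodge structures, hence a filtered isomorphism. Since $F^{n+1}=0$ on both sides, $F^{n}$ matches ${\rm Gr}^{n}_{F}$ and the equality holds exactly.

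Second, your series expansion has the wrong overall sign. In fact $({\rm Td}(y)^{-1}-1)/y=\sum_{k\geq1}(-1)^{k}y^{k-1}/(k+1)!$, which equals $-1/2$ at $y=0$. So the degree-$n$ term is $(-1)^{n+1}y^{n}/(n+2)!$, and for $y=c_{1}({\mathcal H}^{\lor})=-c_{1}({\mathcal H})$ this is $-c_{1}({\mathcal H})^{n}/(n+2)!$. Hence you need $\int_{E_{x}}\widetilde{\gamma}^{*}c_{1}({\mathcal H})^{n}=+\mu(\pi_{x})$.

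For that identity, the Morsification detour is unnecessary and awkward, since the resolution $q$ changes along the deformation. A direct argument works. Trivialize $\pi^{*}TC$ near $E_{x}$. Then $q^{*}d\pi$ is a holomorphic section of $\widetilde{\gamma}^{*}{\mathcal H}^{\lor}$ whose divisor is $E_{x}$ with the multiplicities of $q^{-1}J_{x}$, where $J_{x}=(\partial_{0}\pi,\ldots,\partial_{n}\pi)$. Thus $\widetilde{\gamma}^{*}{\mathcal H}\cong{\mathcal O}(-E_{x})$ there, and
\[
\int_{E_{x}}c_{1}({\mathcal O}(-E_{x}))^{n}=e(J_{x})=\dim{\mathcal O}_{X,x}/J_{x}=\mu(\pi_{x}),
\]
because $J_{x}$ is a parameter ideal in a regular local ring. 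This requires $E_{x}$ to be counted with those multiplicities, which is how it enters through $q^{*}\log\|d\pi\|^{2}$ in the proof of Theorem~\ref{Theorem4.1}. With the reduced exceptional set the identity fails in general.
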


\begin{pf}
The result follows from \cite[Cor.\,3.2]{ErikssonFreixas26} and \eqref{eqn:serre:duality}.
\end{pf}

\subsection
{Canonical singularities and $L^{2}$-metrics}
\label{sect:5.3}
In Section~\ref{sect:5.3}, we prove the continuity of the $L^{2}$-metric on $R^{q}\pi_{*}K_{X/S}(\xi)$ when $X_{0}$ has canonical singularities. 
For this sake, we recall some notions concerning singularities and introduce some notations used throughout Section~\ref{sect:5.3}. 
Let $V$ be a normal variety with canonical divisor $K_{V}$ and let $\phi \colon \widetilde{V} \to V$ be any resolution. 
Then $V$ has canonical singularities if $K_{V}$ is ${\mathbf Q}$-Cartier and $K_{\widetilde{V}} - \phi^{*}K_{V}$ is effective. 
\par
Consider the pair $(X, X_{0})$. Recall that $\beta \colon X' \to X$ is an embedded log resolution of $X_{0}$. 
Let $E = \sum_{i\in I} a_{i} D_{i} \subset X'$ be a $\beta$-exceptional normal crossing divisor with
\begin{equation}
\label{eqn:discrepancy}
K_{X'}+\widetilde{X}_{0} = \beta^{*}( K_{X} + X_{0} ) + E.
\end{equation}
Here $\widetilde{X}_{0}\subset X'$ is the proper transform of $X_{0}$, which is a smooth hypersurfaces of $X'$, 
$D_{i}$ is a prime divisor of $X'$, $a_{i}\in {\mathbf Z}$ ($i\in I$), 
and $X'_{0} := \beta^{-1}(X_{0})$ is a normal crossing divisor of $X'$ with $\widetilde{X}_{0}\cup E\subset X'_{0}$.
The pair $(X, X_{0})$ is called canonical (resp. log canonical) if $a_{i} \geq 0$ (resp. $a_{i}\geq -1$) for all $i \in I$ (\cite[Def.\,3.5]{Kollar97}). 
If $X_{0}$ is reduced and has only canonical singularities, then $(X, X_{0})$ is canonical (\cite{Stevens88}, \cite[Th.\,7.9]{Kollar97}).

\subsubsection
{Fiber integrals and canonical singularities}
\label{sect:5.3.1}
\par
Let $\omega \in A^{n+1,n+1}(X)$. We can express $\omega = \pi^{*}(ds\wedge d\bar{s})\wedge R$, where
$R\in C^{\infty}(X\setminus{\rm Sing}\,X_{0},\Omega_{X/S}^{n}\wedge\overline{\Omega_{X/S}^{n}})$.
Define ${\mathcal R}(s)\in C^{\infty}(S^{o})$ as the fiber integral $\displaystyle {\mathcal R}(s):=\int_{X_{s}}R|_{X_{s}}$. 
Then $\pi_{*}(\omega) = {\mathcal R}(s)\,ds\wedge d\bar{s}$.

\begin{lemma}
\label{lemma:fib:int:can:sing}
Suppose that $X_{0}$ is a reduced divisor with only canonical singularities. Then ${\mathcal R}(s)$ lies in ${\mathcal B}(S)$ 
by setting $\displaystyle {\mathcal R}(0) := \int_{(X_{0})_{\rm reg}}R|_{(X_{0})_{\rm reg}}$.
\end{lemma}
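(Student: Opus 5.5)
Our plan is to pass to the embedded log resolution $\beta\colon X'\to X$ and compute the fiber integral there, where Barlet–Takayama type expansions are available; the canonicity of $X_{0}$ will then kill all the logarithmic and fractional-power terms that would otherwise obstruct membership in ${\mathcal B}(S)$ with the stated value at $0$. Since $\beta$ is an isomorphism off $X'_{0}$, we have ${\mathcal R}(s)=\int_{X'_{s}}\beta^{*}R$ for $s\neq0$. Using \eqref{eqn:discrepancy}, write $\beta^{*}\omega$ locally near a point of $X'_{0}$ in coordinates $(z_{0},\ldots,z_{n})$ with $\pi'=u\prod_{j}z_{j}^{m_{j}}$. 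The relative form $\beta^{*}R$ is, up to a smooth factor, of the form $\eta\wedge\bar\eta$, where $\eta$ is a relative $n$-form obtained from a local generator of $K_{X}$ divided by $d\pi$. This is the residue of a section of $K_{X}+X_{0}$. Pulled back, it is a section of $K_{X'/S}$ twisted by $\widetilde{X}_{0}+\sum(m_{j}-1)D_{j}-E$, roughly $\prod_{j} z_{j}^{a_{j}-m_{j}+1}$ times the standard residue form $dz_{1}\wedge\cdots\wedge dz_{n}/(m_{0}z_{0}^{m_{0}-1}\cdots)$.

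The key steps are as follows. (1) Using a partition of unity subordinate to a normal-crossing atlas, reduce ${\mathcal R}(s)$ to a finite sum of local fiber integrals $\int_{\{u z^{m}=s\}}\phi(z)\,|z^{b}|^{2}\,(\text{residue form})\wedge\overline{(\ldots)}$ with $\phi$ smooth and compactly supported. (2) Apply the Barlet–Takayama expansion, as in the proof of Proposition~\ref{prop:asym:ex} via \cite[Prop.\,6.1]{Takayama22}. Each local integral then lies in $C^{\infty}(S)\oplus\bigoplus_{r,k}|s|^{2r}(\log|s|)^{k}C^{\infty}(S)$, with exponents $r$ determined by the ratios $(a_{j}+1)/m_{j}$. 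A $\log$-term with exponent $r=0$ (a pure $(\log|s|)^{k}$ term, $k\ge1$) can only arise from a component along which the integrand's weight exactly matches the critical value. (3) Use canonicity: $a_{i}\geq0$ on every $\beta$-exceptional $D_{i}$, and reducedness gives $m=1$ along $\widetilde{X}_{0}$. With these, show that along every exceptional component the exponent satisfies $r=(a_{i}+1)/m_{i}-\ldots>0$ strictly, so contributions there are $O(|s|^{2r}(\log|s|)^{k})$ with $r>0$. Along $\widetilde{X}_{0}$, which is smooth with $m=1$, the integral is smooth in $s$. (4) Identify the constant term: the only $r=0$ contribution comes from $\widetilde{X}_{0}$, and its value at $s=0$ is $\int_{\widetilde{X}_{0}}\beta^{*}R$. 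This equals $\int_{(X_{0})_{\rm reg}}R$, since $\widetilde{X}_{0}\setminus E\cong (X_{0})_{\rm reg}$ up to a measure-zero set. Integrability of $R$ on $(X_{0})_{\rm reg}$ follows because canonical implies that the pulled-back residue form extends holomorphically across $E\cap\widetilde{X}_{0}$. Taken together, this places ${\mathcal R}-{\mathcal R}(0)$ in ${\mathcal B}(S)$ and gives continuity at $0$.

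The main obstacle is step (3): the precise bookkeeping of exponents at points where $\widetilde{X}_{0}$ meets exceptional divisors with multiplicities $m_{j}\ge1$. There one must check that the local model $\int_{z_{0}z^{m'}=s}|z^{b}|^{2}\cdots$ produces no $r=0$ logarithm, i.e.\ that $\log$ terms only accompany strictly positive powers of $|s|$. I would first treat the case of one exceptional divisor meeting $\widetilde{X}_{0}$, via the explicit calculation in \cite[p.140 Cas 1]{Barlet82}. Here the discrepancy inequality $a_{j}\ge0$, equivalently $a_{j}+1>0$ in the log canonical threshold sense relative to $m_{j}$, is exactly what forces the relevant Mellin-type poles to lie strictly to the left of the critical line. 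The general case would then follow by the same Fubini-type iteration over the normal-crossing coordinates.
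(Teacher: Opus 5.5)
Your overall strategy (compute on $X'$, use $a_i\ge0$ to rule out bad exponents, then identify the constant) is viable, but the local formula on which step (3) rests is wrong, and the step you single out as the main obstacle is left unproved. Let $\eta$ be a local generator of $K_X$ and $\widetilde{\eta}$ the relative form with $\eta=\widetilde{\eta}\wedge\pi^{*}ds$. Since $\beta^{*}X_0=\widetilde{X}_0+\sum_j m_jD_j$, relation \eqref{eqn:discrepancy} gives $K_{X'}=\beta^{*}K_X+\sum_j(m_j+a_j)D_j$. Hence $\beta^{*}\widetilde{\eta}$ is a section of $K_{X'/S}\otimes{\mathcal O}(-\sum_j(m_j+a_j)D_j)$, not of $K_{X'/S}\otimes{\mathcal O}(\widetilde{X}_0+\sum_j(m_j-1)D_j-E)$.

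Taken at face value, your formula gives $\beta^{*}\widetilde{\eta}$ a pole along $\widetilde{X}_0$, which by itself already produces a term of order $|s|^{-2}$. It also gives poles along $D_j$ whose order grows with $m_j$. So the positivity of $r$ claimed in (3) would be false. The correct computation (Step 2 of the paper) is the following: modulo $\pi'^{*}ds$, $\beta^{*}\widetilde{\eta}\equiv c(z)\,dz_1\wedge\cdots\wedge dz_n$ with $c$ \emph{holomorphic}. Up to a unit, $c=\prod_{j\ge1}z_j^{a_j}$ at points of $\widetilde{X}_0=\{z_0=0\}$, and $c=z_0^{a_0+1}\prod_{j\ge1}z_j^{a_j}$ at points off $\widetilde{X}_0$. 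This does reproduce your heuristic that the exponents on an exceptional $D_j$ are $(a_j+1+k)/m_j$, $k\ge0$, but not the formula you wrote.

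Once this is in place, the bookkeeping in (3) is unnecessary, and the proof becomes the paper's. The form $\beta^{*}(\phi\,\widetilde{\eta}\wedge\overline{\widetilde{\eta}})$ coincides on every fiber $X'_s$ with the smooth compactly supported $(n,n)$-form $\phi\,|c|^{2}\tau\wedge\overline{\tau}$ on $X'$, where $\tau=dz_1\wedge\cdots\wedge dz_n$. Barlet's theorem on fiber integrals of smooth forms \cite[Th.\,1]{Barlet82} then gives ${\mathcal R}\in{\mathcal B}(S)$ directly. Note that \cite[Prop.\,6.1]{Takayama22}, as used in Proposition~\ref{prop:asym:ex}, concerns reduced normal crossing fibers, whereas $X'_0$ is not reduced. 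Canonicity enters only through the holomorphy of $c$: for log canonical pairs one gets $c\,dz_1\wedge\cdots\wedge dz_n/(z_1\cdots z_n)$ instead, and logarithms can appear, cf.\ Proposition~\ref{prop:L2:log:can}.

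Your step (4) only asserts that the exceptional components do not contribute to the constant term; the paper proves it. The limit is the integral of the smooth form over $X'_0$, and $|c|^{2}\tau\wedge\overline{\tau}$ restricts to zero on every exceptional component. On $\{z_0=0\}$ off $\widetilde{X}_0$ the factor $c$ vanishes, and on $\{z_j=0\}$ with $j\ge1$ the form $dz_j$ restricts to zero. You already observed that the pulled-back residue form extends holomorphically across $E\cap\widetilde{X}_0$; applying that observation on all of $X'$ is the missing idea.
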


\begin{pf}
{\em (Step 1) }
Set $\pi':=\pi \circ \beta$. Then $X'_{0}=(\pi')^{-1}(0)$ and $X'_{s}:=(\pi')^{-1}(s)\cong X_{s}$ for $s\in S^{o}$.
Let ${\mathcal V} \subset X$ be a relatively compact open subset carrying a nowhere vanishing holomorphic $n+1$-form $\eta$. 
By an argument using partition of unity, it suffices to prove the assertion when $\omega$ has support in ${\mathcal V}$.
So we suppose ${\rm supp}\,\omega \subset {\mathcal V}$. Let $p\in X'_{0} \cap \beta^{-1}({\mathcal V})$. 
Since $X_{0}$ is a reduced divisor of $X$, there is a coordinate neighborhood $(U,(z_{0},\cdots,z_{n}))$ centered at $p$ satisfying
$\beta(U)\subset{\mathcal V}$ and one of the following (a), (b): 
Write $\pi'|_{U}(z)=z_{0}^{e_{0}}\cdots z_{n}^{e_{n}}$, $e_{i}\in{\bf Z}_{\geq0}$.
\begin{itemize}
\item[(a)]
If $p\in\widetilde{X}_{0}$, then $X'_{s}\cap U = \{z\in U;\,z_{0}z_{1}^{e_{1}}\cdots z_{n}^{e_{n}}=s\}$ and $\widetilde{X}_{0}\cap U=\{z_{0}=0\}$.
\item[(b)]
If $p\not\in\widetilde{X}_{0}$, then
$X'_{s}\cap U = \{z\in U;\,z_{0}^{e_{0}}z_{1}^{e_{1}}\cdots z_{n}^{e_{n}}=s\}$, $e_{0}>0$ and $\widetilde{X}_{0}\cap U=\emptyset$.
\end{itemize}
Then, on $U$, we can express
\begin{equation}
\label{eqn:pullback}
(\pi')^{*}\left(\frac{ds}{s}\right) = \beta^{*}\pi^{*}\left(\frac{ds}{s}\right) =
\begin{cases}
\begin{array}{lll}
\frac{dz_{0}}{z_{0}}+\sum_{i\geq1}e_{i}\frac{dz_{i}}{z_{i}}
&{\rm if }&p\in\widetilde{X}_{0},
\\
\sum_{i\geq0}e_{i}\frac{dz_{i}}{z_{i}}
&{\rm if }&p\not\in\widetilde{X}_{0}.
\end{array}
\end{cases}
\end{equation}
\par{\em (Step 2) }
Since $X_{0}$ has only canonical singularities, the pair $(X, X_{0})$ is canonical. 
Namely, $a_{i} \geq 0$ for all $i \in I$ in \eqref{eqn:discrepancy}.
Since $\eta/\pi^{*}s$ is a meromorphic $(n+1)$-form with pole of order $1$ along $X_{0}\cap {\mathcal V}$,  
$\beta^{*}(\eta/\pi^{*}s)$ is a meromorphic $(n+1)$-form on $X' \cap \beta^{-1}({\mathcal V})$ with at most logarithmic pole along 
$\widetilde{X}_{0}\cap\beta^{-1}({\mathcal V})$, 
because $K_{X'}+\widetilde{X}_{0}=\beta^{*}(K_{X}+X_{0})+\sum_{i\in I}a_{i}D_{i}$ with $a_{i}\geq0$. 
Hence, on $U$, we have
\begin{equation}
\label{eqn:log:can:form}
\beta^{*}\left(\frac{\eta}{\pi^{*}s}\right)
=
\begin{cases}
\begin{array}{lll}
a(z)\,\frac{dz_{0}}{z_{0}}\wedge dz_{1}\wedge\cdots\wedge dz_{n} & \text{if} & p\in\widetilde{X}_{0}\cap U,
\\
b(z)\,dz_{0}\wedge\cdots\wedge dz_{n} & \text{if} & p\not\in\widetilde{X}_{0}\cap U,
\end{array}
\end{cases}
\end{equation}
where $a(z),b(z)\in{\mathcal O}(U)$. Let $\widetilde{\eta}\in\Gamma({\mathcal V}\setminus X_{0},\Omega^{n}_{X/S})$ be such that 
$\eta=\widetilde{\eta}\wedge\pi^{*}ds$. Since
\begin{equation}
\label{eqn:Theta:Xi}
\beta^{*}(\eta/\pi^{*}s) = \beta^{*}\widetilde{\eta} \wedge (\pi')^{*}(ds/s),
\end{equation}
$\beta^{*}\widetilde{\eta}$ is expressed as follows on $U\setminus X'_{0}$ by 
\eqref{eqn:pullback}, \eqref{eqn:log:can:form}, \eqref{eqn:Theta:Xi}:
\begin{equation}
\label{eqn:pullback:rel:can:form}
\beta^{*}\widetilde{\eta}
=
\begin{cases}
\begin{array}{lll}
a(z)\,dz_{1}\wedge\cdots\wedge dz_{n}\mod(\pi')^{*}ds & \text{if} & p\in\widetilde{X}_{0}\cap U,
\\
\frac{z_{0}}{e_{0}}b(z)\,dz_{1}\wedge\cdots\wedge dz_{n} \mod(\pi')^{*}ds & \text{if} & p\not\in\widetilde{X}_{0}\cap U.
\end{array}
\end{cases}
\end{equation}
In \eqref{eqn:log:can:form}, \eqref{eqn:pullback:rel:can:form}, set $c(z):=a(z)$ in case (a) and $c(z):=z_{0}b(z)/e_{0}$ in case (b). 
Then $c(z)\in{\mathcal O}(U)$. 
Set $\tau:=dz_{1}\wedge\cdots\wedge dz_{n}$. By \eqref{eqn:pullback:rel:can:form}, for all $\chi\in C_{0}^{\infty}(U)$ and $s\in S^{o}$, 
we have
$\displaystyle \int_{X'_{s}\cap U} \chi\,\beta^{*}(\widetilde{\eta}\wedge\overline{\widetilde{\eta}})|_{X'_{s}\cap U} 
= \int_{X'_{s}\cap U} \chi\,|c|^{2}\,\tau\wedge\overline{\tau}$. It follows from \cite[Th.\,1]{Barlet82} that
\begin{equation}
\label{eqn:lim:fib:int}
\lim_{s\to0}\int_{X'_{s}\cap U} \chi\,\beta^{*}(\widetilde{\eta}\wedge\overline{\widetilde{\eta}})
=
\int_{X'_{0}\cap U} \chi\,|c|^{2}\,\tau\wedge\overline{\tau}
=
\int_{\widetilde{X}_{0}\cap U} \chi\,|c|^{2}\,\tau\wedge\overline{\tau}
=
\int_{\widetilde{X}_{0}\cap U} \chi\,\beta^{*}(\widetilde{\eta}\wedge\overline{\widetilde{\eta}}).
\end{equation}
Here we get the second equality as follows. 
In case (a), $(X'_{0}\setminus\widetilde{X}_{0})\cap U$ is defined locally by the equation $z_{1}^{e_{1}}\cdots z_{n}^{e_{n}}=0$.
Hence one of $dz_{1},\ldots,dz_{n}$ vanishes on $(X'_{0}\setminus\widetilde{X}_{0})\cap U$, 
which implies the second equality of \eqref{eqn:lim:fib:int}. 
In case (b), let $x \in (X'_{0}\setminus\widetilde{X}_{0})\cap U$. 
Then one of $z_{0},\ldots,z_{n}$ vanishes on a neighborhood $W$ of $x$ in $(X'_{0} \setminus \widetilde{X}_{0})\cap U$. 
If $z_{0}|_{W}=0$, then $c|_{W}=(z_{0}b/e_{0})|_{W}=0$. If $z_{j}|_{W}=0$ for some $j>0$, then $dz_{j}|_{W}=0$. 
Since $(X'_{0}\setminus\widetilde{X}_{0})\cap U$ is covered by these $W$, 
we get $\int_{(X'_{0}\setminus\widetilde{X}_{0})\cap U} F\,|c|^{2}\,\tau\wedge\overline{\tau}=0$.
\par{\em (Step 3) }
Since $\omega\in A^{n+1,n+1}(X)$ has compact support in ${\mathcal V}$, 
there exists a smooth function $m\in C_{0}^{\infty}({\mathcal V})$ such that 
$\omega = (-1)^{n}m\,\eta\wedge\overline{\eta} = \pi^{*}(ds\wedge d\bar{s})\wedge m\,\widetilde{\eta}\wedge\overline{\widetilde{\eta}}$.
Then ${\mathcal R}(s) = \pi_{*}(m\,\widetilde{\eta}\wedge\overline{\widetilde{\eta}})$. 
Since $\pi_{*}\omega = (\pi\circ\beta)_{*}(\beta^{*}m \cdot \beta^{*}(\widetilde{\eta}\wedge\overline{\widetilde{\eta}}))\,ds\wedge d\bar{s}$,
it follows from \cite[Th.\,1]{Barlet82} that ${\mathcal R}(s)$ lies in ${\mathcal B}(S)$ 
by setting ${\mathcal R}(0) = \lim_{s\to0} {\mathcal R}(s)$,
because the integrand of this fiber integral is a smooth $(n,n)$-form on $\beta^{-1}({\mathcal V})$ by \eqref{eqn:pullback:rel:can:form}.
By \eqref{eqn:lim:fib:int}, we get
$\displaystyle \lim_{s\to0}{\mathcal R}(s) 
= \int_{(X_{0})_{\rm reg}} m\,\widetilde{\eta}\wedge\overline{\widetilde{\eta}}|_{(X_{0})_{\rm reg}} 
= \int_{(X_{0})_{\rm reg}}R|_{(X_{0})_{\rm reg}}$.
\end{pf}

\begin{remark}
For $q=0$, the assertion of Lemma~\ref{lemma:fib:int:can:sing} holds true even when $\dim T >1$ by Takayama \cite[Prop.\,3.2.3 (2)]{Takayama19}.
\end{remark}

\subsubsection
{Canonical singularities and the continuity of the $L^{2}$-metric}
\label{sect:5.3.2}
\par

\begin{theorem}
\label{cont:L2:can:sing}
Suppose that $X_{0}$ is reduced and has only canonical singularities. 
Then, for all $q\geq0$, the elementary exponents of $R^{q}\pi_{*}K_{X/S}(\xi)$ vanish,
and the $L^{2}$ metric on $R^{q}\pi_{*}K_{X/S}(\xi)|_{S^{o}}$ extends to a continuous Hermitian metric on $R^{q}\pi_{*}K_{X/S}(\xi)|_{S}$ 
of class ${\mathcal B}(S)$. 
\end{theorem}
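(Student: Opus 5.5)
We prove both assertions together by working directly on $X$, without the semi-stable reduction, and using Lemma~\ref{lemma:fib:int:can:sing} in place of the Barlet--Takayama expansion. Fix $q$ and a basis $\{\Theta_{1},\ldots,\Theta_{h^{q}}\}$ of the free ${\mathcal O}_{S}$-module $R^{q}\pi_{*}K_{X}(\xi)\cong H^{q}(X,K_{X}(\xi))$, after shrinking $S$. By Theorem~\ref{Thm:Takegoshi}, applied to the honest K\"ahler form $\kappa_{X}$ on $X$ (no degeneration issue arises here), we obtain holomorphic forms $\sigma_{\alpha}\in\Gamma(X,\Omega^{n+1-q}_{X}(\xi))$ with $D^{\xi}\sigma_{\alpha}=0$ and $\pi^{*}ds\wedge\sigma_{\alpha}=0$. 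We also obtain relative forms $v_{\alpha}$ on $X\setminus\Sigma_{\pi}$ with $\sigma_{\alpha}=v_{\alpha}\wedge\pi^{*}ds$, such that $v_{\alpha}|_{X_{s}}\wedge\kappa_{X_{s}}^{q}$ is harmonic. The argument of Lemma~\ref{lemma:fiber:integral} then gives
\[
H_{\alpha\bar\beta}(s)=q!\int_{X_{s}}i^{(n-q)^{2}}h_{\xi}(v_{\alpha}\wedge\bar v_{\beta})\wedge\kappa_{X}^{q}
\]
for $s\in S^{o}$. The form $\pi^{*}(i\,ds\wedge d\bar s)\wedge i^{(n-q)^{2}}h_{\xi}(v_{\alpha}\wedge\bar v_{\beta})\wedge\kappa_{X}^{q}=i^{(n-q+1)^{2}}h_{\xi}(\sigma_{\alpha}\wedge\bar\sigma_{\beta})\wedge\kappa_{X}^{q}$ is a smooth $(n+1,n+1)$-form $\omega_{\alpha\bar\beta}$ on all of $X$. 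Hence $H_{\alpha\bar\beta}(s)$ is exactly the function ${\mathcal R}(s)$ attached to $\omega_{\alpha\bar\beta}$ in Section~\ref{sect:5.3.1}.

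By Lemma~\ref{lemma:fib:int:can:sing}, each $H_{\alpha\bar\beta}$ lies in ${\mathcal B}(S)$, with
\[
H_{\alpha\bar\beta}(0)=q!\int_{(X_{0})_{\rm reg}}i^{(n-q)^{2}}h_{\xi}(v_{\alpha}\wedge\bar v_{\beta})\wedge\kappa_{X}^{q}.
\]
For continuity of the metric it then suffices to show that $H(0)$ is positive definite, and this is the main step. Suppose $u\in{\mathbf C}^{h^{q}}$ with $u^{*}H(0)u=0$, and set $v=\sum u_{\alpha}v_{\alpha}$. On $(X_{0})_{\rm reg}$ the integrand is pointwise semi-positive, because it is a positive multiple of $|v|^{2}dv$ (primitive $(n-q,0)$-forms satisfy the Hodge--Riemann sign). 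Hence $v|_{(X_{0})_{\rm reg}}=0$.

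We want to conclude that $\sigma=\sum u_{\alpha}\sigma_{\alpha}$ is divisible by $\pi$, that is, $\sigma=\pi\sigma'$ with $\sigma'$ holomorphic, $D\sigma'=0$, and $ds\wedge\sigma'=0$. The pullback $\beta^{*}v$ extends holomorphically across $\widetilde{X}_{0}$ by the canonical-singularity computation \eqref{eqn:pullback:rel:can:form}, and it vanishes there. The local expressions in Steps 1--2 of the proof of Lemma~\ref{lemma:fib:int:can:sing} then show that $\beta^{*}\sigma$ vanishes along $\widetilde{X}_{0}$. Since $X_{0}$ is reduced, $\sigma/\pi$ is holomorphic off ${\rm Sing}\,X_{0}$, hence holomorphic on $X$ by normality and Hartogs, because ${\rm Sing}\,X_{0}$ has codimension $\geq 2$ in $X$. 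Then $[\sigma'\wedge\kappa^{q}]$ is a class $\Theta'$ with $\pi\Theta'=\sum u_{\alpha}\Theta_{\alpha}$. This contradicts the fact that the $\Theta_{\alpha}$ restrict to a basis of $H^{q}(X_{0},K_{X}(\xi)|_{X_{0}})$, which is the surjectivity and injectivity from the proof of Lemma~\ref{lemma:local:freeness}, unless $u=0$. I expect this injectivity step, namely checking that the Takegoshi representative vanishes along $X_{0}$ and not only along $\widetilde{X}_{0}$, to be the main obstacle. It will require care with the exceptional components $D_{i}$ where $a_{i}\geq0$.

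Finally, $H(s)$ is continuous of class ${\mathcal B}(S)$ with $H(0)>0$, so $\det H(s)\to\det H(0)\neq0$. Comparing with Theorem~\ref{thm:str:sing:L2}(3) for the trivial group, this forces $\delta^{q}=0$. Since $\delta^{q}=\sum_{\alpha}\epsilon^{q}_{\alpha}$ by \eqref{eqn:tr:elem:exp} and each $\epsilon^{q}_{\alpha}\geq0$, all elementary exponents vanish. The isotypic components follow because they are orthogonal direct summands.
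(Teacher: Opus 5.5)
Your proof is correct. It differs from the paper only in how you obtain non-degeneracy of $H(0)$, and the two halves of your argument overlap more than you seem to notice.

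\textbf{What the paper does.} It proves $H_{\alpha\bar\beta}\in{\mathcal B}(S)$ exactly as you do. It then gets both remaining facts at once from the semi-stable reduction. By \eqref{eqn:comp:det:L2} and Proposition~\ref{prop:nonvanishing},
\[
\det H(\mu(t))=|t|^{-2\deg\mu\cdot\delta^{q}}\det\widetilde{H}(t)\geq C|t|^{-2\deg\mu\cdot\delta^{q}}\geq C .
\]
So $\det H\geq C$ on $S^{o}$, hence $\det H(0)>0$ by continuity, and boundedness of $\det H$ near $0$ forces $\delta^{q}=0$.

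\textbf{Your divisibility step.} It proves $H(0)>0$ directly on $X$, without semi-stable reduction or Mourougane--Takayama, and it is valid. It is also simpler than you expect. If $v|_{(X_{0})_{\rm reg}}=0$, then, because $X_{0}$ is reduced, $\pi$ generates the ideal of $X_{0}$ at each point of $(X_{0})_{\rm reg}=X_{0}\setminus\Sigma_{\pi}$. Hence $v=\pi v'$ there, and $\sigma/\pi$ is holomorphic on $X\setminus{\rm Sing}\,X_{0}$. Hartogs on the manifold $X$, where ${\rm Sing}\,X_{0}$ has codimension $\geq 2$, finishes. The detour through $\beta^{*}v$ and $\widetilde{X}_{0}$ is unnecessary; moreover \eqref{eqn:pullback:rel:can:form} concerns $\widetilde{\eta}$, not a general $v$. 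So the obstacle you anticipate with the exceptional $D_{i}$ never arises. The final contradiction also follows directly from freeness: $\sum_{\alpha}u_{\alpha}\Theta_{\alpha}=s\Theta'$ gives $u_{\alpha}=s\,c_{\alpha}(s)$, hence $u=0$.

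\textbf{The redundancy.} Your last step invokes Theorem~\ref{thm:str:sing:L2}~(3), which already rests on Proposition~\ref{prop:nonvanishing}. Statement (3), together with mere boundedness of $\det H$ near $0$, already yields $\delta^{q}=\varrho^{q}=0$, since both are $\geq 0$. It also gives $\det H(0)=e^{c}>0$ for the constant $c$ in (3). So, as organized, your positivity step adds nothing logically, and the paper's route is shorter. What your argument does buy is an elementary proof that the limit $H(0)$ is non-degenerate. The vanishing of the elementary exponents, however, still needs a lower bound on the $Y$-side, because the $e^{q}_{\alpha}$ are defined through the semi-stable reduction.
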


\begin{pf}
After shrinking $S$ if necessary, we have sections $\Psi_{1},\ldots,\Psi_{h^{q}}\in H^{q}(X, K_{X}(\xi))$ such that
$R^{q}\pi_{*}K_{X}(\xi)|_{S}={\mathcal O}_{S}\Psi_{1}\oplus\cdots\oplus{\mathcal O}_{S}\Psi_{h^{q}}$. 
Set
\begin{equation}
\label{eqn:metric:tensor}
g_{\alpha\bar{\beta}}(s) := ( \Psi_{\alpha}\otimes(\pi^{*}ds)^{-1}|_{X_{s}}, \Psi_{\beta}\otimes(\pi^{*}ds)^{-1}|_{X_{s}} )_{L^{2}},
\qquad
s\in S^{o}.
\end{equation}
Then $g_{\alpha\bar{\beta}}\in C^{\infty}(S^{o})$ and $\det(g_{\alpha\bar{\beta}})>0$ on $S^{o}$.
It suffices to prove that $g_{\alpha\bar{\beta}}\in{\mathcal B}(S)$ and $\det(g_{\alpha\bar{\beta}})>0$ on $S$. 
By Theorem~\ref{Thm:Takegoshi} (1), (2), there exist $\psi_{\alpha}\in \Gamma(X,\Omega_{X}^{n-q+1}(\xi))$ and 
$\widetilde{\psi}_{\alpha}\in \Gamma(X\setminus{\rm Sing}\,X_{0},\Omega_{X/S}^{n-q}(\xi))$ such that 
$\Psi_{\alpha}=[\psi_{\alpha}\wedge\kappa_{X}^{q}]$ and 
$\psi_{\alpha}|_{X\setminus{\rm Sing}\,X_{0}}=\widetilde{\psi}_{\alpha}\wedge\pi^{*}ds$. 
Under the canonical identification $K_{X/S}(\xi)|_{X\setminus X_{0}} = \Omega_{X/S}^{n}(\xi)|_{X\setminus X_{0}}$, 
in $H^{q}(X\setminus X_{0}, K_{X/S}(\xi))$, we have
\begin{equation}
\label{eqn:express:Psi}
\Psi_{\alpha}\otimes(\pi^{*}ds)^{-1} = [\widetilde{\psi}_{\alpha}\wedge\kappa_{X}^{q}],
\qquad
\alpha=1,\ldots, h^{q}.
\end{equation}
Since $(\widetilde{\psi}_{\alpha}\wedge\kappa_{X}^{q})|_{X_{s}}$ is the harmonic representative of the cohomology class 
$\Psi_{\alpha}\otimes(\pi^{*}ds)^{-1}|_{X_{s}}$ $(s\in S^{o})$ by Theorem~\ref{Thm:Takegoshi} (3), 
we deduce from \eqref{eqn:metric:tensor}, \eqref{eqn:express:Psi} that
\begin{equation}
\label{eqn:met:ten:fib:int}
\pi_{*}(i^{(n-q+1)^{2}}h_{\xi}(\psi_{\alpha}\wedge\overline{\psi}_{\beta})\wedge\kappa_{X}^{q}) = i\,g_{\alpha\bar{\beta}}(s)\,ds\wedge d\bar{s}.
\end{equation}
Since $\psi_{\alpha}\in \Gamma(X,\Omega_{X}^{n-q+1}(\xi))$, we see that 
$h_{\xi}(\psi_{\alpha}\wedge\overline{\psi_{\beta}})\wedge\kappa_{X}^{q}$ is a smooth top form on $X$. 
By Lemma~\ref{lemma:fib:int:can:sing} and \eqref{eqn:met:ten:fib:int}, we get $g_{\alpha\bar{\beta}}(s)\in{\mathcal B}(S)$.
\par
Consider the semi-stable reduction \eqref{eqn:ss:red}. By \eqref{eqn:comp:det:L2} and Proposition~\ref{prop:nonvanishing}, 
there exist $\delta^{q}\in{\mathbf Z}_{\geq0}$ and $C>0$ such that
\begin{equation}
\label{eqn:nonvanish:det:1}
\det( g_{\alpha\bar{\beta}})( \mu(t) ) = |t|^{-2\delta^{q}} \det \widetilde{H}(t) \geq C\,|t|^{-2\delta^{q}}.
\end{equation}
Since $\delta^{q}\geq0$, there exists $\epsilon>0$ such that for all $s\in S^{o}$, $\det(g_{\alpha\bar{\beta}}(s)) \geq \epsilon>0$.
Since $g_{\alpha\bar{\beta}}(s)\in{\mathcal B}(S)$, this implies $\det(g_{\alpha\bar{\beta}}(0))\geq\epsilon>0$.
Since the left hand side of \eqref{eqn:nonvanish:det:1} extends to a continuous function on $T$, we get $\delta^{q}=0$. 
By \eqref{eqn:elm:exp:length}, this implies the vanishing of the elementary exponents of $R^{q}\pi_{*}K_{X/S}(\xi)$.
\end{pf}

\begin{corollary}
\label{cor:C0:L2}
Let $\sigma_{W}$ be a nowhere vanishing holomorphic section of $\lambda_{W}(K_{X/S}(\xi))$ (cf. Sect.~\ref{sect:1.3.2}).
Then, under the same assumption as in Theorem~\ref{cont:L2:can:sing}, $\delta_{W}^{q}=\varrho_{W}^{q}=0$ 
for all $q\geq 0$ and $W \in \widehat{G}$. In particular, 
there exist $c\in{\mathbf R}$, $r\in{\mathbf Q}_{>0}$ and $\nu\in{\mathbf Z}_{\geq0}$ such that as $s\to0$
$$
\log\|\sigma_{W}^{q}(s)\|_{L^{2}}^{2} = c + O\left(|s|^{r}(\log|s|)^{\nu}\right).
$$
\end{corollary}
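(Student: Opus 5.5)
The corollary will be deduced from Theorem~\ref{cont:L2:can:sing} together with the structure theorem, Theorem~\ref{thm:str:sing:L2}. By Theorem~\ref{cont:L2:can:sing}, all elementary exponents of $R^{q}\pi_{*}K_{X/S}(\xi)$ vanish. Since $R^{q}\pi_{*}K_{X/S}(\xi)_{W}$ is a direct summand, its elementary exponents form a subset of those of the full bundle. I expect this to be visible from \eqref{eqn:rel:bases}: the isotypic decomposition of the Mourougane--Takayama injection $\varphi$ is compatible with the decomposition for $G$ trivial. Hence $e^{q}_{\alpha}=0$ for every $\alpha$, and \eqref{eqn:elm:exp:length} gives $\delta_{W}^{q}=0$.

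To show $\varrho^{q}_{W}=0$, I will use the continuity statement in Theorem~\ref{cont:L2:can:sing}. The $L^{2}$-metric on $R^{q}\pi_{*}K_{X/S}(\xi)|_{S}$ extends continuously, and the extension is positive definite at $s=0$, since the proof gives $\det(g_{\alpha\bar\beta}(0))\geq\epsilon>0$. It follows that its restriction to the orthogonal, holomorphic direct summand $R^{q}\pi_{*}K_{X/S}(\xi)_{W}$ is a continuous metric of class ${\mathcal B}(S)$ with positive determinant at $0$. Concretely, for a holomorphic frame ${\mathbf e}_{1},\dots,{\mathbf e}_{h^{q}_{W}}$ of the $W$-summand, the function $\log\|{\mathbf e}_{1}\wedge\cdots\wedge{\mathbf e}_{h^{q}_{W}}\|^{2}_{L^{2}}$ is bounded near $0$. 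Comparing with \eqref{eqn:asymptotoc:L2:X} with $\delta^{q}_{W}=0$, the term $\varrho^{q}_{W}\log\log(|s|^{-2})$ must be bounded, which forces $\varrho^{q}_{W}=0$. Alternatively, in \eqref{eqn:L2:asym:eqv} the polynomial in $\log|s|^{-2}$ must reduce to its constant term, so $a^{q}_{m}=0$ for $m>0$ and $a^{q}_{0}>0$.

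For the expansion, write $\sigma_{W}$ in terms of such frames, using the identification $\lambda_{W}(K_{X/S}(\xi))=\bigotimes_{q}\det(R^{q}\pi_{*}K_{X/S}(\xi)_{W})^{(-1)^{q}}$. Then $\sigma_{W}$ equals a nowhere vanishing holomorphic function times the wedge of the frames, and that function contributes a smooth term. For each $q$, $\log\|\cdot\|^{2}_{L^{2}}$ is $\log$ of a positive function $D_{W}^{q}(s)$ in ${\mathcal B}(S)$. Each entry $g_{\alpha\bar\beta}$ lies in ${\mathcal B}(S)$, i.e. it is smooth plus terms $|s|^{2r}(\log|s|)^{k}$ with $r\in{\mathbf Q}\cap(0,1]$ and $k\leq n$. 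Since ${\mathcal B}(S)$ is closed under products up to finitely many such terms, $D^{q}_{W}(s)=D^{q}_{W}(0)+O(|s|^{r}(\log|s|)^{\nu})$ for some $r>0$ and $\nu\geq0$. Taking logarithms, using $D^{q}_{W}(0)>0$, and summing over $q$ with signs gives the claim with $c=\sum_{q}(-1)^{q}\log D^{q}_{W}(0)$. The notation $\|\sigma_{W}^{q}\|$ in the statement presumably refers to these determinant sections, and the argument is the same in either reading.

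The main point to verify is the bookkeeping between the full bundle and its $W$-isotypic summand. The key fact needed is that the $L^{2}$-metric is $G$-invariant, so the isotypic summands are orthogonal and the restricted metric inherits continuity and nondegeneracy. The smooth-part estimate for functions in ${\mathcal B}(S)$ is routine.
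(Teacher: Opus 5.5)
Your proof is correct and follows the paper's argument. The paper's entire proof is the observation that $R^{q}\pi_{*}K_{X/S}(\xi)_{W}$ is a holomorphic subbundle of $R^{q}\pi_{*}K_{X/S}(\xi)$, so the continuous, nondegenerate ${\mathcal B}(S)$-class extension from Theorem~\ref{cont:L2:can:sing} restricts to it; you spell out exactly this restriction step.

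Two small simplifications are available. Orthogonality of the isotypic summands is not needed for the restriction step. Also, boundedness of $\log\|\mathbf{e}_1\wedge\cdots\wedge\mathbf{e}_{h^q_W}\|^2_{L^2}$ together with \eqref{eqn:asymptotoc:L2:X} already forces both $\delta^q_W=0$ and $\varrho^q_W=0$, so the separate elementary-exponent bookkeeping is optional.
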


\begin{pf}
Since $R^{q}\pi_{*}K_{X/S}(\xi)_{W} \subset R^{q}\pi_{*}K_{X/S}(\xi)$ is a holomorphic subbundle, 
the result follows from Theorem~\ref{cont:L2:can:sing}. 
\end{pf}

\begin{theorem}
\label{thm:sing:tor:can}
Suppose that $X_{0}$ is reduced and has only canonical singularities. 
Then there exist $\gamma_{g} \in {\mathbf C}$, $r\in{\mathbf Q}_{>0}$ and $\nu\in{\mathbf Z}_{\geq0}$ such that as $s \to 0$,
$$
\log\tau_{G}(X_{s},K_{X_{s}}(\xi_{s}))(g) = \alpha_{\pi,K_{X/S}(\xi)}(g) \log |s|^{2} + \gamma_{g} + O\left( |s|^{r} ( \log (|s|^{-1}) )^{\nu} \right).
$$
\end{theorem}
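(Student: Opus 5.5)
The plan is to combine the singularity of the equivariant Quillen metric (Theorem~\ref{Thm:Sing:Q:adj}) with the continuity of the $L^{2}$-metrics under canonical singularities (Corollary~\ref{cor:C0:L2}). I would take an admissible holomorphic section $\sigma=(\sigma_{W})_{W\in\widehat{G}}$ of $\lambda_{G}(K_{X/S}(\xi))$ on $S$, with each $\sigma_{W}$ built from nowhere vanishing sections $\sigma_{W}^{q}$ of $\det R^{q}\pi_{*}K_{X/S}(\xi)_{W}$. By definition of the equivariant Quillen metric,
$$
\log\tau_{G}(X_{s},K_{X_{s}}(\xi_{s}))(g)
=
\log\|\sigma(s)\|^{2}_{Q,\lambda_{G}(K_{{\mathcal X}/C}(\xi))}(g)
-\sum_{q\geq0,\,W\in\widehat{G}}\frac{(-1)^{q}\chi_{W}(g)}{\dim W}\log\|\sigma^{q}_{W}(s)\|^{2}_{L^{2}}.
$$

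For the first term, Theorem~\ref{Thm:Sing:Q:adj} gives $\log\|\sigma\|^{2}_{Q}(g)\equiv_{\mathcal B}\alpha_{\pi,K_{X/S}(\xi)}(g)\log|s|^{2}$. Since the difference lies in ${\mathcal B}(S)$, it equals its value $c$ at $s=0$ plus finitely many terms of the form $|s|^{2r}(\log|s|)^{k}\phi(s)$ with $r\in{\mathbf Q}\cap(0,1]$, $k\leq n$, and $\phi$ smooth, together with $\phi(s)-\phi(0)=O(|s|)$. Every such term is $O(|s|^{r_{0}}(\log|s|^{-1})^{n})$, where $r_{0}>0$ is a suitable minimal exponent. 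The first term is therefore $\alpha_{\pi,K_{X/S}(\xi)}(g)\log|s|^{2}+c_{g}+O(|s|^{r}(\log|s|^{-1})^{\nu})$.

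For the second term, the hypothesis that $X_{0}$ is reduced with only canonical singularities allows Corollary~\ref{cor:C0:L2} to apply, which gives $\log\|\sigma^{q}_{W}(s)\|^{2}_{L^{2}}=c^{q}_{W}+O(|s|^{r}(\log|s|)^{\nu})$. Only finitely many pairs $(q,W)$ are nontrivial (for the others $\sigma_{W}=1$ and the contribution vanishes), so the weighted sum is a constant plus a remainder of the same type. I take the minimum of the exponents $r$ and the maximum of the $\nu$ over all terms. Subtracting then gives the claim with $\gamma_{g}=c_{g}-\sum(-1)^{q}\chi_{W}(g)c^{q}_{W}/\dim W$.

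The only point needing care is making the ${\mathcal B}$-class remainder explicit. The continuity and nondegeneracy of the $L^{2}$-metric at $s=0$ are needed so that the logarithm of a ${\mathcal B}$-class function with positive value at $0$ is again constant plus $O(|s|^{r}(\log|s|)^{\nu})$. Both are already contained in Theorem~\ref{cont:L2:can:sing} and Corollary~\ref{cor:C0:L2}, so no real obstacle remains. Equivalently, this is Theorem~\ref{thm:main:1} with $\delta_{g}=\varrho_{g}=0$ and all the $\log\log$ corrections absent.
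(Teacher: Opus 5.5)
Your proposal is correct and is essentially the paper's argument: the paper simply invokes Theorem~\ref{thm:main:1} with $\delta_{g}=0$ and all $\varrho^{q}_{W}=0$ (by Corollary~\ref{cor:C0:L2}), and Theorem~\ref{thm:main:1} is itself proved by exactly the combination of Theorem~\ref{Thm:Sing:Q:adj} with the $L^{2}$-asymptotics that you carry out by hand. Unfolding it as you do, with the sharper remainder of Corollary~\ref{cor:C0:L2} used directly, just makes explicit why the $\log\log$ term and the $O(1/\log|s|^{-1})$ error are replaced by $O(|s|^{r}(\log|s|^{-1})^{\nu})$.
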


\begin{pf}
Since $\delta_{\pi,K_{X/S}(\xi)}(g) = \varrho_{\pi, K_{X/S}(\xi)} =0$ by \eqref{eqn:elem:exp}, \eqref{eqn:def:rho} and Corollary~\ref{cor:C0:L2}, 
the result follows from Theorem~\ref{thm:main:1}.
\end{pf}

For an example where $\alpha_{\pi, K_{X/S}(\xi)}(g)$ is explicitly evaluated, we refer the reader to \cite[Th.\,6.3]{Yoshikawa04}. 
In \cite{KawaguchiYoshikawa26, Yoshikawa26}, we will use Theorem~\ref{thm:sing:tor:can} to determine certain 
holomorphic torsion invariants of Calabi-Yau manifolds with group action.

\subsubsection
{Log-canonical singularities and $L^{2}$-metrics}
\label{sect:5.3.3}
\par
We keep the notation in the proof of Theorem~\ref{cont:L2:can:sing}.

\begin{proposition}
\label{prop:L2:log:can}
Suppose that $X_{0}$ is reduced and the pair $(X,X_{0})$ is log-canonical. 
Then the elementary exponents of $R^{q}\pi_{*}K_{X/S}(\xi)$ vanish.
Moreover, there exist constants $\varrho^{q} \in {\mathbf Z}_{\geq 0}$ and $c^{q} \in {\mathbf R}$ such that as $s\to0$,
\begin{equation}
\label{eqn:asymptotoc:L2:X:lc}
\log\det ( g_{\alpha\bar{\beta}}(s) )
= \varrho^{q} \log\log(|s|^{-2}) + c^{q} + O(1/\log |s|^{-1}).
\end{equation}
\end{proposition}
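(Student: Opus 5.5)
We follow the proof of Theorem~\ref{cont:L2:can:sing}, replacing Lemma~\ref{lemma:fib:int:can:sing} by a log-canonical analogue. Keep the frame $\Psi_{1},\ldots,\Psi_{h^{q}}$ of $R^{q}\pi_{*}K_{X}(\xi)$, the forms $\psi_{\alpha}\in\Gamma(X,\Omega^{n-q+1}_{X}(\xi))$ from Theorem~\ref{Thm:Takegoshi}, and the fiber integral identity \eqref{eqn:met:ten:fib:int}. By a partition of unity we reduce to $\omega=\pi^{*}(ds\wedge d\bar s)\wedge m\,\widetilde{\eta}\wedge\overline{\widetilde{\eta}}$ supported in a chart ${\mathcal V}$, as in Step~1 of Lemma~\ref{lemma:fib:int:can:sing}. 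Since $(X,X_{0})$ is log-canonical, \eqref{eqn:discrepancy} holds with $a_{i}\geq -1$. Hence $\beta^{*}(\eta/\pi^{*}s)$ has at most logarithmic poles along all of $X'_{0}$, not only along $\widetilde{X}_{0}$. At a point where $\pi'=z_{0}^{e_{0}}\cdots z_{k}^{e_{k}}$, we can write
$$
\beta^{*}(\eta/\pi^{*}s)=c(z)\,\frac{dz_{0}}{z_{0}}\wedge\cdots\wedge\frac{dz_{k}}{z_{k}}\wedge dz_{k+1}\wedge\cdots\wedge dz_{n},
$$
with $c$ holomorphic, and $c$ vanishes on components with $a_{i}\geq 0$.

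Next we compute the fiber integral of $|c|^{2}\chi$ times the residual log form restricted to $X'_{s}$. Dividing by $(\pi')^{*}(ds/s)=\sum e_{i}dz_{i}/z_{i}$ gives a relative form. On $X'_{s}$ this relative form is $\prod_{j\leq k}|z_{j}|^{-2}$ times a smooth form in $k$ of the log variables, divided by $|s|^{-2}$-type factors. This is exactly the type of integral treated in \cite[Prop.\,6.1]{Takayama22}, i.e. the computation used in Proposition~\ref{prop:asym:ex}. It yields $g_{\alpha\bar\beta}(s)\in\bigoplus_{0\leq m\leq n}(\log|s|^{-2})^{m}\,\mathcal{B}(S)$, with no negative powers of $|s|$.

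The key difference from a general degeneration is the absence of a factor $|s|^{-2r}$, $r>0$, which would come from discrepancies $a_{i}<-1$. Here all poles are logarithmic, so the integrand in the variables $\log|z_{j}|$ is bounded by a polynomial in $\log|s|^{-1}$. Consequently $\det(g_{\alpha\bar\beta})(s)\leq C(\log|s|^{-2})^{N}$.

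On the other hand, \eqref{eqn:nonvanish:det:1} gives $\det(g_{\alpha\bar\beta})(\mu(t))=|t|^{-2\delta^{q}}\det\widetilde H(t)\geq C|t|^{-2\delta^{q}}$, and $\det\widetilde H(t)$ has polynomial-in-$\log$ growth by Theorem~\ref{thm:str:sing:L2}. Comparing these bounds forces $\delta^{q}=0$, so the elementary exponents vanish by \eqref{eqn:elm:exp:length}. Then $\det(g_{\alpha\bar\beta})(s)=\det\widetilde H(t)$, and Theorem~\ref{thm:str:sing:L2} (3) with $\delta_{W}^{q}=0$ gives \eqref{eqn:asymptotoc:L2:X:lc}, where $\varrho^{q}$ is defined by \eqref{eqn:rho:q}.

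The main obstacle is the local estimate at points of $X'_{0}$ lying on components with $a_{i}=-1$ other than $\widetilde X_{0}$. There one must check carefully that $\beta^{*}\widetilde\eta$ restricted to $X'_{s}$ produces only logarithmic growth, and not a power of $|s|^{-1}$. We would first verify this on a model chart with $\pi'=z_{0}^{e_{0}}z_{1}^{e_{1}}$, and then invoke Takayama's expansion in general.
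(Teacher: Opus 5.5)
Your argument follows the paper's proof. Log-canonicity means $\beta^{*}(\eta/\pi^{*}s)$ has only logarithmic poles along $X'_{0}$, which gives $g_{\alpha\bar\beta}(s)=O((\log|s|^{-1})^{n})$. Combined with $\widetilde H(t)\geq C\,I$, this forces all $e^{q}_{\alpha}=0$. You conclude via the determinant, as in Theorem~\ref{cont:L2:can:sing}, while the paper concludes via $D(t)=I$; the two are equivalent because $e^{q}_{\alpha}\geq 0$. Finally, Theorem~\ref{thm:str:sing:L2}~(3) with $\delta^{q}_{W}=0$ gives \eqref{eqn:asymptotoc:L2:X:lc}.

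The one weak point is the step you call the main obstacle, which you leave unverified. It should not be routed through \cite[Prop.\,6.1]{Takayama22}. That computation is for the semistable model $u_{0}\cdots u_{k}=t$ with a smooth integrand, whereas here $\pi'=z_{0}^{e_{0}}\cdots z_{k}^{e_{k}}$ has multiplicities and $\beta^{*}\widetilde\eta$ has poles. So your claimed membership $g_{\alpha\bar\beta}\in\bigoplus_{m}(\log|s|^{-2})^{m}\mathcal{B}(S)$ is unjustified as written. It is also unnecessary: only an upper bound is needed, because the expansion is supplied by Theorem~\ref{thm:str:sing:L2} once $\delta^{q}=0$.

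The upper bound is a direct computation, and it is what the paper does.
\begin{itemize}
\item With $e_{0}>0$, write $dz_{0}/z_{0}=e_{0}^{-1}\bigl((\pi')^{*}(ds/s)-\sum_{i\geq1}e_{i}\,dz_{i}/z_{i}\bigr)$. This gives $\beta^{*}\widetilde\eta\equiv\pm e_{0}^{-1}c\,\frac{dz_{1}}{z_{1}}\wedge\cdots\wedge\frac{dz_{k}}{z_{k}}\wedge dz_{k+1}\wedge\cdots\wedge dz_{n}$ modulo $(\pi')^{*}ds$.
\item In the coordinates $(z_{1},\ldots,z_{n})$, the fibre $X'_{s}\cap U$ is an $e_{0}$-sheeted covering of the region $\{|z_{j}|<1,\ \prod_{j=1}^{k}|z_{j}|^{e_{j}}>|s|\}$. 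All $e_{j}>0$ for $j\leq k$, since the poles lie in $X'_{0}$.
\item The local fibre integral is therefore at most $C$ times the integral of $\prod_{j=1}^{k}|z_{j}|^{-2}$ over that region.
\item Substituting $x_{j}=-\log|z_{j}|$, this is a constant times the volume of the simplex $\{x_{j}\geq0,\ \sum_{j}e_{j}x_{j}<\log|s|^{-1}\}$, i.e.\ $O((\log|s|^{-1})^{k})$.
\end{itemize}
This is exactly \eqref{eqn:fib:int:lc}, and with it your proof is complete.
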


\begin{pf}
Recall that $(U, (z_{0}, \ldots, z_{n}))$ is a coordinate neighborhood as in (Step 1) in the proof of Lemma~\ref{lemma:fib:int:can:sing}, 
that $\eta$ is a nowhere vanishing holomorphic $n+1$-form on $U$, and that $\widetilde{\eta}$ is a relative $n$-form such that 
$\eta = \widetilde{\eta} \wedge \pi^{*}ds$.
Since the pair $(X,X_{0})$ is log-canonical, by \eqref{eqn:log:can:form}, $\beta^{*}(\eta/\pi^{*}s)$ is expressed as follows 
on $U\setminus X'_{0}$:
\begin{equation}
\label{eqn:residue}
\beta^{*}\left(\frac{\eta}{\pi^{*}s}\right) 
= 
c(z) \frac{dz_{0}\wedge\cdots\wedge dz_{n}}{z_{0}\cdots z_{n}}
=
\pi^{\prime *}(ds) \wedge c(z) \frac{dz_{1}\wedge\cdots\wedge dz_{n}}{z_{1}\cdots z_{n}},
\end{equation}
where $c(z) \in {\mathcal O}(U)$. By \eqref{eqn:Theta:Xi}, \eqref{eqn:residue}, for any smooth function $\chi \in C_{0}^{\infty}(U)$ 
with compact support in $U$, there is a constant $C>0$ such that
\begin{equation}
\label{eqn:fib:int:lc}
\begin{aligned}
\left|\int_{X'_{s}\cap U} \chi\,\beta^{*}(\widetilde{\eta} \wedge \overline{\widetilde{\eta}})\right|
&\leq
C\,\left|
\int_{z\in U,\,z_{0}^{e_{1}}\cdots z_{n}^{e_{n}}=s}
\frac{dz_{1}}{z_{1}}\wedge\cdots\wedge\frac{dz_{n}}{z_{n}}
\wedge
\overline{\frac{dz_{1}}{z_{1}}\wedge\cdots\wedge\frac{dz_{n}}{z_{n}}}
\right|
\\
&\leq
C\,(\log |s|^{-1})^{n}.
\end{aligned}
\end{equation}
\par
By \eqref{eqn:met:ten:fib:int} and the smoothness of the top form 
$h_{\xi}(\psi_{\alpha}\wedge\overline{\psi}_{\beta})\wedge\kappa_{X}^{q}$,
we deduce from \eqref{eqn:fib:int:lc} that $g_{\alpha\bar{\beta}}(s) = O\left( (\log|s|^{-1})^{n} \right)$.
This implies that $D(t)$ in Theorem~\ref{thm:str:sing:L2} (1) is the identity matrix. Namely, $e_{1}^{q} = \cdots = e_{h_{W}^{q}}^{q} =0$.
Since $W$ is arbitrary, this implies the vanishing of the elementary exponents of $R^{q}\pi_{*}K_{X/S}(\xi)$.
By \eqref{eqn:elm:exp:length}, we get $\delta_{W}^{q} =0$ in \eqref{eqn:asymptotoc:L2:X}. 
Then \eqref{eqn:asymptotoc:L2:X:lc} follows from \eqref{eqn:asymptotoc:L2:X}. This completes the proof.
\end{pf}

\begin{theorem}
\label{thm:sing:tor:log:can}
Suppose that $X_{0}$ is reduced and the pair $(X, X_{0})$ has only log-canonical singularities. 
Then there exist $\gamma_{g} \in {\mathbf C}$ such that as $s \to 0$,
$$
\log\tau_{G}(X_{s},K_{X_{s}}(\xi_{s}))(g) 
= \alpha_{\pi,K_{X/S}(\xi)}(g) \log |s|^{2} - \varrho_{g}\,\log\log( |s|^{-2}) + \gamma_{g} + O\left(1/\log |s|^{-1} \right).
$$
In particular, if $X_{0}$ is a reduced normal crossing divisor of $X$, this formula holds.
\end{theorem}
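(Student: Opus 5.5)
This follows by specializing Theorem~\ref{thm:main:1} to the log-canonical case, in the same way that Theorem~\ref{thm:sing:tor:can} was deduced from the canonical case. By \eqref{eqn:asymp:eq:tors} we have
$$
\log\tau_{G}(X_{s},K_{X_{s}}(\xi_{s}))(g) = \kappa_{g}\log|s|^{2} - \varrho_{g}\log\log(|s|^{-2}) + \gamma_{g} + O(1/\log|s|^{-1}),
$$
with $\kappa_{g} = \alpha_{\pi,K_{X/S}(\xi)}(g) + \delta_{\pi,K_{X/S}(\xi)}(g)$ by \eqref{eqn:def:kappa}. It therefore suffices to show that $\delta_{\pi,K_{X/S}(\xi)}(g) = 0$. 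Unlike the canonical case, the $\log\log$ term is kept, since $\varrho_{g}$ need not vanish.

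The vanishing of $\delta_{g}$ comes from Proposition~\ref{prop:L2:log:can}. Under the hypotheses that $X_{0}$ is reduced and $(X,X_{0})$ is log-canonical, its proof shows $g_{\alpha\bar\beta}(s) = O((\log|s|^{-1})^{n})$. Combined with the decomposition \eqref{eqn:decomp:H}, this forces all elementary exponents $e^{q}_{\alpha}$ to vanish for every $W\in\widehat{G}$ and every $q$.

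I will make one check explicit. The bound $g_{\alpha\bar\beta}(s)=O((\log|s|^{-1})^{n})$ applies to the Gram matrix of any holomorphic frame, in particular to the frame $\{{\mathbf e}_{\alpha}\}$ of the isotypic summand $R^{q}\pi_{*}K_{X/S}(\xi)_{W}$ chosen in Theorem~\ref{thm:str:sing:L2}. Hence each diagonal entry $|t|^{-2e^{q}_{\alpha}}\widetilde{H}_{\alpha\bar\alpha}(t)$ is $O((\log|t|^{-1})^{n})$. Since $\widetilde{H}(t)\ge C\,I$ by Theorem~\ref{thm:str:sing:L2}~(2), we get $|t|^{-2e^{q}_{\alpha}} = O((\log|t|^{-1})^{n})$, so $e^{q}_{\alpha}=0$.

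Then \eqref{eqn:elm:exp:length} gives $\delta^{q}_{W}=0$ for all $q$ and $W$, and \eqref{eqn:elem:exp} gives $\delta_{\pi,K_{X/S}(\xi)}(g)=0$. Substituting into \eqref{eqn:asymp:eq:tors} yields the claimed expansion, with $\varrho_{g}$ as in \eqref{eqn:def:rho}.

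For the final assertion: if $X_{0}$ is a reduced normal crossing divisor, the pair $(X,X_{0})$ is log-canonical (indeed log smooth), so the formula applies. This is also consistent with Corollary~\ref{cor:ss:degn}. The only step needing care is the uniform treatment of the isotypic components above; everything else is direct citation.
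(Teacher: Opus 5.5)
Your proposal is correct and follows the paper's proof: Proposition~\ref{prop:L2:log:can} gives the vanishing of all elementary exponents, so $\delta_{\pi,K_{X/S}(\xi)}(g)=0$ and $\kappa_{g}=\alpha_{\pi,K_{X/S}(\xi)}(g)$ in Theorem~\ref{thm:main:1}. Your explicit use of the lower bound $\widetilde{H}(t)\geq C\,I$ to force $e^{q}_{\alpha}=0$ spells out a step that the paper states only briefly, as ``$D(t)$ is the identity matrix''.
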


\begin{pf}
By Proposition~\ref{prop:L2:log:can}, the elementary exponents of $R\pi_{*}K_{X/S}(\xi)$ vanish. The result follows from Theorem~\ref{thm:main:1} and \eqref{eqn:def:kappa}.
\end{pf}

\subsubsection
{Example}
\label{sect:5.3.4}
\par
Consider the case $g=1$.
Suppose that $\pi(z)$ is locally given by a quadratic polynomial of rank two near the critical locus $\Sigma=\Sigma_{\pi}$, i.e.,
for any $p \in \Sigma$,
$$
\pi(z) = z_{0}z_{1}
$$
in a suitable local coordinates $z=(z_{0}, \ldots, z_{n})$ centered at $p$. 
(Typical examples of such degenerations are provided by the deformation to the normal cone e.g. \cite[Sect.\,8 a)]{Bismut97}.) 
Then the pair $(X, X_{0})$ is log-canonical and Theorem~\ref{thm:sing:tor:log:can} is applicable to this family.
To give an explicit formula for $\kappa_{\pi, K_{X/S}(\xi)} = \alpha_{\pi, K_{X/S}(\xi)}$ in this case, recall that the Bismut $E$-genus
\cite[Def.\,5.1, Prop.\,5.2]{Bismut97} is the additive characteristic class associated to the power series
$$
{\rm E}(x) := \frac{x - {\rm sinh}(x)}{2x(1-{\rm cosh}(x))}
= \frac{{\rm Td}(x){\rm Td}(-x)}{2x} \left( \frac{{\rm Td}^{-1}(x)-1}{x} - \frac{{\rm Td}^{-1}(-x)-1}{-x} \right).
$$
Then ${\rm E}(0) = 1/6$.

\begin{proposition}[\cite{Bismut97}]
\label{prop:kappa:q:rk2}
Under the assumption as above, 
$$
\kappa_{\pi, K_{X/S}(\xi)} = \alpha_{\pi, K_{X/S}(\xi)} 
= -\frac{1}{2} \int_{\Sigma} {\rm Td}(T\Sigma) {\rm E}(N_{\Sigma/X}) {\rm ch}(K_{\Sigma}(\xi)).
$$
\end{proposition}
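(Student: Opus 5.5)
By the preceding discussion, $(X,X_{0})$ is log-canonical, so Proposition~\ref{prop:L2:log:can} gives the vanishing of all elementary exponents of $R^{q}\pi_{*}K_{X/S}(\xi)$. Hence $\delta_{\pi,K_{X/S}(\xi)}=0$ by \eqref{eqn:elem:exp}, and \eqref{eqn:def:kappa} yields $\kappa_{\pi,K_{X/S}(\xi)}=\alpha_{\pi,K_{X/S}(\xi)}$. (Equivalently, compare Theorem~\ref{thm:sing:tor:log:can} with Theorem~\ref{thm:main:1}.) It then remains to evaluate $\alpha_{\pi,K_{X/S}(\xi)}$ from \eqref{eqn:log:coeff:tw} with $g=1$. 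For $g=1$ the fixed locus is all of ${\mathcal X}$, which is horizontal, so ${\mathcal X}^{1}_{V}=\emptyset$ and only the first integral survives.

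\textbf{Resolving the Gauss map.} Near $\Sigma$ we have $\pi=z_{0}z_{1}$, so $\Sigma=\{z_{0}=z_{1}=0\}$ is a smooth submanifold of codimension two, and the Hessian of $\pi$ is a nondegenerate quadratic form on $N=N_{\Sigma/X}$. I take $q\colon\widetilde{X}\to X$ to be the blow-up along $\Sigma$, with exceptional divisor $E=\mathbf{P}(N)$. Since $d\pi=z_{1}dz_{0}+z_{0}dz_{1}$, the Gauss map $\gamma=[d\pi]$ extends holomorphically across $E$. Its restriction to a point $[v]\in\mathbf{P}(N_{x})$ is the conormal covector ${\rm Hess}(v,\cdot)$.

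Because the Hessian is a section of $S^{2}N^{\lor}$ (the twist by $\pi^{*}TS$ is trivial near $0$), it gives an isomorphism $N\cong N^{\lor}$. Consequently
\[
\widetilde{\gamma}^{*}{\mathcal H}^{\lor}|_{E}\cong{\mathcal O}_{\mathbf{P}(N)}(-1),
\]
after identifying $N$ with $N^{\lor}$. The relation $N\cong N^{\lor}$ also forces the Chern roots of $N$ to be $\pm y$ rationally, so $c_{1}(N)=0$. Since $E=q^{-1}(\Sigma\cap X_{0})$ is the only component relevant to the integral, we obtain
\[
\alpha=\int_{\mathbf{P}(N)}\varphi\bigl(c_{1}({\mathcal O}(-1))\bigr)\,q^{*}\{{\rm Td}(TX){\rm ch}(K_{X}(\xi))\},
\qquad
\varphi(x)=\frac{{\rm Td}(x)^{-1}-1}{x}.
\]

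\textbf{Fiber integration.} I would push forward along the $\mathbf{P}^{1}$-bundle $\mathbf{P}(N)\to\Sigma$ using the standard residue formula. For a $\mathbf{P}^{1}$-bundle with Chern roots $y_{1},y_{2}$, the push-forward of $f(c_{1}({\mathcal O}(-1)))$ is $\pm\bigl(f(-y_{1})-f(-y_{2})\bigr)/(y_{1}-y_{2})$. With $y_{1}=y$ and $y_{2}=-y$ this becomes
\[
-\frac{1}{2y}\bigl(\varphi(y)-\varphi(-y)\bigr),
\]
with the sign still to be fixed. For the remaining factors I use the restrictions to $\Sigma$:
\[
{\rm Td}(TX)|_{\Sigma}={\rm Td}(T\Sigma)\,{\rm Td}(y)\,{\rm Td}(-y),
\qquad
K_{X}|_{\Sigma}=K_{\Sigma}\otimes\det N^{\lor}\equiv K_{\Sigma}\ \text{rationally}.
\]
Combining these with the second expression for ${\rm E}(x)$ in the definition of the Bismut $E$-genus gives exactly
\[
-\frac{1}{2}\int_{\Sigma}{\rm Td}(T\Sigma)\,{\rm E}(N)\,{\rm ch}(K_{\Sigma}(\xi)).
\]

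\textbf{Main obstacle.} The step most likely to go wrong is the bookkeeping of signs and normalizations: whether $\widetilde{\gamma}^{*}{\mathcal H}^{\lor}$ is ${\mathcal O}(-1)$ or ${\mathcal O}(1)$ under the Hessian identification, and the orientation sign in the push-forward. A second point to check is that the global monodromy of the two isotropic lines of the Hessian (the splitting $N=L_{0}\oplus L_{1}$ may exist only after a double cover) does not matter. It should not, because the final expression is symmetric in $\pm y$. To pin down the overall sign and the factor $-1/2$, I would compare against the case $\dim\Sigma=0$, i.e.\ the $A_{1}$ double point with $n=1$.
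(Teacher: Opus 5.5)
Your proposal is correct and follows essentially the same route as the paper. The paper also gets $\kappa=\alpha$ from log-canonicity, blows up $\Sigma$, identifies $\widetilde{\gamma}^{*}{\mathcal H}^{\lor}|_{E}$ with ${\mathcal O}_{{\mathbf P}(N)}(-1)$, pushes forward along ${\mathbf P}(N_{\Sigma/X})\to\Sigma$, uses $c_{1}(N_{\Sigma/X})=0$, and recognizes the $E$-genus; the only difference is that you derive by hand (Hessian identification, residue push-forward, $N\cong N^{\lor}$) what the paper cites from \cite{Yoshikawa07} and \cite{Bismut97}. Your sign worries are unfounded: the push-forward is exactly $-(\varphi(y)-\varphi(-y))/2y$, and the factor $\frac{1}{2}$ is just ${\rm E}(N)={\rm E}(y)+{\rm E}(-y)=2{\rm E}(y)$ because ${\rm E}$ is an even additive class, so no $A_{1}$ comparison is needed.
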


\begin{pf}
We use the notation in Section~\ref{sect:3.4}. Let $q \colon \widetilde{X} \to X$ be the blowing-up of $\Sigma$ and
let $E = q^{-1}(\Sigma) \cong {\mathbf P}(N_{\Sigma/X})$ be the exceptional divisor. Then the indeterminacy of the Gauss map for $\pi$ 
can be resolved by considering $\widetilde{X}$. 
Let $p \colon {\mathbf P}(N_{\Sigma/X}) \to \Sigma$ be the projection. Since $q|_{E}=p$, setting $g=1$ in \eqref{eqn:log:coeff:tw}, we get
$$
\begin{aligned}
\alpha_{\pi, K_{X/S}(\xi)} 
&= 
\int_{E} \widetilde{\gamma}^{*} 
\left\{ \frac{{\rm Td}^{-1}({\mathcal H}^{\lor})-1}{c_{1}({\mathcal H}^{\lor})} \right\} p^{*}\left\{ {\rm Td}(TX) e^{-c_{1}(X)} {\rm ch}(\xi) \right\}
\\
&=
\int_{\Sigma} p_{*} \widetilde{\gamma}^{*} \left\{ \frac{{\rm Td}^{-1}({\mathcal H}^{\lor})-1}{c_{1}({\mathcal H}^{\lor})} \right\} 
{\rm Td}(N_{\Sigma/X}) e^{-c_{1}(N_{\Sigma/X})} {\rm Td}(T\Sigma) {\rm ch}(K_{\Sigma}(\xi)).
\end{aligned}
$$ 
Let $F:= {\mathcal O}_{{\mathbf P}(N_{\Sigma/X})}(1)$. 
By \cite[(23)]{Yoshikawa07}, $F = \widetilde{\gamma}^{*}{\mathcal H}|_{E}$. Set $f(x) := ({\rm Td}^{-1}(x) - 1)/x$ and
$f_{-}(x) := ( f(x) - f(-x) )/2x$. Then we have
$$
p_{*} \widetilde{\gamma}^{*} \left\{ \frac{{\rm Td}^{-1}({\mathcal H}^{\lor})-1}{c_{1}({\mathcal H}^{\lor})} \right\} 
=
p_{*}( f( -c_{1}(F) ))
=
-p_{*}( f_{*}( c_{1}(F) ))
=
-\frac{1}{2} f_{-}(N_{\Sigma/X}),
$$
where the second equality follows from \cite[p.80 last line]{Yoshikawa07} and the third equality follows from \cite[p.81]{Yoshikawa07}.
Since $c_{1}(N_{\Sigma/X})=0$ by \cite[(2.9)]{Bismut97}, this implies that 
\begin{equation}
\label{eqn:alpha:q:s}
\alpha_{\pi, K_{X/S}(\xi)} 
= 
- \frac{1}{2} \int_{\Sigma} f_{-}(N_{\Sigma/X}) {\rm Td}(N_{\Sigma/X}) {\rm Td}(T\Sigma) {\rm ch}(K_{\Sigma}(\xi)).
\end{equation}
Since $f_{-}(N_{\Sigma/X}) {\rm Td}(N_{\Sigma/X}) = {\rm E}(N_{\Sigma/X})$ by \cite[(29)]{Yoshikawa07},
the desired formula follows from \eqref{eqn:alpha:q:s}.
\end{pf}

\section
{Asymptotic behavior of Bott-Chern forms under degenerations}
\label{sect:6}
\par
In Sections~\ref{sect:6} and \ref{sect:7}, we provide a topological formula for the coefficient $\kappa_{\pi,K_{X/S}(\xi)}$ 
in Theorem~\ref{thm:main:1}. To this end, in this section, 
we study the asymptotic behavior of the fiber integrals of Bott-Chern forms under certain degeneracy assumption. 
In the remainder of this paper, we assume 
$$
G = \{ 1 \}.
$$
\subsection
{Bott-Chern forms}
\label{sect:6.1}
\par
We introduce the notation used throughout Section~\ref{sect:6}.
Let $M$ be a complex manifold. 
Let $E$, $E'$ be holomorphic vector bundles on $M$ of rank $r$. Let 
$$
\varphi \colon E \to E'
$$ 
be a homomorphism of vector bundles. Let $N$ be the degeneracy locus of $\varphi$. 
Namely, $N:= \{ x \in M; \, {\rm rank}(\varphi_{x}) < r \}$. We assume that $N$ is a compact hypersurface of $M$. 
We set $M^{0} := M\setminus N$. 
Let $h_{E}$ and $h_{E'}$ be Hermitian metrics on $E$ and $E'$, respectively. 
We set $\overline{E}:=(E,h_{E})$ and $\overline{E'}:=(E',h_{E'})$.
Let $R_{\overline{E}}$ and $R_{\overline{E'}}$ be the curvature forms of $\overline{E}$ and $\overline{E'}$ 
with respect to the Chern connection, respectively.
Then $\varphi$ induces an isomorphism of holomorphic Hermitian vector bundles on $M^{0}$
$$
\overline{E'} \cong (E, h_{E'}^{\varphi}),
\qquad
h_{E'}^{\varphi}(u,v) := h_{E'}(\varphi(u),\varphi(v)).
$$
\par
Throughout this section, we assume that
$P(\cdot)\in\frak{gl}({\bf C}^{r})^{{\rm GL}({\bf C}^{r})}$ is a ${\rm GL}({\mathbf C}^{r})$-invariant polynomial 
on the Lie algebra $\frak{gl}({\bf C}^{r})$ 
with respect to the adjoint action. We set 
$$
P(\overline{E}) = P(E,h_{E}) := P\left(-R_{\overline{E}}/2\pi i\right).
$$
By \cite[I, e), f)]{BGS88}, \cite[Sect.1.2.4]{GilletSoule90}, we have the Bott-Chern class 
$$
\widetilde{P}(\overline{E},\overline{E'}) := \widetilde{P}(E ; h_{E}, \varphi^{*}h_{E'})\in\widetilde{A}(M^{0})
$$ 
such that 
$$
dd^{c}\widetilde{P}(\overline{E},\overline{E'}) = P(\overline{E}) - P(\overline{E'}).
$$
In this section, we construct a natural extension of $\widetilde{P}(\overline{E},\overline{E'})$ to a current on $M$. 
For later use in the next section, when $M$ admits a proper holomorphic function $f \colon M \to \varDelta$ with $N \subset f^{-1}(0)$, 
we study the asymptotic behavior of the fiber integral
$f_{*}\widetilde{P}(\overline{E},\overline{E'})$ as a function on the punctured disc $\varDelta^{*}$.
We remark that, when $N$ is a smooth hypersurface and $\varphi \colon {\mathcal O}_{M}(E) \to {\mathcal O}_{M}(E')$ provides 
a resolution of $i_{*}(E'|_{N})$ with $i \colon N \hookrightarrow M$ being an embedding, 
a natural extension of $\widetilde{P}(\overline{E},\overline{E'})$ to a current on $M$ was constructed by 
Bismut, Gillet, and Soul\'e \cite{BGS90b}. While their construction extends to general immersions of higher codimension,
it is not applicable to our setting because $N$ is not necessarily smooth. 
\par
We introduce a particular representative of the Bott-Chern class $\widetilde{P}(\overline{E},\overline{E'})$, 
which we use in Section~\ref{sect:6}. To this end, we recall the construction of the Bott-Chern class given in \cite[I, f)]{BGS88},
\cite[Sect.\,1.2.3]{GilletSoule90}. (See \cite[I, Cor.\,1.30]{BGS88} for another construction.)
Let $v \colon F \to F'$ be an isomorphism of holomorphic vector bundles on $M$.
Let $h_{F}$, $h_{F'}$ be Hermitian metrics on $F$, $F'$, respectively.
Let $\pi_{1}\colon M\times{\bf P}^{1}\to M$ and $\pi_{2}\colon M\times{\bf P}^{1}\to{\bf P}^{1}$ be the natural projections.
Set $\widetilde{F} := \pi_{1}^{*}F$. Define a Hermitian metric $h_{\widetilde{F}}$ on $\widetilde{F}$ as
$$
h_{\widetilde{F}} := \frac{1}{1+|\zeta|^{2}} h_{F} + \frac{|\zeta|^{2}}{1+|\zeta|^{2}} h_{F'}^{v},
$$
where $\zeta$ is the inhomogeneous coordinate of ${\mathbf P}^{1}$.
Then $h_{\widetilde{F}}|_{M \times\{0\}} = h_{F}$ and $\widetilde{h}_{\widetilde{F}}|_{M \times\{\infty\}} = v^{*}h_{F'} = h_{F'}^{v}$.
Setting $S=0$, $E = F$, $Q = F'$ in \cite[Sect.\,1.2.3]{GilletSoule90}, 
we deduce from \cite[(1.2.3.2)]{GilletSoule90} that in $\widetilde{A}_{M}$,
\begin{equation}
\label{eqn:Bott:Chern}
\widetilde{P}( \overline{F}, \overline{F'} ) 
= \int_{{\mathbf P}^{1}} P\left( \frac{i}{2\pi} R(\widetilde{F}, h_{\widetilde{F}}) \right)\, \log |\zeta|^{2}.
\end{equation}
(We remark that we set $S=0$, $E=F$, $Q=F'$ here, while $S=F$, $E=F'$, $Q=0$ in \cite[Sect.\,1.2.4]{GilletSoule90}. 
This explains the difference of the sign between \eqref{eqn:Bott:Chern} and \cite[(1.2.3.2)]{GilletSoule90}.)
Under the identification $(F', h_{F'}) \cong (F, h_{F'}^{v})$,
$\widetilde{P}( \overline{F}, \overline{F'} )$ is also denoted by $\widetilde{P}( F ; h_{F}, h_{F'} )$. 
In what follows, the representative of $\widetilde{P}( \overline{F}, \overline{F'} )$ given by \eqref{eqn:Bott:Chern} 
is called the Bott-Chern form and is still denoted by the same symbol $\widetilde{P}( \overline{F}, \overline{F'} )$.

\subsection
{The Grassmann bundle and the section associated to $\varphi$}
\label{sect:6.2}
\par
Let $\sigma_{0},\sigma_{\infty}\in H^{0}({\bf P}^{1},{\mathcal O}_{{\bf P}^{1}}(1))$ be the canonical sections such that
${\rm div}(\sigma_{0})=0$, ${\rm div}(\sigma_{\infty})=\infty$. Then $\zeta=\sigma_{0}/\sigma_{\infty}$. 
Let $h_{{\mathcal O}_{{\bf P}^{1}}(1)}$ be the standard Hermitian metric on ${\mathcal O}_{{\bf P}^{1}}(1)$ with
$$
\|\sigma_{0}(\zeta)\|^{2}=|\zeta|^{2}/(1+|\zeta|^{2}),
\qquad
\|\sigma_{\infty}(\zeta)\|^{2}=1/(1+|\zeta|^{2}).
$$
\par
Set 
$$
E(1) := \pi_{1}^{*}E\otimes\pi_{2}^{*}{\mathcal O}_{{\bf P}^{1}}(1),
\qquad
E'(1) := \pi_{1}^{*}E'\otimes\pi_{2}^{*}{\mathcal O}_{{\bf P}^{1}}(1).
$$
Then $E(1)$ and $E'(1)$ are endowed with the product Hermitian metrics 
$h_{E(1)} = \pi_{1}^{*}h_{E}\otimes\pi_{2}^{*}h_{{\mathcal O}_{{\bf P}^{1}}(1)}$ and 
$h_{E'(1)} = \pi_{1}^{*}h_{E'}\otimes\pi_{2}^{*}h_{{\mathcal O}_{{\bf P}^{1}}(1)}$, respectively. 
\par
Let $\varepsilon_{\varphi}\colon\pi_{1}^{*}E\to E(1)\oplus E'(1)$ be the homomorphism of vector bundles defined as
\begin{equation}
\label{eqn:hom:v.b.}
(\varepsilon_{\varphi})_{(x,\zeta)}(e) := (e\otimes\sigma_{\infty}(\zeta),\varphi_{x}(e)\otimes\sigma_{0}(\zeta)),
\qquad
(x,\zeta) \in M \times {\mathbf P}^{1},
\quad
e\in E_{x}.
\end{equation}
Define a holomorphic Hermitian vector bundle $\overline{\mathcal E}$ on $(M\times{\bf P}^{1})\setminus(N\times\{\infty\})$ as
$$
\overline{\mathcal E} := (\pi_{1}^{*}E,\|\sigma_{\infty}\|^{2}h_{E} + \|\sigma_{0}\|^{2}h_{E'}^{\varphi}).
$$
Then we have an isomorphism of holomorphic Hermitian vector bundles
$$
\overline{\mathcal E}\cong(\varepsilon_{\varphi}(\pi_{1}^{*}E), (h_{E(1)}\oplus h_{E'(1)})|_{\varepsilon_{\varphi}(E)})
$$
on $(M\times{\bf P}^{1})\setminus(N\times\{\infty\})$. In particular, for $\zeta\not=\infty$, we have
\begin{equation}
\label{eqn:vb}
\left(E,\frac{1}{1+|\zeta|^{2}}h_{E}+\frac{|\zeta|^{2}}{1+|\zeta|^{2}}h_{E'}^{\varphi}\right)
\cong
(\varepsilon_{\varphi}(\pi_{1}^{*}E),(h_{E(1)}\oplus h_{E'(1)})|_{\varepsilon_{\varphi}(E)})|_{M\times\{\zeta\}}
\end{equation}
and for $\zeta=\infty$, we have
$$
(E|_{M^{0}},h_{E',\varphi}) 
\cong 
(\varepsilon_{\varphi}(\pi_{1}^{*}E),(h_{E(1)}\oplus h_{E'(1)})|_{\varepsilon_{\varphi}(E)})|_{M^{0}\times\{\infty\}} 
= \overline{E'}|_{M^{0}}.
$$
\par
Let $\varPi\colon{\rm Gr}(r, E(1)\oplus E'(1)) \to M\times{\bf P}^{1}$ be the Grassmann bundle such that
$$
{\rm Gr}(r, E(1)\oplus E'(1))_{(x,\zeta)} = \varPi^{-1}(x,\zeta) := {\rm Gr}(r, E(1)_{(x,\zeta)}\oplus E'(1)_{(x,\zeta)})
$$
for $(x,\zeta)\in M\times{\bf P}^{1}$. Hence ${\rm Gr}(r, E(1)\oplus E'(1))$ parametrizes linear subspacess of dimension $r$ 
of the fibers of $E(1)\oplus E'(1)$. 
\par
Let 
$$
{\mathscr U} := {\mathscr U}^{E(1)\oplus E'(1)} \to {\rm Gr}(r, E(1)\oplus E'(1))
$$ 
be the universal vector bundle of rank $r$. 
Since ${\mathscr U} \subset \varPi^{*}(E(1)\oplus E'(1))$, ${\mathscr U}$ is endowed with the Hermitian metric 
$h_{\mathscr U} := \varPi^{*}(h_{E(1)}\oplus h_{E'(1)})|_{\mathscr U}$. We set 
$$
\overline{\mathscr U} := ({\mathscr U}, h_{\mathscr U}).
$$
\par
Let $\sigma_{\varphi}\colon(M\times{\bf P}^{1})\setminus(N\times\{\infty\})\to{\rm Gr}(r, E(1)\oplus E'(1))$ be the holomorphic section 
associated to the homomorphism $\varepsilon_{\varphi}$ defined by
\begin{equation}
\label{eqn:taut:section}
\sigma_{\varphi}(x,\zeta) := [(\varepsilon_{\varphi})_{(x,\zeta)}(E_{x})].
\end{equation}
Here, for a subspace $V\subset E_{x}\oplus E'_{x}$ of dimension $r$, $[V]\in{\rm Gr}(r,E(1)\oplus E'(1))_{(x,\zeta)}$ denotes the point 
corresponding to $V$. Since
$$
( {\mathscr U}_{\sigma_{\varphi}(x,\zeta)}, h_{{\mathcal U}, \sigma_{\varphi}(x,\zeta)} )
=
( \varepsilon_{\varphi}(E_{x}\times\{\zeta\}), h_{E(1)}\oplus h_{E'(1)}|_{\varepsilon_{\varphi}(E_{x}\times\{\zeta\})} ),
$$
comparing this with \eqref{eqn:vb}, we have
$$
\overline{\mathcal E} = \sigma_{\varphi}^{*}\overline{\mathscr U}
$$
as holomorphic Hermitian vector bundles on $(M\times{\bf P}^{1})\setminus(N\times\{\infty\})$. 
Since $N\times\{\infty\}$ has codimension $2$ in $M \times {\mathbf P}^{1}$, 
the holomorphic section $\sigma_{\varphi}$ from $(M\times{\bf P}^{1})\setminus(N\times\{\infty\})$ to ${\rm Gr}(r, E(1)\oplus E'(1))$ 
extends to a meromorphic map (\cite{Siu75}).
\par
Fix a resolution of the indeterminacy 
$$
q\colon{\mathfrak M}\to M\times{\mathbf P}^{1}
$$
of the meromorphic map $\sigma_{\varphi}\colon M\times{\bf P}^{1}\dashrightarrow{\rm Gr}(r, E(1)\oplus E'(1))$. Set 
\begin{equation}
\label{eqn:divisor}
{\mathfrak D}_{\varphi} := q^{-1}(N\times\{\infty\}),
\end{equation}
which is endowed with the structure of a divisor on ${\mathfrak M}$ such that
\begin{equation}
\label{eqn:div:0:infty}
{\rm div}(q^{*}\zeta)^{-1} = \widetilde{M\times\{\infty\}} + {\mathfrak D}_{\varphi} - q^{-1}(M\times\{0\}).
\end{equation}
Here $\widetilde{M\times\{\infty\}}$ is the proper transform of $M\times\{\infty\}$. 
Since $N$ is compact and $q$ is proper, ${\mathfrak D}_{\varphi}$ is compact.
By definition, there is a holomorphic map 
$$
\widetilde{\sigma}_{\varphi} \colon {\mathfrak M} \to {\rm Gr}(r, E(1)\oplus E'(1))
$$ 
such that $\widetilde{\sigma}_{\varphi}=\sigma_{\varphi}\circ q$ on ${\frak M}\setminus {\mathfrak D}_{\varphi}$. 
We have the following commutative diagram:
$$
   \xymatrix{
    {\mathfrak D}_{\varphi} \ar@{^{(}-_>}[r] \ar[d]_{q} & {\mathfrak M} \ar[d]_{q} \ar[rd]^{\widetilde{\sigma}_{\varphi}}
    \\
    N\times\{\infty\} \ar@{^{(}-_>}[r] & M\times{\bf P}^{1}  \ar@{.>}[r]_{\sigma_{\varphi}} & {\rm Gr}(r, E(1)\oplus E'(1))
   }
$$
Then $\widetilde{\sigma}_{\varphi}^{*}\overline{\mathscr U}$ is a holomorphic Hermitian vector bundle on ${\mathfrak M}$ such that
on ${\mathfrak M} \setminus {\mathfrak D}_{\varphi}$,
\begin{equation}
\label{eqn:universal:property:2}
\widetilde{\sigma}_{\varphi}^{*}\overline{\mathscr U} = q^{*}\overline{\mathcal E}
\end{equation}
By \eqref{eqn:universal:property:2}, we get the following equality of differential forms 
on ${\mathfrak M} \setminus {\mathfrak D}_{\varphi}$
\begin{equation}
\label{eqn:char:form:1}
q^{*}P(\overline{\mathcal E}) = \widetilde{\sigma}_{\varphi}^{*}P(\overline{\mathscr U}),
\end{equation}
where $\widetilde{\sigma}_{\varphi}^{*} P(\overline{\mathscr U})\in \bigoplus_{p\geq0} A^{p,p}({\mathfrak M})$.

\subsection
{An extension of Bott-Chern forms to currents}
\label{sect:6.3}
\par
Following MacPherson's graph construction for vector bundle homomorphism  \cite[Example 18.1.6]{Fulton98}
(see also \cite[p.341 Eq.(14) and Def.\,18.1]{Fulton98}), for a polynomial $P=P(c_{1},c_{2},\ldots)$ in the Chern classes, we set
\begin{equation}
\label{eqn:local:Chern}
P^{M}_{N}(\varphi)
:=
(\pi_{1}\circ q)_{*}\left[c_{1}({\mathfrak D}_{\varphi}) \wedge \widetilde{\sigma}_{\varphi}^{*} P({\mathscr U})\right].
\end{equation}
Here $P^{M}_{N}(\varphi)$ is regarded as a linear form on $H^{*}_{\rm DR}(M, {\mathbf C})$ defined as
\begin{equation}
\label{eqn:def:local:Chern}
\begin{aligned}
\int_{M} P^{M}_{N}(\varphi) \wedge \chi
&=
\int_{\mathfrak M} 
c_{1}({\mathfrak D}_{\varphi}) \wedge \widetilde{\sigma}_{\varphi}^{*} P({\mathscr U}) \wedge (\pi_{1}\circ q)^{*}\chi
\\
&=
\int_{{\mathfrak D}_{\varphi}} \widetilde{\sigma}_{\varphi}^{*} P( {\mathscr U} ) \wedge (\pi_{1}\circ q)^{*}\chi
\end{aligned}
\end{equation}
for $\chi \in H^{*}_{\rm DR}(M, {\mathbf C})$. Hence $P^{M}_{N}(\varphi) \in H^{*}_{{\rm DR},c}(M, {\mathbf C})$.
By \cite[Example 18.1.6 (c)]{Fulton98}, we have the following equality of cohomology classes with compact support:
\begin{equation}
\label{eqn:difference:Chern}
P(E) - P(E') = P^{M}_{N}(\varphi).
\end{equation}
In this subsection, using the Bott-Chern forms, we provide a refinement of \eqref{eqn:difference:Chern} at the level of currents.
Recall that on $M^{0}$, $\widetilde{P}(\overline{E},\overline{E'})$ is given by the smooth form 
\begin{equation}
\label{eqn:def:Bott:Chern}
\begin{aligned}
\widetilde{P}(\overline{E},\overline{E'})
&=
(\pi_{1})_{*}\left[ P(\overline{\mathcal E})\,\log|\zeta|^{2} \right]
=
(\pi_{1}\circ q)_{*}\left[ P(q^{*}\overline{\mathcal E})\cdot q^{*}(\log|\zeta|^{2}) \right]
\\
&=
(\pi_{1}\circ q)_{*}\left[ \widetilde{\sigma}_{\varphi}^{*} P(\overline{\mathscr U})\cdot q^{*}(\log|\zeta|^{2}) \right],
\end{aligned}
\end{equation}
where we used \eqref{eqn:char:form:1} to get the last expression. 
Since $\widetilde{\sigma}_{\varphi}^{*}P(\overline{\mathscr U})\cdot q^{*}(\log|\zeta|^{2})$ is a differential form on ${\mathfrak M}$ 
with coefficients in $L^{1}_{\rm loc}$, $(\pi_{1}\circ q)_{*}[\widetilde{\sigma}_{\varphi}^{*}P(\overline{\mathscr U})\cdot q^{*}(\log|\zeta|^{2})]$ 
defines a current on $M$ by the properness of $\pi_{1}\circ q$.

\begin{definition}
\label{def:canonical:ext}
The Bott-Chern type current $\langle \widetilde{P}(\overline{E},\overline{E'}) \rangle$ on $M$ is defined as
$$
\langle \widetilde{P}(\overline{E},\overline{E'}) \rangle 
:= 
(\pi_{1}\circ q)_{*}[\widetilde{\sigma}_{\varphi}^{*} P(\overline{\mathscr U})\cdot q^{*}(\log|\zeta|^{2})].
$$ 
\end{definition}

Then \eqref{eqn:difference:Chern} is refined as follows.

\begin{theorem}
\label{thm:canonical:ext}
The following identity of currents on $M$ holds:
$$
dd^{c} \langle \widetilde{P}(\overline{E},\overline{E'}) \rangle
=
P(\overline{E}) - P(\overline{E'}) 
- (\pi_{1}\circ q)_{*}\left[ \delta_{{\mathfrak D}_{\varphi}}\wedge\widetilde{\sigma}_{\varphi}^{*} P(\overline{\mathscr U}) \right].
$$
Here $\delta_{{\mathfrak D}_{\varphi}}$ is the Dirac $\delta$-current associated to the divisor ${\mathfrak D}_{\varphi}$.
\end{theorem}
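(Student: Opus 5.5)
The plan is to compute $dd^{c}$ of the current by pairing with test forms and moving $dd^{c}$ onto ${\mathfrak M}$, where everything is a smooth form times $\log|q^{*}\zeta|^{2}$. For a test form $\chi$ on $M$, Definition~\ref{def:canonical:ext} gives
$$
\int_{M} dd^{c}\langle \widetilde{P}(\overline{E},\overline{E'}) \rangle \wedge \chi
=
\int_{\mathfrak M} \widetilde{\sigma}_{\varphi}^{*}P(\overline{\mathscr U})\, q^{*}\log|\zeta|^{2} \wedge dd^{c}(\pi_{1}\circ q)^{*}\chi .
$$
Since $\widetilde{\sigma}_{\varphi}^{*}P(\overline{\mathscr U})$ is a smooth closed form on ${\mathfrak M}$ (a pullback of a Chern--Weil form), the integrand is $dd^{c}$-compatible. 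Because $\pi_{1}\circ q$ is proper and $\chi$ has compact support, integration by parts on ${\mathfrak M}$ moves $dd^{c}$ onto $q^{*}\log|\zeta|^{2}$. The total degree on ${\mathfrak M}$ is even, so no sign appears.

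Next I would apply the Poincar\'e--Lelong formula on ${\mathfrak M}$. With the convention $dd^{c}=\frac{1}{2\pi i}\bar\partial\partial$, we have $dd^{c}\log|g|^{2}=\delta_{{\rm div}(g)}$ for a meromorphic function $g$. By \eqref{eqn:div:0:infty},
$$
dd^{c}q^{*}\log|\zeta|^{2} = \delta_{q^{-1}(M\times\{0\})} - \delta_{\widetilde{M\times\{\infty\}}} - \delta_{{\mathfrak D}_{\varphi}} .
$$
Here $q^{-1}(M\times\{0\})\cong M\times\{0\}$, since $q$ is an isomorphism away from ${\mathfrak D}_{\varphi}$ and $\{\zeta=0\}$ is disjoint from $N\times\{\infty\}$. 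I would make this rigorous with the usual regularization $\log(|\zeta|^{2}+\epsilon)$, or locally via $\log|g|^{2}$ with $g$ a local defining function of the divisor. The only inputs are that $\widetilde{\sigma}_{\varphi}^{*}P(\overline{\mathscr U})\wedge(\pi_{1}\circ q)^{*}\chi$ is smooth and that $\log|q^{*}\zeta|^{2}$ is $L^{1}_{\rm loc}$.

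The three terms are then identified one at a time.
\begin{itemize}
\item[(i)] On $q^{-1}(M\times\{0\})$, \eqref{eqn:char:form:1} and \eqref{eqn:vb} at $\zeta=0$ show that $\widetilde{\sigma}_{\varphi}^{*}P(\overline{\mathscr U})$ restricts to $P(\overline{\mathcal E}|_{\zeta=0})=P(\overline{E})$. This term contributes $\int_{M}P(\overline{E})\wedge\chi$.
\item[(ii)] On $\widetilde{M\times\{\infty\}}$, which maps birationally onto $M\times\{\infty\}$, the form restricts to $P(\overline{E'})$ on the dense open set $M^{0}\times\{\infty\}$. Since both sides are smooth forms on $\widetilde{M\times\{\infty\}}$, they agree everywhere there. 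On $M^{0}$ the section sends $E_{x}$ to $\{0\}\oplus\varphi(E_{x})\otimes\sigma_{0}=E'(1)_{(x,\infty)}$, and $\overline{\mathscr U}$ restricts to $\overline{E'}\otimes{\mathcal O}(1)|_{\infty}$, whose curvature is that of $\overline{E'}$ because the metric on ${\mathcal O}(1)$ is flat along the fiber. Pushing forward by the birational map gives $\int_{M}P(\overline{E'})\wedge\chi$, because the exceptional set has measure zero.
\item[(iii)] The ${\mathfrak D}_{\varphi}$ term is exactly $-(\pi_{1}\circ q)_{*}[\delta_{{\mathfrak D}_{\varphi}}\wedge\widetilde{\sigma}_{\varphi}^{*}P(\overline{\mathscr U})]$.
\end{itemize}
Collecting the three terms gives the stated identity.

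The main obstacle is step (ii). The pullback $\widetilde{\sigma}_{\varphi}^{*}\overline{\mathscr U}$ on the strict transform agrees with $\overline{E'}$ only over $M^{0}$. So I must check carefully that the identification of this restricted Chern--Weil form with $(\pi_{1}\circ q)^{*}P(\overline{E'})$ holds as smooth forms, and not merely up to something supported over $N$. I would handle this by density and continuity, as in (ii). The multiplicity convention for ${\mathfrak D}_{\varphi}$ in \eqref{eqn:div:0:infty} must also match the coefficient in Poincar\'e--Lelong, which it does by the definition of the divisor structure.
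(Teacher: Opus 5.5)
Your proposal is correct and follows the paper's proof. The paper likewise pushes $dd^{c}$ through $(\pi_{1}\circ q)_{*}$ using closedness of $\widetilde{\sigma}_{\varphi}^{*}P(\overline{\mathscr U})$, applies Poincar\'e--Lelong to $q^{*}\log|\zeta|^{2}$ via \eqref{eqn:div:0:infty}, and reads off the three terms; your density-and-continuity argument on $\widetilde{M\times\{\infty\}}$, together with the measure-zero remark, just spells out the identification of the $\infty$-term with $P(\overline{E'})$ that the paper leaves implicit.
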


\begin{pf}
We remark that since $\widetilde{\sigma}_{\varphi}^{*}P(\overline{\mathscr U})$ is a smooth form on ${\mathfrak M}$, the product of currents
$\delta_{{\mathfrak D}_{\varphi}}\wedge\widetilde{\sigma}_{\varphi}^{*}P(\overline{\mathscr U})$ is well-defined.
Since $\partial$ and $\bar{\partial}$ commute with $(\pi_{1}\circ q)_{*}$ and $dd^{c} \log |\zeta|^{2} = \delta_{0} - \delta_{\infty}$, we get 
$$
\begin{aligned}
dd^{c} \langle \widetilde{P}(\overline{E},\overline{E'}) \rangle
&=(\pi_{1}\circ q)_{*}\left[\widetilde{\sigma}_{\varphi}^{*}P(\overline{\mathscr U})\wedge\{dd^{c}q^{*}(\log|\zeta|^{2})\}\right]
\\
&=
(\pi_{1}\circ q)_{*}\left[\widetilde{\sigma}_{\varphi}^{*}P(\overline{\mathscr U})\wedge
\{ \delta_{q^{-1}(M\times\{0\})} - \delta_{\widetilde{M\times\{\infty\}}} - \delta_{{\mathfrak D}_{\varphi}} \}  \right]
\\
&= P(\overline{E}) - P(\overline{E'}) 
- (\pi_{1}\circ q)_{*}\left[\widetilde{\sigma}_{\varphi}^{*}P(\overline{\mathscr U})\wedge\delta_{{\mathfrak D}_{\varphi}}\right],
\end{aligned}
$$
where the second equality follows from \eqref{eqn:div:0:infty}. This completes the proof.
\end{pf}

In certain special cases, an explicit formula for the current $\langle \widetilde{P}(\overline{E},\overline{E'}) \rangle$, 
along with a comparison to the Bott-Chern singular currents of Bismut-Gillet-Soul\'e \cite{BGS90}, \cite{BGS90b}, 
will be presented in Section~\ref{sect:9.3}.

\subsection
{The asymptotic behavior of the fiber integral $f_{*}[\chi\wedge \langle \widetilde{P}(\overline{E},\overline{E'})\rangle ]$}
\label{sect:6.4}
\par

\begin{theorem}
\label{thm:pusf:forward:Bott:Chern}
Let $\varDelta \subset {\mathbf C}$ be the unit disc. Let $f \colon M \to \varDelta$ be a surjective proper holomorphic function with 
$N \subset f^{-1}(0)$. Let $\chi$ be a $\partial$ and $\bar{\partial}$ closed smooth differential form on $M$. 
Then 
$$
f_{*}[\chi\wedge \langle \widetilde{P}(\overline{E},\overline{E'}) \rangle]^{(n,n)}
\equiv_{\mathcal B} -\left\{ \int_{M} P^{M}_{N}(\varphi) \wedge \chi \right\}\,\log | t |^{2}.
$$
\end{theorem}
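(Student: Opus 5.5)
Lift everything to the resolution $\mathfrak{M}$ and rewrite the pushforward as a fiber integral of a form with logarithmic coefficients over the composite map $\widetilde{f}:=f\circ\pi_{1}\circ q\colon\mathfrak{M}\to\varDelta$. Set
\[
\omega:=(\pi_{1}\circ q)^{*}\chi\wedge\widetilde{\sigma}_{\varphi}^{*}P(\overline{\mathscr U}),
\]
a smooth, $d$-closed form on $\mathfrak{M}$ ($\chi$ is $\partial$- and $\bar\partial$-closed and $\widetilde{\sigma}_{\varphi}^{*}P(\overline{\mathscr U})$ is a closed characteristic form). By Definition~\ref{def:canonical:ext},
\[
f_{*}[\chi\wedge\langle\widetilde{P}(\overline{E},\overline{E'})\rangle]=\widetilde{f}_{*}[\omega\cdot q^{*}\log|\zeta|^{2}].
\]
The goal is therefore the asymptotics of a degree-$0$ fiber integral of a smooth closed form times $\log|q^{*}\zeta|^{2}$. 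By \eqref{eqn:div:0:infty}, this function has logarithmic singularities along $q^{-1}(M\times\{0\})$, along $\widetilde{M\times\{\infty\}}$, and along the compact divisor $\mathfrak{D}_{\varphi}$, which lies over $N\times\{\infty\}\subset f^{-1}(0)\times\{\infty\}$.

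First, I would show that only $\mathfrak{D}_{\varphi}$ contributes a $\log|t|^{2}$ term. Away from $N$, i.e.\ over $M^{0}$, $q$ is an isomorphism and the integrand is the usual Bott--Chern form, which is smooth on $M^{0}$. So the singular behaviour as $t\to0$ comes only from a neighbourhood of $\widetilde{f}^{-1}(0)\cap\mathfrak{D}_{\varphi}$ and from the degeneration of the fibres. For the latter, standard Barlet-type expansions \cite{Barlet82} of fiber integrals of smooth forms give only ${\mathcal B}$-class terms in degree $0$, as used in the proof of Theorem~\ref{Theorem4.1} via $[(\pi|)_{*}\omega]^{(0)}\equiv_{\mathcal B}0$. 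Concretely, I would split
\[
\log|q^{*}\zeta|^{2}=\log\|\sigma_{\mathfrak D}\|^{2}+\psi,
\]
where $\sigma_{\mathfrak D}$ is the canonical section of ${\mathcal O}(\mathfrak{D}_{\varphi})$ with a smooth metric and $\psi$ is $\log|\cdot|^{2}$ of the other two divisors plus a smooth function. These other divisors are horizontal with respect to $\widetilde{f}$: $q^{-1}(M\times\{0\})$ and $\widetilde{M\times\{\infty\}}$ map onto $\varDelta$. Hence the $\psi$-part is the pullback under $\pi_{1}\circ q$ of the ordinary Bott--Chern-type integral over ${\mathbf P}^{1}$ for the smooth, nondegenerate part. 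It yields a form of class ${\mathcal B}$ after pushforward, after one checks by a resolution (Hironaka/Bierstone--Milman) that all divisors involved are normal crossing with $\widetilde{f}^{-1}(0)$ and applying Barlet--Takayama expansions.

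Next, for the term $\widetilde{f}_{*}[\omega\log\|\sigma_{\mathfrak D}\|^{2}]^{(0)}$, I would use the argument of \cite[Cor.\,4.6]{Yoshikawa07}, already invoked in Theorems~\ref{Theorem4.1} and \ref{Thm:Sing:Q:adj}. Since $\mathfrak{D}_{\varphi}\subset\widetilde{f}^{-1}(0)$ is a union of (multiples of) components of the central fiber, write $\widetilde{f}^{*}t=u\cdot\sigma_{\mathfrak D}\cdot\sigma_{R}$, where $\sigma_{R}$ cuts out the remaining components. Then
\[
\log\|\sigma_{\mathfrak D}\|^{2}=\log|t|^{2}-\log\|\sigma_{R}\|^{2}-\log|u|^{2}.
\]
The $\log|t|^{2}$ term contributes $\log|t|^{2}\cdot\widetilde{f}_{*}(\omega)^{(0)}$. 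On the pieces where $\mathfrak{D}$ and $R$ overlap, one uses the dual formulation of \cite[Cor.\,4.6]{Yoshikawa07}: the fiber integral of a closed form times the log of a section vanishing on part of the central fiber is $\equiv_{\mathcal B}$ the integral of the form over that part times $\log|t|^{2}$. The outcome should be
\[
\widetilde{f}_{*}[\omega\log\|\sigma_{\mathfrak D}\|^{2}]^{(0)}\equiv_{\mathcal B}\Bigl(\int_{\mathfrak{D}_{\varphi}}\omega\Bigr)\log|t|^{2},
\]
with multiplicities matching the divisor structure \eqref{eqn:divisor}. The sign must then be tracked: $\mathfrak{D}_{\varphi}$ enters ${\rm div}(q^{*}\zeta)^{-1}$ with a plus sign, so $\log|q^{*}\zeta|^{2}$ contains $-\log\|\sigma_{\mathfrak D}\|^{2}$, which produces the minus sign in the statement. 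Finally, by \eqref{eqn:def:local:Chern},
\[
\int_{\mathfrak{D}_{\varphi}}\widetilde{\sigma}_{\varphi}^{*}P({\mathscr U})\wedge(\pi_{1}\circ q)^{*}\chi=\int_{M}P^{M}_{N}(\varphi)\wedge\chi,
\]
which completes the identification.

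I expect the main obstacle to be the second step: making the local analysis near $\mathfrak{D}_{\varphi}\cap\widetilde{f}^{-1}(0)$ rigorous when $\mathfrak{D}_{\varphi}$ and the other components of the central fiber are not in normal crossing position. I would first try to blow up $\mathfrak{M}$ further so that $\widetilde{f}^{-1}(0)\cup\mathfrak{D}_{\varphi}\cup q^{-1}(M\times\{0,\infty\})$ is simple normal crossing. Both sides are invariant under such modifications, since $\int_{\mathfrak D}$ of a pulled-back closed form is computed cohomologically. After that, I would reduce to the local model computation in the proof of \cite[Th.\,4.5, Cor.\,4.6]{Yoshikawa07}, using closedness of $\omega$ to justify discarding the boundary terms in the Stokes-type argument.
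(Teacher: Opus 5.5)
Your lift to $\mathfrak{M}$ and your final identification $\int_{\mathfrak{D}_\varphi}\omega=\int_M P^M_N(\varphi)\wedge\chi$ via \eqref{eqn:def:local:Chern} are correct. The analytic heart of the statement, however, is only asserted (``the outcome should be''): namely, that $\widetilde f_*[\omega\cdot q^*\log|\zeta|^2]^{(0)}$ equals $\alpha\log|t|^2$ modulo ${\mathcal B}$, with the right $\alpha$. The two arguments you give for it do not hold up.

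In the vertical step, writing $\widetilde f^*t=u\,\sigma_{\mathfrak D}\,\sigma_R$ with $\sigma_R$ holomorphic requires $\mathfrak{D}_\varphi\le\widetilde f^*(0)$ as divisors, and this is false in general. The multiplicity of $\mathfrak{D}_\varphi$ along a $q$-exceptional component is the order of $q^*\zeta^{-1}$, which can exceed that of $\widetilde f$. For instance, take $\varphi=f=w$, blow up $\{w=\zeta^{-1}=0\}$, and then blow up the intersection of the exceptional divisor with $\widetilde{M\times\{\infty\}}$: the new component has multiplicity $2$ in $\mathfrak{D}_\varphi$ but $1$ in $\widetilde f^*(0)$. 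Even if you allow $R$ to be non-effective, the step is circular. To evaluate $\widetilde f_*[\omega\log\|\sigma_R\|^2]$ you invoke exactly the claim (``log of a section vanishing on part of the central fibre gives $\int\omega$ times $\log|t|^2$'') that you want for $\mathfrak{D}_\varphi$, so the factorization does no work.

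In the horizontal step, $\omega\cdot\psi$ is not a pull-back from $M$, and its push-forward to $M$ is not smooth near $N$. Barlet--Takayama expansions alone give an expansion in powers of $\log|t|$, but nothing in your argument forces the coefficients of $(\log|t|)^k$, $k\ge1$, to vanish. Moreover, once you blow up to reach normal crossings, the horizontal divisors acquire vertical exceptional components, so ``horizontal $\Rightarrow$ only ${\mathcal B}$'' is not a local triviality. There is also a sign slip: by \eqref{eqn:div:0:infty}, $\log|q^*\zeta|^2=-\log\|\sigma_{\mathfrak D}\|^2+\psi$, not $+$.

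What is missing is the global residue argument, which removes all local bookkeeping. First, apply \cite[Lemma 4.4]{Yoshikawa07} to the whole integrand (a closed smooth form times $q^*\log|\zeta|^2$). This already gives $f_*[\chi\wedge\langle\widetilde P(\overline E,\overline{E'})\rangle]^{(n,n)}=\alpha\log|t|^2+g$ with $g\in{\mathcal B}(\varDelta)$, so only $\alpha$ remains to be found. Second, since $\chi$ is $\partial$- and $\bar\partial$-closed, Theorem~\ref{thm:canonical:ext} gives
\[
dd^c f_*[\chi\wedge\langle\widetilde P(\overline E,\overline{E'})\rangle]
=f_*[\chi\wedge(P(\overline E)-P(\overline{E'}))]
-f_*[\chi\wedge(\pi_1\circ q)_*\{\delta_{\mathfrak D_\varphi}\wedge\widetilde\sigma_\varphi^*P(\overline{\mathscr U})\}].
\]
The first term lies in $\bigcup_{p>1}L^p_{\rm loc}(\varDelta)\,dt\wedge d\bar t$ by \cite[Lemma 9.2]{Yoshikawa07}. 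The current $f_*[\chi\wedge(\pi_1\circ q)_*\{\cdots\}]$ equals $A\,\delta_0$ with $A=\int_M P^M_N(\varphi)\wedge\chi$, because $f\circ\pi_1\circ q$ maps $\mathfrak{D}_\varphi$ to $0$. Since $dd^c{\mathcal B}(\varDelta)\subset\bigcup_{p>1}L^p_{\rm loc}(\varDelta)\,dt\wedge d\bar t$ and $dd^c\log|t|^2=\delta_0$, comparing Dirac masses gives $\alpha=-A$.

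This same comparison is what actually justifies both of your local claims. The $\delta$-currents of the horizontal divisors push forward to $L^p$ densities with no atom at $0$, while $\delta_{\mathfrak D_\varphi}$ pushes forward to $(\int_{\mathfrak D_\varphi}\omega)\,\delta_0$. So the splitting into horizontal and vertical parts, and the passage to normal crossings, are unnecessary.
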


\begin{pf}
Set $A := \int_{M} P^{M}_{N}( \varphi )\wedge\chi$. 
By \cite[Lemma 4.4]{Yoshikawa07} and Definition~\ref{def:canonical:ext},
there exist $\alpha \in {\mathbf R}$ and $g \in {\mathcal B}(\varDelta)$ such that
$f_{*}[\chi\wedge \langle \widetilde{P}(\overline{E},\overline{E'}) \rangle]^{(n,n)} = \alpha\,\log | t |^{2} + g$. 
It suffices to prove $\alpha=-A$.
Since $\partial\chi=\bar{\partial}\chi=0$, it follows from Theorem~\ref{thm:canonical:ext} and the commutativity of $f_{*}$ and $dd^{c}$ 
that as currents on $\varDelta$,
$$
\begin{aligned}
\,&
dd^{c}f_{*}[\chi\wedge \langle \widetilde{P}(\overline{E},\overline{E'}) \rangle]^{(n,n)}
=
f_{*}[\chi\wedge\{ dd^{c} \langle\widetilde{P}(\overline{E},\overline{E'}) \rangle\}]^{(n+1,n+1)}
\\
&=
f_{*}[\chi\wedge\{ P(\overline{E}) - P(\overline{E'})\}]^{(n+1,n+1)}
-
f_{*}\left[
\chi\wedge(\pi_{1}\circ q)_{*}\left\{\delta_{{\mathfrak D}_{\varphi}}\wedge\widetilde{\sigma}_{\varphi}^{*} P(\overline{\mathcal U})\right\}
\right]
\\
&\equiv
-f_{*}\left[
\chi\wedge(\pi_{1}\circ q)_{*}\left\{\delta_{{\mathfrak D}_{\varphi}}\wedge\widetilde{\sigma}_{\varphi}^{*} P(\overline{\mathcal U})\right\}
\right]
\mod
\bigcup_{p>1} L^{p}_{\rm loc}(\varDelta)\cdot dt\wedge d\bar{t}.
\end{aligned}
$$
Here we used $f_{*}( A^{n+1,n+1}(M)) \subset \bigcup_{p>1} L_{\rm loc}^{p}(\varDelta) \cdot dt\wedge d\bar{t}$
(\cite[Lemma 9.2]{Yoshikawa07}) to get the last equality.
Since $(\pi_{1}\circ q)({\mathfrak D}_{\varphi})\subset N$ and hence $(f\circ\pi_{1}\circ q)({\mathfrak D}_{\varphi})=\{0\}$ by assumption, 
we get for all $\varrho\in C^{\infty}_{0}(\varDelta)$
$$
\begin{aligned}
\,&
\int_{\varDelta} \varrho\cdot f_{*}
\left[
\chi \wedge (\pi_{1}\circ q)_{*} \left\{ \delta_{{\mathfrak D}_{\varphi}} \wedge \widetilde{\sigma}_{\varphi}^{*} P(\overline{\mathcal U}) \right\}
\right]
=
\int_{M} f^{*}\varrho\cdot\chi\wedge(\pi_{1}\circ q)_{*} \left\{ \delta_{{\mathfrak D}_{\varphi}} \wedge \widetilde{\sigma}_{\varphi}^{*}
P(\overline{\mathcal U}) \right\}
\\
&=
\int_{\frak M} 
\{ (\pi_{1}\circ q)^{*}(f^{*}\varrho\cdot\chi)\} \wedge \left\{ \delta_{{\mathfrak D}_{\varphi}} \wedge \widetilde{\sigma}_{\varphi}^{*}
P(\overline{\mathcal U}) \right\}
=
\int_{{\mathfrak D}_{\varphi}} \{ (\pi_{1}\circ q)^{*}(f^{*}\varrho\cdot\chi) \} \wedge \widetilde{\sigma}_{\varphi}^{*}P(\overline{\mathcal U})
\\
&=
\varrho(0)\int_{{\mathfrak D}_{\varphi}} (\pi_{1}\circ q)^{*}\chi \wedge \widetilde{\sigma}_{\varphi}^{*}P(\overline{\mathcal U})
=
A\,\varrho(0)
=
A \int_{\varDelta} \varrho\, dd^{c} \log | s |^{2}.
\end{aligned}
$$
Since $dd^{c}{\mathcal B}(\varDelta) \subset \bigcup_{p>1} L^{p}_{\rm loc}(\varDelta)\cdot dt\wedge d\bar{t}$ and hence
$dd^{c}(\alpha \log |t|^{2} + g) \equiv \alpha dd^{c} \log |t|^{2} \mod \bigcup_{p>1} L^{p}_{\rm loc}(\varDelta)\cdot dt\wedge d\bar{t}$, 
we get $\alpha=-A$. 
\end{pf}

\subsection
{Bott-Chern type currents for generically injective homomorphisms}
\label{sect:6.5}
\par
For later use in the next section, we determine the asymptotic behavior of the fiber integral of Bott-Chern forms in a more degenerate setting.
Let $(V, h_{V})$ be a holomorphic Hermitian vector bundle on $M$ of rank $l > r$. 
Let $\varPi \colon {\rm Gr}(r, V) \to M$ be the Grassmann bundle on $M$ such that ${\rm Gr}(r, V)_{x} = {\rm Gr}(r, V_{x})$ for $x \in M$. 
Let
$$
\psi \colon E \to V
$$
be a homomorphism of holomorphic vector bundles on $M$ such that $\psi$ is injective on $M^{0}$. In particular, it can be possible that 
${\rm rk}(\psi_{x}) < {\rm rk}(E)$ for some $x \in N$. Then 
$$
W := \psi(E)
$$ 
is a holomorphic subbundle of $V$ defined on $M^{0}$ endowed with the Hermitian metric $h_{W} := h_{V}|_{W}$. 
We set $\overline{W} := (W, h_{W})$. On $M^{0}$, we have the Bott-Chern form $\widetilde{P}(\overline{E}, \overline{W})$.
As before, we construct an extension of $\widetilde{P}(\overline{E}, \overline{W})$ to a current on $M$.

\begin{lemma}
\label{lemma:mer:extension}
Define holomorphic map ${\mathbf s}_{\psi} \colon M^{0} \to {\rm Gr}(r, V)$ by
$$
{\mathbf s}_{\psi}(x) := [ \psi(E_{x}) ] \in {\rm Gr}(r, V_{x})
\qquad
(x \in M^{0}).
$$
Then ${\mathbf s}_{\psi}$ extends to a meromorphic map from $M$ to ${\rm Gr}(r, V)$.
\end{lemma}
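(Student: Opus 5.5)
The plan is to realize ${\mathbf s}_{\psi}$ as a map into a projective space via the Plücker embedding. Then the meromorphic extension reduces to the extension of a section of a projectivized vector bundle whose indeterminacy is confined to the analytic set $N$.

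First consider the Plücker embedding ${\rm Gr}(r,V)\hookrightarrow{\mathbf P}(\Lambda^{r}V)$ over $M$. It is a closed holomorphic embedding of projective bundles over $M$, given fiberwise by $[U]\mapsto[\Lambda^{r}U]$. The homomorphism $\psi$ induces a holomorphic section $\Lambda^{r}\psi\in H^{0}(M,{\rm Hom}(\Lambda^{r}E,\Lambda^{r}V)) = H^{0}(M,\Lambda^{r}V\otimes\det E^{\lor})$. This section vanishes exactly where ${\rm rk}(\psi_{x})<r$, which is contained in $N$. On $M^{0}$ it is nowhere zero, and the composite of ${\mathbf s}_{\psi}$ with the Plücker embedding equals $x\mapsto[\Lambda^{r}\psi(\Lambda^{r}E_{x})]$, that is, the line spanned by $\Lambda^{r}\psi$.

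Next, show that the map $x\mapsto[\Lambda^{r}\psi(x)]$ into ${\mathbf P}(\Lambda^{r}V)$ extends meromorphically to $M$. Work locally on a trivializing open set $U$ with a local frame of $\det E$ and a trivialization $\Lambda^{r}V|_{U}\cong U\times{\mathbf C}^{m}$. There $\Lambda^{r}\psi$ becomes a holomorphic vector $(a_{1},\ldots,a_{m})$ of functions that is not identically zero, since $M^{0}$ is dense. The map $x\mapsto[a_{1}(x):\cdots:a_{m}(x)]$ is a meromorphic map, with indeterminacy in the common zero set of the $a_{i}$. Concretely, its graph closure is the closure of $\{(x,[a(x)])\}$ over the set where $a\neq0$, which is an analytic subset: it is the blow-up of $U$ along the ideal $(a_{1},\ldots,a_{m})$. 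These local graphs glue, because the construction is independent of the trivializations up to a nowhere-vanishing factor. Call the resulting meromorphic map $\Phi$.

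Finally, the graph of ${\mathbf s}_{\psi}$ must be checked to be analytic. The graph closure $\overline{\Gamma}$ of $\Phi$ lies in the closed analytic subset $M\times_{M}{\rm Gr}(r,V)\subset{\mathbf P}(\Lambda^{r}V)$: over $M^{0}$ it lies in ${\rm Gr}(r,V)$, and ${\rm Gr}(r,V)$ is closed, so the closure does as well. Hence $\overline{\Gamma}$ is the closure of the graph of ${\mathbf s}_{\psi}$ in ${\rm Gr}(r,V)$. This closure is an irreducible analytic subvariety, proper over $M$, and bimeromorphic to $M$ via the projection, which is what it means for ${\mathbf s}_{\psi}$ to extend meromorphically.

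The only delicate step is the analyticity of the closure. I would settle it through the blow-up description above; alternatively, Siu's extension theorem \cite{Siu75} applies away from codimension two, since the indeterminacy locus of $[a]$ has codimension at least $2$ after dividing out the common divisorial factor of the $a_{i}$. Either route shows that $\overline{\Gamma}$ is analytic.
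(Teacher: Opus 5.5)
Your proposal is correct and follows essentially the same route as the paper. Both arguments express $\Lambda^{r}\psi$ in Plücker coordinates as a not-identically-zero vector of holomorphic functions, note that the induced map to projective space is meromorphic, and use the closedness of ${\rm Gr}(r,V)$ inside the Plücker projective bundle to place the extension in the Grassmann bundle. The only difference is that you justify the meromorphy of $x\mapsto[a_{1}(x):\cdots:a_{m}(x)]$ explicitly, via the graph closure (the blow-up along $(a_{1},\ldots,a_{m})$), whereas the paper asserts it in one line; the alternative appeal to Siu's theorem is not needed.
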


\begin{pf}
It suffices to prove the assertion when $M=\varDelta^{m}$, $E={\mathbf C}^{r}$, $V={\mathbf C}^{l}$, 
and ${\rm Gr}(r,V) = {\rm Gr}(r, {\mathbf C}^{l})$, where $m=\dim M$. 
Fix bases $\{ {\mathbf e}_{1}, \ldots, {\mathbf e}_{r} \}$ of ${\mathbf C}^{r}$ and $\{{\mathbf v}_{1},\ldots,{\mathbf v}_{l}\}$ of $V$. 
Then, for $I=\{i_{1}<\cdots<i_{r}\}$, there exists $\psi_{I} \in {\mathcal O}(M)$ such that 
$\Lambda^{r} \psi( x ) ( {\mathbf e}_{1}\wedge\cdots\wedge {\mathbf e}_{r}) = \sum_{ | I | = r} \psi_{I}(x)\,{\bf v}_{I}$, 
where $\Lambda^{r}\psi(x) \in {\rm Hom}( \Lambda^{r}E_{x}, \Lambda^{r}V_{x})$ is the homomorphism induced from $\psi$ and 
${\mathbf v}_{I}:={\mathbf v}_{i_{1}} \wedge \cdots \wedge {\mathbf v}_{i_{k}}$. Set $N:=\binom{l}{r}$. 
By the injectivity of $\psi$ on $M^{0}$, there exists an $I$ such that $\psi_{I}(x) \not=0$. 
By identifying ${\rm Gr}(r,{\bf C}^{l})$ with its image in ${\mathbf P}^{N-1}$ via the Pl\"ucker embedding, 
${\mathbf s}_{\psi}$ is expressed as the holomorphic map 
${\mathbf s}_{\psi} \colon M^{0} \ni x \to [\cdots : \psi_{I}(x) : \cdots ] \in {\mathbf P}^{N-1}$. 
Since ${\mathbf s}_{\psi}$ lifts to the non-constant holomorphic map $M \ni x \to (\ldots, \psi_{I}(x), \ldots) \in {\mathbf C}^{N}$ 
from $M$ to ${\mathbf C}^{N}$, ${\mathbf s}_{\psi}$ extends to a meromorphic map from $M$ to ${\mathbf P}^{N-1}$. 
Since the image of ${\mathbf s}_{\psi}$ is contained in ${\rm Gr}(r,{\mathbf C}^{l})$, 
${\mathbf s}_{\psi}$ is a meromorphic map from $M$ to ${\rm Gr}(r,{\mathbf C}^{l})$.
\end{pf}

Let 
$$
\nu \colon M^{\natural} \to M
$$ 
be a resolution of the indeterminacy of ${\mathbf s}_{\psi}$. By definition,
$$
{\mathbf s}_{\psi}^{\natural} := {\mathbf s}_{\psi}\circ \nu \colon M^{\natural} \setminus \nu^{-1}(N) \to {\rm Gr}(r, V)
$$
extends to a holomorphic map from $M^{\natural}$ to ${\rm Gr}(r, V)$. 
Let ${\mathcal U}^{V} \subset \varPi^{*}V$ be the universal rank $r$ bundle on ${\rm Gr}(r, V)$. 
Since ${\mathbf s}_{\psi}^{*}{\mathcal U}^{V} = \psi(E)$ on $M^{0}$, 
$({\mathbf s}_{\psi}^{\natural})^{*}{\mathcal U}^{V}$ is a holomorphic vector bundle on $M^{\natural}$ such that
$$
({\mathbf s}_{\psi}^{\natural})^{*}{\mathcal U}^{V} |_{\nu^{-1}(M^{0})} = \nu^{*}(\psi(E)) |_{\nu^{-1}(M^{0})} = \nu^{*}W. 
$$
Since $\nu^{*}E = M^{\natural} \times_{M} E = M^{\natural} \times_{{\rm Gr}(r, V)} \varPi^{*}E$ and 
$({\mathbf s}_{\psi}^{\natural})^{*}{\mathcal U}^{V} = M^{\natural} \times_{{\rm Gr}(r, V)} {\mathcal U}^{V}$, under this identification,
$\psi$ induces the following homomorphism of holomorphic vector bundles on $M^{\natural}$
$$
\psi^{\natural} \colon \nu^{*}E \to 
({\mathbf s}_{\psi}^{\natural})^{*} {\mathcal U}^{V} \subset ({\mathbf s}_{\psi}^{\natural})^{*}\varPi^{*}V,
\qquad
\psi^{\natural} := {\rm id}_{M^{\natural}} \times \psi.
$$
Let $N^{\natural} \subset M^{\natural}$ be the degeneracy locus of $\psi^{\natural}$, i.e., the locus where $\psi^{\natural}$ is not injective. 
Then $M^{\natural} \setminus N^{\natural} \supset \nu^{-1}(M^{0})$ by the injective of $\psi|_{M^{0}}$.
Since ${\rm Im}\,\psi^{\natural}|_{M^{0}} \subset ({\mathbf s}_{\psi}^{\natural})^{*} {\mathcal U}^{V}|_{M^{0}}$, 
we get $\psi^{\natural}( \nu^{*}E ) \subset ({\mathbf s}_{\psi}^{\natural})^{*} {\mathcal U}^{V}$ by the continuity of $\psi^{\natural}$.
Let $h_{{\mathcal U}^{V}}$ be the Hermitian metric on ${\mathcal U}^{V}$ induced from $\varPi^{*}h_{V}$ via the inclusion 
${\mathcal U}^{V} \subset \varPi^{*}V$. 
Set
$$
{E}^{\natural} := \nu^{*}E,
\qquad
{W}^{\natural} := ({\mathbf s}_{\psi}^{\natural})^{*} {\mathcal U}^{V},
\qquad
h_{E^{\natural}} := \nu^{*}h_{E},
\qquad
h_{W^{\natural}} := ({\mathbf s}_{\psi}^{\natural})^{*}h_{{\mathcal U}^{V}}.
$$
We define $\overline{E^{\natural}} := (E^{\natural}, h_{E^{\natural}})$ and $\overline{W^{\natural}} := (W^{\natural}, h_{W^{\natural}})$.
Then we have the following equality of differential forms on $M^{\natural} \setminus N^{\natural}$
\begin{equation}
\label{eqn:BC:1}
\widetilde{P}(\overline{E^{\natural}}, \overline{W^{\natural}}) 
= ({\mathbf s}_{\psi}^{\natural})^{*}\widetilde{P}(\varPi^{*}\overline{E}, \overline{\mathcal U}^{V}).
\end{equation}
Since $\psi^{\natural} \colon \overline{E^{\natural}} \to \overline{W^{\natural}}$ is a homomorphism of 
holomorphic Hermitian vector bundles on $M^{\natural}$ invertible on $M^{\natural} \setminus N^{\natural}$, 
we have the Bott-Chern type current $\langle \widetilde{P}(\overline{E^{\natural}},\overline{W^{\natural}}) \rangle$ on $M^{\natural}$ 
by Definition~\ref{def:canonical:ext}.
By \eqref{eqn:BC:1} and Theorem~\ref{thm:canonical:ext}, we have
\begin{equation}
\label{eqn:BC:2}
\langle \widetilde{P}(\overline{E^{\natural}}, \overline{W^{\natural}}) \rangle |_{\nu^{-1}(M^{0})} 
= \nu^{*}\widetilde{P}(\overline{E}, \overline{W})|_{M^{0}}.
\end{equation}

\begin{definition}
\label{def:canonical:ext:2}
Define the Bott-Chern type current $ \langle \widetilde{P}(\overline{E}, \overline{W}) \rangle$ on $M$ as
$$
\langle \widetilde{P}(\overline{E}, \overline{W}) \rangle := \nu_{*}\widetilde{P}(\overline{E^{\natural}}, \overline{W^{\natural}}).
$$
\end{definition}

By \eqref{eqn:BC:2}, $\langle \widetilde{P}(\overline{E}, \overline{W}) \rangle$ is an extension of 
$\widetilde{P}(\overline{E}, \overline{W})|_{M^{0}}$ to a current on $M$.

\begin{theorem}
\label{thm:pusf:forward:Bott:Chern:2}
Let $\chi$ be a $\partial$ and $\bar{\partial}$ closed smooth differential from on $M$. 
Let $f \colon M \to \varDelta$ be a proper surjective holomorphic function with $N \subset f^{-1}(0)$. Then 
$$
f_{*}[\chi\wedge\widetilde{P}(\overline{E}, \overline{W})]^{(n,n)}
\equiv_{\mathcal B}
-\left\{ \int_{M^{\natural}} \nu^{*}\chi \wedge P^{M^{\natural}}_{N^{\natural}}(\psi^{\natural}) \right\} 
\log | t |^{2}.
$$
In particular, $\int_{M^{\natural}} \nu^{*}\chi \wedge P^{M^{\natural}}_{N^{\natural}}(\psi^{\natural})$ 
is independent of the choice of $M^{\natural}$.
\end{theorem}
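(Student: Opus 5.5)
The plan is to reduce the statement to Theorem~\ref{thm:pusf:forward:Bott:Chern}, applied on the modification $M^{\natural}$. We apply that theorem to the proper function $f^{\natural} := f\circ\nu \colon M^{\natural}\to\varDelta$, the homomorphism $\psi^{\natural}\colon E^{\natural}\to W^{\natural}$ of rank $r$ bundles with degeneracy locus $N^{\natural}$, and the closed form $\nu^{*}\chi$. By definition of $\langle \widetilde{P}(\overline{E},\overline{W})\rangle$ as $\nu_{*}$ of the Bott-Chern type current on $M^{\natural}$, and since $f_{*}\nu_{*}=(f\circ\nu)_{*}$ and $\chi\wedge\nu_{*}T=\nu_{*}(\nu^{*}\chi\wedge T)$, we get
$$
f_{*}[\chi\wedge\langle\widetilde{P}(\overline{E},\overline{W})\rangle]^{(n,n)} = (f^{\natural})_{*}[\nu^{*}\chi\wedge\langle\widetilde{P}(\overline{E^{\natural}},\overline{W^{\natural}})\rangle]^{(n,n)}.
$$
The right-hand side is then $\equiv_{\mathcal B}-\{\int_{M^{\natural}}\nu^{*}\chi\wedge P^{M^{\natural}}_{N^{\natural}}(\psi^{\natural})\}\log|t|^{2}$ by Theorem~\ref{thm:pusf:forward:Bott:Chern}.

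Two hypotheses of that theorem must be checked. The first is $N^{\natural}\subset (f^{\natural})^{-1}(0)$. This holds because $N^{\natural}\subset \nu^{-1}(N)$, which follows from $M^{\natural}\setminus N^{\natural}\supset\nu^{-1}(M^{0})$ as noted before Definition~\ref{def:canonical:ext:2}, and $N\subset f^{-1}(0)$. Compactness of $N^{\natural}$ follows from properness of $\nu$. The second is that the degeneracy locus be a hypersurface. Since $\psi^{\natural}$ is a map between bundles of equal rank, $N^{\natural}$ is the zero locus of $\det\psi^{\natural}$, a section of $\det E^{\natural\lor}\otimes\det W^{\natural}$ that is not identically zero. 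Hence $N^{\natural}$ is a divisor (possibly after taking its support), and the construction of Section~\ref{sect:6.3} applies verbatim. I expect the main subtlety here to be whether Section~\ref{sect:6.3} really needs only that $N^{\natural}$ be compact and contained in a fiber. Inspecting the proof of Theorem~\ref{thm:pusf:forward:Bott:Chern}, it uses only $(f\circ\pi_{1}\circ q)(\mathfrak D_{\varphi})=\{0\}$, so this seems fine.

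The fiber integral on the left of the statement is understood via the current $\langle\widetilde{P}(\overline{E},\overline{W})\rangle$, which coincides with the smooth form $\widetilde{P}(\overline{E},\overline{W})$ on $M^{0}\supset f^{-1}(\varDelta^{*})$ by \eqref{eqn:BC:2}. So for $t\neq0$ the two fiber integrals agree pointwise as functions on $\varDelta^{*}$.

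For the independence claim: the left-hand side is defined on $\varDelta^{*}$ purely in terms of $\widetilde{P}(\overline{E},\overline{W})$ on $M^{0}$, so it does not depend on $M^{\natural}$. The coefficient of $\log|t|^{2}$ in an expression of the form $\alpha\log|t|^{2}+g$ with $g\in\mathcal B(\varDelta)$ is uniquely determined, because $\mathcal B(\varDelta)\subset C^{0}(\varDelta)$ while $\log|t|^{2}$ is unbounded. Therefore $\int_{M^{\natural}}\nu^{*}\chi\wedge P^{M^{\natural}}_{N^{\natural}}(\psi^{\natural})$ is independent of the choice of resolution.
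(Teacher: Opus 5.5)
Your proposal is correct and follows the paper's proof: the paper likewise rewrites $f_{*}[\chi\wedge\langle\widetilde{P}(\overline{E},\overline{W})\rangle]^{(n,n)}$ as $(f\circ\nu)_{*}[\nu^{*}\chi\wedge\langle\widetilde{P}(\overline{E^{\natural}},\overline{W^{\natural}})\rangle]^{(n,n)}$ and then applies Theorem~\ref{thm:pusf:forward:Bott:Chern} on $M^{\natural}$. Your explicit checks of that theorem's hypotheses (that $N^{\natural}\subset\nu^{-1}(N)$ is compact and is the zero divisor of $\det\psi^{\natural}$), together with your uniqueness-of-the-$\log|t|^{2}$-coefficient argument for independence of $M^{\natural}$, supply details the paper leaves implicit.
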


\begin{pf}
Since 
$f_{*}[\chi\wedge \langle \widetilde{P}(\overline{E},\overline{W}) \rangle]^{(n,n)}=
(f\circ \nu)_{*} [ \nu^{*}\chi\wedge \langle \widetilde{P}(\overline{E^{\natural}},\overline{W^{\natural}}) \rangle ]^{(n,n)}$,
the result follows from Theorem~\ref{thm:pusf:forward:Bott:Chern}.
\end{pf}

\section
{The leading term of the asymptotic expansion}
\label{sect:7}
\par
In this section, we consider the semi-stable reduction $f \colon Y \to T$ of $\pi \colon X \to S$ as in \eqref{eqn:ss:red}, 
induced by the map $\mu \colon T \to S$. Recall that $F \colon Y \to X$ is the corresponding morphism (cf. \eqref{eqn:ss:red}). 
Since $S$ is an open subset of the compact Riemann surface $C$ and $X = \pi^{-1}(S)$ is the corresponding open subset of 
a projective algebraic manifold ${\mathcal X}$ (cf. Section~\ref{sect:3.1}), we can assume the following: 
\begin{itemize}
\item
$T$ is an open subset of a compact Riemann surface $C'$.
\item
$Y$ is an open subset of a projective algebraic manifold ${\mathcal Y}$. 
\item
$f$ is induced by a surjective holomorphic map from ${\mathcal Y}$ to $C'$. 
\item
$F^{*}\xi$ extends to a holomorphic vector bundle over ${\mathcal Y}$. 
\end{itemize}
The goal of this section is a formula for $\kappa_{\pi,K_{X/S}(\xi)}$ in Theorem~\ref{thm:main:1} 
in terms of various characteristic classes associated to the semi-stable reduction $f \colon Y \to T$ and the map $F \colon Y \to X$
(Section~\ref{sect:7.1}). As its application, we will prove a certain locality of $\kappa_{\pi,K_{X/S}(\xi)}$, which, together with the results in
Section~\ref{sect:7.2}, enables us to give an explicit formula for $\kappa_{\pi,K_{X/S}(\xi)}$ when $\xi$ is topologically trivial 
near ${\rm Sing}\,X_{0}$ (Section~\ref{sect:7.2}). In particular, when ${\rm Sing}\,X_{0}$ is isolated, considering the two cases of $\xi$, 
one trivial and the other an ample line bundle, we relate the ratio of analytic torsions with period-type integrals (Section~\ref{sect:7.3}).

\subsection
{A topological expression of the leading term}
\label{sect:7.1}
\par
In this subsection, keeping the notation in Subsection~\ref{sect:6.5}, we write ${\rm Gr}(n, TY)$ for ${\mathbf P}(TY)^{\lor}$.
To simplify the notation, we denote by $T\pi$ and $Tf$ the relative tangent bundles of $\pi$ and $f$, respectively. 
Hence $T\pi = TX/S = \ker \pi_{*} |_{X\setminus\Sigma_{\pi}}$ and $Tf = TY/T = \ker f_{*} |_{Y\setminus\Sigma_{f}}$. 
\par
Let
$$
\gamma_{Y} \colon Y\setminus\Sigma_{f} \ni y \to [T_{y}Y_{f(y)}] \in {\rm Gr}(n, TY) 
$$ 
be the Gauss map for the family $f \colon Y \to T$. Define $M$ as a resolution of the indeterminacy of $\gamma_{Y}$. 
Namely, there exists a proper birational morphism
$$
b \colon M \to Y
$$ 
and a morphism $\gamma_{M} \colon M \to {\rm Gr}(n, TY)$ such that 
$\gamma_{M} = \gamma_{Y} \circ b$ on $M \setminus b^{-1}(\Sigma_{f})$.
We set $N := (F\circ b)^{-1}(\Sigma_{\pi}) \supset b^{-1}(\Sigma_{f})$ and 
$$
M^{0} := M \setminus N = b^{-1}(Y\setminus F^{-1}(\Sigma_{\pi})).
$$
Let $\varPi \colon {\rm Gr}(n, TY) \to Y$ be the projection. We set 
$$
E := \gamma_{M}^{*}{\mathcal U}^{TY} = M \times_{{\rm Gr}(n, TY)} {\mathcal U}^{TY} \subset M \times_{{\rm Gr}(n, TY)} \varPi^{*}(TY), 
$$
where ${\mathcal U}^{TY} \subset \varPi^{*}(TY)$ is the universal bundle of rank $n$ on ${\rm Gr}(n, TY)$. 
Let $h_{\mathcal Y}$ be a K\"ahler metric on ${\mathcal Y}$. 
Then $E$ is endowed with the metric induced from $\varPi^{*}h_{\mathcal Y}|_{Y}$. We set
$$
V := (F\circ b)^{*}(TX) = M \times_{{\rm Gr}(n, TY)} \varPi^{*}F^{*}(TX),
$$
which is endowed with the Hermitian metric induced from $F^{*}(h_{\mathcal X}|_{X})$. Let
$$
D_{F} \colon TY \to F^{*}(TX),
\qquad\qquad
J_{F} \colon Tf \to F^{*}(TX)
$$
be the differential of the map $F \colon Y \to X$, where $J_{F} := D_{F}|_{Tf}$. 
Then $D_{F}|_{Y\setminus F^{-1}(\Sigma_{\pi})}$  is an isomorphism of holomorphic vector bundles from $TY$ to $F^{*}(TX)$ 
on $Y\setminus F^{-1}(\Sigma_{\pi})$, and $J_{F}$ is an injective homomorphism from $Tf$ to $F^{*}(TX)$.
We define 
$$
\psi := {\rm id}_{M} \times \varPi^{*}D_{F} |_{\nu_{M}^{*}{\mathcal U}^{TY}} \in {\rm Hom}(E, V),
$$
where
$$
{\rm id}_{M} \times \varPi^{*}D_{F} \colon M \times_{{\rm Gr}(n, TY)} \varPi^{*}(TY) \to M \times_{{\rm Gr}(n, TY)} \varPi^{*}F^{*}(TX)
$$ 
is the homomorphism induced from $D_{F}$. Since $\psi |_{M^{0}} \colon E |_{M^{0}} \to V |_{M^{0}}$ is the pull back of the homomorphism 
$J_{F} \colon Tf \to (F^{*}TX)|_{Y\setminus F^{-1}(\Sigma_{\pi})}$ by $b$, 
$\psi \colon E \to V$ is viewed as its extension. In particular, $\psi \colon E \to V$ is injective on $M^{0}$.
\par
As in Section~\ref{sect:6.5}, we define $\nu \colon M^{\natural} \to M$ as a resolution of the meromorphic map 
${\mathbf s}_{\psi} \colon M^{0} \to {\rm Gr}(n, V)$, where 
$$
{\mathbf s}_{\psi} (x) = [ J_{F}(T_{b(x)}Y_{f(b(x))}) ] \in {\rm Gr}(n, T_{F(b(x))}X)
\qquad ( x \in M^{0} ).
$$
By constructions in Section~\ref{sect:6.5}, we have the rank $n$ holomorphic Hermitian vector bundles 
$\overline{E^{\natural}}$, $\overline{W^{\natural}}$ on $M^{\natural}$ and the homomorphism 
$\psi^{\natural} \colon E^{\natural} \to W^{\natural}$. We write $J_{F}^{\natural}$, $(Tf)^{\natural}$, $(F^{*}T\pi)^{\natural}$ in place of 
$\psi^{\natural}$, $E^{\natural}$, $W^{\natural}$, respectively. 
On $\nu^{-1}(M^{0})$, $J_{F}^{\natural} \colon (Tf)^{\natural} \to (F^{*}T\pi)^{\natural}$ is identified with
$J_{F} \colon Tf|_{Y \setminus F^{-1}(\Sigma_{\pi})} \to F^{*}(T\pi)|_{Y \setminus F^{-1}(\Sigma_{\pi})}$. 
Hence $J_{F}^{\natural} \colon\overline{Tf^{\natural}} \to \overline{F^{*}(T\pi)^{\natural}}$ is viewed as an extension of 
the homomorphism of holomorphic Hermitian vector bundles
$J_{F} \colon \overline{Tf}|_{Y \setminus F^{-1}(\Sigma_{\pi})} \to F^{*}(\overline{T\pi})|_{Y \setminus F^{-1}(\Sigma_{\pi})}$, 
where $Tf$ is endowed with $h_{{\mathcal Y}/C'}$, 
the Hermitian metric on $T{\mathcal Y}/C|_{{\mathcal Y}\setminus\Sigma_{f}}$ induced from $h_{\mathcal Y}$ 
and $T\pi$ is endowed with $h_{{\mathcal X}/C}$. We make the following definitions:
\par{(i) }
${\sigma}_{J_{F}^{\natural}} \colon 
(M^{\natural} \times {\mathbf P}^{1}) \setminus ( N \times \{ \infty \} ) \to {\rm Gr}(n, (Tf)^{\natural}(1) \oplus (F^{*}T\pi)^{\natural}(1) )$ 
is defined by \eqref{eqn:taut:section}, which is a meromorphic map from 
$M^{\natural}\times{\mathbf P}^{1}$ to ${\rm Gr}(n, (Tf)^{\natural}(1)\oplus (F^{*}T\pi)^{\natural}(1))$.
\par{(ii) }
$q^{\natural} \colon {\mathfrak M}^{\natural} \to M^{\natural} \times{\mathbf P}^{1}$ is a resolution of the indeterminacy of 
$\sigma_{J_{F}^{\natural}}$.
\par{(iii) }
${\mathfrak D}_{J_{F}^{\natural}}$ is the divisor defined as $(q^{\natural})^{*}(N^{\natural} \times \{ \infty \})$.
\par{(iv) }
$\pi_{1}^{\natural} \colon M^{\natural} \times {\mathbf P}^{1} \to M^{\natural}$ is the projection.
\par{(v) }
$\widetilde{\sigma}_{J_{F}^{\natural}} \colon {\mathfrak M}^{\natural} \to {\rm Gr}(n, (Tf)^{\natural}(1) \oplus (F^{*}T\pi)^{\natural}(1))$
is the morphism satisfying $\widetilde{\sigma}_{J_{F}^{\natural}} = {\sigma}_{J_{F}^{\natural}} \circ q^{\natural}$.
\par
From Theorem~\ref{thm:pusf:forward:Bott:Chern:2}, we obtain the following:

\begin{theorem}
\label{thm:singularity:BottChern}
Let $P$ be a ${\rm GL}({\mathbf C}^{n})$-invariant polynomial in $\frak{gl}({\mathbf C}^{n})$ with respect to the adjoint action. 
Then for any $\partial$ and $\bar{\partial}$ closed smooth differential form $\chi$ on $Y$, 
$$
f_{*}\left[ \widetilde{P}( \overline{Tf}, F^{*}\overline{T\pi} ) \wedge \chi \right] 
\equiv_{\mathcal B}
- \left\{ \int_{M^{\natural}} P^{M^{\natural}}_{N^{\natural}}( J_{F}^{\natural} ) \wedge (b \circ \nu)^{*}\chi \right\} \log | t |^{2}.
$$
\end{theorem}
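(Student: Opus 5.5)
The plan is to reduce the statement to Theorem~\ref{thm:pusf:forward:Bott:Chern:2}, applied on the manifold $M$ of Section~\ref{sect:7.1} with the data $E=\gamma_{M}^{*}{\mathcal U}^{TY}$, $V=(F\circ b)^{*}TX$ and $\psi\colon E\to V$. The function is $f\circ b\colon M\to T$ and the form is $b^{*}\chi$.

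First, identify the left-hand side on the open dense set where everything is smooth. On $M^{0}=b^{-1}(Y\setminus F^{-1}(\Sigma_{\pi}))$ we have $E|_{M^{0}}=b^{*}Tf$, with metric $b^{*}h_{{\mathcal Y}/C'}$, since $\gamma_{M}=\gamma_{Y}\circ b$ and $\gamma_{Y}^{*}{\mathcal U}^{TY}=Tf$ isometrically. Also, the subbundle $W=\psi(E)\subset V$ with the induced metric equals $b^{*}F^{*}(T\pi,h_{{\mathcal X}/C})$. This holds because $J_{F}(T_{y}Y_{t})=T_{F(y)}X_{\mu(t)}$ for $y\notin F^{-1}(\Sigma_{\pi})$, using that $F$ maps fibers isomorphically for $t\neq 0$ and is a local isomorphism there.

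Consequently the Bott--Chern forms agree: $\widetilde{P}(\overline{E},\overline{W})=b^{*}\widetilde{P}(\overline{Tf},F^{*}\overline{T\pi})$ on $M^{0}$. Since $b$ is birational and $b^{-1}(Y_{t})\cong Y_{t}$ for $t\neq0$, we get, for $t\in T^{o}$,
$$
f_{*}[\widetilde{P}(\overline{Tf},F^{*}\overline{T\pi})\wedge\chi]=(f\circ b)_{*}[\widetilde{P}(\overline{E},\overline{W})\wedge b^{*}\chi].
$$
Both sides are functions on $T^{o}$, namely fiber integrals over smooth fibers, so no current extension is needed there. The current $\langle\widetilde{P}(\overline{E},\overline{W})\rangle$ of Definition~\ref{def:canonical:ext:2} restricts to this smooth form off $N$, so Theorem~\ref{thm:pusf:forward:Bott:Chern:2} applies to it. The form $b^{*}\chi$ is still $\partial$- and $\bar\partial$-closed. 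Finally, $N=(F\circ b)^{-1}(\Sigma_{\pi})\subset (f\circ b)^{-1}(0)$, because $\Sigma_{\pi}\subset X_{0}$ and $F^{-1}(X_{0})=Y_{0}$.

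The theorem then gives the coefficient $-\int_{M^{\natural}}\nu^{*}b^{*}\chi\wedge P^{M^{\natural}}_{N^{\natural}}(\psi^{\natural})$, with $\psi^{\natural}=J_{F}^{\natural}$. This is exactly the claimed formula.

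Two hypotheses of Theorem~\ref{thm:pusf:forward:Bott:Chern} need checking.
\begin{itemize}
\item \emph{Compactness of the degeneracy locus.} $N$, and hence $N^{\natural}$ and $\mathfrak{D}_{J_{F}^{\natural}}$, must be compact. This holds because $N\subset (f\circ b)^{-1}(0)$, which is compact since $f$ and $b$ are proper.
\item \emph{Codimension of the degeneracy locus.} That theorem assumes $N$ is a hypersurface. If $N^{\natural}$ is not of pure codimension one, one can replace $M^{\natural}$ by a further blow-up along $N^{\natural}$. Its pullback is then a divisor, and the localized class $P^{M^{\natural}}_{N^{\natural}}$ is compatible with such pullbacks by the independence statement in Theorem~\ref{thm:pusf:forward:Bott:Chern:2}.
\end{itemize}

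The main obstacle is justifying the equality of fiber integrals together with the fact that the smooth-part computation captures the whole $\mathcal B$-class expansion. The proof of Theorem~\ref{thm:pusf:forward:Bott:Chern} goes through the currents $\langle\cdot\rangle$ and \cite[Lemma 4.4]{Yoshikawa07}. For $t\ne0$, the pushforward of the current coincides with the fiber integral of the $L^{1}$ form over the smooth fiber $Y_{t}\cong(f\circ b\circ\nu)^{-1}(t)$. This is because the singular part of the current, supported on $\mathfrak{D}$ over $N$, lies over $t=0$. I would make this explicit by restricting the current identity to $T^{o}$, where all the maps $b$, $\nu$, $q^{\natural}$ are isomorphisms over $Y\setminus Y_{0}$.
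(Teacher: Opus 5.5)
Your proposal is correct and follows the paper's proof: identify $b^{*}\widetilde{P}(\overline{Tf},F^{*}\overline{T\pi})$ with $\widetilde{P}(\overline{(Tf)^{\natural}},\overline{(F^{*}T\pi)^{\natural}})$ on $M^{0}$, then apply Theorem~\ref{thm:pusf:forward:Bott:Chern:2} to $f\circ b$ and $b^{*}\chi$. Your codimension caveat is unnecessary: $N^{\natural}$ is the zero locus of $\det J_{F}^{\natural}$, a section of a line bundle that does not vanish on the dense set $\nu^{-1}(M^{0})$, so it is automatically a hypersurface (compact, as you note).
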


\begin{pf}
Since the homomorphism $J_{F}^{\natural} \colon (Tf)^{\natural} \to (F^{*}T\pi)^{\natural}$ on $\nu^{-1}(M^{0})$ is identified with 
the differential $J_{F} \colon Tf \to F^{*}(T\pi)$ on $Y \setminus F^{-1}(\Sigma_{\pi})$, we have
$$
b^{*}\widetilde{P}(\overline{Tf}, F^{*}(\overline{T\pi})) = \widetilde{P}(\overline{(Tf)^{\natural}}, \overline{(F^{*}T\pi)^{\natural}})
$$
as differential forms on $M^{0}$. The result then follows from Theorem~\ref{thm:pusf:forward:Bott:Chern:2}.
\end{pf}

\par
For a square matrix $A$, we set
$$
{\rm Td}^{\lor}(A) := {\rm Td}(A)\,e^{-c_{1}(A)} = \det\left(\frac{A}{I-e^{-A}}\right)\,e^{-{\rm Tr}(A)} = \det\left(\frac{A}{e^{A}-I}\right).
$$
Then we have ${\rm Td}^{\lor}(A) = {\rm Td}(-A)$. We define a rational number $\beta_{Y/X, \xi}$ by
\begin{equation}
\label{eqn:def:beta}
\begin{aligned}
\beta_{Y/X, \xi}
&:=
\int_{M^{\natural}} ({\rm Td}^{\lor})^{M^{\natural}}_{N^{\natural}}( J_{F}^{\natural} ) 
\wedge (F \circ b \circ \nu)^{*}{\rm ch}( \xi )
\\
&=
\int_{{\mathfrak D}_{J_{F}^{\natural}}}
\widetilde{\sigma}_{J_{F}^{\natural}}^{*}{\rm Td}^{\lor}({\mathscr U}^{\natural}) \wedge 
(F \circ b \circ \nu \circ {\pi}_{1}^{\natural} \circ {q}^{\natural})^{*} {\rm ch}( \xi ).
\end{aligned}
\end{equation}
\par
Let $\|\cdot\|_{Q,\lambda( K_{Y/T}(F^{*}\xi ))}$ be the Quillen metric on $\lambda( K_{Y/T}(F^{*}\xi ))|_{T^{o}}$ with respect to 
the metrics $h_{{\mathcal Y}/C}$ and $F^{*}h_{\xi}$. 
Similarly, let  $\|\cdot\|'_{Q,\lambda( \mu^{*}\xi )}$ be the Quillen metric on $\lambda(K_{Y/S}(F^{*}\xi))|_{T^{o}}$ with respect to 
the metrics $F^{*}h_{{\mathcal X}/C}$ and $F^{*}h_{\xi}$.

\begin{proposition}
\label{prop:comparison:quillen}
The following equality of functions on $T^{o}$ holds:
$$
\log \left( \frac{\|\cdot\|_{Q,\lambda( K_{Y/T}(F^{*}\xi) )}}{\|\cdot\|'_{Q,\lambda( K_{Y/T}(F^{*}\xi) )}} \right)^{2} 
\equiv_{\mathcal B} -\beta_{Y/X, \xi} \log|t|^{2}.
$$
In particular, $\beta_{Y/X, \xi}$ is independent of the choices of $M$ and $M^{\natural}$.
\end{proposition}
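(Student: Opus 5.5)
The plan is to compare the two Quillen metrics with the anomaly formula of Bismut--Gillet--Soul\'e \cite[I, Th.\,0.3]{BGS88} on $T^{o}$, and then to evaluate the resulting fiber integral of a Bott--Chern form using Theorem~\ref{thm:singularity:BottChern}. On $Y\setminus Y_{0}$ the two metrics differ only in the Kähler metric on the fibers: $h_{\mathcal Y}|_{Tf}$ on the one hand and $F^{*}h_{\mathcal X}|_{T\pi}$, transported by $J_{F}$, on the other. The bundle $K_{Y/T}(F^{*}\xi)$ also carries the metric induced from each fiber metric, because $K_{Y_{t}}=\det(Tf)^{\lor}$.

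The first step is to apply the anomaly formula fiberwise for $t\in T^{o}$, regarding the coefficient bundle as $\det(Tf)^{\lor}\otimes F^{*}\xi$ with both the tangent metric and the coefficient metric varying. This gives
\[
\log\frac{\|\cdot\|^{2}_{Q}}{\|\cdot\|'^{2}_{Q}}
= f_{*}\Bigl[\widetilde{\rm Td}(Tf;h_{\mathcal Y},J_{F}^{*}F^{*}h_{\mathcal X})\,{\rm ch}(K_{Y/T},\cdot)\,{\rm ch}(F^{*}\xi)
+{\rm Td}(\cdot)\,\widetilde{\rm ch}(K_{Y/T};\cdot,\cdot)\,{\rm ch}(F^{*}\xi)\Bigr]^{(0)} ,
\]
up to the sign conventions of \eqref{eqn:anomaly:eq:Q}. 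The second step is to combine these two terms into a single Bott--Chern class. Since $K_{Y/T}=\det(Tf)^{\lor}$ with matched metrics, the multiplicativity and functoriality of Bott--Chern classes \cite[I]{GilletSoule90} turn the sum, modulo $\operatorname{Im}\partial+\operatorname{Im}\bar\partial$, into $\widetilde{{\rm Td}^{\lor}}(\overline{Tf},F^{*}\overline{T\pi})\,{\rm ch}(F^{*}\xi,F^{*}h_{\xi})$. Here ${\rm Td}^{\lor}(A)={\rm Td}(A)e^{-c_{1}(A)}$ is exactly the $\mathrm{GL}$-invariant polynomial with ${\rm Td}(A){\rm ch}(\det A^{\lor})={\rm Td}^{\lor}(A)$. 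The $\partial$- and $\bar\partial$-exact corrections should contribute only terms in ${\mathcal B}$ after $f_{*}$, by the Barlet-type estimate already used in Theorem~\ref{Theorem4.1}.

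The third step is to apply Theorem~\ref{thm:singularity:BottChern} with $P={\rm Td}^{\lor}$ (truncated to degree $\le n+1$) and $\chi=F^{*}{\rm ch}(\xi,h_{\xi})$, which is $\partial$- and $\bar\partial$-closed. This yields $-\bigl\{\int_{M^{\natural}}({\rm Td}^{\lor})^{M^{\natural}}_{N^{\natural}}(J_{F}^{\natural})\wedge(F\circ b\circ\nu)^{*}{\rm ch}(\xi)\bigr\}\log|t|^{2}=-\beta_{Y/X,\xi}\log|t|^{2}$ modulo ${\mathcal B}$, as required. The left-hand side does not depend on $M$ or $M^{\natural}$, and $\log|t|^{2}\notin{\mathcal B}$, so the independence statement follows at once.

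I expect the main obstacle to be the second step. The metric $F^{*}h_{\mathcal X}|_{T\pi}$ is defined only off $F^{-1}(\Sigma_{\pi})$ and degenerates there, so every manipulation of Bott--Chern forms has to be done on $Y\setminus F^{-1}(\Sigma_{\pi})$. We must then check that the discarded $\partial$/$\bar\partial$-exact terms push forward to functions in ${\mathcal B}$, or to $0$ in degree zero on $T^{o}$. My first attempt will be to choose the explicit representatives \eqref{eqn:Bott:Chern}, for which the identity holds at the level of forms on $\mathbf P^{1}$-families, and to note that $f_{*}$ of an exact form vanishes on $T^{o}$ because $f$ is proper there.
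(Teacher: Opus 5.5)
Your proposal is correct and follows the paper's own proof. The paper uses the anomaly formula together with the multiplicativity of Bott--Chern classes \cite[I, Prop.\,1.3.1]{GilletSoule90} to obtain $f_{*}[\widetilde{{\rm Td}^{\lor}}(\overline{Tf},F^{*}\overline{T\pi})\wedge F^{*}{\rm ch}(\xi,h_{\xi})]^{(n,n)}$ exactly on $T^{o}$, and then applies Theorem~\ref{thm:singularity:BottChern} with $\chi=F^{*}{\rm ch}(\xi,h_{\xi})$. The obstacle you anticipate does not arise: since $F^{-1}(\Sigma_{\pi})\subset Y_{0}$, everything over $T^{o}$ is smooth, and, as you note at the end, $\partial$/$\bar\partial$-exact corrections have vanishing degree-zero fiber integral there, so no Barlet-type estimate is needed.
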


\begin{pf}
Since $\widetilde{{\rm Td}\cdot e^{-c_{1}}}(\overline{Tf}, F^{*}(\overline{T\pi})) = \widetilde{{\rm Td}^{\lor}}(\overline{Tf}, F^{*}(\overline{T\pi}))$
on $Y \setminus F^{-1}(\Sigma_{\pi})$, it follows from the anomaly formula 
\cite[Ths.\,0.2, 0.3]{BGS88} and \cite[I, Prop.\,1.3.1]{GilletSoule90} that
$$
\log\left(\frac{\|\cdot\|_{Q,\lambda( K_{Y/T}(F^{*}\xi) )}}{\|\cdot\|'_{Q,\lambda( K_{Y/T}(F^{*}\xi) )}}\right)^{2}
=
f_{*} \left[ \widetilde{\rm Td}^{\lor}(\overline{Tf}, F^{*}(\overline{T\pi})) \wedge F^{*}{\rm ch}( \overline{\xi} ) \right]^{(n,n)}.
$$
This, together with Theorem~\ref{thm:singularity:BottChern}, yields the result.
\end{pf}

Recall that the rational number $\kappa_{\pi, K_{X/S}(\xi)}$ and the non-negative integer $\varrho_{\pi, K_{X/S}(\xi)}$ 
were defined in \eqref{eqn:def:kappa} and \eqref{eqn:def:rho}, respectively, with $g=1$.

\begin{theorem}
\label{thm:leading:term:torsion}
The following equality of rational numbers holds:
\begin{equation}
\label{eqn:kappa}
\kappa_{\pi, K_{X/S}(\xi)} = \frac{1}{\deg\mu} \left( \alpha_{f, K_{Y/T}(F^{*}\xi)} + \beta_{Y/X, \xi } \right).
\end{equation}
In particular, $\kappa_{\pi, K_{X/S}(\xi)}$ is given by the integral of various characteristic classes associated to 
the semi-stable reduction of $\pi \colon X \to S$.
Moreover, the right hand side of \eqref{eqn:kappa} is independent of the choice of a semi-stable reduction of $\pi \colon X \to S$.
\end{theorem}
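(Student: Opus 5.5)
We compare two expressions for the same analytic torsion on $T^{o}$. For $t\in T^{o}$, $Y_{t}\cong X_{\mu(t)}$ via $F$. Equipped with $F^{*}h_{\mathcal X}|_{Y_{t}}$ and $F^{*}h_{\xi}$, the torsion of $K_{Y_{t}}(F^{*}\xi_{t})$ is exactly $\tau(X_{\mu(t)},K_{X_{\mu(t)}}(\xi_{\mu(t)}))$. By Theorem~\ref{thm:main:1} (with $g=1$), its logarithm is therefore
\[
\kappa_{\pi,K_{X/S}(\xi)}\log|\mu(t)|^{2}+O(\log\log|t|^{-2})=\deg\mu\,\kappa_{\pi,K_{X/S}(\xi)}\log|t|^{2}+O(\log\log|t|^{-2}).
\]
We then compute the same quantity through the family $f\colon Y\to T$ and the metric $h_{\mathcal Y}$, and match the $\log|t|^{2}$ coefficients. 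This is legitimate because $\log\log$ and bounded terms are negligible against $\log|t|^{2}$.

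First, we fix an admissible (here simply nowhere vanishing) section $\sigma$ of $\lambda(K_{Y/T}(F^{*}\xi))$ on $T$. The torsion for the metric pair $(F^{*}h_{{\mathcal X}/C},F^{*}h_{\xi})$ equals
\[
\log\|\sigma\|'^{2}_{Q}-\sum_{q}(-1)^{q}\log\|\sigma^{q}\|^{2}_{L^{2}}.
\]
The relevant $L^{2}$ metric is the one for the degenerate form $\kappa_{Y}=F^{*}\kappa_{X}+f^{*}\kappa_{T}$ restricted to fibres, which on $Y_{t}$ coincides with $F^{*}\kappa_{X}|_{Y_{t}}$. By Theorem~\ref{thm:str:sing:L2}(1)--(2), applied directly to the basis $\widetilde{\mathbf e}$ of $R^{q}f_{*}K_{Y/T}(F^{*}\xi)$, we get $\det\widetilde H(t)\ge C$, with $\det\widetilde H$ polynomial in $\log|t|^{-2}$. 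Hence $\log\|\sigma^{q}\|^{2}_{L^{2}}=O(\log\log|t|^{-2})$, with no $\log|t|^{2}$ term. This is precisely why $\delta^{q}_{W}$ disappears when one passes to $Y$.

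Next, we split
\[
\log\|\sigma\|'^{2}_{Q}=\log\|\sigma\|^{2}_{Q}-\log\bigl(\|\cdot\|_{Q}/\|\cdot\|'_{Q}\bigr)^{2}.
\]
By Proposition~\ref{prop:comparison:quillen}, the second term contributes $+\beta_{Y/X,\xi}\log|t|^{2}$ modulo ${\mathcal B}$. The Quillen metric $\|\cdot\|_{Q}$ for the smooth Kähler metric $h_{\mathcal Y}$ on the projective family ${\mathcal Y}\to C'$ falls under Theorem~\ref{Thm:Sing:Q:adj} applied to $f$ and the bundle $F^{*}\xi$ (extended to ${\mathcal Y}$, trivial group). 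This gives $\log\|\sigma\|^{2}_{Q}\equiv_{\mathcal B}\alpha_{f,K_{Y/T}(F^{*}\xi)}\log|t|^{2}$. Nakano semi-positivity is not needed for this Quillen-metric statement.

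We now assemble the pieces. The $L^{2}$ contributions for the two metrics differ: under $h_{\mathcal Y}$ they are also $O(\log\log)$ by Mourougane--Takayama, but that case is not needed, since the torsion itself is metric-specific. The result is
\[
\log\tau=(\alpha_{f}+\beta_{Y/X,\xi})\log|t|^{2}+O(\log\log|t|^{-2}).
\]
Comparing with the first expression gives \eqref{eqn:kappa}. The right-hand side is then independent of the semi-stable reduction, because the left-hand side is.

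The main point to check is consistency of conventions. One must confirm that the anomaly formula in Proposition~\ref{prop:comparison:quillen} refers to the same pair of Quillen metrics, with the ratio oriented in the same direction, as the one appearing in the torsion identity, and that $\beta$ enters with the sign $+$. One must also check that $\alpha_{f,\cdot}$ is defined for $f$ with the same $\mathcal{H}$ and Todd conventions as for $\pi$. A secondary issue is that Theorem~\ref{Thm:Sing:Q:adj} was set up for a global family over a compact curve; the reductions assumed at the start of Section~\ref{sect:7} (that $Y\subset{\mathcal Y}$, $f$ extends to ${\mathcal Y}\to C'$, and $F^{*}\xi$ extends) are what allow it to apply here.
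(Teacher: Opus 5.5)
Your argument is correct and is essentially the paper's proof. The paper writes $\log\tau(X_{\mu(t)})=\log\tau(Y_t)-\log(\|\cdot\|_{Q}/\|\cdot\|'_{Q})^{2}+\log(\|\cdot\|_{L^{2}}/\|\cdot\|'_{L^{2}})^{2}$, and then uses Theorem~\ref{thm:sing:tor:log:can} for $\tau(Y_t)$, Proposition~\ref{prop:L2:log:can} and Theorem~\ref{thm:str:sing:L2} for the two $L^{2}$-terms, and Proposition~\ref{prop:comparison:quillen} for the Quillen ratio. You instead apply Theorem~\ref{Thm:Sing:Q:adj} directly to the $h_{\mathcal Y}$-Quillen metric, so the $h_{\mathcal Y}$-$L^{2}$ term (which cancels in the paper's bookkeeping) never appears and Proposition~\ref{prop:L2:log:can} is not needed.
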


\begin{pf}
Let $\|\cdot\|_{L^{2},\lambda(K_{Y/T}(F^{*}\xi) )}$ be the $L^{2}$-metric on $\lambda(K_{Y/T}(F^{*}\xi))$ with respect to the metrics 
$h_{{\mathcal Y}/C'}$ and $F^{*}h_{\xi}$. Similarly, let $\|\cdot\|'_{L^{2},\lambda(K_{Y/T}(F^{*}\xi))}$ be the $L^{2}$-metric on 
$\lambda(K_{Y/T}(F^{*}\xi))$ with respect to the metrics $F^{*}h_{{\mathcal X}/C}$ and $F^{*}h_{\xi}$.
Let $\tau(Y_{t}, K_{Y_{t}}(F^{*}\xi))$ be the analytic torsion of $F^{*}\xi_{t}$ with respect to the metrics $h_{\mathcal Y}|_{Y_{t}}$ and 
$F^{*}h_{\xi}|_{Y_{t}}$.
On the other hand, the analytic torsion of $F^{*}\xi_{t}$ with respect to the metrics $F^{*}h_{{\mathcal X}/C}|_{Y_{t}}$ and 
$F^{*}h_{\xi}|_{Y_{t}}$ is given by $\tau(X_{\mu(t)}, K_{X_{\mu(t)}}(\xi))$. Hence 
\begin{equation}
\label{eqn:comparison:Quillen:1}
\log\left(\frac{\|\cdot\|_{Q,\lambda(K_{Y_{t}}(F^{*}\xi))}}{\|\cdot\|'_{Q,\lambda(K_{Y_{t}}(F^{*}\xi))}}\right)^{2}
=
\log\frac{\tau(Y_{t}, K_{Y_{t}}(F^{*}\xi))}{\tau(X_{\mu(t)}, K_{X_{\mu(t)}}(\xi))}
+
\log\left(\frac{\|\cdot\|_{L^{2},\lambda(K_{Y_{t}}(F^{*}\xi))}}{\|\cdot\|'_{L^{2},\lambda(K_{Y_{t}}(F^{*}\xi))}}\right)^{2}.
\end{equation}
Since $Y_{0}$ is reduced and normal crossing, it follows from Theorem~\ref{thm:sing:tor:log:can} that as $t \to 0$,
\begin{equation}
\label{eqn:asymptotics:torsion:1}
\log\tau(Y_{t}, K_{Y_{t}}(F^{*}\xi_{t})) = \alpha_{f, K_{Y/T}(F^{*}\xi)} \log|t|^{2} + O\left( \log\log (| t |^{-2}) \right).
\end{equation}
\par
Let $\sigma$ be a nowhere vanishing holomorphic section of $\lambda(K_{Y/T}(F^{*}\xi))$ defined on $T$. 
By Proposition~\ref{prop:L2:log:can}, as $t\to0$, we have
\begin{equation}
\label{eqn:estimate:L2:metric:1}
\log\|\sigma(t)\|_{L^{2},\lambda(K_{Y/T}(F^{*}\xi))}^{2} = O\left( \log\log(| t |^{-2}) \right).
\end{equation}
Similarly, since the degenerate K\"ahler form $\kappa_{Y}$ on $Y$ is defined as $\kappa_{Y} = F^{*}\kappa_{X} + f^{*}\kappa_{T}$, 
the $L^{2}$-metric on $\lambda(K_{Y/T}(F^{*}\xi))$ is given by $\| \cdot \|'_{L^{2},\lambda(K_{Y/T}(F^{*}\xi))}$.
By Theorem~\ref{thm:str:sing:L2} (3), as $t \to 0$, we have
\begin{equation}
\label{eqn:estimate:L2:metric:2}
\log\|\sigma(t)\|^{\prime 2}_{L^{2},\lambda(K_{Y/T}(F^{*}\xi))} = O\left( \log\log(| t |^{-2}) \right).
\end{equation}
\par
By \eqref{eqn:comparison:Quillen:1}, \eqref{eqn:asymptotics:torsion:1}, \eqref{eqn:estimate:L2:metric:1}, \eqref{eqn:estimate:L2:metric:2}
and Proposition~\ref{prop:comparison:quillen}, we get
\begin{equation}
\label{eqn:sing:torsion:2}
\begin{aligned}
\,&
\log\tau(X_{\mu(t)}, K_{X_{\mu(t)}}(\xi))
\\
&=
\log\tau(Y_{t}, K_{Y_{t}}(F^{*}\xi))
-
\log\left(\frac{\|\cdot\|_{Q,\lambda(K_{Y_{t}}(F^{*}\xi))}}{\|\cdot\|'_{Q,\lambda(K_{Y_{t}}(F^{*}\xi))}}\right)^{2}
+
\log\left(\frac{\|\cdot\|_{L^{2},\lambda(K_{Y_{t}}(F^{*}\xi))}}{\|\cdot\|'_{L^{2},\lambda(K_{Y_{t}}(F^{*}\xi))}}\right)^{2}
\\
&=
( \alpha_{f, K_{Y/T}(F^{*}\xi)} + \beta_{Y/X, \xi} ) \log | t |^{2} + O\left( \log\log(| t |^{-2}) \right).
\end{aligned}
\end{equation}
Since $t=s^{\frac{1}{\deg\mu}}$, \eqref{eqn:kappa} follows from \eqref{eqn:asymp:eq:tors} and \eqref{eqn:sing:torsion:2}. 
This completes the proof.
\end{pf}

To explore the properties of $\kappa_{\pi, K_{X/S}(\xi)}$, we consider its behavior when $\xi$ is twisted by high tensor powers of 
a positive line bundle. In the case of a fixed compact K\"ahler manifold (i.e., for a fixed parameter $s$), 
the asymptotic behavior of analytic torsion was determined by Bismut-Vasserot \cite{BismutVasserot89} and Finski \cite{Finski18}.
(See \cite{BermanBoucksom10} for the case where the curvature of the twisting ample line bundle is not positive.)

\begin{corollary}
\label{cor:dom:term:asym}
Let $H$ be a positive line bundle on ${\mathcal X}$ and set $\xi(m) := \xi\otimes H^{m}$. 
Then there exists a polynomial $p(m) \in {\mathbf Q}[m]$ with $\deg p(m) \leq \dim {\rm Sing}\,X_{0}$ such that for all $m \geq 0$,
$$
\kappa_{\pi, K_{X/S}(\xi(m))} = p(m).
$$
\end{corollary}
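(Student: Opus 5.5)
We apply Theorem~\ref{thm:leading:term:torsion} to $\xi(m)=\xi\otimes H^{m}$, fixing a single semi-stable reduction $f\colon Y\to T$, $F\colon Y\to X$. The diagram \eqref{eqn:ss:red}, the spaces $M$, $M^{\natural}$, ${\mathfrak M}^{\natural}$, the divisor ${\mathfrak D}_{J_{F}^{\natural}}$ and the homomorphism $J_{F}^{\natural}$ all depend only on $\pi$ and $F$, not on the twisting bundle. The hypotheses of the theorem still hold: if $H$ carries a metric with positive curvature, then $(\xi(m),h_{\xi}\otimes h_{H}^{m})$ is Nakano semi-positive on $X$ for every $m\ge 0$. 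So, with $\deg\mu$ fixed,
\[
\kappa_{\pi,K_{X/S}(\xi(m))}=\frac{1}{\deg\mu}\left(\alpha_{f,K_{Y/T}(F^{*}\xi(m))}+\beta_{Y/X,\xi(m)}\right),
\]
and it suffices to show that each term on the right is a polynomial in $m$ of degree at most $d:=\dim{\rm Sing}\,X_{0}$.

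For $\beta$, write ${\rm ch}(\xi(m))={\rm ch}(\xi)\,e^{m c_{1}(H)}$. By \eqref{eqn:def:beta}, $\beta_{Y/X,\xi(m)}$ is an integral over the compact set ${\mathfrak D}_{J_{F}^{\natural}}$ of a fixed class wedged with the pullback of $e^{m c_{1}(H)}$. This pullback factors through the map ${\mathfrak D}_{J_{F}^{\natural}}\to X$, whose image lies in $\Sigma_{\pi}\cap X_{0}$, because ${\mathfrak D}_{J_{F}^{\natural}}$ lies over $N^{\natural}\subset \nu^{-1}(N)$ and $N=(F\circ b)^{-1}(\Sigma_{\pi})$. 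We take $\Sigma_{\pi}\cap X_{0}={\rm Sing}\,X_{0}$, the fibre being reduced near the relevant locus (the paper identifies these). Hence $c_{1}(H)^{k}$ pulls back to zero for $k>d$, and only the terms $m^{k}c_{1}(H)^{k}/k!$ with $k\le d$ survive. So $\beta$ is a polynomial of degree at most $d$ with rational coefficients.

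For $\alpha$, we use the defining formula \eqref{eqn:log:coeff:tw} for the family $f$ with $G$ trivial. There is no vertical term, and the horizontal term is an integral over $E_{0}\subset\widetilde{\mathcal Y}$, the preimage of $\Sigma_{f}\cap Y_{0}$ under a resolution of the Gauss map of $f$. Replacing $\xi$ by $\xi(m)$ multiplies the integrand by $e^{m\,c_{1}(F^{*}H)}$, pulled back to $E_{0}$. The map $E_{0}\to X$ factors through $F(\Sigma_{f})$. Since $f$ is semi-stable with reduced normal crossing fibre, $\Sigma_{f}$ is the double locus of $Y_{0}$. Away from $F^{-1}(\Sigma_{\pi})$, the map $F$ is étale onto its image on $Y\setminus Y_{0}$, but on $Y_{0}$ it may contract components. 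The key point I need is that $F(\Sigma_{f})\subset {\rm Sing}\,X_{0}$. Arguing directly: at a point $y\in Y_{0}$ with $F(y)\notin\Sigma_{\pi}$, the form $\pi$ is a submersion at $F(y)$, and $\mu\circ f=\pi\circ F$ with $\mu(t)=t^{d}$. It follows that $f^{d}$ has differential proportional to the pullback of a nonvanishing form, up to the degeneracy of $F$. This local analysis is the step I expect to be the main obstacle.

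If the direct argument fails, I will fall back on bounding $\alpha$ through the equality of the theorem itself. Alternatively, I will compare with $\alpha_{\pi,K_{X/S}(\xi(m))}$ from \eqref{eqn:log:coeff:tw}. That quantity is manifestly a polynomial of degree at most $d$, since its integrand is supported over $E\to\Sigma_{\pi}\cap X_{0}$. Then $\beta$ only needs to absorb the difference, which again involves only classes supported over $N$.

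Once both terms are polynomials of degree at most $d$ with rational coefficients, $p(m)$ is their sum divided by $\deg\mu$. Rationality holds because these are integrals of rational characteristic classes.
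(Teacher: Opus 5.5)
Your strategy is the paper's: fix one semi-stable reduction, apply Theorem~\ref{thm:leading:term:torsion} to $\xi(m)$, write ${\rm ch}(\xi(m))={\rm ch}(\xi)\,e^{m c_{1}(H)}$, and kill $c_{1}(H)^{k}$ for $k>\dim{\rm Sing}\,X_{0}$, because the integrands only see $H$ through ${\rm Sing}\,X_{0}=\Sigma_{\pi}\cap X_{0}$. (This identity holds for the hypersurface $X_{0}=\{\pi=0\}$ without any reducedness assumption.) Your treatment of $\beta_{Y/X,\xi(m)}$ is correct.

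The gap is in the $\alpha$-term. You need $F(\Sigma_{f})\subset\Sigma_{\pi}\cap X_{0}$, you flag it as the main obstacle, and you never establish it. The local argument you sketch cannot work. Differentiating $\pi\circ F=\mu\circ f$ gives $F^{*}d\pi=(\deg\mu)\,f^{\deg\mu-1}df$, which vanishes identically on $Y_{0}$ once $\deg\mu\geq 2$. So it carries no information about $df$ at points of $Y_{0}$. In fact no purely local argument can work, because the inclusion depends on how the reduction was built. Suppose the log resolution $\beta$ blows up a centre $C\subset X_{0}\setminus\Sigma_{\pi}$. Then $\pi'$ becomes $v_{0}v_{1}$ along $\widetilde{X}_{0}\cap\beta^{-1}(C)$. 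The double locus of $Y_{0}$ over it lies in $\Sigma_{f}$ but maps onto $C\not\subset\Sigma_{\pi}$.

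Your fallbacks do not close this either. Bounding $\alpha$ ``through the equality of the theorem'' is circular. Comparing with $\alpha_{\pi,K_{X/S}(\xi(m))}$ would require knowing that $\delta_{\pi,K_{X/S}(\xi(m))}$ is a polynomial of degree $\leq\dim{\rm Sing}\,X_{0}$. The paper deduces that (Corollary~\ref{cor:sing:L2}) \emph{from} the present corollary.

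The missing input comes from the construction of the reduction. Take $\beta$ to be an isomorphism over $X\setminus\Sigma_{\pi}$; this is possible since $X_{0}$ is already a smooth reduced divisor there. Near a point over $x\notin\Sigma_{\pi}$, with $w=\pi$ a local coordinate, $X'\times_{S}T$ is $\{w=t^{\deg\mu}\}$. This is smooth, and ${\rm pr}_{2}$ is a submersion on it. Moreover $\rho$ is an isomorphism over the smooth locus $U'$. Hence $f$ is a submersion on $Y\setminus F^{-1}(\Sigma_{\pi})$, i.e.\ $\Sigma_{f}\subset F^{-1}(\Sigma_{\pi})$.

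This is exactly the inclusion $N=(F\circ b)^{-1}(\Sigma_{\pi})\supset b^{-1}(\Sigma_{f})$ recorded in Section~\ref{sect:7.1}. It is also implicit in the definition of $\beta_{Y/X,\xi}$, where $Tf$ and $J_{F}$ must be defined and injective on $Y\setminus F^{-1}(\Sigma_{\pi})$. Theorem~\ref{thm:leading:term:torsion} holds for any semi-stable reduction, so you may use such a one.

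Let $q_{Y}$ be the resolution of the Gauss map of $f$ entering \eqref{eqn:log:coeff:tw}. Then the form $(F\circ q_{Y})^{*}c_{1}(H,h_{H})^{k}$ vanishes on $q_{Y}^{-1}(\Sigma_{f})$ for $k>\dim{\rm Sing}\,X_{0}$. With that, your argument goes through exactly as the paper's one-line proof does.
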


\begin{pf}
By \eqref{eqn:log:coeff:tw}, \eqref{eqn:def:beta}, \eqref{eqn:kappa}, 
$\deg\mu \cdot \kappa_{\pi, K_{X/S}(\xi(m))} = \alpha_{f, K_{Y/T}(F^{*}(H^{m}\otimes \xi))} + \beta_{Y/X, H^{m}\otimes  \xi }$ 
is a polynomial in the variable $m$ with rational coefficients. 
Since $c_{1}(H)^{k}=0$ on ${\rm Sing}\,X_{0}$ for $k > \dim {\rm Sing}\,X_{0}$, we get
$\deg \alpha_{f, K_{Y/T}(F^{*}(H^{m}\otimes \xi))} \leq \dim {\rm Sing}\,X_{0}$ by \eqref{eqn:log:coeff:tw} and
$\deg \beta_{Y/X, H^{m}\otimes  \xi } \leq \dim {\rm Sing}\,X_{0}$ by \eqref{eqn:def:beta}, 
where $\deg(\cdot)$ denotes the degree in the variable $m$.
This proves $\deg p(m) \leq \dim {\rm Sing}\,X_{0}$.
\end{pf}

\begin{example}
\label{ex:quad:sing}
Consider the same situation as in Section~\ref{sect:5.3.4}. Hence, $\pi(z)$ is locally defined by the equation $\pi(z)=z_{0}z_{1}$
in a suitable local coordinates near the critical locus $\Sigma = \Sigma_{\pi}$ of $\pi$. Then $\Sigma$ is a compact complex submanifold 
of $X$ of dimension $n-1$. By Proposition~\ref{prop:kappa:q:rk2}, $\kappa_{\pi, K_{X/S}(\xi(m))} $ is a polynomial in the variable $m$ 
of degree $n-1$ given by
$$
\begin{aligned}
\kappa_{\pi, K_{X/S}(\xi(m))} 
&= 
-\frac{1}{2} \int_{\Sigma} {\rm Td}(T\Sigma) {\rm E}(N_{\Sigma/X}) {\rm ch}(K_{\Sigma}(\xi(m)))
\\
&=
-\left( \frac{{\rm E}(0)r(\xi)}{(n-1)!}\int_{\Sigma}c_{1}(H)^{n-1} \right) m^{n-1} + O(m^{n-2})
\end{aligned}
$$
with ${\rm E}(0)=1/6$. Suppose that $\dim \Sigma =1$ and $\Sigma$ is connected. Let $g$ be the genus of $\Sigma$. 
Since ${\rm E}(N_{\Sigma/X})$ is a polynomial in $c_{2}(N_{\Sigma/X})$, we get
$$
2\kappa_{\pi, K_{X/S}(\xi(m))} = -{\rm E}(0) \chi(\Sigma, K_{\Sigma}(\xi(m))) = -{\rm E}(0)\{ \deg_{\Sigma} (\xi(m)|_{\Sigma}) + r(\xi)(g-1) \},
$$
where $\chi(\Sigma, K_{\Sigma}(\xi(m))) \geq 0$ for $m>0$ by the Nakano vanishing theorem.
In particular, $2\kappa_{\pi, K_{X/S}} = -{\rm E}(0)(g-1)$, whose sign is determined by $g$. 
It is remarkable that $\kappa_{\pi, K_{X/S}} = 0$ if $\Sigma$ consists of elliptic curves.
\end{example}

\begin{example}
Suppose that $\dim {\rm Sing}\,X_{0}=0$.
In contrast to the previous example, in \cite{ErikssonFreixas26}, Eriksson and Freixas i Montplet proved $-\kappa_{\pi,K_{X/S}} > 0$ 
when $n=1$. In the case $n\geq 2$, they conjecture $-\kappa_{\pi,K_{X/S}} > 0$.
See \cite[Conjecture, Theorems A and B]{ErikssonFreixas26} for their conjecture and various supporting evidences.
\end{example}

\begin{problem}
In \cite{BermanBoucksom10, BismutVasserot89, Finski18}, the behavior of the function $\log \tau(X_{s}, K_{X_{s}}(\xi(m)))$ on 
$S^{o} \times {\mathbf Z}_{\geq0}$ as $m \to \infty$ was studied for a fixed $s \in S^{o}$. 
In Theorems~\ref{thm:main:1} and~\ref{thm:leading:term:torsion}, the behavior as $s \to 0$ is studied for a fixed $m \in {\mathbf Z}_{\geq0}$.
In Corollary~\ref{cor:dom:term:asym}, the behavior of $\log \tau(X_{s}, K_{X_{s}}(\xi(m))) / \log |s|^{2}$ as $s\to0$ is studied
for a fixed $m \in {\mathbf Z}_{\geq0}$. 
Describe the behavior of $\log \tau(X_{s}, K_{X_{s}}(\xi(m)))$ as a function of two variables on $S^{o} \times {\mathbf Z}_{\geq0}$.
\end{problem}

Recall that $\delta_{\pi, K_{X/S}(\xi)} \in {\mathbf Q}$ was defined by \eqref{eqn:elem:exp}. By \eqref{eqn:asym:eq:L2}, 
the leading term of the singularity of the $L^{2}$-metric on $\lambda(K_{X/S}(\xi))$ with respect to $h_{{\mathcal X}/C}$, 
$h_{\xi}$ is given by $-\delta_{\pi, K_{X/S}(\xi)}$ in the sense that as $s \to 0$,
$$
\log \| \sigma(s) \|_{\lambda(K_{X/S}(\xi)), L^{2}}^{2} 
= -\delta_{\pi, K_{X/S}(\xi)} \log |s|^{2} + \varrho_{\pi, K_{X/S}(\xi)} \log\log (|s|^{-2}) + c + O\left(1/\log |s|^{-1}\right).
$$
By Theorem~\ref{thm:leading:term:torsion}, $\delta_{\pi, K_{X/S}(\xi)}$ is expressed as the integral of characteristic classes associated 
to the semi-stable reduction of $\pi \colon X \to S$.

\begin{corollary}
\label{cor:sing:L2}
The following identity of rational numbers holds:
$$
\delta_{\pi, K_{X/S}(\xi)} = \frac{1}{\deg\mu} \left( \alpha_{f, K_{Y/T}(F^{*}\xi)} + \beta_{Y/X, \xi } \right) - \alpha_{\pi, K_{X/S}(\xi)}.
$$
In particular, $\delta_{\pi, K_{X/S}(\xi)}$ is given by the integral of various characteristic classes associated to the semi-stable reduction 
of $\pi \colon X \to S$ and to the resolution of the indeterminacy of the Gauss map of $\pi$. 
Consequently, there exists a polynomial $q(m) \in {\mathbf Q}[m]$ with $\deg q(m) \leq \dim {\rm Sing}\,X_{0}$ such that for $m \geq 0$,
$$
\delta_{\pi, K_{X/S}(\xi(m))} = q(m).
$$
\end{corollary}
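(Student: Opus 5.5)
The identity is essentially a rearrangement of definitions together with Theorem~\ref{thm:leading:term:torsion}. With $G$ trivial, the definition \eqref{eqn:def:kappa} reads
$$
\kappa_{\pi, K_{X/S}(\xi)} = \alpha_{\pi, K_{X/S}(\xi)} + \delta_{\pi, K_{X/S}(\xi)}.
$$
Substituting \eqref{eqn:kappa} for $\kappa_{\pi,K_{X/S}(\xi)}$ and solving for $\delta_{\pi, K_{X/S}(\xi)}$ gives the displayed formula directly. Since $\alpha_{\pi,K_{X/S}(\xi)}$ is defined in \eqref{eqn:log:coeff:tw} via the resolution $q\colon\widetilde{\mathcal X}\to{\mathcal X}$ of the Gauss map, and $\alpha_{f,K_{Y/T}(F^{*}\xi)}$, $\beta_{Y/X,\xi}$ are integrals over data attached to the semi-stable reduction, this also yields the qualitative statement about characteristic classes.

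For the polynomiality, I would replace $\xi$ by $\xi(m)=\xi\otimes H^{m}$, which is still Nakano semi-positive on $X$ because $H$ is positive. Everything above then applies for each $m\ge0$. By Corollary~\ref{cor:dom:term:asym}, $\kappa_{\pi,K_{X/S}(\xi(m))}=p(m)$ with $\deg p\le \dim{\rm Sing}\,X_{0}$. It therefore suffices to show that $m\mapsto\alpha_{\pi,K_{X/S}(\xi(m))}$ is a polynomial in ${\mathbf Q}[m]$ of the same degree bound, and then set $q(m):=p(m)-\alpha_{\pi,K_{X/S}(\xi(m))}$.

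In \eqref{eqn:log:coeff:tw}, ${\rm ch}(K_{\mathcal X}(\xi(m)))={\rm ch}(K_{\mathcal X}(\xi))\,e^{m c_{1}(H)}$, so each integral is polynomial in $m$ with rational coefficients. For the degree bound, I would argue as in the proof of Corollary~\ref{cor:dom:term:asym}. The first integral is over $E_{0}\cap\widetilde{\mathcal X}_{H}$, which is contained in $q^{-1}(\Sigma_{\pi}\cap X_{0})$. The pulled-back class $q^{*}c_{1}(H)^{k}$ restricted there is pulled back from $\Sigma_{\pi}\cap X_{0}={\rm Sing}\,X_{0}$, so it vanishes for $k>\dim{\rm Sing}\,X_{0}$. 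The second integral, over ${\mathcal X}_{V}\cap X_{0}\subset\Sigma_{\pi}$, is handled the same way (it is empty when $g=1$ but is harmless in any case).

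The only point needing care is the claim that $q^{*}c_{1}(H)^{k}|_{E_{0}}=0$ for $k>\dim {\rm Sing}\,X_{0}$. This holds because $q|_{E_{0}}$ factors through ${\rm Sing}\,X_{0}$, and cohomology classes of degree above twice the complex dimension vanish there (for instance by working in a neighborhood retracting onto it). I expect no real obstacle beyond this.
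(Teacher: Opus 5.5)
Your proof is correct and follows the paper's: the identity comes from comparing the definition $\kappa_{\pi,K_{X/S}(\xi)}=\alpha_{\pi,K_{X/S}(\xi)}+\delta_{\pi,K_{X/S}(\xi)}$ with Theorem~\ref{thm:leading:term:torsion}, and polynomiality comes from the argument of Corollary~\ref{cor:dom:term:asym}, namely that $c_{1}(H)^{k}$ vanishes on the part lying over ${\rm Sing}\,X_{0}$ for $k>\dim{\rm Sing}\,X_{0}$. The only cosmetic difference is that you quote Corollary~\ref{cor:dom:term:asym} for the $\kappa$-part and run the degree argument only on $\alpha_{\pi,K_{X/S}(\xi(m))}$, whereas the paper applies that argument to all three terms.
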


\begin{pf}
By \eqref{eqn:def:kappa} and \eqref{eqn:asymp:eq:tors}, we have 
$\kappa_{\pi, K_{X/S}(\xi)} = \alpha_{\pi, K_{X/S}(\xi)} + \delta_{\pi, K_{X/S}(\xi)}$. 
Comparing this with \eqref{eqn:kappa}, we get the first statement. 
The second statement follows from the same argument as in the proof of Corollary~\ref{cor:dom:term:asym}.
\end{pf}

Concerning the subdominant term of the asymptotic expansion \eqref{eqn:asymp:eq:tors} with $g=1$, 
$\varrho_{\pi, K_{X/S}(m)}$ is independent of the choice of an ample line bundle $H$ on $X$ and $m>0$ under the assumption 
that $n=1$ and $X_{0}$ is reduced. In fact, in this case, $\varrho_{\pi, K_{X/S}(m)}-\varrho_{\pi, K_{X/S}} = N-1$, 
where $N$ is the number of the irreducible components of $X_{0}$ by \cite[Th.\,5.5]{DaiYoshikawa25}. 
In higher dimensions, we propose the following:

\begin{conjecture}
\label{conj:indep:rho}
If ${\rm Sing}\,X_{0}$ is isolated, $\xi={\mathcal O}_{X}$ and $n>1$, then $\varrho_{\pi, K_{X/S}(m)}$ is independent of the choice of 
an ample line bundle $H$ and $m>0$. Moreover, 
$$
\varrho_{\pi, K_{X/S}(m)}-\varrho_{\pi, K_{X/S}} = \dim \ker \square_{X_{0}}^{(0, n-1)} - h^{0,n-1}(X_{s}), 
\qquad
s\not=0.
$$ 
Here $\square_{X_{0}}^{(0,n-1)}$ is the Friedrichs extension of the Hodge-Kodaira Laplacian acting on the compactly supported 
smooth $(0,n-1)$-forms on $X_{0}\setminus{\rm Sing}\,X_{0}$.
\end{conjecture}

\subsection
{A locality of the leading term}
\label{sect:7.2}
\par
In this subsection, we prove the locality of the leading term in the asymptotic expansion of analytic torsion.
Let $\psi \colon {\mathcal W} \to B$ be a surjective holomorphic map with connected fibers from a connected projective manifold 
of dimension $n+1$ to a compact Riemann surface. Let $\Sigma_{\psi} \subset {\mathcal W}$ be the critical locus of $\psi$.  
Let $b_{0} \in \psi(\Sigma_{\psi}) \subset B$ be a point of the discriminant locus of $\psi$. We assume that there is an embedding
$(S,0) \hookrightarrow (B,b_{0})$ and we set $W := \psi^{-1}(S)$. For $s\in S$, we set $W_{s} := \psi^{-1}(s)$. 
Let $(\eta, h_{\eta})$ be a holomorphic Hermitian vector bundle on ${\mathcal W}$ such that $(\eta, h_{\eta})|_{W}$ is Nakano semi-positive.
Let $h_{W}$ be a K\"ahler metric on $W$ and let $h_{W_{s}}$ be the K\"ahler metric on $W_{s}$ induced from $h_{W}$.
Let $\tau(W_{s}, K_{W_{s}}(\eta_{s}))$ be the analytic torsion of $(W_{s}, K_{W_{s}}(\eta_{s}))$ with respect to the metrics $h_{W_{s}}$,
$h_{\eta}|_{W_{s}}$.

\begin{theorem}
\label{thm:locality:sing:tors}
With the notation and assumption as above, let $U$ be an open neighborhood of ${\rm Sing}\,X_{0}$ in ${\mathcal X}$ and 
let $V$ be an open neighborhood of ${\rm Sing}\,W_{0}$ in ${\mathcal W}$. Suppose that the following hold:
\par{\rm (1)}
There exists an isomorphism $\iota \colon U \to V$ 
such that $\iota^{*}(\psi|_{V}) = \pi|_{U}$ and $\iota^{*}(\eta|_{V}) \cong \xi|_{U}$ as smooth complex vector bundles on $U$.  
\par{\rm (2)}
There exist log-resolutions $b \colon (X',X'_{0}) \to (X, X_{0})$ and $c \colon (W', W'_{0})\to (W, W_{0})$ such that the isomorphism
$\iota \colon U \to V$ lifts to an isomorphism between $b^{-1}(U)$ and $c^{-1}(V)$, 
such that $X'\setminus X'_{0} \subset X'$ and $W'\setminus W'_{0} \subset W'$ are toroidal embeddings without self-intersection, 
and such that the conical polyhedral complexes with integral structure associated to 
$X'\setminus X'_{0} \subset X'$ and $W'\setminus W'_{0} \subset W'$ coincide.
(See \cite[p.71]{Mumford73} for the notion of the conical polyhedral complex with integral structure associated to a toroidal embedding 
without self-intersection.)
\par
Then $\kappa_{\pi, K_{X/S}(\xi)} = \kappa_{\psi, K_{W/S}(\eta)}$.
In particular, there exist $\varrho_{\xi,\eta} \in {\mathbf Z}$ and $c_{\xi,\eta} \in {\mathbf R}$ such that as $s \to 0$,
$$
\log \frac{ \tau(X_{s}, K_{X_{s}}(\xi_{s})) }{ \tau(W_{s}, K_{W_{s}}(\eta_{s})) } 
= \varrho_{\xi,\eta} \log\log(|s|^{-2}) + c_{\xi,\eta} + O(1/\log |s|^{-1}).
$$
\end{theorem}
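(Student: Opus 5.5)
The natural route is Theorem~\ref{thm:leading:term:torsion}: it expresses $\kappa$ as $\frac{1}{\deg\mu}(\alpha_{f,K_{Y/T}(F^{*}\xi)}+\beta_{Y/X,\xi})$, and every term there is an integral of characteristic classes supported near the degeneration locus. So I will choose semi-stable reductions of $\pi$ and $\psi$ compatibly under $\iota$, and check that each term is determined by data on $U$ (resp.\ $V$), where $\iota$ identifies everything. The second assertion then follows from Theorem~\ref{thm:main:1} with $G$ trivial: subtract the two expansions, the $\log|s|^{2}$ terms cancel, and $\varrho_{\xi,\eta}:=\varrho_{\psi,K_{W/S}(\eta)}-\varrho_{\pi,K_{X/S}(\xi)}$, $c_{\xi,\eta}:=\gamma_{1}-\gamma'_{1}$ are real by that theorem.

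\textbf{Step 1: compatible semi-stable reductions.} Mumford's construction \cite[Chap.\,II]{Mumford73} starts from the log-resolutions $X'$, $W'$, base changes by $\mu(t)=t^{d}$ with the same $d$, normalizes, and then resolves via a subdivision of the conical polyhedral complex. By hypothesis~(2) the complexes coincide and $\iota$ lifts to $b^{-1}(U)\cong c^{-1}(V)$. Hence the same $d$ and the same subdivision can be chosen, giving $f\colon Y\to T$ and $f'\colon Y'\to T$ whose restrictions over $b^{-1}(U)$ and $c^{-1}(V)$ are identified compatibly with $F$ and $F'$. Away from these sets both maps are isomorphisms onto smooth fibrations, since $X'_{0}\setminus b^{-1}(U)$ is the smooth part of $X_{0}$ and the base change there is trivial. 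This step requires the most care. Hypothesis~(2) is exactly what makes it possible, but one must check that the toroidal resolution is determined by the complex together with its integral structure, including the multiplicities $e_{i}$. Those multiplicities are encoded in the integral structure through the function $\pi'$.

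\textbf{Step 2: locality of $\alpha$ and $\beta$.} In $\alpha_{f,K_{Y/T}(F^{*}\xi)}$, the domain of integration is $E\subset \widetilde{\mathcal Y}$ lying over $\Sigma_{f}\cap Y_{0}\subset F^{-1}(U)$. The integrand is a product of Chern classes of $TY$, $K_{Y}$, $F^{*}\xi$ and $\widetilde\gamma^{*}\mathcal H$, restricted to a neighbourhood of $E$. The resolution of the Gauss map can be chosen equal on both sides because $f$ and $f'$ agree there. Topological isomorphism of $\xi|_{U}$ and $\eta|_{V}$ gives equality of ${\rm ch}$. Hence $\alpha_{f}=\alpha_{f'}$. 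For $\beta_{Y/X,\xi}$, formula \eqref{eqn:def:beta} integrates over ${\mathfrak D}_{J^{\natural}_{F}}$, which lies over $N=(F\circ b)^{-1}(\Sigma_{\pi})\subset (F\circ b)^{-1}(U)$. All constructions $M$, $M^{\natural}$, $\mathfrak M^{\natural}$ depend only on $F$, $f$ near this set, so they can be chosen identically. Moreover, $\beta$ is independent of these choices by Proposition~\ref{prop:comparison:quillen}. The integrand is a Chern class polynomial in bundles pulled back from $U$ times ${\rm ch}(\xi)$. Therefore $\beta_{Y/X,\xi}=\beta_{Y'/W,\eta}$.

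\textbf{Step 3: conclusion.} Theorem~\ref{thm:leading:term:torsion} now gives $\kappa_{\pi,K_{X/S}(\xi)}=\kappa_{\psi,K_{W/S}(\eta)}$, with the same $\deg\mu$ on both sides. That theorem also says $\kappa$ is independent of the chosen semi-stable reduction, so using the compatible reductions of Step~1 is legitimate. The asymptotic formula for the ratio of torsions follows as indicated in the opening paragraph.
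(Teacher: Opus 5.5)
Your proposal is correct and follows the paper's own proof. Like the paper, you build the semi-stable reduction of $\psi$ by the same Mumford procedure, using (1) and the coincidence of the polyhedral complexes in (2). You then read off $\alpha_{f,K_{Y/T}(F^{*}\xi)}=\alpha_{g,K_{M/T}(G^{*}\eta)}$ and $\beta_{Y/X,\xi}=\beta_{M/W,\eta}$ from their defining formulas, using $\iota^{*}(\eta|_{V})\cong\xi|_{U}$ topologically, and conclude via Theorem~\ref{thm:leading:term:torsion}, with the torsion-ratio asymptotics from Theorem~\ref{thm:main:1}. Your remarks on independence of choices via Proposition~\ref{prop:comparison:quillen}, and on $\varrho_{\xi,\eta}=\varrho_{\psi,K_{W/S}(\eta)}-\varrho_{\pi,K_{X/S}(\xi)}$, make explicit details the paper leaves implicit.
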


\begin{pf}
Since $\iota^{*}(\psi|_{V}) = \pi|_{U}$ by (1) and the conical polyhedral complexes with integral structure associated to 
$X'\setminus X'_{0} \subset X'$ and $W'\setminus W'_{0} \subset W'$ are isomorphic by (2), 
we obtain a semi-stable reduction of $\psi \colon (W, W_{0}) \to (S,0)$ by the same procedure 
as the construction of the semi-stable reduction $f \colon Y \to T$ of $\pi \colon X \to S$ (\cite[Chap.\,II Sect.\,3]{Mumford73}). 
The semi-stable reduction of $\psi \colon (W, W_{0}) \to (S,0)$ constructed in this way is denoted by $g \colon M \to T$. 
Let $G \colon M \to W$ be the corresponding morphism.
Since $\iota^{*}(\eta|_{V}) \cong \xi|_{U}$ as smooth vector bundles on a neighborhood of ${\rm Sing}\,X_{0}$, 
we obtain $\alpha_{f, K_{Y/T}(F^{*}\xi)} = \alpha_{g, K_{M/T}(G^{*}\eta)}$ and $\beta_{Y/X, \xi} = \beta_{M/W, \eta}$ by 
\eqref{eqn:log:coeff:tw} and \eqref{eqn:def:beta}, respectively. The result follows from Theorem~\ref{thm:leading:term:torsion}.
\end{pf}

\begin{remark}
If $X_{0}$ and $W_{0}$ are irreducible, (2) can be dropped in Theorem~\ref{thm:locality:sing:tors}. 
\end{remark}

Recall that the semi-simple part of the monodromy operator $M_{s}$ acts on the limiting Hodge filtration $F^{n}_{\infty}H^{n+q}(X_{\infty})$ 
and that $\log M_{s}$ is its logarithm with imaginary part lying in $[0, 2\pi)$. To simplify the notation, we set
\begin{equation}
\label{eqn:lg:monodromy}
\epsilon_{\pi} := \frac{1}{2\pi i} \sum_{q\geq 0} (-1)^{q} {\rm Tr} \left[ \left. \log M_{s} \right|_{F^{n}_{\infty}H^{n+q}(X_{\infty})} \right].
\end{equation}
By Proposition~\ref{prop:EFM21}, we have
\begin{equation}
\label{eqn:}
\epsilon_{\pi} = \delta_{\pi, K_{X/S}}.
\end{equation}
When ${\rm Sing}\,X_{0}$ consists of isolated points, recall that $\mu(\pi_{x})$ and $\widetilde{p}_{g}(\pi_{x})$, $x \in {\rm Sing}\,X_{0}$,
denote the Milnor number and the spectral genus of $\pi_{x} \in {\mathcal O}_{X,x}$, respectively.

\begin{theorem}
\label{thm:sing:tors:N:pos}
Let $(\xi, h_{\xi})$ be a holomorphic Hermitian vector bundle of rank $r(\xi)$ on ${\mathcal X}$ such that 
$(\xi, h_{\xi})|_{X}$ is Nakano semi-positive. Then the following hold.
\par{\rm (1)}
If $\xi$ is topologically trivial near ${\rm Sing}\,X_{0}$, then 
$$
\kappa_{\pi, K_{X/S}(\xi)} = r(\xi)( \alpha_{\pi, K_{X/S}} + \epsilon_{\pi} ).
$$
\par{\rm (2)}
If $\dim {\rm Sing}\,X_{0} = 0$, then 
$$
\kappa_{\pi, K_{X/S}(\xi)} = -r(\xi) \sum_{x\in {\rm Sing}\,X_{0}} \left( \frac{\mu(\pi_{x})}{(n+2)!} - \widetilde{p}_{g}(\pi_{x}) \right).
$$
\end{theorem}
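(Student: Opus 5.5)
The plan is to reduce both parts to the case $\xi$ trivial by means of the locality statement (Theorem~\ref{thm:locality:sing:tors}), or more directly through the formula of Theorem~\ref{thm:leading:term:torsion}.

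For (1), I use \eqref{eqn:kappa}: $\deg\mu\cdot\kappa_{\pi,K_{X/S}(\xi)} = \alpha_{f,K_{Y/T}(F^{*}\xi)} + \beta_{Y/X,\xi}$. In both terms, ${\rm ch}(\xi)$ enters only through integrals over loci lying over ${\rm Sing}\,X_{0}$. For $\alpha_{f,\cdot}$ these loci are $E\cap\widetilde{\mathcal Y}$ over $\Sigma_{f}\subset F^{-1}(\Sigma_{\pi})$, and the vertical fixed locus is empty because $g=1$. For $\beta$ the locus is ${\mathfrak D}_{J_{F}^{\natural}}$, which lies over $N^{\natural}\subset (F\circ b\circ\nu)^{-1}(\Sigma_{\pi})$. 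Near $\Sigma_{\pi}=\operatorname{Sing}X_{0}$, the bundle $\xi$ is topologically trivial, so ${\rm ch}(\xi)$ restricted to a neighborhood is cohomologous to $r(\xi)$. The integrals are against classes with compact support in that neighborhood: for $\beta$ this follows from \eqref{eqn:def:local:Chern}, and for $\alpha$ the integral is over a compact subvariety contained in the preimage of the neighborhood. Hence each term is multiplied by $r(\xi)$ relative to the case $\xi={\mathcal O}$ with trivial metric. The same argument applies to $\alpha_{f,K_{Y/T}(F^*\xi)}$ once one notes that ${\rm ch}(K_{Y}(F^{*}\xi)) = {\rm ch}(K_Y){\rm ch}(F^*\xi)$. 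It follows that $\kappa_{\pi,K_{X/S}(\xi)} = r(\xi)\,\kappa_{\pi,K_{X/S}}$.

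It then remains to identify $\kappa_{\pi,K_{X/S}}$. The trivial line bundle with trivial metric is Nakano semi-positive, so Corollary~\ref{cor:sing:tors:triv} with $G$ trivial gives $\kappa_{\pi,K_{X/S}} = \alpha_{\pi,K_{X/S}} + \delta_{\pi,K_{X/S}} = \alpha_{\pi,K_{X/S}}+\epsilon_{\pi}$ by Proposition~\ref{prop:EFM21}. This proves (1).

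For (2), if $\operatorname{Sing}X_0$ is finite, then any $\xi$ is topologically trivial on small balls around the singular points, so (1) applies. Then $\kappa_{\pi,K_{X/S}}$ is the coefficient of $\log|s|^2$ in $\log\tau(X_s,K_{X_s})$ given by Theorem~\ref{thm:main:1}. Comparing with Proposition~\ref{prop:EF24} and using uniqueness of the coefficient of $\log|s|^{2}$ modulo $O(\log\log)$ terms identifies it with $-\sum_x(\mu(\pi_x)/(n+2)!-\widetilde p_g(\pi_x))$.

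The main obstacle is the justification in (1) that replacing ${\rm ch}(\xi)$ by $r(\xi)$ is legitimate. Formally this requires that the relevant classes in \eqref{eqn:log:coeff:tw} and \eqref{eqn:def:beta} be compactly supported in the preimage of a neighborhood $U$ of $\Sigma_\pi$, on which ${\rm ch}(\xi)|_U=r(\xi)$ in $H^*(U)$. I would handle this via the Poincaré-duality pairing $H_c^*\times H^*$ exactly as in \eqref{eqn:def:local:Chern}, noting that both $E\cap\widetilde{\mathcal Y}$ and ${\mathfrak D}_{J_F^\natural}$ are compact subvarieties mapping into $U$. Alternatively, one may invoke Theorem~\ref{thm:locality:sing:tors} with $(\mathcal W,\eta)=(\mathcal X,\xi)$ compared against $(\mathcal X,{\mathcal O}^{\oplus r(\xi)})$, using additivity of $\kappa$ in $\xi$ for direct sums.
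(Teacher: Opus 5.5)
Your proposal is correct and follows essentially the paper's route. The paper applies Theorem~\ref{thm:locality:sing:tors} with $\mathcal{W}=\mathcal{X}$ and $\eta=\mathcal{O}^{r(\xi)}$, whose proof is exactly your replacement of ${\rm ch}(\xi)$ by $r(\xi)$ in $\alpha_{f,\cdot}$ and $\beta_{Y/X,\cdot}$ via \eqref{eqn:kappa}, and then concludes with Corollary~\ref{cor:sing:tors:triv} for (1) and Proposition~\ref{prop:EF24} for (2). You unpack that locality argument and obtain the factor $r(\xi)$ directly, rather than passing through $\mathcal{O}^{r(\xi)}$.
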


\begin{pf}
Setting ${\mathcal W} = {\mathcal X}$ and $\eta = {\mathcal O}_{\mathcal X}^{r(\xi)}$ in Theorem~\ref{thm:locality:sing:tors}, 
we obtain (1) from Corollary~\ref{cor:sing:tors:triv} and (2) from Proposition~\ref{prop:EF24}.
\end{pf}

The locality of $\kappa_{\pi, K_{X/S}(\xi)}$ entails the corresponding locality of $\delta_{\pi, K_{X/S}(\xi)}$.

\begin{theorem}
\label{thm:sing:L2:N:pos:gen}
With the same notation and assumption as in Theorem~\ref{thm:locality:sing:tors}, 
one has $\delta_{\pi, K_{X/S}(\xi)} = \delta_{\psi, K_{W/S}(\eta)}$. In particular,
if $\sigma$ and $\sigma'$ are nowhere vanishing holomorphic sections on $S$ of $\lambda(K_{X/S}(\xi))$ and
$\lambda(K_{W/S}(\eta))$ respectively, then there exists $c'_{\xi,\eta} \in {\mathbf R}$ such that as $s \to 0$,
$$
\log ( \| \sigma(s) \|_{L^{2}}^{2} / \| \sigma'(s) \|_{L^{2}}^{2} ) =  -\varrho_{\xi,\eta} \log\log(|s|^{-2}) + c'_{\xi,\eta} + O(1/\log |s|^{-1}),
$$
where $\varrho_{\xi,\eta} \in {\mathbf Z}$ is the same integer as in Theorem~\ref{thm:locality:sing:tors}. 
\end{theorem}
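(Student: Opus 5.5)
The plan is to derive the equality $\delta_{\pi, K_{X/S}(\xi)} = \delta_{\psi, K_{W/S}(\eta)}$ from the locality of $\kappa$ in Theorem~\ref{thm:locality:sing:tors}, together with the decomposition $\kappa = \alpha + \delta$ of \eqref{eqn:def:kappa}. The remaining ingredient is the locality of the topological term $\alpha$.

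The first step is to show that $\alpha_{\pi,K_{X/S}(\xi)} = \alpha_{\psi,K_{W/S}(\eta)}$. By \eqref{eqn:log:coeff:tw} (with $g=1$, so that only the integral over $E_{0}$ survives), $\alpha_{\pi,K_{X/S}(\xi)}$ is an integral over $E_{0} = q^{-1}(\Sigma_{\pi}\cap X_{0})$. Its integrand is built from characteristic classes of $\widetilde{\gamma}^{*}{\mathcal H}$, $q^{*}T{\mathcal X}$, $q^{*}K_{\mathcal X}$ and $q^{*}\xi$. The resolution $q$ of the Gauss map may be chosen to be an isomorphism away from $\Sigma_{\pi}$. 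Hence, over $U$, the resolution for $\pi$ can be transported via $\iota$ to one for $\psi$ over $V$; this is legitimate because the Gauss map depends only on the function $\pi|_{U}$ and on $TU$.

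Since $\alpha$ does not depend on the choice of resolution (Theorem~\ref{Thm:Sing:Q:adj} identifies it with a coefficient of a metric singularity), we may use the transported resolutions on both sides. The restrictions of all the classes involved to a neighbourhood of $E_{0}$ then correspond under $\iota$. For $\xi$ and $\eta$ this uses the $C^{\infty}$ isomorphism in hypothesis (1), which suffices because the Chern character is a topological class. Therefore the two integrals over $E_{0}$ agree.

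The second step is to combine this with Theorem~\ref{thm:locality:sing:tors}. By \eqref{eqn:def:kappa}, $\delta = \kappa - \alpha$, so the two $\delta$'s coincide.

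The third step handles the $L^{2}$-metrics. By \eqref{eqn:asym:eq:L2} with $G$ trivial we have
$$
\log\|\sigma(s)\|^{2}_{L^{2}} = -\delta_{\pi}\log|s|^{2} + \varrho_{\pi}\log\log(|s|^{-2}) + c + O(1/\log|s|^{-1}),
$$
and the analogous expansion holds for $\sigma'$. Subtracting, the $\log|s|^{2}$ terms cancel by the second step, leaving $(\varrho_{\pi}-\varrho_{\psi})\log\log(|s|^{-2})$ plus a constant.

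It remains to identify $\varrho_{\pi}-\varrho_{\psi}$ with $-\varrho_{\xi,\eta}$. From \eqref{eqn:asymp:eq:tors} and the equality of the $\kappa$'s, the torsion ratio is
$$
\log\frac{\tau(X_{s}, K_{X_{s}}(\xi_{s}))}{\tau(W_{s}, K_{W_{s}}(\eta_{s}))} = -(\varrho_{\pi}-\varrho_{\psi})\log\log(|s|^{-2}) + \text{const} + O(1/\log|s|^{-1}).
$$
Thus $\varrho_{\xi,\eta} = -(\varrho_{\pi}-\varrho_{\psi})$, which gives the stated sign.

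The only step needing real care is the locality of $\alpha$ in the first step. One must check that the Gauss-map resolution can be taken compatible with $\iota$, and that hypothesis (1), which is only a smooth isomorphism of bundles, suffices. I expect both points to be straightforward, since the integrand of $\alpha$ involves only Chern classes supported near $\Sigma_{\pi}$.
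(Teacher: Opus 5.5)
Your argument is correct and uses the same ingredients as the paper, only in the opposite order. The paper obtains the $L^{2}$-ratio expansion directly from Theorems~\ref{Thm:Sing:Q:adj} and~\ref{thm:locality:sing:tors} (via $\log\|\cdot\|_{L^{2}}^{2}=\log\|\cdot\|_{Q}^{2}-\log\tau$) and then reads off $\delta_{\pi,K_{X/S}(\xi)}=\delta_{\psi,K_{W/S}(\eta)}$, whereas you first get the equality of the $\delta$'s from $\kappa=\alpha+\delta$ and then the $\log\log$ term from \eqref{eqn:asym:eq:L2} and \eqref{eqn:asymp:eq:tors}; your Step 1 (locality of $\alpha$) is exactly what the paper uses tacitly, since otherwise the Quillen metrics would leave a term $(\alpha_{\pi,K_{X/S}(\xi)}-\alpha_{\psi,K_{W/S}(\eta)})\log|s|^{2}$ in the ratio.
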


\begin{pf}
The second statement follows from Theorems~\ref{Thm:Sing:Q:adj} and~\ref{thm:locality:sing:tors}. 
The first statement follows from the second one.
\end{pf}

\begin{theorem}
\label{thm:sing:L2:N:pos}
With the same notation and assumption as in Theorem~\ref{thm:sing:tors:N:pos},
let $\sigma$ be a nowhere vanishing holomorphic section of $\lambda(K_{X/S}(\xi))$ defined on $S$.  
\par{\rm (1)}
If $\xi$ is topologically trivial near ${\rm Sing}\,X_{0}$, then there exists $c'_{\xi} \in {\mathbf R}$  such that as $s \to 0$,
$$
\log \| \sigma(s) \|_{L^{2}}^{2} = -r(\xi) \epsilon_{\pi} \log |s|^{2} + \varrho_{\pi,K_{X/S}(\xi)}\log\log(|s|^{-2}) + c'_{\xi} + O(1/\log |s|^{-1}).
$$
\par{\rm (2)}
If $\dim {\rm Sing}\,X_{0} = 0$, then there exists $c'_{\xi} \in {\mathbf R}$  such that as $s \to 0$,
$$
\log  \| \sigma(s) \|_{L^{2}}^{2} 
= -r(\xi)  \widetilde{p}_{g}({\rm Sing}\,X_{0}) \log |s|^{2} + \varrho_{\pi, K_{X/S}(\xi)} \log\log (|s|^{-2}) + c'_{\xi} + O(1/\log |s|^{-1}).
$$
\end{theorem}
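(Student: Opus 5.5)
The plan is to combine Theorem~\ref{thm:main:1} (the torsion asymptotics), Theorem~\ref{Thm:Sing:Q:adj} (the singularity of the Quillen metric) and Theorem~\ref{thm:sing:tors:N:pos}. The identity
\[
\log\|\sigma\|_{Q}^{2}=\log\|\sigma\|^{2}_{L^{2}}-\log\tau(X_{s},K_{X_{s}}(\xi_{s}))
\]
isolates the $L^{2}$ part. By Theorem~\ref{Thm:Sing:Q:adj} with $g=1$, we have
\[
\log\|\sigma\|_{Q}^{2}\equiv_{\mathcal B}\alpha_{\pi,K_{X/S}(\xi)}\log|s|^{2}.
\]
Since functions in ${\mathcal B}(S)$ equal a constant plus $O(|s|^{r}(\log|s|^{-1})^{n})$, this is $\alpha_{\pi,K_{X/S}(\xi)}\log|s|^{2}+c+O(1/\log|s|^{-1})$. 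By Theorem~\ref{thm:main:1} (equation \eqref{eqn:asymp:eq:tors} with $g=1$), we have
\[
\log\tau=\kappa_{\pi,K_{X/S}(\xi)}\log|s|^{2}-\varrho_{\pi,K_{X/S}(\xi)}\log\log(|s|^{-2})+\gamma_{1}+O(1/\log|s|^{-1}).
\]
Subtracting, and recalling that $\kappa=\alpha+\delta$ by \eqref{eqn:def:kappa}, gives
\[
\log\|\sigma\|^{2}_{L^{2}}=-\delta_{\pi,K_{X/S}(\xi)}\log|s|^{2}+\varrho_{\pi,K_{X/S}(\xi)}\log\log(|s|^{-2})+c'_{\xi}+O(1/\log|s|^{-1}).
\]
This step is essentially \eqref{eqn:asym:eq:L2}, so it can be cited directly. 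The constant $c'_{\xi}$ is real because all terms with $g=1$ are real.

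It remains to identify $\delta_{\pi,K_{X/S}(\xi)}=\kappa_{\pi,K_{X/S}(\xi)}-\alpha_{\pi,K_{X/S}(\xi)}$ in the two cases. In case (1), Theorem~\ref{thm:sing:tors:N:pos}(1) gives $\kappa_{\pi,K_{X/S}(\xi)}=r(\xi)(\alpha_{\pi,K_{X/S}}+\epsilon_{\pi})$. I then need
\[
\alpha_{\pi,K_{X/S}(\xi)}=r(\xi)\,\alpha_{\pi,K_{X/S}}.
\]
This is the step I expect to be the main obstacle, though it should be mild. The claim should follow from \eqref{eqn:log:coeff:tw}: with $g=1$ there is no vertical term, and the horizontal term is an integral over $E_{0}\cap\widetilde{\mathcal X}$, which lies in $q^{-1}(\Sigma_{\pi}\cap X_{0})$. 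Hence only the restriction of ${\rm ch}(\xi)$ to a neighborhood of ${\rm Sing}\,X_{0}$ matters. There ${\rm ch}(\xi)=r(\xi)$, because $\xi$ is topologically trivial near ${\rm Sing}\,X_{0}$. To be careful I would pull back via $q$ and note that $q^{*}{\rm ch}(\xi)$ restricted to $q^{-1}(U)$ equals $r(\xi)$ in cohomology, with $E_{0}\subset q^{-1}(U)$. Then $\delta_{\pi,K_{X/S}(\xi)}=r(\xi)\epsilon_{\pi}$, which gives (1).

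In case (2), Theorem~\ref{thm:sing:tors:N:pos}(2) gives $\kappa$ in terms of Milnor numbers and spectral genera. Isolated singularities make $\xi$ automatically topologically trivial near ${\rm Sing}\,X_{0}$, since small balls are contractible, so case (1) already gives $\delta_{\pi,K_{X/S}(\xi)}=r(\xi)\epsilon_{\pi}=r(\xi)\,\delta_{\pi,K_{X/S}}$. It then remains to show $\delta_{\pi,K_{X/S}}=\widetilde{p}_{g}({\rm Sing}\,X_{0}):=\sum_{x}\widetilde{p}_{g}(\pi_{x})$. For this, apply Proposition~\ref{prop:EF24} together with $\kappa_{\pi,K_{X/S}}=\alpha_{\pi,K_{X/S}}+\delta_{\pi,K_{X/S}}$. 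This reduces the claim to
\[
\alpha_{\pi,K_{X/S}}=-\sum_{x}\frac{\mu(\pi_{x})}{(n+2)!}.
\]
That formula is the local computation of the Quillen-metric singularity for isolated critical points from \cite{Yoshikawa07}, \cite{ErikssonFreixas26}. I would cite it, or equivalently use the result of \cite{ErikssonFreixas26} that the $L^{2}$-metric exponent for $K_{X/S}$ equals the spectral genus. Substituting then yields (2).
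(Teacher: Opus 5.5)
Your proposal is correct and is essentially the paper's argument: the paper also combines Theorem~\ref{Thm:Sing:Q:adj} (Quillen coefficient $\alpha$) with Theorem~\ref{thm:sing:tors:N:pos} (torsion coefficient $\kappa$) to read off the $L^{2}$ coefficient $\alpha-\kappa=-\delta$, and your explicit checks that $\alpha_{\pi,K_{X/S}(\xi)}=r(\xi)\,\alpha_{\pi,K_{X/S}}$ and, in case (2), $\alpha_{\pi,K_{X/S}}=-\sum_{x}\mu(\pi_{x})/(n+2)!$ are exactly the identifications the paper leaves implicit. One slip: since $\log\tau=-\zeta'(0)$, the correct identity is $\log\|\sigma\|_{Q}^{2}=\log\|\sigma\|_{L^{2}}^{2}+\log\tau$ rather than the version with a minus sign that you displayed, though your subsequent subtraction already uses the correct version.
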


\begin{pf}
The result follows from Theorems~\ref{Thm:Sing:Q:adj} and \ref{thm:sing:tors:N:pos}.
\end{pf}

\begin{question}
In the situation of Theorems~\ref{thm:locality:sing:tors} and~\ref{thm:sing:L2:N:pos}, 
do the elementary exponents of $R^{q}\psi_{*}K_{W/S}(\eta)$ coincide with those of $R^{q}\pi_{*}K_{X/S}(\xi)$ for all $q \geq 0$?
\end{question}

\subsection
{Analytic torsion and the determinant of the period-type integrals}
\label{sect:7.3}
\par
In this subsection, as an application of Theorems~\ref{thm:sing:tors:N:pos} (2) and \ref{thm:sing:L2:N:pos} (2), 
we relate the ratio of analytic torsions to the ratio of the determinants of period-type integrals.
\par

\begin{theorem}
\label{thm:McKean:Singer:prod}
Let $H$ be an ample line bundle on ${\mathcal X}$ endowed with a Hermitian metric of semi-positive curvature. 
Let $\{ \varphi_{1}, \ldots, \varphi_{m_{1}} \}$, $m_{1}=h^{0}(K_{X_{s}}(H))$, be a basis of $\pi_{*}K_{X/S}(H)$ 
as a free ${\mathcal O}_{S}$-module. 
Similarly, let $\{ \omega_{1}, \ldots, \omega_{m_{2}} \}$, $m_{2} = h^{n,0}(X_{s})$, be a basis of $\pi_{*}K_{X/S}$ 
as a free ${\mathcal O}_{S}$-module. 
Suppose that $X_{0}$ has only isolated singularities. Then there exist constants $C,C'>0$ such that as $s \to 0$,
\begin{equation}
\label{eqn:McKean:Singer:prod}
\frac{ \tau(X_{s},K_{X_{s}}) }{ \tau(X_{s},K_{X_{s}}(H)) } 
\sim 
C \frac{\displaystyle \det\left( (\sqrt{-1})^{n^{2}} \int_{X_{s}} h_{H}(\varphi_{\alpha}(s) \wedge \overline{\varphi_{\beta}(s)} ) \right) }
{\displaystyle \det\left( (\sqrt{-1})^{n^{2}} \int_{X_{s}} \omega_{i}(s) \wedge \overline{\omega_{j}(s)} \right) }
\sim
C' (\log |s|^{-1})^{\varrho},
\end{equation}
where $\varrho := \varrho_{\pi, K_{X/S}(H)} - \varrho_{\pi, K_{X/S}}$ and $A(s) \sim B(s)$ if $\lim_{s\to0} A(s)/B(s) =1$.
Moreover, if the linear system $|H|$ contains a smooth member avoiding $\Sigma_{\pi}$ and intersecting $X_{0}$ transversally,
then $\varrho \geq 0$.
\end{theorem}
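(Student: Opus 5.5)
Since $H$ carries a semi-positive metric, $(H,h_{H})$ is Nakano semi-positive, and the trivial bundle ${\mathcal O}$ with trivial metric is as well. So both $\xi={\mathcal O}$ and $\xi=H$ are covered by Theorem~\ref{thm:main:1}. By Theorem~\ref{thm:sing:tors:N:pos} (2) with $r(\xi)=1$, the coefficients $\kappa_{\pi,K_{X/S}}$ and $\kappa_{\pi,K_{X/S}(H)}$ are both equal to $-\sum_{x}(\mu(\pi_{x})/(n+2)!-\widetilde{p}_{g}(\pi_{x}))$. Subtracting the two expansions \eqref{eqn:asymp:eq:tors} therefore cancels the $\log|s|^{2}$ terms, and
$$
\log\frac{\tau(X_{s},K_{X_{s}})}{\tau(X_{s},K_{X_{s}}(H))}=(\varrho_{\pi,K_{X/S}(H)}-\varrho_{\pi,K_{X/S}})\log\log(|s|^{-2})+c+o(1).
$$
Since $\log\log(|s|^{-2})=\log\log|s|^{-1}+\log 2$, this gives the second asymptotic $\sim C'(\log|s|^{-1})^{\varrho}$.

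For the middle expression, I would compare the torsions with the $L^{2}$-metrics through the Quillen metrics. Take nowhere vanishing sections $\sigma,\sigma'$ of $\lambda(K_{X/S}(H))$ and $\lambda(K_{X/S})$. By Theorem~\ref{Thm:Sing:Q:adj}, $\log\|\sigma\|^{2}_{Q}\equiv_{\mathcal B}\alpha_{\pi,K_{X/S}(H)}\log|s|^{2}$, and similarly for $\sigma'$. Consequently
$$
\log\frac{\tau(K)}{\tau(K(H))}=\log\|\sigma\|^{2}_{L^{2}}-\log\|\sigma'\|^{2}_{L^{2}}+(\alpha_{\pi,K_{X/S}}-\alpha_{\pi,K_{X/S}(H)})\log|s|^{2}+c''+o(1).
$$
By Theorem~\ref{thm:sing:L2:N:pos} (2), both $L^{2}$-norms have leading coefficient $-\widetilde{p}_{g}({\rm Sing}\,X_{0})$. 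Combined with the equality of the $\kappa$'s, the $\alpha$-difference must vanish, so the torsion ratio is $C\|\sigma\|^{2}_{L^{2}}/\|\sigma'\|^{2}_{L^{2}}(1+o(1))$.

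It remains to identify the $L^{2}$-determinants with the period-type determinants. The higher-degree terms $q>0$ enter $\|\sigma\|^{2}_{L^{2}}$ as well. For $K_{X/S}(H)$ with $H$ ample, I would use Nakano/Kodaira vanishing on fibers, $h^{q}(K_{X_{s}}(H))=0$ for $q>0$, so only $q=0$ contributes. The $q=0$ $L^{2}$-metric on $\pi_{*}K_{X/S}(H)$ is exactly $(\sqrt{-1})^{n^{2}}\int_{X_{s}}h_{H}(\varphi_{\alpha}\wedge\overline{\varphi_{\beta}})$, up to a universal constant.

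For $K_{X/S}$, however, the groups $R^{q}\pi_{*}K_{X/S}$ with $q>0$ do not vanish. This is the main obstacle. I would handle it Hodge-theoretically: by Proposition~\ref{prop:EFM21} and the isolated-singularity hypothesis, the contribution of $q\geq1$ comes from $F^{n}_{\infty}H^{n+q}$. For isolated singularities, $H^{n+q}(X_{\infty})$ with $q\ge1$ has trivial monodromy and extends across $0$ as a polarized VHS with continuous, nondegenerate Hodge metric. So these factors contribute $c+o(1)$, and only the $q=0$ period determinant survives. The same continuity holds for $K_{X/S}(H)$ trivially.

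For the final claim $\varrho\geq0$, I would use that $\varrho$ equals the $\log\log$-exponent of $\det\int h_{H}\varphi\wedge\bar\varphi\big/\det\int\omega\wedge\bar\omega$. Given a smooth member $D\in|H|$ avoiding $\Sigma_{\pi}$ and transverse to $X_{0}$, the adjunction sequence $0\to K_{X_{s}}\to K_{X_{s}}(H)\to K_{D_{s}}\to0$ embeds $\pi_{*}K_{X/S}$ into $\pi_{*}K_{X/S}(H)$, with quotient $\pi_{*}K_{D/S}$ for the smooth family $D\to S$. The $L^{2}$-metric on $\pi_{*}K_{D/S}$ is continuous and nondegenerate. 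Meanwhile, $h_{H}$ applied to $\omega\cdot s_{D}$ is bounded by a constant times $\omega\wedge\bar\omega$, so the sub-Gram determinant is controlled by the original. Hence the ratio of determinants is bounded below by a positive constant, which forces $\varrho\ge0$.
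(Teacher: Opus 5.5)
Your derivation of the two asymptotic equivalences in \eqref{eqn:McKean:Singer:prod} is essentially the paper's. It uses the equality of the $\kappa$'s from Theorem~\ref{thm:sing:tors:N:pos}~(2), Kodaira vanishing for $K_{X_s}(H)$, and triviality of the monodromy on $H^{n+q}(X_\infty)$, $q\ge1$, to make the $L^2$-metrics on $\det R^q\pi_*K_{X/S}$, $q>0$, continuous and non-degenerate (the paper phrases this last step via Serre duality and \cite[Th.\,4.4]{EFM21}).

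\textbf{The gap is in the proof of $\varrho\ge0$.} Fix the following notation.
\begin{itemize}
\item $A$ is the Gram matrix of a frame of $\pi_*K_{X/S}(H)$ consisting of $s_D\omega_1,\dots,s_D\omega_{m_2}$ and lifts of a frame of the quotient. Its determinant is comparable to that of $(\varphi_\alpha)$.
\item $G_{11}$ is the block of $A$ on the $s_D\omega_i$.
\item $S$ is the Schur complement, i.e.\ the Gram matrix of the quotient metric.
\item $B$ is the Gram matrix of the $\omega_i$.
\end{itemize}
Then $\det A=\det G_{11}\det S$. A lower bound for $\det A/\det B$ therefore needs both $\det G_{11}\gtrsim\det B$ and $\det S\gtrsim1$.

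Your estimate $|s_D|^2_{h_H}\le C$ gives $G_{11}\le C\,B$, i.e.\ $\det G_{11}\lesssim\det B$. This is the opposite inequality and says nothing about $\varrho\ge0$. No pointwise reverse bound is available, since $s_D$ vanishes on $D$.

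The missing idea is locality. The $\log$-growth of these fibre integrals comes only from a neighbourhood of $\Sigma_\pi$ (of $\mathrm{Sing}\,Y_0$ on the semi-stable model), and there $|s_D|$ is bounded below because $D\cap\Sigma_\pi=\emptyset$. The paper gets this from \cite[Lemma~3.4]{Takayama22}: the orders of $\|\widetilde{\omega}_k\|^2$ with and without the weight $|s_D|^2$ agree. It combines this with the adapted bases of \cite[Lemma~5.2]{Takayama22} to pass from individual norms to determinants.

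Within your set-up you could argue directly. Take a cut-off $\chi$ equal to $1$ near $\Sigma_\pi$ and supported where $|s_D|^2_{h_H}\ge c$. Then:
\begin{itemize}
\item $G_{11}\ge c\,B_\chi$;
\item $B_{1-\chi}$ is bounded, because $\pi$ is submersive on $\mathrm{supp}(1-\chi)$;
\item $G_{11}\ge c'I$ by Theorem~\ref{thm:str:sing:L2}~(1),(2), since $|t^{-e_\alpha}|\ge1$.
\end{itemize}
Hence $B\le c^{-1}G_{11}+C'I\le C''G_{11}$.

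\textbf{The quotient factor is also not handled correctly.} The quotient $\pi_*K_{X/S}(H)/\pi_*K_{X/S}$ is not $\pi_*K_{D/S}$. It is only the image $Q$ of the residue map, because $H^0(K_{D_s})\to H^1(K_{X_s})$ need not vanish. $Q$ is a subbundle of $\pi_*K_{D/S}$ only because its cokernel injects into the locally free sheaf $R^1\pi_*K_{X/S}$.

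Moreover, $S$ is the quotient metric induced by the $L^2$-metric of $K_{X_s}(H)$, not the $L^2$-metric of $D_s$. Continuity and non-degeneracy of the latter bounds $S$ from below only once you show that the residue map $L^2(X_s)\to L^2(D_s)$ is bounded uniformly in $s$. This follows from mean-value estimates on a tubular neighbourhood of $D$, which stays away from $\Sigma_\pi$.
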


\begin{pf}
We keep the notation of Theorems~\ref{thm:sing:tors:N:pos} and \ref{thm:sing:L2:N:pos}. Set $\xi = {\mathcal O}_{X}$ and $\xi' = H$.
Since $H$ is ample, $\sigma' = \varphi_{1} \wedge\cdots\wedge \varphi_{m_{2}}$ by the Kodaira vanishing theorem. Hence
\begin{equation}
\label{eqn:det:period:H}
\| \sigma'(s) \|_{L^{2}}^{2} = \det\left( (\sqrt{-1})^{n^{2}} \int_{X_{s}} h_{H}(\varphi_{\alpha}(s) \wedge \overline{\varphi_{\beta}(s)} ) \right)
\qquad
(s \in S^{o}).
\end{equation}
\par
Since $X_{0}$ has only isolated singularities, the monodromy action on $H^{q}(X_{s}, {\mathbf C})$ $(s\in S^{o})$ is trivial for $0\leq q<n$. 
Since $(R^{q}\pi_{*}K_{X/S}, h_{L^{2}})$ is the dual of the Hermitian vector bundle $(R^{n-q}\pi_{*}{\mathcal O}_{X}, h_{L^{2}})$ 
by the Serre duality, we deduce from the triviality of the monodromy action on $H^{q}(X_{s}, {\mathbf C})$ $(0\leq q<n)$ 
and \cite[Th.\,4.4]{EFM21} that the $L^{2}$-metric on $\det R^{q}\pi_{*}K_{X/S}$ defined on $S^{o}$ extends to 
a non-degenerate continuous metric on $\det R^{q}\pi_{*}K_{X/S}$ defined on $S$ for $q>0$. 
Hence there exists a continuous function $g(s)$ on $S$ such that
\begin{equation}
\label{eqn:det:period:O}
\| \sigma(s) \|_{L^{2}}^{2} = e^{g(s)} \det\left( (\sqrt{-1})^{n^{2}} \int_{X_{s}} \omega_{i}(s) \wedge \overline{\omega_{j}(s)} \right)
\qquad
(s \in S^{o}).
\end{equation}
From Theorems~\ref{thm:sing:tors:N:pos} (2) and \ref{thm:sing:L2:N:pos} and \eqref{eqn:det:period:H}, \eqref{eqn:det:period:O}, 
we obtain \eqref{eqn:McKean:Singer:prod}.
\par
We prove the non-negativity of $\varrho$. Let $D \in |H|$ be a smooth hypersurface in $X$ with $D \cap \Sigma_{\pi}=\emptyset$ 
and intersecting $X_{0}$ transversally.
Let $f \colon Y \to T$ be the semi-stable reduction of $\pi \colon X \to S$ as in \eqref{eqn:ss:red}. 
Then $F^{-1}(D)$ is a smooth hypersurface in $Y$ with $F^{-1}(D)\cap F^{-1}(\Sigma_{\pi}) = \emptyset$ and
intersecting $Y_{0}$ transversally.
Let $\{ \widetilde{\varphi}_{1}, \ldots, \widetilde{\varphi}_{m_{1}} \}$ and $\{ \widetilde{\omega}_{1}, \ldots, \widetilde{\omega}_{m_{2}} \}$
be bases of $f_{*}K_{Y/T}(F^{*}H)$ and $f_{*}K_{Y/T}$ as free ${\mathcal O}_{T}$-modules, respectively.
To see that we can assume 
$\{ \widetilde{\omega}_{1}, \ldots, \widetilde{\omega}_{m_{2}} \} \subset \{ \widetilde{\varphi}_{1}, \ldots, \widetilde{\varphi}_{m_{1}} \}$, 
consider the exact sequence $0 \to f_{*}K_{Y/T} \to f_{*}K_{Y/T}(F^{*}H) \to f_{*}(K_{D/T})$
obtained from the Poincar\'e residue sequence for $F^{-1}(D) \subset Y$. 
Since $f_{*}K_{Y/T}$, $f_{*}K_{Y/T}(F^{*}H)$, and $f_{*}(K_{D/T})$ are free ${\mathcal O}_{T}$-modules,
$f_{*}K_{Y/T}$ is a direct summand of $f_{*}K_{Y/T}(F^{*}H)$. Consequently, we can assume 
$\{ \widetilde{\omega}_{1}, \ldots, \widetilde{\omega}_{m_{2}} \} \subset \{ \widetilde{\varphi}_{1}, \ldots, \widetilde{\varphi}_{m_{1}} \}$. 
\par
By Theorem~\ref{thm:str:sing:L2} (3), we have
\begin{equation}
\label{eqn:loglog:0}
\varrho_{f,K_{Y/T}(F^{*}H)} = \varrho_{f,K_{X/S}(H)},  \qquad  \varrho_{f,K_{Y/T}} = \varrho_{f,K_{X/S}}.
\end{equation}
By \cite[Lemma~5.2]{Takayama22}, there is a basis $\{ \psi_{1}, \ldots, \psi_{m_{1}} \}$ of $f_{*}K_{Y/T}(F^{*}H)$ satisfying
${\rm ord}(\| \psi_{1} \wedge\cdots\wedge \psi_{m_1}\|_{L^{2}}^{2}) = {\rm ord}(\prod_{k=1}^{m_{1}} \| \psi_{k} \|_{L^{2}}^{2})$ 
and ${\rm ord} (\| \psi_{k} \|_{L^{2}}^{2}) \geq ({\rm ord} \| \psi_{l} \|_{L^{2}}^{2}) \geq 0$ whenever $k < l$. 
Here for $f(t) = \sum_{l=0}^{n} a_{l}(t) (-\log |t|^{2}) ^{l}$ with $a_{l}(t)\in C^{\infty}(T)$ and $(a_{0}(0),\ldots,a_{n}(0))\not=(0,\ldots,0)$, 
we set ${\rm ord} (f) := \max\{ 0\leq l \leq n;\, a_{l}(0) \not= 0\}$. Then
\begin{equation}
\label{eqn:loglog:1}
\varrho_{f,K_{Y/T}(F^{*}H)} = \sum_{k=1}^{m_{1}} {\rm ord} (\| \psi_{k} \|_{L^{2}}^{2}).
\end{equation}
Express $\omega_{k} = \sum_{l=1}^{m_{1}} c_{kl}(t) \psi_{l}$, $1\leq k \leq m_{1}$, 
where the functions $c_{kl}(t) \in {\mathcal O}(T)$ satisfy ${\rm rank}( c_{kl}(0) ) = m_{2}$. 
We can find a basis $\{ \widetilde{\omega}'_{1}, \ldots, \widetilde{\omega}'_{m_{2}} \}$ of $f_{*}K_{Y/T}$ 
satisfying $\widetilde{\omega}'_{k} \equiv \psi_{i_{k}} + \sum_{j>i_{k}} b_{i_{k},j} \psi_{j} \mod t{\mathcal O}(T)$ 
and $i_{1}<i_{2}<\cdots<i_{m_{2}}$, where the $b_{i_{k},j}$ are constants. 
Since ${\rm ord}(  \| \widetilde{\omega}'_{k} \|_{L^{2}}^{2}) \leq {\rm ord}(  \| \psi_{i_k} \|_{L^{2}}^{2})$
and $\| \widetilde{\omega}'_{1} \wedge\cdots\wedge \widetilde{\omega}'_{m_{2}} \|_{L^{2}}^{2} \leq 
\prod_{k=1}^{m_{2}} \| \widetilde{\omega}'_{k} \|_{L^{2}}^{2}$, 
we obtain
\begin{equation}
\label{eqn:loglog:2}
{\rm ord} (\| \widetilde{\omega}'_{1} \wedge\cdots\wedge \widetilde{\omega}'_{m_{2}} \|_{L^{2}}^{2} )
\leq
\sum_{k=1}^{m_{2}} {\rm ord}(  \| \widetilde{\omega}'_{k} \|_{L^{2}}^{2}) 
\leq
\sum_{k=1}^{m_{2}} {\rm ord}(  \| \psi_{i_k} \|_{L^{2}}^{2}) 
\leq 
\sum_{l=1}^{m_{1}} {\rm ord}(  \| \psi_{l} \|_{L^{2}}^{2}). 
\end{equation}
Let $\| \cdot \|'_{L^{2}}$ be the $L^{2}$-norm on $f_{*}K_{Y/T}$. By \cite[Lemma~5.2]{Takayama22} again, we can assume 
\begin{equation}
\label{eqn:loglog:3}
\varrho_{f,K_{Y/T}} 
=
{\rm ord}(\| \widetilde{\omega}_{1} \wedge\cdots\wedge \widetilde{\omega}_{m_{2}} \|_{L^{2}}^{\prime 2})
= 
\sum_{k=1}^{m_{2}} {\rm ord}(\| \widetilde{\omega}_{k}\|_{L^{2}}^{\prime 2}).
\end{equation}
Since ${\rm Sing}\,Y_{0} \subset F^{-1}(\Sigma_{\pi})$ and hence $F^{-1}(D) \cap {\rm Sing}\,Y_{0} = \emptyset$, 
it follows from \cite[Lemma\,3.4]{Takayama22} that
\begin{equation}
\label{eqn:loglog:4}
{\rm ord}(\| \widetilde{\omega}_{k}\|_{L^{2}}^{\prime 2}) = {\rm ord}(\| \widetilde{\omega}_{k}\|_{L^{2}}^{2}).
\end{equation}
By \eqref{eqn:loglog:0}, \eqref{eqn:loglog:1}, \eqref{eqn:loglog:3}, \eqref{eqn:loglog:3}, \eqref{eqn:loglog:4}, 
we conclue $\varrho_{\pi,K_{X/S}} \leq \varrho_{\pi,K_{X/S}(H)}$. 
\end{pf}

\begin{remark}
We note that the second equality of \cite[Conjecture~9.6]{DaiYoshikawa25} follows from \eqref{eqn:McKean:Singer:prod} and
Theorems~\ref{thm:str:sing:L2} (3) and \ref{thm:sing:L2:N:pos} (2). Conjecture~\ref{conj:indep:rho} says that $\varrho$ is independent of $H$. 
For a conjectural interpretation of the left hand side of \eqref{eqn:McKean:Singer:prod} in terms of the small eigenvalues of 
the Hodge-Kodaira Laplacian of $X_{s}$, see \cite[Conjecture~9.6 ]{DaiYoshikawa25}.
\end{remark}

\section
{The boundary behavior of the curvature of $L^{2}$-metrics}
\label{sect:8}
\par
We keep the notation and assumption in Section~\ref{sect:4}.
Let ${\mathcal R}(s)\,ds\wedge d\bar{s}$ be the curvature form of $(R^{q}\pi_{*}K_{X/S}(\xi), h_{L^{2}})$. 
By Berndtsson \cite{Berndtsson09}, Mourougane-Takayama \cite{MourouganeTakayama08}, $i{\mathcal R}(s)$ is semi-positive. 
In this section, as a by-product of Theorem~\ref{thm:str:sing:L2}, we determine the asymptotic behavior of the first Chern form 
$c_{1}(R^{q}\pi_{*}K_{X/S}(\xi),h_{L^{2}})$ as $s \to 0$. In particular, we prove the Poincar\'e boundedness of the curvature 
$i{\mathcal R}(s)$.
In what follows, we write $C^{\infty}_{\bf R}(T)$ for the set of {\em real-valued} $C^{\infty}$ functions on $T$.


\begin{lemma}
\label{lemma:technical}
Let $I$ be a finite set. For $i\in I$, let $\ell_{i}\in{\mathbf Z}$ and $\varphi_{i}(t)\in C^{\infty}_{\mathbf R}(T)$. 
Set $g_{i}(t) := (\log|t|^{2})^{\ell_{i}}\varphi_{i}(t)$ for $i\in I$ and $g(t) := \sum_{i\in I} g_{i}(t)$. Suppose that $g(t)>0$ on $T^{o}$.
Then the following identity of functions on $T^{o}$ hold:
\begin{equation}
\label{eqn:dd:log:g}
\begin{aligned}
\partial_{t\bar{t}}\log g
&=
-\sum_{i}\frac{\ell_{i}}{|t|^{2}(\log|t|^{2})^{2}} \frac{g_{i}}{g}
+
\frac{1}{2}\sum_{i,j}  \left| \frac{\ell_{i}-\ell_{j}}{t(\log|t|^{2})}  \right|^{2} \frac{g_{i}g_{j}}{g^{2}}
+
\sum_{i} \frac{\partial_{t\bar{t}}\varphi_{i}}{\varphi_{i}} \frac{g_{i}}{g}
\\
&\quad
-
\sum_{i} \left| \frac{\partial_{t}\varphi_{i}}{\varphi_{i}} \frac{g_{i}}{g} \right|^{2}
+
\sum_{i,j} {\rm Re}\left\{ \left( \frac{\ell_{i}-\ell_{j}}{t(\log|t|^{2})}\right) \cdot
\left(\overline{\frac{\partial_{t}\varphi_{i}}{\varphi_{i}}-\frac{\partial_{t}\varphi_{j}}{\varphi_{j}}}\right) \right\} \frac{g_{i}g_{j}}{g^{2}}.
\end{aligned}
\end{equation}
In particular, if $0\leq \ell_{i}\leq N$ for all $i\in I$, then as $t\to0$
\begin{equation}
\label{eqn:d:log:g:asymp}
\begin{aligned}
\partial_{t\bar{t}}\log g
&=
-\sum_{i}\frac{\ell_{i}}{|t|^{2}(\log|t|^{2})^{2}} \frac{g_{i}}{g}
+
\frac{1}{2}\sum_{i,j}  \left| \frac{\ell_{i}-\ell_{j}}{t(\log|t|^{2})} \right|^{2} \frac{g_{i}g_{j}}{g^{2}}
\\
&\qquad
+
O\left( \frac{(\log |t|^{-1})^{2N}}{|t| g(t)^{2}} + \frac{(\log |t|^{-1})^{N}}{g(t)} \right).
\end{aligned}
\end{equation}
\end{lemma}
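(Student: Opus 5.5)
The identity \eqref{eqn:dd:log:g} is a direct computation; I will organize it through the logarithmic derivatives of the individual terms. Set $L:=\log|t|^{2}$, so that $\partial_{t}L=1/t$ and $\partial_{t\bar t}L=0$ on $T^{o}$. Where $\varphi_{i}\neq0$, write $u_{i}:=\partial_{t}g_{i}/g_{i}=\ell_{i}/(tL)+\partial_{t}\varphi_{i}/\varphi_{i}$. Since $g_{i}$ is real, $\partial_{\bar t}g_{i}=\overline{\partial_{t}g_{i}}$, and a direct differentiation gives
$$
\frac{\partial_{t\bar t}g_{i}}{g_{i}}=|u_{i}|^{2}-\frac{\ell_{i}}{|t|^{2}L^{2}}-\Bigl|\frac{\partial_{t}\varphi_{i}}{\varphi_{i}}\Bigr|^{2}+\frac{\partial_{t\bar t}\varphi_{i}}{\varphi_{i}}.
$$
The term $-\ell_{i}/(|t|^{2}L^{2})$ comes from $\partial_{\bar t}(\ell_{i}/(tL))=-\ell_{i}/(|t|^{2}L^{2})$; the cross terms cancel against $|u_{i}|^{2}$ in the expected way.

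Next use $\partial_{t\bar t}\log g=\sum_{i}(g_{i}/g)\,\partial_{t\bar t}g_{i}/g_{i}-|\sum_{i}(g_{i}/g)u_{i}|^{2}$. Apply the variance identity $\sum_{i}p_{i}|u_{i}|^{2}-|\sum_{i}p_{i}u_{i}|^{2}=\tfrac12\sum_{i,j}p_{i}p_{j}|u_{i}-u_{j}|^{2}$ with weights $p_{i}=g_{i}/g$, which sum to $1$ (the weights need not be positive, only summing to one). Then expand $|u_{i}-u_{j}|^{2}$ into three pieces: the $\ell$-difference squared, the $\varphi$-log-derivative difference squared, and twice the real part of their product. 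The first piece gives the second term of \eqref{eqn:dd:log:g}, and the cross piece gives the last term. The $\varphi$-piece, $\tfrac12\sum p_{i}p_{j}|a_{i}-a_{j}|^{2}$ with $a_{i}=\partial_{t}\varphi_{i}/\varphi_{i}$, should combine with $-\sum p_{i}|a_{i}|^{2}$ into the stated form. I need to check that bookkeeping carefully, since the printed fourth term may be exactly this recombination (or a typo of it). This algebraic matching of terms is the only delicate point.

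Zeros of $\varphi_{i}$ are handled by multiplying through. Every term in \eqref{eqn:dd:log:g} in which $\varphi_{i}$ appears in a denominator actually appears multiplied by $g_{i}$ or $g_{i}g_{j}$, so it is a smooth expression of the form $(\log|t|^{2})^{\ell_{i}}\partial\varphi_{i}$ and so on. Hence the identity, proved first on $\{\varphi_{i}\neq0\}$, extends by continuity (or is proved directly by computing $g\,\partial_{t\bar t}g-|\partial_{t}g|^{2}$).

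For \eqref{eqn:d:log:g:asymp}, I estimate the three $\varphi$-terms after clearing denominators. The term $\sum_{i}g_{i}\,\partial_{t\bar t}\varphi_{i}/(\varphi_{i}g)=\sum_{i}L^{\ell_{i}}\partial_{t\bar t}\varphi_{i}/g$ is $O((\log|t|^{-1})^{N}/g)$. The remaining two terms have the form $\sum_{i,j}(\cdots)L^{\ell_{i}+\ell_{j}}/g^{2}$, where each summand involves $\partial_{t}\varphi\,\overline{\partial_{t}\varphi}$ or $(tL)^{-1}\partial\varphi\cdot\varphi$. These are bounded by $O((\log|t|^{-1})^{2N}/(|t|g^{2}))$, using that $|t|^{-1}$ dominates bounded terms as $t\to0$ and that $\ell_{i}\geq0$ with $|L|^{-1}\leq1$ for small $t$.
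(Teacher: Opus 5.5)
Your approach is the same direct differentiation the paper uses. The paper expands $|\partial_t g|^2$ by hand in its proof, while you route it through the variance identity $\sum_i p_i|u_i|^2-\bigl|\sum_i p_iu_i\bigr|^2=\frac12\sum_{i,j}p_ip_j|u_i-u_j|^2$. That identity holds for real weights with $\sum_i p_i=1$, and it produces the second and fifth terms in one stroke. Your formula for $\partial_{t\bar t}g_i/g_i$ is correct.

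The step you left open does not close the way you hoped. Applying the same variance identity to $a_i=\partial_t\varphi_i/\varphi_i$ gives
\[
\tfrac12\sum_{i,j}p_ip_j|a_i-a_j|^2-\sum_i p_i|a_i|^2=-\Bigl|\sum_i p_i a_i\Bigr|^2 .
\]
So the fourth term is $-\bigl|\sum_i\frac{\partial_t\varphi_i}{\varphi_i}\frac{g_i}{g}\bigr|^2$, with the sum inside the modulus. The printed $-\sum_i\bigl|\frac{\partial_t\varphi_i}{\varphi_i}\frac{g_i}{g}\bigr|^2$ is a typo, and the identity as printed can fail once $|I|\geq 2$.

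For example, take $I=\{1,2\}$, $\ell_1=\ell_2=0$ and $\varphi_1=\varphi_2=\varphi>0$. The left side is $\partial_{t\bar t}\varphi/\varphi-|\partial_t\varphi|^2/\varphi^2$. The printed right side gives $\partial_{t\bar t}\varphi/\varphi-\frac12|\partial_t\varphi|^2/\varphi^2$. The paper's own computation in fact yields the corrected term: the last summand of \eqref{eqn:comp:met:2} is $\bigl|\sum_j\frac{\partial_t\varphi_j}{\varphi_j}g_j\bigr|^2$. So state and prove the corrected identity rather than trying to match the printed one.

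Your derivation of \eqref{eqn:d:log:g:asymp} is fine and unaffected by this correction. Both $\bigl|\sum_i L^{\ell_i}\partial_t\varphi_i\bigr|^2/g^2$ and $\sum_i|L^{\ell_i}\partial_t\varphi_i|^2/g^2$ are $O((\log|t|^{-1})^{2N}/g^2)$, with your $L=\log|t|^{2}$. The cross term is $O((\log|t|^{-1})^{2N-1}/(|t|g^2))$, which is the same bound the paper obtains from $g_i/\varphi_i=O((\log|t|^{-1})^N)$.

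One small point on zeros of the $\varphi_i$. Extending from $\{\prod_i\varphi_i\neq0\}$ by continuity needs that set to be dense, which fails if some $\varphi_i$ vanishes on an open set. The cleared-denominator computation of $g\,\partial_{t\bar t}g-|\partial_t g|^2$, which you mention in parentheses, avoids this and should be the primary argument.
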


\begin{pf}
Since $\partial_{t} g_{i} = \left( \frac{\ell_{i}}{t(\log|t|^{2})}+\frac{\partial_{t}\varphi_{i}}{\varphi_{i}} \right) g_{i}$, we get 
\begin{equation}
\label{eqn:comp:met:1}
\begin{aligned}
\partial_{t\bar{t}} g_{i}(t)
&=
\left( -\frac{\ell_{i}}{|t|^{2}(\log|t|^{2})^{2}} 
+ \frac{\partial_{t\bar{t}}\varphi_{i}}{\varphi_{i}} -\frac{|\partial_{t}\varphi_{i}|^{2}}{\varphi_{i}^{2}}
+
\left| \frac{\ell_{i}}{t(\log|t|^{2})} + \frac{\partial_{t}\varphi_{i}}{\varphi_{i}} \right|^{2} \right) g_{i}
\\
&=
\left( 
-\frac{\ell_{i}}{|t|^{2}(\log|t|^{2})^{2}} + \frac{\partial_{t\bar{t}}\varphi_{i}}{\varphi_{i}}
+
\left| \frac{\ell_{i}}{t(\log|t|^{2})} \right|^{2} 
+
2{\rm Re}\left( \frac{\ell_{i}}{t(\log|t|^{2})} \right) \overline{\frac{\partial_{t}\varphi_{i}}{\varphi_{i}}}
\right) g_{i}.
\end{aligned}
\end{equation}
Since
\begin{equation}
\label{eqn:comp:met:2}
\begin{aligned}
|\partial_{t}g|^{2}
&=
\sum_{i,j\in I}
\left(
\frac{\ell_{i}}{t(\log|t|^{2})} + \frac{\partial_{t}\varphi_{i}}{\varphi_{i}}
\right)
\left(
\overline{\frac{\ell_{j}}{t(\log|t|^{2})}+\frac{\partial_{t}\varphi_{j}}{\varphi_{j}}}
\right)
g_{i}g_{j}
\\
&=
\left| \sum_{i} \left( \frac{\ell_{i}}{t(\log|t|^{2})} \right) g_{i} \right|^{2}
+
2 \sum_{i,j} {\rm Re} \left\{ \left(\frac{\ell_{i}}{t(\log|t|^{2})} \right) \overline{\frac{\partial_{t}\varphi_{j}}{\varphi_{j}}} \right\} g_{i}g_{j}
+
\left| \sum_{j} \frac{\partial_{t}\varphi_{j}}{\varphi_{j}} g_{j} \right|^{2}
\end{aligned}
\end{equation}
and $\partial_{t\bar{t}}\log g = \partial_{t\bar{t}}g/g - |\partial_{t}g|^{2}/g^{2}$, 
we obtain \eqref{eqn:dd:log:g} from \eqref{eqn:comp:met:1}, \eqref{eqn:comp:met:2}.
Since $\partial_{t}\varphi = O(1)$, $\partial_{t\bar{t}} \varphi = O(1)$ and $g_{i}/\varphi_{i} = O( (\log|t|^{-1})^{N} )$, 
\eqref{eqn:d:log:g:asymp} follows from \eqref{eqn:dd:log:g}.
\end{pf}

\begin{lemma}
\label{lemma:Mumf:good}
Let $\varphi_{i}\in C^{\infty}_{\mathbf R}(T)$ $(0\leq i \leq N)$ and set $g(t)=\sum_{i=0}^{N}(\log|t|^{2})^{i}\varphi_{i}(t)$.
Suppose that $g(t)>0$ on $T^{o}$ and $\varphi_{i}(0)\not=0$ for some $0\leq i\leq N$. Set 
$\ell := \max\{0\leq i\leq N;\,\varphi_{i}(0)\not=0\}$.
Then there exists a constant $C>0$ such that on $T^{o}$,
$$
\left|
\partial_{t\bar{t}}\log g(t)
+
\frac{\ell}{|t|^{2}(\log |t|^{-1} )^{2}}
\right|
\leq
\frac{C}{|t|^{2}( \log |t|^{-1} )^{3}}.
$$
\end{lemma}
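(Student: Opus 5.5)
The plan is to derive the estimate from Lemma~\ref{lemma:technical}, applied to $g$ written in the form required there. Set $L:=\log|t|^{-2}>0$ near $0$, so that $g=\sum_{i}(-L)^{i}\varphi_{i}$. By Taylor's theorem, write $\varphi_{i}(t)=\varphi_{i}(0)+\widetilde{\varphi}_{i}(t)$ with $\widetilde{\varphi}_{i}=O(|t|)$ smooth. Using the formula $\partial_{t\bar t}\log g=\partial_{t\bar t}g/g-|\partial_t g|^2/g^2$ directly will be cleaner than applying Lemma~\ref{lemma:technical} verbatim, because the $\varphi_{i}$ may vanish and the quotients $\partial\varphi_i/\varphi_i$ in \eqref{eqn:dd:log:g} are then undefined.

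First I will compute $\partial_{t\bar t}$ of $L^{i}\varphi_i$. Since $\partial_t L=-1/t$ and $\partial_{t\bar t}L=0$ on $T^o$, we get $\partial_t(L^i\varphi_i)=-iL^{i-1}\varphi_i/t+L^i\partial_t\varphi_i$ and
$$\partial_{t\bar t}(L^i\varphi_i)=\frac{i(i-1)L^{i-2}\varphi_i}{|t|^2}-\frac{iL^{i-1}}{t}\partial_{\bar t}\varphi_i-\frac{iL^{i-1}}{\bar t}\partial_t\varphi_i+L^i\partial_{t\bar t}\varphi_i.$$
Next, $g=(-1)^{\ell}\varphi_\ell(0)L^\ell\,(1+O(1/L))$, because the terms with $i>\ell$ carry a factor $\varphi_i(0)=0$ and so are $O(|t|L^N)=O(L^{\ell-1})$. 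Since $g>0$, this forces $(-1)^\ell\varphi_\ell(0)>0$, and hence $g\asymp L^\ell$.

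I then expand both quantities to leading order. For the second derivative,
$$\partial_{t\bar t}g=\frac{(-1)^\ell\varphi_\ell(0)\,\ell(\ell-1)L^{\ell-2}}{|t|^2}+O\!\left(\frac{L^{\ell-3}}{|t|^2}\right)+O\!\left(\frac{L^{N}}{|t|}\right).$$
Here the first error absorbs the $i=\ell-1$ contribution, and every term containing $\widetilde\varphi_i$ or a derivative of $\varphi_i$ gains a factor $|t|$, since $\partial_t\varphi_i=O(1)$ and $\widetilde\varphi_i=O(|t|)$. For the first derivative,
$$\partial_t g=-\frac{(-1)^\ell\varphi_\ell(0)\,\ell L^{\ell-1}}{t}\,(1+O(1/L))+O(L^N).$$
Combining these gives
$$\partial_{t\bar t}\log g=\frac{\ell(\ell-1)-\ell^2}{|t|^2L^2}+O\!\left(\frac{1}{|t|^2L^3}\right)=-\frac{\ell}{|t|^2L^2}+O\!\left(\frac{1}{|t|^2L^3}\right).$$
The errors of order $L^{N-\ell}/|t|$ are absorbed into the $O(1/(|t|^2L^3))$ term, because $|t|L^{M}\to0$ for every $M$. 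Converting via $L^2=4(\log|t|^{-1})^2$ changes only the constant in front of the main term; I will match that constant with the normalization used in the statement, $\log|t|^{-2}$ versus $\log|t|^{-1}$, and adjust the statement's convention if necessary. On compact subsets of $T^o$ away from $0$ the bound holds trivially, so only the region near $0$ needs the expansion.

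The main obstacle is bookkeeping: making sure the subleading term $i=\ell-1$ contributes only at relative order $1/L$ to both $g\,\partial\bar\partial g$ and $|\partial g|^2$, and that the cross terms cancel to that order. The cancellation happens because the leading coefficients of $\partial_{t\bar t}g\cdot g$ and of $|\partial_t g|^2$ are $\ell(\ell-1)$ and $\ell^2$ times the same factor $\varphi_\ell(0)^2L^{2\ell-2}/|t|^2$. The relative $1/L$ corrections from $\varphi_{\ell-1}(0)$ produce only $O(L^{2\ell-3}/|t|^2)$ after dividing by $g^2\asymp L^{2\ell}$, which lands exactly in the $O(1/(|t|^2L^3))$ error.
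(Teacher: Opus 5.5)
Your argument is correct. It is the same leading-order computation the paper carries out, packaged differently. The paper inserts $g_k=(\log|t|^2)^k\varphi_k$ into the general identity of Lemma~\ref{lemma:technical}. There the coefficient of $|t|^{-2}(\log|t|^2)^{-2}$ appears as $-\sum_k k\,w_k+\frac12\sum_{j,k}(j-k)^2w_jw_k$ with $w_k=g_k/g$. The bounds \eqref{eqn:g:i:g} ($w_\ell\to1$, the other weights $O(1/\log|t|^{-1})$ or $O(|t|(\log|t|^{-1})^{N})$) then remove everything but $-\ell$.

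You instead expand $\partial_{t\bar t}g$ and $|\partial_t g|^2$ directly around $\varphi_\ell(0)$ and read off $\ell(\ell-1)-\ell^2=-\ell$. Your version is self-contained and never divides by $\varphi_i$. In the paper's identity the quotients $\partial_t\varphi_i/\varphi_i$ are harmless only because each is multiplied by $g_i$. The paper's version, in exchange, makes the mean-minus-variance structure in the exponents visible.

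On the constant, you should not hedge. With the Wirtinger normalization used throughout ($\partial_t\log|t|^2=1/t$), the main term is $-\ell/(|t|^2(\log|t|^{-2})^2)=-\ell/(4|t|^2(\log|t|^{-1})^2)$. This is exactly what the paper's own \eqref{eqn:hess:det:g} yields. So the displayed inequality, written with $(\log|t|^{-1})^2$, is off by a factor $4$ and is false for every $\ell\ge1$; it already fails for $g=\log|t|^{-2}$. State and prove the corrected form.

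Finally, as in the paper, the estimate only makes sense on a punctured neighbourhood of $0$, i.e.\ after shrinking $T$. Compact annuli do not exhaust $T^o$, and near $|t|=1$ nothing controls $g$.
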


\begin{pf}
Set $I=\{0,1,\ldots,N\}$ and $g_{i}(t):=(\log |t|^{2})^{i}\varphi_{i}(t)$ for $i\in I$. 
Since $g(t)=\varphi_{\ell}(0)( \log |t|^{2})^{\ell}(1+O(1/\log |t|^{-1}))$ as $t\to0$, it follows from Lemma~\ref{lemma:technical} that
\begin{equation}
\label{eqn:g:i:g}
\left|\frac{g_{i}(t)}{g(t)}\right|
=
\begin{cases}
\begin{array}{ll}
O(|t| (\log |t|^{-1})^{i-\ell})&(i>\ell),
\\
1+O((\log |t|^{-1})^{-1})&(i=\ell),
\\
O((\log |t|^{-1})^{-(\ell-i)})&(i<\ell).
\end{array}
\end{cases}
\end{equation}
Since $O(1/g(t))=O(1)$ by the formula $g(t)=\varphi_{\ell}(0)( \log |t|^{2})^{\ell}(1+O(1/\log |t|^{-1}))$, 
it follows from \eqref{eqn:d:log:g:asymp} that
\begin{equation}
\label{eqn:hess:det:g}
\begin{aligned}
\partial_{t\bar{t}}\log g
&=
-\frac{\ell}{|t|^{2}(\log|t|^{2})^{2}} \frac{g_{\ell}}{g} 
-
\sum_{i\not=\ell} \frac{i}{|t|^{2}(\log|t|^{2})^{2}} \frac{g_{i}}{g}
\\
&\quad
+\frac{1}{2}\sum_{i\not=j}
\frac{(i-j)^{2}}{|t|^{2}(\log|t|^{2})^{2}}
\left(\frac{g_{i}}{g}\right)\left(\frac{g_{j}}{g}\right)
+
O\left(\frac{(-\log|t|)^{2N}}{|t|}\right).
\end{aligned}
\end{equation}
By \eqref{eqn:g:i:g}, the second and third terms on the right-hand side of 
\eqref{eqn:hess:det:g} are bounded by $|t|^{-2}(-\log|t|)^{-3}$ as $t\to0$, which implies the result.
\end{pf}

\begin{theorem}
\label{thm:c1:dir:im:K}
As $s \to 0$, the following equality holds:
\begin{equation}
\label{eqn:Poincare:curv}
c_{1}(R^{q}\pi_{*}K_{X/S}(\xi),h_{L^{2}}) 
= \left\{ \frac{\varrho^{q}}{|s|^{2}(\log|s|)^{2}} + O\left(\frac{1}{|s|^{2}(\log|s|)^{3}}\right) \right\} i\,ds\wedge d\bar{s}.
\end{equation}
In particular, the curvature form $i{\mathcal R}(s)\,ds\wedge d\bar{s}$ is Poincar\'e bounded near $0\in S$. 
Namely, there exists a constant $C>0$ such that for all $s\in S^{o}$,
$$
0 \leq i{\mathcal R}(s) \leq \frac{C}{|s|^{2}(\log|s|)^{2}}\,{\rm Id}_{R^{q}\pi_{*}K_{X/S}(\xi)}.
$$
\end{theorem}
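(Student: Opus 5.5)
The first Chern form of $(R^{q}\pi_{*}K_{X/S}(\xi),h_{L^{2}})$ is $-dd^{c}\log\det H(s)$, where $H(s)$ is the Gram matrix of a holomorphic frame. So the plan is to apply Lemma~\ref{lemma:Mumf:good} to $\log\det H$ and then pass from the trace to the full curvature. Since $G$ plays no role here, I take $G$ trivial (or sum over $W$; $\det$ is multiplicative over the orthogonal isotypic decomposition) and write $\varrho^{q}$ for the corresponding integer from \eqref{eqn:rho:q}.

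\textbf{Step 1: reduction to one real variable.} By Theorem~\ref{thm:str:sing:L2} (3), \eqref{eqn:L2:asym:eqv}, we have
$$
\det H(s)=|s|^{-2\delta^{q}}\,g(s),\qquad g(s)=\sum_{0\leq m\leq nh^{q}}\varphi_{m}(s)\,(\log|s|^{-2})^{m},
$$
where $\varphi_{m}(s)=a_{m}^{q}+\sum_{j}|s|^{2j/d}\psi_{j,m}^{q}(s)$ and $\varrho^{q}$ is the largest $m$ with $\varphi_{m}(0)=a_{m}^{q}\neq0$. The factor $\log|s|^{-2\delta^{q}}$ is pluriharmonic on $S^{o}$, so it does not contribute to $\partial_{s\bar s}$. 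The functions $\varphi_{m}$ are only smooth in $|s|^{2/d}$, not in $s$. To handle this, I would either work on $T$ via $s=t^{d}$, where $\varphi_{m}(t^{d})$ is smooth in $t$ and $\log|s|=d\log|t|$, or check directly that the error estimates in Lemma~\ref{lemma:technical} only use $\partial\varphi=O(|s|^{-1+1/d})$, which is still absorbed.

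\textbf{Step 2: the asymptotic \eqref{eqn:Poincare:curv}.} On $T$, Lemma~\ref{lemma:Mumf:good} (with $\log|t|^{-2}$ in place of $\log|t|^{2}$, a sign change absorbed into the $\varphi_{i}$) gives
$$
\partial_{t\bar t}\log g=-\frac{\varrho^{q}}{|t|^{2}(\log|t|)^{2}}+O\bigl(|t|^{-2}(\log|t|^{-1})^{-3}\bigr).
$$
Pulling back by $\mu$: since $dd^{c}$ commutes with $\mu^{*}$, $|t|^{2}(\log|t|)^{2}\,i\,dt\wedge d\bar t$ transforms into $|s|^{2}(\log|s|)^{2}\,i\,ds\wedge d\bar s$ with the constant factors of $d$ cancelling, because $\mu^{*}(ds/s)=d\,dt/t$ and $\log|s|=d\log|t|$. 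Combining with $c_{1}=-dd^{c}\log\det H$ and the normalization $dd^{c}=\frac{1}{2\pi i}\bar\partial\partial$ yields \eqref{eqn:Poincare:curv}, after tracking constants (and signs) against the paper's conventions.

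\textbf{Step 3: Poincaré boundedness.} Semi-positivity $i\mathcal R\geq0$ is the Berndtsson / Mourougane–Takayama result quoted above. For a semi-positive Hermitian endomorphism, the operator norm is at most the trace, and $\operatorname{Tr}(i\mathcal R)\,ds\wedge d\bar s$ is $2\pi c_{1}$. Step 2 bounds this trace by $C|s|^{-2}(\log|s|)^{-2}$, so $i\mathcal R\leq \operatorname{Tr}(i\mathcal R)\,\mathrm{Id}\leq C|s|^{-2}(\log|s|)^{-2}\,\mathrm{Id}$.

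The main obstacle I expect is Step 1: justifying the Lemma~\ref{lemma:Mumf:good} estimates when the coefficients are smooth only in $|s|^{2/d}$. Working on $T$ should resolve this cleanly.
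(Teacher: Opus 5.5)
Your proposal follows the paper's own proof: work on $T$, where $\det H(\mu(t))=|t|^{-2\deg\mu\cdot\delta^{q}}\det\widetilde H(t)$ with the first factor pluriharmonic and $\det\widetilde H$ an expansion in powers of $\log|t|$ with smooth coefficients; apply Lemma~\ref{lemma:Mumf:good}; pull back using $\mu^{*}\{ds\wedge d\bar s/(|s|^{2}(\log|s|)^{2})\}=dt\wedge d\bar t/(|t|^{2}(\log|t|)^{2})$; and get the Poincar\'e bound from semi-positivity together with the fact that the largest eigenvalue is at most the trace. The one caveat is the constant bookkeeping you deferred. With $dd^{c}=\frac{1}{2\pi i}\bar\partial\partial$, and since Lemma~\ref{lemma:technical} actually gives the leading term $-\varrho^{q}/(|t|^{2}(\log|t|^{2})^{2})=-\varrho^{q}/(4|t|^{2}(\log|t|)^{2})$, the leading coefficient comes out as $\varrho^{q}/(8\pi)$ rather than $\varrho^{q}$. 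This normalization slip is already present in the paper's Lemma~\ref{lemma:Mumf:good} and in the stated formula, and it does not affect the Poincar\'e boundedness.
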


\begin{pf} 
By \eqref{eqn:asym:exp:det:H}, 
$\mu^{*}c_{1}(R^{q}\pi_{*}K_{X/S}(\xi),h_{L^{2}}) = -\frac{i}{2\pi}\partial\bar{\partial} \log \{ \sum_{m=0}^{nh^{q}}a_{m}(t)\,(\log |t|^{2})^{m} \}$.
In Lemma~\ref{lemma:Mumf:good}, set $g(t) = \det H(t) = \sum_{m=0}^{nh^{q}} (\log |t|^{2})^{m} a_{m}(t)$.
Since $a_{m}(0)\not=0$ for some $0\leq m \leq nh^{q}$, we deduce from Lemma~\ref{lemma:Mumf:good} that
\begin{equation}
\label{eqn:pullback:c1:dir:im:K}
\mu^{*}c_{1}(R^{q}\pi_{*}K_{X/S}(\xi),h_{L^{2}}) 
= \varrho^{q}\frac{i\,dt\wedge d\bar{t}}{|t|^{2}(-\log|t|)^{2}} + O\left(\frac{i\,dt\wedge d\bar{t}}{|t|^{2}(-\log|t|)^{3}}\right).
\end{equation}
Since $\mu^{*}\{ds\wedge d\bar{s}/(|s|^{2}(-\log |s|)^{m}\} = (\deg \mu)^{2-m} dt\wedge d\bar{t}/(|t|^{2} (-\log |t|)^{m})$, 
\eqref{eqn:Poincare:curv} follows from \eqref{eqn:pullback:c1:dir:im:K}.
\par
Let $\lambda_{1},\ldots,\lambda_{h^{q}}$ be the eigenvalues of $i{\mathcal R}(s)$. 
Since $(R^{q}\pi_{*}K_{X/S}(\xi),h_{L^{2}})$ is Nakano semi-positive \cite{Berndtsson09, MourouganeTakayama08}, 
we have $\lambda_{\alpha} \geq 0$ for all $1 \leq \alpha \leq h^{q}$. This, together with \eqref{eqn:pullback:c1:dir:im:K}, yields that
$0 \leq i\,{\rm Tr}\,{\mathcal R}(s) = \sum_{\alpha}\lambda_{\alpha} \leq C/(|s|^{2}(\log |s|^{-2})^{2})$ on $S^{o}$.
Consequently, $\max_{\alpha}\{\lambda_{\alpha}\} \leq C/(|s|^{2}(-\log|s|)^{2})$. 
Since $0 \leq i{\mathcal R}(s) \leq \max_{\alpha}\{\lambda_{\alpha}\}\cdot{\rm Id}_{R^{q}\pi_{*}K_{X/S}(\xi)}$,
the desired inequality follows from this inequality . 
\end{pf}

\begin{corollary}
\label{cor:Wang}
If $X_{0}$ has only canonical singularities, 
the pseudo distance on $S$ induced by the semi-positive $(1,1)$-form $c_{1}(R^{q}\pi_{*}K_{X/S}(\xi), h_{L^{2}})$ is incomplete.
\end{corollary}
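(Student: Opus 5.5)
The plan is to show that the completeness of the pseudo-distance reduces to the divergence of the length of a radial path, and then to bound the metric density directly. By Theorem~\ref{thm:c1:dir:im:K} we already know
$$
c_{1}(R^{q}\pi_{*}K_{X/S}(\xi),h_{L^{2}}) = \left\{ \frac{\varrho^{q}}{|s|^{2}(\log|s|)^{2}} + O\left(\frac{1}{|s|^{2}(\log|s|)^{3}}\right) \right\} i\,ds\wedge d\bar{s}
$$
near $s=0$. So incompleteness amounts to showing that the coefficient of $\frac{1}{|s|^{2}(\log|s|)^{2}}$ vanishes. Once it does, the remaining density is integrable along radial paths.

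The first step is to identify $\varrho^{q}$ in the canonical case. By Corollary~\ref{cor:C0:L2}, if $X_{0}$ is reduced with only canonical singularities, then $\varrho^{q}_{W}=0$ for all $q$ and $W$; with $G$ trivial this gives $\varrho^{q}=0$. If $X_{0}$ is not assumed reduced, I would restrict to the reduced case implicit in Section~\ref{sect:5.3} and mirror the hypothesis of Theorem~\ref{cont:L2:can:sing}. With $\varrho^{q}=0$, the form satisfies $c_{1}\le C\,i\,ds\wedge d\bar s/(|s|^{2}(\log|s|^{-1})^{3})$ on a punctured neighbourhood of $0$.

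The second step is a sharper estimate, and I expect this to be the real obstacle. The semi-positive form $c_1=\omega\, i\,ds\wedge d\bar s$ defines the length element $\sqrt{\omega}\,|ds|$. Along the segment $s=r\in(0,\epsilon)$, the bound $\omega\le C/(r^{2}(\log r^{-1})^{3})$ only gives $\sqrt{\omega}\le C'/(r(\log r^{-1})^{3/2})$. The integral $\int_{0}^{\epsilon}dr/(r(\log r^{-1})^{3/2})$ converges, so this bound already suffices. Still, a direct route is more robust. By Theorem~\ref{cont:L2:can:sing}, the matrix $g_{\alpha\bar\beta}$ lies in $\mathcal{B}(S)$ and is positive definite at $0$. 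In the present situation, Lemma~\ref{lemma:fib:int:can:sing} shows that the entries involve only the terms $|s|^{2r}(\log|s|)^{k}$ with $r>0$, with no pure powers of $\log$. Then $\log\det g$ is a $C^{0}$ function whose second derivatives are $O(|s|^{2r-2}(\log|s|^{-1})^{k})$, hence the density $\omega$ is integrable to first order. In either form, the radial curve reaching $0$ has finite length. I would check that the derivative bounds in $\mathcal{B}(S)$ hold termwise; this is the step needing care.

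Finally, a Cauchy sequence $s_{k}=1/k\cdot\epsilon$ then has no limit in $S^{o}$. So the pseudo-distance restricted to $S^{o}$, where the form is defined and semi-positive, is incomplete. If the intended statement is about $S$ itself, one instead notes that $0$ lies at finite distance, which is the same content.
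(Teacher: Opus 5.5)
Your argument is correct and is exactly the paper's: Corollary~\ref{cor:C0:L2} gives $\varrho^{q}=0$ (with $X_{0}$ reduced, which the paper also takes as implicit). Theorem~\ref{thm:c1:dir:im:K} then bounds the density by $C|s|^{-2}(\log|s|^{-1})^{-3}$, whose square root is integrable along a radial segment, so $0$ lies at finite distance; the $\mathcal{B}(S)$-based alternative in your second step is valid but unnecessary.
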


\begin{pf}
Since $\varrho^{q}=0$ by Corollary~\ref{cor:C0:L2}, the assertion follows from Theorem~\ref{thm:c1:dir:im:K}.
\end{pf}

When $\xi = {\mathcal O}_{X}$ and $q=0$, Corollary~\ref{cor:Wang} was obtained by Wang \cite[Th.\,1.1]{Wang03}.

\begin{remark}
A holomorphic Hermitian vector bundle on a punctured disc is referred to as an {\em acceptable bundle} in the sense of Mochizuki
\cite[Sect.\,21]{Mochizuki11} if the pointwise length of its curvature form with respect to the Poincar\'e metric on the punctured disc 
is bounded near the puncture. By Theorem~\ref{thm:c1:dir:im:K}, $(R^{q}\pi_{*}K_{X/S}(\xi),h_{L^{2}})$ is an acceptable bundle 
on $S^{o}$ in the sense of Mochizuki. By Theorem~\ref{thm:str:sing:L2} (1), the elementary exponents of $R^{q}\pi_{*}K_{X/S}(\xi)$
coincide with its parabolic weights. 
(See e.g. \cite[Sect.\,21.4.1]{Mochizuki11}, \cite[7.11]{FujinoFujisawaOno25} for the notion of parabolic weights.)
We refer the reader to \cite[Sect.\,21]{Mochizuki11}, \cite{FujinoFujisawaOno25} 
and the references therein for further details about acceptable bundles. 
It is worth remarking that a weak form of Theorem~\ref{thm:str:sing:L2} (1)
holds for general acceptable bundles \cite[Th.\,21.3.2]{Mochizuki11}, \cite[Th.\,1.13]{FujinoFujisawaOno25}.
\end{remark}

By Takayama \cite{Takayama22}, the $L^{2}$-metric $\mu^{*}h_{L^{2}}$ on $f_{*}K_{Y/T}(F^{*}\xi)$ is good in the sense of Mumford. 
For the higher direct image sheaves, we have the following:

\begin{proposition}
\label{prop:Mumf:good}
{\rm (1) }
The Hermitian metric $\mu^{*}\det h_{L^{2}}$ on $\det R^{q}f_{*}K_{Y/T}(F^{*}\xi)|_{T^{o}}$ is good in the sense of Mumford. 
\par{\rm (2) }
If $X_{0}$ is reduced and has only canonical singularities, then the $L^{2}$-metric $h_{L^{2}}$ on $R^{q}\pi_{*}K_{X/S}(\xi)$ 
is good in the sense of Mumford.
\end{proposition}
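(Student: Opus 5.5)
The plan is to deduce both parts from the structure theorem for the $L^{2}$-metric, Theorem~\ref{thm:str:sing:L2}, together with Lemma~\ref{lemma:Mumf:good}. Recall that a metric is good in the sense of Mumford if, in a holomorphic frame, the entries and the inverse of the determinant grow at most like a power of $\log|t|^{-1}$, and the connection and curvature forms are Poincar\'e bounded.

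For (1), fix a holomorphic frame $\widetilde{\mathbf e}_{1}\wedge\cdots\wedge\widetilde{\mathbf e}_{h^{q}}$ of $\det R^{q}f_{*}K_{Y/T}(F^{*}\xi)$, taking $G$ trivial. By Proposition~\ref{Proposition:MourouganeTakayama}(2), $\mu^{*}h_{L^{2}}$ pulled back by $\varphi$ is the $L^{2}$-metric with respect to $\kappa_{Y}$. Its determinant is therefore $g(t)=\det\widetilde{H}(t)=\sum_{m=0}^{nh^{q}}a_{m}(t)(\log|t|^{2})^{m}$ with $a_{m}\in C^{\infty}_{\mathbf R}(T)$, by Proposition~\ref{prop:asym:ex}.

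\begin{itemize}
\item \emph{Growth bounds.} The upper bound $g(t)\le C(\log|t|^{-1})^{nh^{q}}$ is immediate from this expansion. The lower bound $g(t)\ge C^{h^{q}}$ comes from Proposition~\ref{prop:nonvanishing}. Together these give the growth conditions on $\|\cdot\|$ and $\|\cdot\|^{-1}$ for the line bundle.
\item \emph{Connection form.} I will show $\partial\log g=O(1/(|t|\log|t|^{-1}))$ by differentiating termwise. We have $\partial_{t}g=\sum_{m}\bigl(\partial_{t}a_{m}(\log|t|^{2})^{m}+m\,a_{m}(\log|t|^{2})^{m-1}/t\bigr)$.
\item \emph{Dominant terms.} I will then divide by $g\sim a_{\ell}(0)(\log|t|^{2})^{\ell}$, with $\ell$ as in Lemma~\ref{lemma:Mumf:good}. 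Here I will use the estimates \eqref{eqn:g:i:g} on $g_{i}/g$: the terms with $i>\ell$ carry an extra factor $|t|$, and the terms with $i<\ell$ are smaller by powers of $\log$.
\item \emph{Curvature.} Lemma~\ref{lemma:Mumf:good} gives $\partial_{t\bar t}\log g=O(|t|^{-2}(\log|t|^{-1})^{-2})$, which is the Poincar\'e bound.
\end{itemize}

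The main obstacle is the terms $a_{i}$ with $i>\ell$. There $a_{i}(0)=0$ but $a_{i}$ is only smooth, so I need $|a_{i}(t)|\le C|t|$ and $|\partial a_{i}|\le C$. These give bounds of size $|t|(\log)^{i-\ell}$ relative to $g$, which is still $o(1/(|t|\log|t|^{-1}))$ after dividing by $|t|$, so they can be controlled.

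For (2), Theorem~\ref{cont:L2:can:sing} shows that all elementary exponents vanish. Hence $D(t)=I$, and $H(\mu(t))=\widetilde H(t)$ in the frames $\mathbf e_{\alpha}$ on $S$. By Corollary~\ref{cor:C0:L2}, $\varrho^{q}=0$, and $g_{\alpha\bar\beta}\in\mathcal B(S)$ with $\det g_{\alpha\bar\beta}(0)>0$. So $h_{L^{2}}$ is continuous and nondegenerate, which gives the growth conditions for the matrix and its inverse.

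For the connection form $H^{-1}\partial H$, I will use that the entries lie in $\mathcal B(S)$. Their derivatives are $O(|s|^{2r-1}(\log)^{k})$ for rational $r>0$, since the pure $\log$-terms with $m\ge1$ are absent when $\varrho^{q}=0$. This is integrable, in fact better than Poincar\'e. For the curvature, I will bound the eigenvalues via Theorem~\ref{thm:c1:dir:im:K} and semi-positivity, giving $0\le i\mathcal R\le C|s|^{-2}(\log|s|)^{-2}$. I expect the subtle point to be verifying the connection bound. If the $\mathcal B(S)$ expansion from Lemma~\ref{lemma:fib:int:can:sing} is not differentiable termwise, I will apply the same fiber-integral argument to the derivative $\partial_{s}$ of the pushed-forward form.
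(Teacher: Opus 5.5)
Your proof is correct, but it takes a different route from the paper. The paper proves both parts by citing Takayama's criterion \cite[Prop.\,4.5]{Takayama22}, which says that an expansion of Barlet--Takayama type together with a coercivity property of the diagonal part implies Mumford goodness. For (1) it feeds in Theorem~\ref{thm:str:sing:L2} (1), (2). For (2) it uses the continuity and non-degeneracy from Theorem~\ref{cont:L2:can:sing}, which give the coercivity.

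You instead check Mumford's definition by hand. In (1), the growth bounds come from the expansion and Proposition~\ref{prop:nonvanishing}. The connection form comes from a direct estimate of $\partial\log g$; your handling of the terms with $m>\ell$ via $a_{m}(t)=O(|t|)$ is right. The curvature comes from Lemma~\ref{lemma:Mumf:good}. This amounts to a self-contained proof of the line-bundle case of Takayama's criterion.

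In (2), the curvature step via Theorem~\ref{thm:c1:dir:im:K} relies on the semi-positivity of $(R^{q}\pi_{*}K_{X/S}(\xi),h_{L^{2}})$ \cite{Berndtsson09, MourouganeTakayama08}, which Takayama's criterion does not need. It also needs one step you leave implicit: converting the $h_{L^{2}}$-operator bound on $i\mathcal{R}$ into bounds on the curvature matrix entries in the frame. That conversion works because $H$ and $H^{-1}$ are bounded in this case, and you should say so. You could avoid positivity entirely: $\partial_{s}\partial_{\bar s}$ of a term $|s|^{2r}(\log|s|)^{k}\phi(s)$ is $O(|s|^{2r-2}(\log|s|^{-1})^{k})$, which already has Poincar\'e growth.

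Two minor points. First, the absence of pure $\log$-terms in the entries is simply part of the statement $g_{\alpha\bar\beta}\in\mathcal{B}(S)$ from Theorem~\ref{cont:L2:can:sing}, so you need not route it through $\varrho^{q}=0$. Second, your fallback about termwise differentiability is unnecessary. Membership in $\mathcal{B}(S)$ is a finite-sum identity of functions smooth on $S^{o}$, so termwise differentiation there is automatic.
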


\begin{pf}
Combining Theorem~\ref{thm:str:sing:L2} (1), (2) and \cite[Prop.\,4.5]{Takayama22}, we get (1). 
Since $h_{L^{2}}$ satisfies a coercivity property of the diagonal part in the sense of Takayama \cite[Def.\,4.4]{Takayama22}
by Theorem~\ref{cont:L2:can:sing}, we get (2) by \cite[Prop.\,4.5]{Takayama22}.
\end{pf}

\begin{conjecture}
\label{conj:Mumford:goodness}
For any $q > 0$, the Hermitian metric $\mu^{*}h_{L^{2}}$ on $R^{q}f_{*}K_{Y/T}(F^{*}\xi)$ is good in the sense of Mumford.
\end{conjecture}

\section
{Appendix}
\label{sect:9}
\par

\subsection
{A sequence of K\"ahler forms on $Z$ approximating $\kappa_{Z}$}
\label{sect:9.1}
\par
In Subsections \ref{sect:9.1} and \ref{sect:9.2}, we keep the notations in Section~\ref{sect:4}. 
We construct the K\"ahler forms $\{\kappa_{Z,k}\}$ in the proof of Proposition~\ref{Proposition:Takegoshi}.
Although it is standard, we give a construction here for a use in the next subsection.
Since $X' \times_{S} T$ is a hypersurface in the complex manifold $X' \times T$ and 
since $\widetilde{\rho} \colon Z \to X'\times_{S}T$ is a composite of blowing-ups with non-singular centers, there is a sequence 
$$
\begin{matrix}
\, & \, E^{(m)} & \, & E^{(m-1)} & \, & E^{(1)} & \, & \,
\\
\, & \cap &\, &\cap &\, &\cap &\, &\,
\\
W &= W^{(m)} & \to & W^{(m-1)} & \to\cdots\to & W^{(1)} & \to & X' \times T
\\
\cup & \cup &\, &\cup &\, &\cup &\, &\cup
\\
Z &= Z^{(m)} & \to & Z^{(m-1)} & \to\cdots\to & Z^{(1)} &\to & X' \times_{S}T
\end{matrix}
$$
where $W^{(i)}$ is a K\"ahler manifold, $\pi_{i}\colon W^{(i)}\to W^{(i-1)}$ is a blowing-up with non-singular center 
$C^{(i-1)}\subset W^{(i-1)}$, $C^{(i-1)} \subset {\rm Sing}\,Z^{(i-1)}$, $Z^{(i)}$ is the proper transform of $Z^{(i-1)}$ by $\pi_{i}$, 
and $E^{(i)}$ is the set of exceptional hypersurfaces in $W^{(i)}$, i.e., $E^{(i)} = \widetilde{E}^{(i-1)} \cup \pi_{i}^{-1}(C^{(i-1)})$, 
where $\widetilde{E}^{(i-1)}$ consists of the proper transforms of the hypersurfaces in $E^{(i-1)}$.
We set $\varPi_{i} := \pi_{i+1} \circ\cdots\circ \pi_{m}$ ($0 \leq i \leq m-1$). Then $\widetilde{\rho} = \varPi_{0}|_{Z}$. 
Recall that, by \eqref{eqn:proj:Z}, $\varpi \colon Z \to T$ is the projection defined as $\varpi = {\rm pr}_{2} \circ \widetilde{\rho}$.
We fix a $C^{0}$-norm $\| \cdot \|$ on the space of differential forms on $W^{(i)}$ for all $i$.
\par
We set $W^{(0)} := X' \times T$ and $Z^{(0)} := X' \times_{S} T$. Let $\widetilde{\kappa}_{X'}$ be a K\"ahler form on $X'$ and 
define $\omega_{W^{(0)}} := \widetilde{\kappa}_{X'} + \kappa_{T}$. 
Let ${\mathcal V}^{(i)}_{l}\subset W^{(i)}$ be a small open neighborhood of $\pi_{i}^{-1}(C^{(i-1)})$ such that
${\rm pr}_{2}\circ\pi_{1}\circ\cdots\circ\pi_{i}({\mathcal V}^{(i)}_{l})\subset T(\frac{1}{l})$.
Suppose that a K\"ahler form $\omega_{W^{(i-1)}}^{(l)}$ on $W^{(i-1)}$ is given. 
Let $\sigma_{i}$ be the canonical section of ${\mathcal O}_{W^{(i)}}(\pi_{i}^{-1}(C^{(i-1)}))$ with ${\rm div}(\sigma_{i}) = \pi_{i}^{-1}(C^{(i-1)})$.
By \cite[Chap.VII, Prop.\,12.4]{Demailly12}, there is a Hermitian metric $h^{(i)}_{l}$ on ${\mathcal O}_{W^{(i)}}(\pi_{i}^{-1}(C^{(i-1)}))$ 
such that $\| \sigma_{i} \|_{h^{(i)}_{l}} = 1$ on $W^{(i)}\setminus{\mathcal V}^{(i)}_{l}$ and such that 
$\pi_{i}^{*}\omega_{W^{(i-1)}}^{(l)} + \epsilon dd^{c}\log \| \sigma_{i}\|_{h^{(i)}_{l}}^{2}$ is a K\"ahler form on $W^{(i)}$ 
for all $0 < \epsilon \ll 1$.
Taking $\epsilon_{l}>0$ small, we may suppose that $\epsilon_{l} \| dd^{c} \log \| \sigma_{i} \|_{h^{(i)}_{l}} \| < 2^{-l}/m$.
Then we set ${\omega}_{W^{(i)}}^{(l)} := \pi_{i}^{*} \omega_{W^{(i-1)}}^{(l)} + \epsilon_{l} dd^{c} \log \| \sigma_{i} \|_{h^{(i)}_{l}}^{2}$.
By construction, ${\omega}_{W^{(i)}}^{(l)} = \pi_{i}^{*}\omega_{W^{(i-1)}}^{(l)}$ on $W^{(i)}\setminus{\mathcal V}^{(i)}_{l}$ and 
$[{\omega}_{W^{(i)}}^{(l)}] = \pi_{i}^{*}[ \omega_{W^{(i-1)}}^{(l)} ] + \epsilon_{l} c_{1}({\mathcal O}_{W^{(i)}}(-\pi_{i}^{-1}(C^{(i-1)})))$.
Starting with $\omega_{W^{(0)}}^{(l)} := \omega_{W^{(0)}}$, 
we obtain a sequence of K\"ahler manifolds $\{(W^{(i)}, {\omega}_{W^{(i)}}^{(l)})\}$. We set
\begin{equation}
\label{eqn:(A.1)}
\widetilde{\kappa}_{Z,l} := {\omega}_{W}^{(l)}|_{Z} = \widetilde{\rho}^{*}( \widetilde{\kappa}_{X'} + \kappa_{T} ) + dd^{c}\Psi^{(l)},
\end{equation}
where $\Psi^{(l)} := \sum_{i=1}^{m} \epsilon_{l} \varPi_{i}^{*} \log \| \sigma_{i}\|_{h^{(i)}_{l}}^{2} |_{Z}$ with $\| dd^{c} \Psi^{(l)} \| < 2^{-l}$.
\par
Let $({\mathcal U}, (z_{0},\ldots,z_{n}))$ be a coordinate neighborhood of $Z$ such that 
$\varpi(z) = z_{0}^{e_{0}} \cdots z_{n}^{e_{n}}$, where $e_{i} \in {\mathbf Z}_{\geq0}$.
Since ${\rm Supp}(\varPi_{i}^{*}\sigma_{i}|_{Z}) \subset Z_{0}=\varpi^{-1}(0)$, we can express
$\varPi_{i}^{*}\sigma_{i}|_{\mathcal U} = z_{0}^{d_{0}} \cdots z_{n}^{d_{n}}{\mathbf s}$, $d_{i}\in{\mathbf Z}_{\geq0}$, 
where ${\mathbf s}$ is a local frame of $\varPi_{i}^{*}{\mathcal O}_{W^{(i)}}(\pi_{i}^{-1}(C^{(i-1)}))|_{Z}$ on ${\mathcal U}$.
Notice that $e_{i}=0$ implies $d_{i}=0$. Hence, on ${\mathcal U}$, we can express
\begin{equation}
\label{eqn:(A.2)}
\Psi^{(l)} |_{\mathcal U} = \sum_{i=0}^{n}\nu_{i}\log|z_{i}|^{2}+\phi,
\qquad
\phi\in C^{\infty}({\mathcal U}),
\quad
\nu_{i}\in{\mathbf R}.
\end{equation}
Here $e_{i}=0$ implies $\nu_{i}=0$. 
By \eqref{eqn:(A.1)}, we have 
$[ \widetilde{\kappa}_{Z,l} ] |_{Z\setminus Z_{0}} = [ \widetilde{\rho}^{*}(\widetilde{\kappa}_{X'} + \kappa_{T} ) ] |_{Z\setminus Z_{0}}$.

\medskip
\par
In Proposition~\ref{Proposition:Takegoshi}, as $\widetilde{\kappa}_{X'}$, we consider a sqeuence of K\"ahler forms
$\{ \kappa_{X',k} \}$ constructed in the same way as above from $\beta^{*}\kappa_{X}$. Applying the above construction with
initial K\"ahler form $\widetilde{\kappa}_{X'} = \kappa_{X',k}$ and setting $\kappa_{Z,k} = \widetilde{\kappa}_{Z,l(k)}$ with
$l(k)$ sufficiently large, we obtain a sequence of K\"ahler forms $\{ \kappa_{Z,k} \}$ used in Proposition~\ref{Proposition:Takegoshi}.
By the same argument as above, $\kappa_{X',k}$ can be expressed as $\kappa_{X',k} = \beta^{*}\kappa_{X} + dd^{c}\psi^{(k)}$, 
where $\psi^{(k)}$ is a smooth function on $X'\setminus X'_{0}$ with $\psi^{(k)}=1$ on $X'\setminus \pi^{\prime -1}(S(\frac{1}{k}))$ 
and $\psi^{(k)}$ admits an expression similar to \eqref{eqn:(A.2)} near $X'_{0}$. Consequently, we can express
\begin{equation}
\label{eqn:(A.3)}
\kappa_{Z,k} = \widetilde{\rho}^{*}( \beta^{*} \kappa_{X} + \kappa_{T} ) + dd^{c} \Phi_{k},
\end{equation}
where $\Phi_{k} \in C^{\infty}(Z \setminus Z_{0})$ admits the following expression near $Z_{0} = \varpi^{-1}(0)$:
\begin{equation}
\label{eqn:(A.4)}
\Phi_{k} |_{\mathcal U} = \sum_{i=0}^{n}\nu'_{i}\log|z_{i}|^{2} + \phi',
\qquad
\phi' \in C^{\infty}({\mathcal U}),
\quad
\nu'_{i}\in{\mathbf R}.
\end{equation}
Since $\Phi_{k} |_{Z\setminus Z_{0}}$ is a smooth function on $Z\setminus Z_{0}$, 
we get \eqref{eqn:equality:coh:2} by \eqref{eqn:Takegoshi:2}, \eqref{eqn:(A.3)}.

\subsection
{Takegoshi's theorem with respect to the degenerate K\"ahler form}
\label{sect:9.2}
\par
Takegoshi's theorem (Theorem~\ref{Thm:Takegoshi}) for $Z$ with respect to the degenerate K\"ahler form 
$\kappa_{Z}=\widetilde{\rho}^{*}(\beta^{*}\kappa_{X}+\kappa_{T})$ was obtained by 
Mourougane and Takayama \cite[Prop.\,4.4]{MourouganeTakayama09}.
However, it is not immediate from the proof if the operator $L^{q}$ in \cite[Prop.\,4.4]{MourouganeTakayama09} is given by 
the multiplication by $\kappa_{Z}^{q}$. To clarify this point, we prove the following:

\begin{proposition}
\label{prop:Takegoshi:sing}
For any $u\in H^{q}(Z, K_{Z}(\widetilde{F}^{*}\xi))$, there exists $\sigma \in \Gamma(Z,\Omega_{Z}^{n+1-q}(\widetilde{F}^{*}\xi))$ with
$u=[\sigma\wedge\kappa_{Z}^{q}]$, $(\varpi^{*}dt)\wedge\sigma=0$ and $D^{\widetilde{F}^{*}\xi, \widetilde{F}^{*}h_{\xi}} \sigma =0$.
\end{proposition}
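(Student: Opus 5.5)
The plan is to reuse the proof of Proposition~\ref{Proposition:Takegoshi}, but to upgrade the equality of classes from $Z\setminus Z_{0}$ to all of $Z$ using the explicit approximating forms of Section~\ref{sect:9.1}. As there, apply Theorem~\ref{Thm:Takegoshi} to $(\widetilde{F}^{*}\xi,\widetilde{F}^{*}h_{\xi})$ on the K\"ahler manifolds $(Z,\kappa_{Z,k})$. This gives holomorphic forms $\theta_{k}$ with
\[
u=[\theta_{k}\wedge\kappa_{Z,k}^{q}],\qquad (\varpi^{*}dt)\wedge\theta_{k}=0,\qquad D\theta_{k}=0 .
\]
By the uniqueness argument already used there (Takegoshi's Th.\,5.2 (iv), since $\kappa_{Z,k}=\kappa_{Z}$ away from $\varpi^{-1}(T(1/k))$), the forms coincide: $\theta_{k}=\sigma$ for all $k$. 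So it only remains to show $[\sigma\wedge\kappa_{Z,k}^{q}]=[\sigma\wedge\kappa_{Z}^{q}]$ in $H^{q}(Z,K_{Z}(\widetilde{F}^{*}\xi))$.

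The first step is to use \eqref{eqn:(A.3)}, $\kappa_{Z,k}=\kappa_{Z}+dd^{c}\Phi_{k}$, and expand
\[
\kappa_{Z,k}^{q}-\kappa_{Z}^{q}=dd^{c}\Phi_{k}\wedge\sum_{j}\kappa_{Z,k}^{j}\wedge\kappa_{Z}^{q-1-j}.
\]
This is a smooth form on $Z$, although $\Phi_{k}$ itself is singular along $Z_{0}$. Write $\Omega:=\sum_{j}\kappa_{Z,k}^{j}\wedge\kappa_{Z}^{q-1-j}$, which is smooth and $d$-closed. Using $\bar\partial\sigma=0$, $\partial$-type flatness $D^{1,0}\sigma=0$, and the closedness of $\Omega$, we get
\[
\sigma\wedge dd^{c}\Phi_{k}\wedge\Omega=\pm\bar\partial\bigl(\partial\Phi_{k}\wedge\sigma\wedge\Omega\bigr)/(2\pi i)
\]
on $Z\setminus Z_{0}$. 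The candidate primitive is $\beta:=\partial\Phi_{k}\wedge\sigma\wedge\Omega$.

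The key step, and the main obstacle, is to show that $\beta$ extends to a smooth (or at least $L^{1}_{\rm loc}$ with $\bar\partial\beta$ holding as currents) form on $Z$. Then the difference of the two representatives is $\bar\partial$-exact in Dolbeault (or current) cohomology, which computes $H^{q}$. By \eqref{eqn:(A.4)}, near $Z_{0}$ we have $\partial\Phi_{k}=\sum_{i}\nu'_{i}\,dz_{i}/z_{i}+\partial\phi'$, where $\nu'_{i}\neq0$ only if $e_{i}>0$. The condition $(\varpi^{*}dt)\wedge\sigma=0$ means $\sigma=\widetilde\sigma\wedge\varpi^{*}dt/\varpi$ wherever this makes sense, and $\varpi^{*}dt/\varpi=\sum e_{i}dz_{i}/z_{i}$. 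I would try to show that $\frac{dz_{i}}{z_{i}}\wedge\sigma$ is holomorphic for each $i$ with $e_{i}>0$. Write $\sigma$ in the frame $dz_{I}$. The relation $\sum_{i}e_{i}z_{i}^{-1}dz_{i}\wedge\sigma=0$, multiplied by $z_{0}\cdots z_{n}$, forces divisibility of the coefficients of $\sigma$: any monomial term $dz_{I}$ with $i\notin I$ must have a coefficient divisible by $z_{i}$ (a standard log-form lemma). That yields holomorphy of $dz_{i}/z_{i}\wedge\sigma$.

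If this local divisibility fails at some corner, the fallback is cruder. The poles are logarithmic and of order one, so $\beta$ is $L^{1}_{\rm loc}$. Then $\bar\partial\beta$ as a current equals the smooth form plus residue terms supported on $Z_{0}$, and those residue terms should vanish because $\sigma$ restricts to zero on the relevant components (by the same divisibility). Finally, descending from $Z$ to $Y$ is not needed here, since the statement is on $Z$.
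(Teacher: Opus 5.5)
Your proposal is correct and follows the paper's proof: you fix one K\"ahler form $\kappa_{Z,k}=\kappa_{Z}+dd^{c}\Phi_{k}$, take the Takegoshi form $\sigma$ for it, and show that $\partial\Phi_{k}\wedge\sigma$ (hence your $\beta$) is smooth on $Z$ using \eqref{eqn:(A.4)} and the holomorphy of $\frac{dz_{i}}{z_{i}}\wedge\sigma$ for $e_{i}>0$. The one difference is how you obtain that holomorphy: you get it at every point, corners included, from the coefficient divisibility forced by $\sum_{i}e_{i}\frac{dz_{i}}{z_{i}}\wedge\sigma=0$, whereas the paper checks it at smooth points of $Z_{0}$ and extends across the codimension-two crossing locus by Riemann extension; the divisibility does hold, so your $L^{1}$ fallback is never needed, and the step identifying all the $\theta_{k}$ is superfluous.
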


\begin{pf}
We write $Z_{0} = \varpi^{-1}(0) =\bigcup_{\lambda\in\Lambda}D_{\lambda}$.
Then there is a K\"ahler form $\kappa_{Z,k}$ on $Z$ of the form $\kappa_{Z,k} = \kappa_{Z} + dd^{c}\Phi_{k}$ with 
$\Phi_{k} \in C^{\infty}(Z\setminus Z_{0})$. 
By Theorem~\ref{Thm:Takegoshi}, there exists $\sigma \in \Gamma(Z,\Omega_{Z}^{n+1-q}(\widetilde{F}^{*}\xi))$ such that
$u=[\sigma \wedge \kappa_{Z,k}^{q}]$, $(\varpi^{*}dt)\wedge\sigma=0$ and $D^{\widetilde{F}^{*}\xi, \widetilde{F}^{*}h_{\xi}} \sigma =0$.
Since 
$\sigma\wedge(\kappa_{Z,k}^{q}-\kappa_{Z}^{q}) 
= \sigma\wedge(\kappa_{Z,k}-\kappa_{Z})\wedge\sum_{i+j=q-1} \kappa_{Z,k}^{i}\kappa_{Z}^{j}$,
it suffices to prove the equality of cohomology classes $[\sigma\wedge(\kappa_{Z,k}-\kappa_{Z})]=0$ on $Z$.
Since $\sigma\wedge(\kappa_{Z,k}-\kappa_{Z}) = \frac{-1}{2\pi i} \bar{\partial}\{\sigma\wedge\partial\Phi_{k}\}$,
it suffices to prove that $\sigma\wedge\partial\Phi_{k}$ is a smooth differential form on $Z$.
\par
Set $E := \bigcup_{\lambda\not=\lambda'}D_{\lambda}\cap D_{\lambda'}$ and $D_{\lambda}^{o} := D_{\lambda}\setminus E$.
Let $p\in D_{\lambda}^{o}$. There is a system of coordinates $({\mathcal U},(z_{0},\ldots,z_{n}))$ of $Z$ centered at $p$ such that 
$\varpi(z)=\epsilon(z)\,z_{0}^{e_{\lambda}}$, $e_{\lambda} \in {\mathbf Z}_{>0}$ and $\epsilon(z)\in{\mathcal O}^{*}({\mathcal U})$. 
Since $\varpi(z)=\epsilon(z)\,z_{0}^{e_{\lambda}}$ and hence 
$\varpi^{*}dt = e_{\lambda}\epsilon(z)z_{0}^{e_{\lambda}-1}dz_{0}+z_{0}^{e_{\lambda}}d\epsilon$,
the condition $(\varpi^{*}dt) \wedge \sigma=0$ implies that 
\begin{equation}
\label{eqn:(A.5)}
\frac{dz_{0}}{z_{0}} \wedge \sigma = -\frac{1}{e_{\lambda}\epsilon(z)}d\epsilon(z)\wedge\sigma
\in
\Gamma( {\mathcal U}, \Omega_{Z}^{n+2-q}(\widetilde{F}^{*}\xi)).
\end{equation}
\par
Let $p \in E$. There is a system of coordinates $({\mathcal U},(z_{0},\ldots,z_{n}))$ of $Z$ centered at $p$ such that 
$\varpi(z) = z_{0}^{e_{0}}\cdots z_{n}^{e_{n}}$, $e_{i} \in {\mathbf Z}_{\geq0}$.
If $e_{i}>0$, then 
$(\frac{dz_{i}}{z_{i}}\wedge\sigma)|_{{\mathcal U} \setminus E} \in 
\Gamma({\mathcal U} \setminus E, \Omega_{Z}^{n+2-q}(\widetilde{F}^{*}\xi))$
by \eqref{eqn:(A.5)}. Since $E$ has codimension $2$ in ${\mathcal U}$, the Riemann extension theorem implies that, if $e_{i}>0$, 
then
\begin{equation}
\label{eqn:(A.6)}
\frac{dz_{i}}{z_{i}}\wedge\sigma \in \Gamma( {\mathcal U}, \Omega_{Z}^{n+2-q}(\widetilde{F}^{*}\xi) ).
\end{equation}
\par
Let $p\in Z_{0}$ and let $({\mathcal U},(z_{0},\ldots,z_{n}))$ be system of coordinates as above. 
On ${\mathcal U}$, we may suppose that $\Phi_{k}$ is expressed as in \eqref{eqn:(A.4)}.
By \eqref{eqn:(A.4)}, \eqref{eqn:(A.6)}, we get
$$
\sigma \wedge \partial\Phi_{k} |_{\mathcal U} = \sum_{i=0}^{n} \nu'_{i}\,\sigma \wedge \frac{dz_{i}}{z_{i}} + \sigma\wedge\partial\phi'
\in A^{n+2-q,0}({\mathcal U},\widetilde{F}^{*}\xi),
$$
where $\nu'_{i}=0$ if $e_{i}=0$. Since $p\in Z_{0}$ is arbitrary, $\sigma\wedge\partial\Phi_{k} \in A^{n+2-q,0}(Z, \widetilde{F}^{*}\xi)$.
\end{pf}

We are now in a position to prove Takegoshi's theorem (Theorem~\ref{Thm:Takegoshi}) for $(Y,F^{*}\xi)$ with respect to 
the degenerate K\"ahler form $\kappa_{Y} = \rho^{*}( \beta^{*}\kappa_{X} + \kappa_{T} )$.

\begin{proposition}
\label{prop:Takegoshi:sing:2}
For any $v\in H^{q}(Y, K_{Y}(F^{*}\xi))$, there exists a holomorphic form $\tau \in \Gamma(Y, \Omega_{Y}^{n+1-q}(F^{*}\xi))$ 
with $v=[\tau\wedge\kappa_{Y}^{q}]$, $(f^{*}dt)\wedge\tau=0$ and $D^{F^{*}\xi, F^{*}h_{\xi}} \tau =0$.
\end{proposition}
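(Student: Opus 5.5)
We deduce the statement from Proposition~\ref{prop:Takegoshi:sing} by pulling back along $\varphi\colon Z\to Y$ and then descending. Let $v\in H^{q}(Y,K_{Y}(F^{*}\xi))$; the case $q=0$ is trivial, so assume $q>0$. Since $\varphi$ is a proper modification between manifolds, $\varphi_{*}K_{Z}=K_{Y}$ and $R^{i}\varphi_{*}K_{Z}(\widetilde{F}^{*}\xi)=0$ for $i>0$ (Takegoshi's relative vanishing \cite[Th.\,6.5 (i)]{Takegoshi95}, or Grauert--Riemenschneider, applied with $\widetilde F^*\xi=\varphi^*F^*\xi$ and the projection formula). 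Hence the Leray spectral sequence gives an isomorphism $\varphi^{*}\colon H^{q}(Y,K_{Y}(F^{*}\xi))\to H^{q}(Z,K_{Z}(\widetilde{F}^{*}\xi))$ given on Dolbeault representatives by pulling back forms.

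Apply Proposition~\ref{prop:Takegoshi:sing} to $u:=\varphi^{*}v$. This gives $\sigma\in\Gamma(Z,\Omega_{Z}^{n+1-q}(\widetilde{F}^{*}\xi))$ with $\varphi^{*}v=[\sigma\wedge\kappa_{Z}^{q}]$, $(\varpi^{*}dt)\wedge\sigma=0$ and $D\sigma=0$. Holomorphic $p$-forms with values in a vector bundle extend across codimension-$\geq 2$ sets by Hartogs, and $\varphi$ is an isomorphism off a codimension-$\geq 2$ subset of $Y$. Hence there is a unique $\tau\in\Gamma(Y,\Omega_{Y}^{n+1-q}(F^{*}\xi))$ with $\varphi^{*}\tau=\sigma$, exactly as in the last paragraph of the proof of Proposition~\ref{Proposition:Takegoshi}.

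The relations $(f^{*}dt)\wedge\tau=0$ and $D^{F^{*}\xi,F^{*}h_{\xi}}\tau=0$ hold on the dense open set where $\varphi$ is biholomorphic, because $\varpi=f\circ\varphi$ and $\widetilde F=F\circ\varphi$. By continuity they hold on all of $Y$.

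For the cohomology class, note that $\kappa_{Z}=\varphi^{*}\kappa_{Y}$. Therefore $\sigma\wedge\kappa_{Z}^{q}=\varphi^{*}(\tau\wedge\kappa_{Y}^{q})$, where $\tau\wedge\kappa_{Y}^{q}$ is a smooth $\bar\partial$-closed $(n+1,q)$-form on $Y$. This gives $\varphi^{*}[\tau\wedge\kappa_{Y}^{q}]=[\sigma\wedge\kappa_{Z}^{q}]=\varphi^{*}v$, and injectivity of $\varphi^{*}$ yields $v=[\tau\wedge\kappa_{Y}^{q}]$.

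The main point needing care is the isomorphism (in particular the injectivity) of $\varphi^{*}$ on $H^{q}(\,\cdot\,,K(\cdot))$ and its compatibility with pullback of Dolbeault representatives. We would justify this via the vanishing of $R^{i}\varphi_{*}K_{Z}(\varphi^{*}F^{*}\xi)$ together with the fine-resolution description of the Leray map.
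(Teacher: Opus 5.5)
Your proposal is correct and follows the paper's proof essentially step by step: you pull $v$ back to $Z$, apply Proposition~\ref{prop:Takegoshi:sing}, descend $\sigma$ to $\tau$ across the proper modification $\varphi$, and then use $\varphi^{*}\kappa_{Y}=\kappa_{Z}$ together with the injectivity of $\varphi^{*}$ on cohomology, obtaining the two identities for $\tau$ by density. The only difference is in how the isomorphism $\varphi^{*}\colon H^{q}(Y,K_{Y}(F^{*}\xi))\to H^{q}(Z,K_{Z}(\widetilde{F}^{*}\xi))$ is justified: you derive it from the vanishing of $R^{i}\varphi_{*}K_{Z}(\widetilde{F}^{*}\xi)$ for $i>0$ and the Leray spectral sequence, whereas the paper simply cites Takegoshi's Theorems I and II.
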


\begin{pf}
Set $u := \varphi^{*}v\in H^{q}(Z, K_{Z}(\widetilde{F}^{*}\xi))$. 
By Proposition~\ref{prop:Takegoshi:sing}, there exists $\sigma \in \Gamma(Z,\Omega_{Z}^{n+1-q}(\widetilde{F}^{*}\xi))$ with $u=[\sigma\wedge\kappa_{Z}^{q}]$, $(\varpi^{*}dt)\wedge\sigma=0$ and $D^{\widetilde{F}^{*}\xi, \widetilde{F}^{*}h_{\xi}} \sigma =0$. 
Since $\varphi \colon Z \to Y$ is a proper modification, there exists $\tau \in \Gamma(Y,\Omega_{Y}^{n+1-q}(F^{*}\xi))$ with 
$\varphi^{*}\tau=\sigma$. Since $\varphi^{*}\kappa_{Y}=\kappa_{Z}$, we get $\varphi^{*}( v - [\tau\wedge\kappa_{Y}^{q}] ) = 0$. 
Since the map $\varphi^{*}\colon H^{q}(Y, K_{Y}(F^{*}\xi))\to H^{q}(Z, K_{Z}(\widetilde{F}^{*}\xi))$ is an isomorphism 
(cf. \cite[Theorem I, II]{Takegoshi95}), 
we get $v=[\tau\wedge\kappa_{Y}^{q}]$. Then the identities $(f^{*}dt) \wedge \tau = 0$ and $D^{F^{*}\xi, F^{*}h_{\xi}} \tau = 0$ follow from 
the corresponding identities $\varpi^{*}dt \wedge \sigma =0$ and $D^{\widetilde{F}^{*}\xi, \widetilde{F}^{*}h_{\xi}} \sigma =0$. 
\end{pf}

\subsection
{A formula for the Bott-Chern type currents}
\label{sect:9.3}
\par
We keep the notation in Section~\ref{sect:6}.
In Subsection~\ref{sect:9.3}, we assume that $N$ is a reduced smooth divisor on $M$
and that $E={\mathcal O}_{M}(-N)$, $E'={\mathcal O}_{M}$. 
Let $s \in \Gamma(M, [N])$ be the section such that $\varphi(e) = e\otimes s$ and ${\rm div}(s)=N$.
Let $\nu$ be the normal bundle of $N$ in $M$. Let $h_{E}$, $h_{E'}$, $h_{\nu}$ be Hermitian metrics on these bundles.
Suppose that $h_{E'}$ is the trivial metric. We set $h_{[N]} := h_{E}^{-1}$.
In this setting, we provide an explicit formula for the Bott-Chern type current
$\langle \widetilde{\rm ch}( \overline{E}, \overline{E'} ) \rangle$ and compare it with the Bott-Chern singular current of 
Bismut-Gillet-Soul\'e \cite{BGS90}, \cite{BGS90b}, which we denote by $T( \overline{E}, \overline{E'}, h_{\nu})$.

\begin{theorem}
\label{thm:comparison:BC}
For $k\geq 1$, set $a_{k} := \frac{1}{k}\sum_{j=1}^{k-1}\frac{1}{j}$ $(k\geq2)$ and $a_{1}:=0$. Then
\begin{equation}
\label{eqn:ch:0}
\langle \widetilde{\rm ch}( \overline{E}, \overline{E'} ) \rangle =  
{\rm Td}^{-1}( [N], h_{[N]} ) \log \| s \|^{2} + \sum_{k=1}^{\infty} (-1)^{k}\frac{a_{k}}{k!} c_{1}( [N], h_{[N]} )^{k}.
\end{equation}
In particular, if $M$ is compact and the metrics $h_{E}$, $h_{E'}$, $h_{\nu}$ verify Bismut's Assumption (A), i.e., 
$\| ds|_{N} \| = 1$ on $N$, then the following identity holds in $P^{M}_{N}/P^{M,0}_{N}$
\begin{equation}
\label{eqn:ch:1}
\langle \widetilde{\rm ch}( \overline{E}, \overline{E'} ) \rangle 
\equiv 
T( \overline{E}, \overline{E'}, h_{\nu} ) + \sum_{k=1}^{\infty} (-1)^{k}\frac{a_{k}}{k!} c_{1}( [N], h_{[N]} )^{k},
\end{equation}
where $P^{M}_{N}$ and $P^{M,0}_{N}$ are the spaces of currents on $M$ introduced in \cite[Def.\,1.3]{BGS90b}.
\end{theorem}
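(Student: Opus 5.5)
The plan is to compute the current directly from Definition~\ref{def:canonical:ext}, using the explicit graph construction for line bundles. Here $r=1$, and the Grassmannian ${\rm Gr}(1,E(1)\oplus E'(1))$ is the projective bundle ${\mathbf P}(E(1)\oplus E'(1))$. The section $\sigma_{\varphi}(x,\zeta)=[e\otimes\sigma_{\infty}:(e\otimes s)\otimes\sigma_{0}]$ is, in homogeneous terms, $[\sigma_{\infty}:s\,\sigma_{0}]$. Its indeterminacy locus is exactly $N\times\{\infty\}$, a smooth codimension-two submanifold, because there $\sigma_{\infty}=0$ and $s=0$ simultaneously and both vanish transversally.

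I would take $q\colon{\mathfrak M}\to M\times{\mathbf P}^{1}$ to be the blow-up along $N\times\{\infty\}$. Then $\widetilde{\sigma}_{\varphi}^{*}{\mathscr U}\cong q^{*}\pi_{1}^{*}E\otimes{\mathcal O}({\mathfrak D}_{\varphi})$, and its metric is $q^{*}(\|\sigma_{\infty}\|^{2}h_{E}+\|\sigma_{0}\|^{2}h_{E'}^{\varphi})$ divided by the squared norm of the canonical section of ${\mathcal O}({\mathfrak D}_{\varphi})$. With $h_{E'}$ trivial and $h_{E}=h_{[N]}^{-1}$, the fiberwise quantity is
\[
\frac{1}{1+|\zeta|^{2}}+\frac{|\zeta|^{2}}{1+|\zeta|^{2}}\,\|s\|^{2}.
\]
Hence, on ${\mathfrak M}\setminus{\mathfrak D}_{\varphi}$,
\[
c_{1}(\widetilde{\sigma}_{\varphi}^{*}\overline{\mathscr U})
=-c_{1}([N],h_{[N]})-dd^{c}\log\bigl(1+|\zeta|^{2}\|s\|^{2}\bigr)+dd^{c}\log(1+|\zeta|^{2}),
\]
and this form extends smoothly to ${\mathfrak M}$.

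Next I would compute $(\pi_{1}\circ q)_{*}[e^{c_{1}}\log|\zeta|^{2}]$ by expanding the exponential. The integration over the ${\mathbf P}^{1}$-fibre reduces to an explicit one-variable integral in $u=|\zeta|^{2}\|s\|^{2}$. Concretely, I would write $e^{-c_{1}([N])}\exp\bigl(-dd^{c}\log(1+u)\bigr)$ and integrate by parts in $\zeta$. The $\log\|s\|^{2}$ term should come from the substitution $\zeta\mapsto\zeta/\|s\|$, which produces $\log|\zeta|^{2}=\log|\zeta'|^{2}-\log\|s\|^{2}$. The remaining terms should produce the universal constants $a_{k}$, via integrals of the form $\int_{0}^{\infty}\log u\,d\bigl(u/(1+u)\bigr)^{k}$-type expressions, which give harmonic-number coefficients.

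The main obstacle is tracking the mixed terms $\bigl(dd^{c}\log(1+u)\bigr)^{j}\wedge c_{1}([N])^{k-j}$, whose fibre integrals must be computed carefully and justified as currents near $N$. I would first treat the degree-$k$ component by a generating-function argument, checking it against the case $k=1$, where the answer should be $-\log\|s\|^{2}$ times the leading coefficient. I would then check the whole formula against Theorem~\ref{thm:canonical:ext}: applying $dd^{c}$ to the right-hand side of \eqref{eqn:ch:0} gives ${\rm ch}(\overline{E})-1$ plus a current supported on $N$. I would use this consistency check to confirm the coefficient of the $\log\|s\|^{2}$ term, which should be ${\rm Td}^{-1}$ via $(1-e^{-x})/x$.

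For \eqref{eqn:ch:1}, I would use the BGS formula for the singular Bott-Chern current of a single section under Assumption (A). By \cite{BGS90b}, modulo $P^{M,0}_{N}$ that current equals ${\rm Td}^{-1}([N],h_{[N]})\log\|s\|^{2}$. Subtracting this from \eqref{eqn:ch:0} gives the stated identity.
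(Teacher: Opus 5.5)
Your route is the paper's own: blow up $N\times\{\infty\}$, use the metric $\|{\mathbf e}\|^{2}(1+|\zeta|^{2}\|s\|^{2})/(1+|\zeta|^{2})$, rescale $\zeta$ by $\|s\|$, and quote Bismut--Gillet--Soul\'e for \eqref{eqn:ch:1}. However, the step you leave as ``should produce the constants $a_{k}$'' does not produce them. Put $x=c_{1}([N],h_{[N]})$, $\eta=\log\|s\|^{2}$, $p=|\zeta|^{2}\|s\|^{2}/(1+|\zeta|^{2}\|s\|^{2})$. Off $N$ and for $\zeta\neq0,\infty$,
\[
c_{1}(\overline{\mathcal E})=-(1-p)\,x-p(1-p)\,\frac{1}{2\pi i}\Bigl(\frac{d\bar\zeta}{\bar\zeta}+\bar\partial\eta\Bigr)\wedge\Bigl(\frac{d\zeta}{\zeta}+\partial\eta\Bigr)+dd^{c}\log(1+|\zeta|^{2}).
\]
Your sketch omits the part of the fibre integral containing $dd^{c}\log(1+|\zeta|^{2})$; the paper discards it via $\int\log|u|^{2}\,\omega_{{\mathbf P}^{1}}=0$. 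That part is not zero pointwise. In degree $(1,1)$ it equals $\frac{t(\log t)^{2}}{2(1-t)^{2}}\,x$ plus a multiple of $\frac{i}{2\pi}\partial\eta\wedge\bar\partial\eta$, where $t=\|s\|^{2}$. It equals $dd^{c}\mathcal{G}(\eta)$ with $\mathcal{G}$ continuous across $N$. So it is harmless modulo ${\rm Im}\,\partial+{\rm Im}\,\bar\partial$, but \eqref{eqn:ch:0} can only be an identity in that quotient. The part you keep, after substituting $\mu=|\zeta|^{2}\|s\|^{2}$, is
\[
-\int_{0}^{\infty}(\log\mu-\eta)\,e^{-x/(1+\mu)}\,\frac{d\mu}{(1+\mu)^{2}}={\rm Td}^{-1}(x)\,\eta+\sum_{k\geq1}\frac{(-x)^{k}}{k!}\,\frac{1}{k+1}\sum_{j=1}^{k}\frac{1}{j}.
\]
So the constants are $a_{k+1}$, not $a_{k}$, because
\[
\int_{0}^{\infty}\rho^{k}\log\rho\,(1+\rho)^{-k-2}\,d\rho=-\int_{0}^{\infty}\log\rho\,(1+\rho)^{-k-2}\,d\rho=a_{k+1},
\]
which is $\frac12$ for $k=1$. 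The paper's proof makes exactly this slip by writing the exponent as $k+1$.

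A more careful treatment of the mixed terms cannot repair this. Take $M={\mathbf P}^{1}$, $N=\{0\}$, $\|s\|^{2}=|z|^{2}/(1+|z|^{2})$, and set $\rho=|z|^{2}$, $\lambda=|\zeta|^{2}$, $D=1+\rho(1+\lambda)$. A direct computation gives $\frac12c_{1}(\overline{\mathcal E})^{2}=(\frac{i}{2\pi})^{2}\bigl(\rho D^{-3}-(1+\lambda)^{-1}D^{-2}\bigr)\,dz\wedge d\bar z\wedge d\zeta\wedge d\bar\zeta$. Integrating in $\rho$ first yields $-\frac12(1+\lambda)^{-2}$, so $\int_{M}\langle\widetilde{\rm ch}(\overline{E},\overline{E'})\rangle^{(1,1)}=-\frac12\int_{0}^{\infty}\log\lambda\,(1+\lambda)^{-2}\,d\lambda=0$. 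The right side of \eqref{eqn:ch:0}, by contrast, integrates to $-\frac12\int_{{\mathbf P}^{1}}\log\|s\|^{2}\,c_{1}([N],h_{[N]})=-\frac12\int_{0}^{1}\log t\,dt=\frac12$. Integration over compact $M$ kills $\partial$- and $\bar\partial$-exact currents, hence $P^{M,0}_{N}$. Therefore \eqref{eqn:ch:0} fails as stated even modulo exact currents. Likewise \eqref{eqn:ch:1}, which you and the paper derive from it, inherits the same shift. Your $dd^{c}$ consistency check via Theorem~\ref{thm:canonical:ext} cannot detect this, since $dd^{c}c_{1}^{k}=0$; a global check like the one above does. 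With $a_{k}$ replaced by $a_{k+1}$ and \eqref{eqn:ch:0} read modulo ${\rm Im}\,\partial+{\rm Im}\,\bar\partial$, your computation goes through. The Bismut--Gillet--Soul\'e step then gives the correspondingly corrected \eqref{eqn:ch:1}.
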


\begin{pf}
Set $F := E \oplus E' = [N]^{-1} \oplus {\mathcal O}_{M}$ and $h_{F}:=h_{E} \oplus h_{E'}$. We set $\overline{[N]} := ([N], h_{[N]})$ as well.
Let $U \subset M$ be a small open subset and let ${\mathbf e} \in \Gamma(U, E)$ be a nowhere vanishing section.
We fix a holomorphic trivialization $F|_{U} \cong {\mathcal O}_{U}^{\oplus 2}$ given by $(v_{1}{\mathbf e}, v_{2}) \mapsto (v_{1},v_{2})$. 
On the open subset of ${\mathbf P}(F(1))|_{U \times {\mathbf P}^{1}}$ defined by $v_{1}\not=0$, $\zeta\not=0,\infty$,
${\mathscr U} = {\mathcal O}_{{\mathbf P}(F(1))}(-1)$ is generated by the section
$(x, \zeta, v) \mapsto ([({\mathbf e}(x)\otimes\sigma_{\infty}(\zeta), v\sigma_{0}(\zeta))], 
({\mathbf e}(x)\otimes\sigma_{\infty}(\zeta), v\sigma_{0}(\zeta)))$, 
where $[({\mathbf e}\otimes\sigma_{\infty}, v\sigma_{0})] \in {\mathbf P}(F(1))|_{U \times {\mathbf P}^{1}}$ is the point corresponding to 
$({\mathbf e}\otimes\sigma_{\infty}, v\sigma_{0}) \in F(1)|_{U \times {\mathbf P}^{1}}$.
By \eqref{eqn:hom:v.b.}, $\sigma_{\varphi}^{*}{\mathscr U}|_{U\times{\mathbf P}^{1}}$ is generated by the section
$([\varepsilon_{\varphi}({\mathbf e})], \varepsilon_{\varphi}({\mathbf e}))$ with
$\varepsilon_{\varphi}({\mathbf e}) = ( {\mathbf e}\otimes\sigma_{\infty}, \varphi({\mathbf e})\sigma_{0})$. 
Set $u:= 1/\zeta$ and $\omega_{{\mathbf P}^{1}}:=\frac{i}{2\pi} \frac{d\zeta\wedge d\bar{\zeta}}{(1+|\zeta|^{2})^{2}}
= \frac{i}{2\pi} \frac{du \wedge d\bar{u}}{(1+|u|^{2})^{2}}$. 
Since $s = \varphi({\mathbf e}){\mathbf e}^{-1}$, we have 
\begin{equation}
\label{eqn:met:tautlog.l.b}
h_{\mathscr U}( [\varepsilon_{\varphi}({\mathbf e})] , \varepsilon_{\varphi}({\mathbf e}) ) 
= 
\frac{\|{\mathbf e}\|^{2} + |\varphi({\mathbf e})|^{2} |\zeta|^{2}}{1+|\zeta|^{2}}
=
\frac{\| {\mathbf e} \|^{2}}{1+|u|^{2}} ( |u|^{2} + \| s \|^{2} ).
\end{equation}
Set $\theta := \partial \log \|s\|^{2}$ and $\Theta := \bar{\partial}\partial \log \| s \|^{2}$. Then $c_{1}(\overline{[N]})=\frac{i}{2\pi}\Theta$. 
Since
$$
\bar{\partial}\partial \log ( |u|^{2} + \| s \|^{2} ) 
= 
\frac{\| s \|^{2}}{ ( |u|^{2} + \|s\|^{2} )^{2} } (d\bar{u} - \bar{u} \bar{\theta}) {\scriptstyle \wedge} (du - u \theta ) 
+ \frac{\| s \|^{2}}{ |u|^{2} + \| s \|^{2}} \Theta
$$
and $\bar{\partial}\partial \log \| {\mathbf e} \|^{2} = -\bar{\partial}\partial \log \| s \|^{2} = -\Theta$, we obtain
\begin{equation}
\label{eqn:ch:2}
\begin{aligned}
\,&
\exp\left\{ \bar{\partial}\partial \log \| {\mathbf e} \|^{2} + \bar{\partial}\partial \log ( |u|^{2} + \| s \|^{2} ) \right\}
\\
&=
\left\{ 1 + \frac{\| s \|^{2}}{ ( |u|^{2} + \|s\|^{2} )^{2} } (d\bar{u} - \bar{u} \bar{\theta} ) {\scriptstyle \wedge} (du - u \theta ) \right\}
\exp\left\{ - \frac{|u|^{2}}{ |u|^{2} + \| s \|^{2}} \Theta \right\}
\\
&=
\left\{ 1 + \frac{\| s \|^{2}}{ ( |u|^{2} + \|s\|^{2} )^{2} } (d\bar{u} {\scriptstyle \wedge} du + |u|^{2} \bar{\theta} {\scriptstyle \wedge} \theta ) 
+ du {\scriptstyle \wedge} A + d\bar{u} {\scriptstyle \wedge} B \right\}
\exp\left\{ - \frac{|u|^{2}}{ |u|^{2} + \| s \|^{2}} \Theta \right\},
\end{aligned}
\end{equation}
where $A$ and $B$ contain neither $du$ nor $d\bar{u}$. By \eqref{eqn:met:tautlog.l.b}, \eqref{eqn:ch:2}, we get
\begin{equation}
\label{eqn:ch:3}
\begin{aligned}
\,&
\exp( c_{1}( \sigma_{\varphi}^{*}\overline{\mathscr U} ))
=
\exp\left\{ -dd^{c}\log\| {\mathbf e} \|^{2} -dd^{c}\log ( |u|^{2} + \| s \|^{2} ) - \omega_{{\mathbf P}^{1}} \right\}
\\
&=
\left\{ 
1 + \frac{i}{2\pi}\frac{\| s \|^{2}}{ ( |u|^{2} + \|s\|^{2} )^{2} } (d\bar{u} {\scriptstyle \wedge} du + |u|^{2} \theta {\scriptstyle \wedge} \theta ) 
+ du {\scriptstyle \wedge} A' + d\bar{u} {\scriptstyle \wedge} B' \right\}
\\
&\qquad
{\scriptstyle \wedge} \exp\left\{ - \frac{|u|^{2}}{ |u|^{2} + \| s \|^{2}} c_{1}(\overline{[N]}) \right\} {\scriptstyle \wedge} ( 1 - \omega_{{\mathbf P}^{1}} )
\\
&=
-\omega_{{\mathbf P}^{1}} {\scriptstyle \wedge}
\left( 1 + \frac{i}{2\pi} \frac{|u|^{2} \|s\|^{2}}{( |u|^{2} + \| s \|^{2} )^{2}} \theta {\scriptstyle \wedge} \bar{\theta}  \right)
\exp\left\{ - \frac{|u|^{2}}{ |u|^{2} + \| s \|^{2}} c_{1}(\overline{[N]}) \right\}
\\
&\quad
-
\frac{\| s \|^{2}}{ ( |u|^{2} + \|s\|^{2} )^{2} } \frac{i}{2\pi} du {\scriptstyle \wedge} d\bar{u} 
{\scriptstyle \wedge} \exp\left\{ - \frac{|u|^{2}}{ |u|^{2} + \| s \|^{2}} c_{1}(\overline{[N]}) \right\}
\\
&\quad
+ du {\scriptstyle \wedge} A'' + d\bar{u} {\scriptstyle \wedge} B'' +C'',
\end{aligned}
\end{equation}
where $A'$, $B'$, $A''$, $B''$, $C''$ contain neither $du$ nor $d\bar{u}$. Set $s = uw {\mathbf e}^{-1}$. 
Then, on $q^{-1}(U \times {\mathbf P}^{1})$, $q^{-1}(N \times \{\infty\})$ is defined locally by $u=w=0$.
By \eqref{eqn:ch:3}, we have
\begin{equation}
\label{eqn:ch:4}
\begin{aligned}
\,&
q^{*}\exp( c_{1}( \sigma_{\varphi}^{*}\overline{\mathscr U} ))
\\
&=
-\omega_{{\mathbf P}^{1}} 
\left( 1 + \frac{i}{2\pi} \frac{\frac{|w|^{2}}{\|{\mathbf e} \|^{2}}}{( 1 + \frac{|w|^{2}}{\|{\mathbf e} \|^{2}} )^{2}} 
\partial \log \frac{|w|^{2}}{\| {\mathbf e}\|^{2}} {\scriptstyle \wedge} \bar{\partial} \log \frac{|w|^{2}}{\| {\mathbf e} \|^{2}}  \right)
\exp\left\{ - \frac{c_{1}(\overline{[N]})}{ 1 + \frac{|w|^{2}}{\|{\mathbf e} \|^{2}} } \right\}
\\
&\quad
-
\frac{\| s \|^{2}}{ ( |u|^{2} + \| s \|^{2} )^{2} } \frac{i}{2\pi} du {\scriptstyle \wedge} d\bar{u} 
{\scriptstyle \wedge} \exp\left\{ - \frac{|u|^{2}}{ |u|^{2} + \| s \|^{2}} c_{1}(\overline{[N]}) \right\}
\\
&\quad
+ du {\scriptstyle \wedge} q^{*}A'' + d\bar{u} {\scriptstyle \wedge} q^{*}B'' + q^{*}C''.
\end{aligned}
\end{equation}
Since $\int_{{\mathbf P}^{1}} \log |u|^{2} \omega_{{\mathbf P}^{1}} =0$, it follows from \eqref{eqn:ch:4} that
\begin{equation}
\label{eqn:ch:5}
\begin{aligned}
\,&
\int_{{\mathbf P}^{1}} \log |\zeta|^{2} \cdot q^{*}\exp( c_{1}( \sigma_{\varphi}^{*}\overline{\mathscr U} ))
=
-\int_{{\mathbf P}^{1}} \log |u|^{2} \cdot q^{*}\exp( c_{1}( \sigma_{\varphi}^{*}\overline{\mathscr U} ))
\\
&=
\int_{{\mathbf P}^{1}} \frac{\| s \|^{2} \log |u|^{2}}{ ( |u|^{2} + \| s \|^{2} )^{2} } \frac{i}{2\pi} du {\scriptstyle \wedge} d\bar{u} 
{\scriptstyle \wedge} \exp\left\{ - \frac{|u|^{2}}{ |u|^{2} + \| s \|^{2}} c_{1}(\overline{[N]}) \right\}
\\
&=
\sum_{k=0}^{\infty} \frac{(-c_{1}(\overline{[N]}))^{k}}{k!} \log \| s \|^{2} 
\int_{\mathbf C} \frac{|\zeta|^{2k}}{(1+|\zeta|^{2})^{k+2}} \frac{i}{2\pi} d\zeta d\bar{\zeta} 
\\
&\quad
+\sum_{k=0}^{\infty} \frac{(-c_{1}(\overline{[N]}))^{k}}{k!} 
\int_{\mathbf C} \frac{|\zeta|^{2k} \log |\zeta|^{2}}{(1+|\zeta|^{2})^{k+2}}  \frac{i}{2\pi} d\zeta d\bar{\zeta}.
\end{aligned}
\end{equation}
Since 
$\int_{\mathbf C} \frac{i}{2\pi} \frac{|\zeta|^{2k} d\zeta d\bar{\zeta}}{(1+|\zeta|^{2})^{k+2}} = 
\int_{\mathbf C} \frac{i}{2\pi} \frac{du d\bar{u}}{(1+|u|^{2})^{k+2}} = \frac{1}{k+1}$ and
$\int_{\mathbf C} \frac{|\zeta|^{2k} \log |\zeta|^{2}}{(1+|\zeta|^{2})^{k+2}} \frac{i d\zeta d\bar{\zeta}}{2\pi}$ is given by
$-\int_{0}^{\infty} \frac{\log\rho}{(1+\rho)^{k+1}} d\rho =a_{k}$,
\eqref{eqn:ch:0} follows from \eqref{eqn:ch:5}.
When $h_{E}$, $h_{E'}$, $h_{\nu}$ verify Bismut's Assumption (A), \eqref{eqn:ch:1} is derived from 
\eqref{eqn:ch:0} and \cite[Rem.\,3.5, especially (3.23), Th.\,3.15, Th.\,3.17]{BGS90}.
\end{pf}



\begin{thebibliography}{99}


\bibitem{AKMW02}
Abramovich, D., Karu, K., Matsuki, K., Wlodarczyk, J.
\newblock 
\textit{Torification and factorization of birational maps},
\newblock 
J. Amer. Math. Soc. 
\newblock 
{\bf 15}
\newblock 
(2002),
\newblock 
531--572.


\bibitem{Barlet82}
Barlet, D.
\newblock 
\textit{D\'eveloppement asymptotique des fonctions obtenues par int\'egration sur les fibres},
\newblock 
Invent. Math. 
\newblock 
{\bf 68}
\newblock 
(1982),
\newblock 
129--174.


\bibitem{BermanBoucksom10}
Berman, R., Boucksom, S.
\textit{Growth of balls of holomorphic sections and energy at equilibrium},
\newblock 
Invent. Math. 
\newblock 
{\bf 181}
\newblock 
(2010),
\newblock 
337--394.


\bibitem{Berndtsson09}
Berndtsson, B.
\newblock
\textit{Curvatures of vector bundles associated to holomorphic fibrations},
\newblock
Ann. of Math.
\newblock
{\bf 169}
\newblock
(2009)
\newblock
531--560.


\bibitem{BierstoneMilman97}
Bierstone, E., Milman, P.
\newblock
\textit{Canonical desingularization in characteristic zero by blowing up the maximum strata of a local invariant},
\newblock 
Invent. Math.
\newblock 
{\bf 128}
\newblock 
(1997),
\newblock 
207--302.


\bibitem{Bismut90}
Bismut, J.-M.
\newblock 
\textit{Superconnection currents and complex immersions},
\newblock 
Invent. Math.
\newblock 
{\bf 99}
\newblock 
(1990),
\newblock 
59--113.


\bibitem{Bismut95}
Bismut, J.-M.
\newblock 
\textit{Equivariant immersions and Quillen metrics},
\newblock 
J. Differential Geom.
\newblock 
{\bf 41}
\newblock 
(1995),
\newblock 
53--157.


\bibitem{Bismut97}
Bismut, J.-M.
\newblock 
\textit{Quillen metrics and singular fibers in arbitrary relative dimension},
\newblock 
J. Algebraic Geom.
\newblock 
{\bf 6}
\newblock 
(1997),
\newblock 
19--149.


\bibitem{BismutBost90}
Bismut, J.-M., Bost, J.-B.
\newblock
\textit{Fibr\'es d\'eterminants, m\'etriques de Quillen et d\'eg\'en\'erescence des courbes},
\newblock 
Acta Math.
\newblock 
{\bf 165}
\newblock 
(1990)
\newblock 
1--103.


\bibitem{BGS88}
Bismut, J.-M., Gillet, H., Soul\'e, C.
\newblock 
\textit{Analytic torsion and holomorphic determinant bundles I,II,III},
\newblock 
Commun. Math. Phys.
\newblock 
{\bf 115}
\newblock 
(1988),
\newblock 
49--78, 
\newblock 
79--126, 
\newblock 
301--351.


\bibitem{BGS90}
Bismut, J.-M., Gillet, H., Soul\'e, C.
\newblock 
\textit{Complex immersions and Arakelov geometry},
\newblock 
(P. Cartier et al., eds.), The Grothendieck Festschrift,
\newblock 
Birkh\"auser,
\newblock 
Boston
\newblock 
(1990)
\newblock 
249--331.


\bibitem{BGS90b}
Bismut, J.-M., Gillet, H., Soul\'e, C.
\newblock 
\textit{Bott-Chern currents and complex immersions},
\newblock
Duke Math. J.
\newblock
{\bf 60}
\newblock
(1990),
\newblock
255--284.


\bibitem{BismutLebeau91}
Bismut, J.-M., Lebeau, G.
\newblock 
\textit{Complex immersions and Quillen metrics},
\newblock 
Publ. Math. IHES
\newblock 
{\bf 74}
\newblock 
(1991),
\newblock 
1--297.


\bibitem{BismutVasserot89}
Bismut, J.-M., Vasserot, E.
\newblock
\textit{The asymptotics of the Ray-Singer analytic torsion associated with high powers of a positive line bundle},
Commun. Math. Phys.
\newblock 
{\bf 125}
\newblock 
(1989),
\newblock 
355--367. 


\bibitem{DaiYoshikawa25}
Dai, X., Yoshikawa, K.-I.
\newblock
{\em Degeneration of Riemann surfaces and the small eigenvalues of the Laplacian},
\newblock
Courant J.  Pure Applied Math. (to appear),
\newblock
arXiv:2509.06151


\bibitem{Demailly12}
Demailly, J.-P.
\newblock 
\textit{Complex Analytic and Differential Geometry},
\newblock 
\url{https://www-fourier.univ-grenoble-alpes.fr/~demailly/manuscripts/agbook.pdf}


\bibitem{EFM18}
Eriksson, D., Freixas i Montplet, G., Mourougane, C.
\newblock
\textit{Singularities of metrics on Hodge bundles and their topological invariants},
\newblock
Algebraic Geometry,
\newblock
{\bf 5}
\newblock
(2018),
\newblock
742--775.


\bibitem{EFM21}
Eriksson, D., Freixas i Montplet, G., Mourougane, C.
\newblock
\textit{BCOV invariants of Calabi-Yau manifolds and degenerations of Hodge structures},
\newblock
Duke Math. J.
\newblock
{\bf 170}
\newblock
(2021),
\newblock
379--454.


\bibitem{ErikssonFreixas26}
Eriksson, D., Freixas i Montplet, G.
\newblock
\textit{The spectral genus of an isolated hypersurface singularity and its relation to the Milnor number and analytic torsion},
\newblock
Doc. Math. 
\newblock
{\bf 31}
\newblock
(2026),
\newblock
1387--1419.


\bibitem{FLY08}
Fang, H., Lu, Z., Yoshikawa, K.-I.
\newblock 
\textit{Analytic torsion for Calabi--Yau threefolds},
\newblock 
J. Differential Geometry
\newblock
{\bf 80}
\newblock
(2008),
\newblock
175--259.


\bibitem{Finski18}
Finski, S.
\newblock 
\textit{On the full asymptotics of analytic torsion},
\newblock 
J. Funct. Anal.
\newblock 
{\bf 275}
\newblock 
(2018),
\newblock 
3457--3503.


\bibitem{FujinoFujisawaOno25}
Fujino, O., Fujisawa, T., Ono, T.
\newblock
\textit{Notes on acceptable bundles},
\newblock
preprint,
\newblock
arXiv:2511.00760
\newblock
(2025).


\bibitem{Fulton98}
Fulton, W.
\newblock
\textit{Intersection Theorey},
\newblock
2nd Ed.
\newblock
Springer,
\newblock
New York
\newblock
(1998)


\bibitem{GilletSoule90}
Gillet, H., Soul\'e, C.
\newblock
\textit{Characteristic classes for algebraic vector bundles with hermitian metric, I, II},
\newblock
Ann. of Math.
\newblock
{\bf 131}
\newblock
(1990),
\newblock
163--238.


\bibitem{Imaike25}
Imaike, D.
\newblock
\textit{Analytic torsion for irreducible holomorphic symplectic fourfolds with involution, I: Construction of an invariant},
\newblock
International J. Math.
\newblock
{\bf 36}
\newblock
(2025)
\newblock
2550010 (27 pages). 


\bibitem{Imaike24}
Imaike, D.
\newblock
\textit{Analytic torsion for irreducible holomorphic symplectic fourfolds with involution, II: the singularity of the invariant},
\newblock
preprint,
\newblock
arXiv:2411.13911v1
\newblock
(2024). 


\bibitem{KawaguchiYoshikawa26}
Kawaguchi, S., Yoshikawa, K.-I.
\newblock
\textit{Reflective modular forms on the moduli space of Eisenstein $K3$ surfaces and equivariant analytic torsions},
\newblock
(tentative title),
\newblock
in preparation.


\bibitem{Mumford73}
Kempf, G., Knudsen, F., Mumford, D., Saint-Donat, B.
\newblock
\textit{Toroidal Embeddings I},
\newblock 
Lecture Notes Math. 
\newblock 
{\bf 339}  
\newblock 
(1973)


\bibitem{KnudsenMumford76}
Knudsen, F.F., Mumford, D.
\newblock 
\textit{The projectivity of the moduli space of stable curves, I.},
\newblock 
Math. Scand.
\newblock 
{\bf 39}
\newblock 
(1976),
\newblock 
19--55.


\bibitem{KohlerRoessler01}
K\"ohler, K., Roessler, D.
\newblock
\textit{A fixed point formula of Lefschetz type in Arakelov geometry I},
\newblock 
Invent. Math.
\newblock
{\bf 145}
\newblock 
(2001),
\newblock 
333--396.


\bibitem{Kollar97}
Koll\'ar, J.
\newblock
\textit{Singularities of pairs},
\newblock 
Proc. Sympos. Pure Math.
\newblock
{\bf 62},
\newblock
Part 1
\newblock 
(1997),
\newblock 
221--287.


\bibitem{Ma00}
Ma, X.
\newblock 
\textit{Submersions and equivariant Quillen metrics},
\newblock 
Ann. Inst. Fourier
\newblock 
{\bf 50}
\newblock 
(2000),
\newblock 
1539--1588.


\bibitem{Ma21}
Ma, X.
\newblock
\textit{Orbifold submersion and analytic torsions},
\newblock
Progress in Math.
\newblock
{\bf 338}
\newblock
(2021),
\newblock
141--177.


\bibitem{Mochizuki11}
Mochizuki, T.
\newblock
\textit{Wild harmonic bundles and wild pure twistor $D$-modules},
Ast\'erisque
\newblock
{\bf 340}
\newblock
(2011).


\bibitem{MourouganeTakayama08}
Mourougane, C., Takayama, S.
\newblock
\textit{Hodge metrics and the curvature of higher direct images},
\newblock
Ann. Sci. \'Ec. Norm. Sup.
\newblock
{\bf 41}
\newblock
(2008),
\newblock
905--924.


\bibitem{MourouganeTakayama09}
Mourougane, C., Takayama, S.
\newblock
\textit{Extension of twisted Hodge metrics for K\"ahler morphisms},
\newblock
J. Differential Geom.
\newblock
{\bf 83}
\newblock
(2009),
\newblock
131--161.


\bibitem{Quillen73}
Quillen, D.
\newblock
\textit{Higher $K$-theories. I.},
\newblock
Lecture Notes in Math.
\newblock
{\bf 341},
\newblock
Springer,
\newblock
(1973),
\newblock
85--147.


\bibitem{Schmid73}
Schmid, W.
\newblock 
\textit{Variation of Hodge structure: The singularities of the period mapping},
\newblock 
Invent. Math.
\newblock 
{\bf 22}
\newblock 
(1973),
\newblock 
211--319.


\bibitem{Siu75}
Siu, Y.-T.
\newblock
\textit{Extension of meromorphic maps into K\"ahler manifolds},
\newblock
Ann. of Math.
\newblock
{\bf 102}
\newblock
(1975),
\newblock 
421--462.


\bibitem{Siu82}
Siu, Y.-T.
\newblock
\textit{Complex-analyticity of harmonic maps, vanishing and Lefschetz theorems},
\newblock
J. Differential Geom.
\newblock
{\bf 17}
\newblock
(1982),
\newblock
55--138.


\bibitem{Steenbrink77}
Steenbrink, J.H.M.
\newblock 
\textit{Mixed Hodge structure on vanishing cohomology},
\newblock 
Real and Complex Singularities,
\newblock 
Sijthoff-Noordhoff, Alphen aan den Rijn
\newblock 
(1977),
\newblock 
525--563.


\bibitem{Stevens88}
Stevens, J.
\newblock 
\textit{On canonical singularities as total spaces of deformations},
\newblock 
Abh. Math. Sem. Univ. Hamburg
\newblock 
{\bf 58}
\newblock 
(1988),
\newblock 
275--283.


\bibitem{Takayama19}
Takayama, S.
\newblock
\textit{Moderate degenerations of Ricci-flat K\"ahler-Einstein manifolds over higher dimensional bases},
\newblock
J. Math. Sci. Univ. Tokyo
\newblock
{\bf 26}
\newblock
(2019),
\newblock
335--359.


\bibitem{Takayama21}
Takayama, S.
\newblock
\textit{Asymptotic expansions of fiber integrals over higher-dimensional bases},
\newblock
J. reine angew. Math. 
\newblock
{\bf 773} 
\newblock
(2021), 
\newblock
67--128.


\bibitem{Takayama22}
Takayama, S.
\newblock
\textit{Mumford goodness of canonical $L^{2}$-metrics on direct image sheaves over a curve},
\newblock
Adv. Math. 
\newblock
{\bf 405} 
\newblock
(2022), 
\newblock
108485.


\bibitem{Takegoshi95}
Takegoshi, K.
\newblock 
\textit{Higher direct images of canonical sheaves tensorized with semi-positive vector bundles by proper K\"ahler morphisms},
\newblock 
Math. Ann.
\newblock 
{\bf 303}
\newblock 
(1995),
\newblock 
389--416.


\bibitem{Wang03}
Wang, C.-L.
\newblock
\textit{Quasi-Hodge metrics and canonical singularities},
\newblock
Math. Res. Lett.
\newblock
{\bf 10}
\newblock
(2003),
\newblock
57--70.


\bibitem{Wells08}
Wells, R.O.
\newblock
\textit{Differential Analysis on Complex Manifolds},
\newblock
3rd Ed.
\newblock
Springer,
\newblock
(2008).


\bibitem{Wolpert87}
Wolpert, S.
\newblock 
\textit{Asymptotics of the spectrum and the Selberg zeta function on the space of Riemann surfaces},
\newblock 
Commun. Math. Phys.
\newblock 
{\bf 112}
\newblock 
(1987),
\newblock 
283--315.


\bibitem{Yoshikawa04}
Yoshikawa, K.-I.
\newblock 
\textit{$K3$ surfaces with involution, equivariant analytic torsion, and automorphic forms on the moduli space},
\newblock 
Invent. Math.
\newblock 
{\bf 156}
\newblock 
(2004),
\newblock 
53--117.


\bibitem{Yoshikawa07}
Yoshikawa, K.-I.
\newblock 
\textit{On the singularity of Quillen metrics},
\newblock 
Math. Ann. 
\newblock 
{\bf 337}
\newblock 
(2007),
\newblock 
61--89.


\bibitem{Yoshikawa10a}
Yoshikawa, K.-I.
\newblock 
\textit{Singularities and analytic torsion},
\newblock
preprint,
\newblock 
arXiv:1007.2835v1
\newblock
(2010).


\bibitem{Yoshikawa10b}
Yoshikawa, K.-I.
\newblock 
\textit{On the boundary behavior of the curvature of $L^{2}$-metrics},
\newblock
preprint,
\newblock 
arXiv:1007.2836v1
\newblock
(2010).


\bibitem{Yoshikawa26}
Yoshikawa, K.-I.
\newblock 
\textit{Analytic torsion for Enriques $2n$-folds},
\newblock
in preparation


\end{thebibliography}
\end{document}